\newcommand{\showcomments}{yes}
\newsavebox{\commentbox}
\newcounter{ax}
\newtheorem{thm}{Theorem}[section]
\newtheorem{lem}[thm]{Lemma}
\newtheorem{cor}[thm]{Corollary}
\newtheorem{prop}[thm]{Proposition}
\newtheorem{thmi}{Theorem}
\newtheorem{cori}[thmi]{Corollary}
\newtheorem{propi}[thmi]{Proposition}
\theoremstyle{definition}
\newtheorem{defn}[thm]{Definition}
\newtheorem{rem}[thm]{Remark}
\newtheorem*{remi}{Remark}
\newtheorem{exmp}[thm]{Example}
\newtheorem{notation}[thm]{Notation}
\newtheorem{claim}{Claim}
\newtheorem{claim*}{Claim}
\newtheorem*{remnon}{Remark}
\newtheorem{question}[thm]{Question}
\DeclareMathOperator{\image}{im}
\DeclareMathOperator{\Aut}{Aut}
\DeclareMathOperator{\link}{Lk}
\DeclareMathOperator{\stabilizer}{Stab}
\DeclareMathOperator{\diam}{diam}
\newcommand{\neb}{\mathcal N}
\def\MCG{\mathcal{MCG}}
\newcommand{\homology}{\ensuremath{{\sf{H}}}}
\newcommand{\coll}{\;\;\makebox[0pt]{$\bot$}\makebox[0pt]{$\smile$}\;\;}
\newcommand{\field}[1]{\mathbb{#1}}
\newcommand{\integers}{\ensuremath{\field{Z}}}
\newcommand{\naturals}{\ensuremath{\field{N}}}
\newcommand{\reals}{\ensuremath{\field{R}}}
\newcommand{\closure}[1]{Cl\left({#1}\right)}
\newcommand{\propnest}{\sqsubsetneq}
\newcommand{\Rmnum}[1]{\mathbf{{\expandafter\@slowromancap\romannumeral #1@}}}
\newcommand{\contact}[1]{\ensuremath{\mathcal C#1}}
\newcommand{\crossing}[1]{\ensuremath{\mathcal C_{\sharp}#1}}
\newcommand{\concom}[1]{\ensuremath{\mathcal C_\bullet#1}}
\newcommand{\tup}[1]{\vec{#1}}
\let\oldmarginpar\marginpar
\renewcommand\marginpar[1]{\-\oldmarginpar[\raggedleft\footnotesize #1]{\raggedright\footnotesize #1}}
\newcommand{\factorsup}{{\mathfrak F}}
\newcommand{\factorseq}{\overline{{\mathfrak F}}}
\newcommand{\tsh}[1]{\left\{\kern-.9ex\left\{#1\right\}\kern-.9ex\right\}}
\newcommand{\Tsh}[2]{\tsh{#2}_{#1}}
\newcommand{\ignore}[2]{\Tsh{#2}{#1}}
\newcommand{\co}{\colon}
\newcounter{enumitemp}
\newcommand{\dist}{\textup{\textsf{d}}}
\newcommand{\OL}{\overleftarrow}
\newcommand{\cuco}[1]{{\mathcal #1}}
\newcommand{\I}{\mathbf I}
\newcommand{\II}{\mathbf{II}}
\newcommand{\fontact}{\widehat{\mathcal C}}
\newcommand{\gate}{\mathfrak g}
\newcommand{\bbf}{\mathfrak C}
\newcommand{\subp}[2]{\rho_{#1}^{#2}}
\newcommand{\seq}[1]{\mbox{\boldmath$#1$}}
\newcommand{\seqcuco}{\seq{\cuco{X}}}
\newcommand{\subseq}[1]{\mbox{\boldmath$\scriptstyle#1$}}
\def\ulim{\lim_\omega}
\newcommand{\Edges}{\mathbf{Edges}}
\newcommand{\Vertices}{\mathbf{Vertices}}
\newcommand{\nest}{\sqsubseteq}
\newcommand{\orth}{\bot}
\newcommand{\transverse}{\pitchfork}
\begin{document}
\title[Hierarchically hyperbolic spaces I: curve complexes for 
cubical groups]{Hierarchically hyperbolic spaces I:\\ curve complexes for cubical groups}

\author[J. Behrstock]{Jason Behrstock}
\address{Lehman College and The Graduate Center, CUNY, New York, New York, USA}
\curraddr{Barnard College, Columbia University, New York, New York, USA}
\email{jason@math.columbia.edu}
\thanks{\flushleft {Behrstock was supported as an Alfred P. Sloan
Fellow, a Simons Fellowship, 
and by the National Science Foundation under Grant Number NSF
1006219.}}

\author[M.F. Hagen]{Mark F. Hagen}
\address{U. Michigan, Ann Arbor, Michigan, USA}
\email{markfhagen@gmail.com}
\thanks{\flushleft {Hagen was supported by the National Science Foundation under Grant Number NSF 1045119.}}

\author[A. Sisto]{Alessandro Sisto}
\address{ETH, Z\"{u}rich, Switzerland}
\email{sisto@math.ethz.ch}
\thanks{\flushleft{Sisto was supported by the Swiss National Science Foundation project 144373.}}

\maketitle

\begin{abstract}
In the context of CAT(0) cubical groups, 
we develop an analogue of the theory of curve complexes and subsurface
projections.  The role of the subsurfaces is
played by a collection of convex subcomplexes called a \emph{factor
system}, and the role of the curve graph is played by the
\emph{contact graph}.  There are a number of close parallels between the contact graph and the curve graph, including
hyperbolicity, acylindricity of the action, the existence of hierarchy
paths, and a Masur--Minsky-style distance formula.  

We then define a
\emph{hierarchically hyperbolic space}; the class of such spaces
includes a wide class of cubical groups (including all virtually
compact special groups) as well as mapping class groups and
Teichm\"{u}ller space with any of the standard metrics.  
We deduce a number of results about these spaces, all of which are 
new for cubical or mapping class groups, and most of which are new 
for both.  We show that the quasi-Lipschitz image from a ball in a 
nilpotent Lie group into a hierarchically hyperbolic space lies close to a product of hierarchy geodesics.  We also prove a rank theorem for
hierarchically hyperbolic spaces; this generalizes results of Behrstock--Minsky, Eskin--Masur--Rafi, Hamenst\"{a}dt, and Kleiner.  We finally prove
that each hierarchically hyperbolic group admits an acylindrical
action on a hyperbolic space.  This acylindricity result is new for cubical
groups, in which case the hyperbolic space admitting the action 
is the contact graph; in the case of the mapping class group,
this provides a new proof of a theorem of Bowditch.
\end{abstract}

\tableofcontents

\section{Introduction}\label{sec:introduction}
Cube complexes and groups that act on them are fundamental objects in 
geometric group theory.  Examples of cubical groups --- groups acting
geometrically on CAT(0) cube complexes --- are right-angled (and many
other) Coxeter groups~\cite{Davis:book,NibloReeves:coxeter},
right-angled Artin groups~\cite{CharneyDavis:salvetti_cubes}, and,
more generally, graph products of abelian
groups~\cite{RuaneWitzel:graph_prod}.  Other examples of cubical
groups include: groups satisfying sufficiently strong
small-cancellation conditions~\cite{Wise:small_cancel_cube}; many
$3$--manifold groups, including all hyperbolic
ones~\cite{KM09,BergeronWise,Wise:quasiconvex_hierarchy} and some
graph manifold groups~\cite{HagenPrzytycki:graph}; hyperbolic
free-by-$\integers$ groups~\cite{HagenWise:freebyz}; etc.  Despite the 
attention cubical groups have attracted in recent years,
their large-scale geometry has remained rather opaque, with a few
salient exceptions, notably the resolution of the Rank Rigidity
Conjecture \cite{CapraceSageev:rank_rigidity}, characterizations of linear
divergence, relative hyperbolicity, and thickness in terms of
combinatorial
data~\cite{BehrstockCharney,HagenBoundary,BehrstockHagen:cubulated1}, 
analysis of quasiflats 
in the $2$--dimensional case~\cite{BKS:quasiflatsCAT0} and 
top-dimensional quasiflats in general \cite{Huang:quasiflats}. 

Recently, there has been enormous progress in understanding 
the mapping class group and related spaces. Highlights have 
included resolutions of the Ending Lamination Conjecture
\cite{BrockCanaryMinsky:ELC2}, 
the Rank Conjecture \cite{BehrstockMinsky:dimension_rank}, 
quasi-isometric rigidity \cite{BKMM:consistency}, finite 
asymptotic dimension \cite{BBF:quasi_tree}, and a number of others. 
Proofs of each of these results have featured the curve complex in a 
central position.

Motivated by the vital role the curve complex has played in 
unveiling the geometry of the mapping class group, 
in this work we develop analogues of those tools in the context of 
cubical groups. In particular, for cubical groups we develop versions of the machinery of curve complex 
projections and hierarchies initiated by Masur--Minsky in 
\cite{MasurMinsky:I,MasurMinsky:II} as well as subsequent tools 
including the consistency and realization theorems 
\cite{Behrstock:asymptotic, BKMM:consistency}. 
We note that right-angled Artin groups are a particularly interesting class of 
groups to which the tools we develop can be applied. 
Finally, we define  
\emph{hierarchically hyperbolic spaces}, which provide a 
framework that encompasses mapping class groups, Teichm\"{u}ller
space, and a large class of cubical groups including the
class of \emph{compact special} groups of
Haglund--Wise~\cite{HaglundWiseSpecial}.  
This allows us to prove new results in both the mapping class group and
cubical contexts.

\subsection{Geometry of contact graphs}

In Part~\ref{part:general}, we develop a number of basic aspects 
of the geometry of \emph{contact graphs}, extending a study which was 
initiated in \cite{Hagen:quasi_arboreal}.  The contact graph
$\contact\cuco X$ of the CAT(0) cube complex $\cuco X$ is the
intersection graph of the \emph{hyperplane carriers}; in other words,
there is a vertex for each hyperplane of $\cuco X$, and two vertices
are adjacent if the corresponding hyperplanes are not separated by a
third.  Since the contact graph is always hyperbolic (in fact, a
quasi-tree)~\cite{Hagen:quasi_arboreal}, it is a natural candidate
for a ``curve complex'' in the context of cubical groups.  The main results
of Part~\ref{part:general} are summarized below.  Recall that the WPD
property, as defined in \cite{BestvinaFujiwara:boundedcohom}, is a
form of properness of an action ``in the direction'' of particular 
elements; see Section \ref{sec:WPD} for the precise definition.  It has
important applications to bounded cohomology (see e.g., \cite{BBF2}), 
is closely related to the Bestvina-Bromberg-Fujiwara construction
\cite{BBF:quasi_tree}, and provides an equivalent
characterization of acylindrical hyperbolicity \cite{Osin:acyl} which 
is used in a number of applications.  

\begin{thmi}\label{thmi:geom_cg}
 Let $\cuco X$ be a CAT(0) cube complex and $\contact \cuco X$ its contact graph. Then:
\begin{enumerate}
 \item\label{itemi:wpd} {\rm (WPD property.)} 
 If $\cuco X$ is uniformly locally finite and  $g\in
 \Aut(\cuco X)$ is rank-one with the property that no positive power of $g$ stabilizes a 
 hyperplane, then $g$ 
 acts as a loxodromic WPD element on $\contact \cuco X$.
 \item\label{itemi:hierarchy} {\rm(Hierarchy paths.)} Let $x,y\in\cuco{X}$ be 0-cubes.  Then there exist hyperplanes $H_0,\ldots,H_k$ with $x\in \neb(H_0),y\in \neb(H_k)$ and combinatorial geodesics $\gamma_i\rightarrow \neb(H_i)$ such that $H_0,H_1,\ldots,H_k$ is a geodesic of $\contact{\cuco{X}}$ and $\gamma_0\gamma_1\cdots\gamma_k$ is a geodesic joining $x,y$.
 \item\label{itemi:contr} {\rm(Contractibility.)} If the set of $1$--cubes of $\cuco X$ is countable and non-empty, then the flag complex spanned by $\contact \cuco X$ is contractible.
\end{enumerate}
\end{thmi}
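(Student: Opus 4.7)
The plan is to prove the three parts independently, using throughout Hagen's earlier result that $\contact{\cuco X}$ is a quasi-tree, hence hyperbolic.

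For (1), I would first verify loxodromicity. The rank-one element $g$ admits a combinatorial axis $\ell\subset\cuco X$ crossing a bi-infinite sequence of pairwise distinct hyperplanes $(H_i)_{i\in\integers}$, with distinctness coming from the hypothesis on powers of $g$. The main step is to show that $i\mapsto H_i$ is a quasi-isometric embedding of $\integers$ into $\contact{\cuco X}$, which immediately gives loxodromicity; the Morse property of the rank-one axis $\ell$ is exactly what prevents contact-graph shortcuts, since any hyperplane $K$ witnessing a substantial shortcut between $H_i$ and $H_j$ would have carrier meeting $\ell$ at two CAT(0)-close points that are far apart along $\ell$. For WPD, given $\epsilon$ I take $N$ large and analyze $h\in\Aut(\cuco X)$ satisfying $\dist_{\contact{\cuco X}}(hH_0,H_0)\leq\epsilon$ and $\dist_{\contact{\cuco X}}(hH_N,H_N)\leq\epsilon$: such an $h$ must approximately translate a long segment of $\ell$, and uniform local finiteness of $\cuco X$ then leaves only finitely many possibilities.

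For (2), I would induct on $k=\dist_{\contact{\cuco X}}(H_0,H_k)$, minimized over hyperplanes with $x\in\neb(H_0)$ and $y\in\neb(H_k)$; the base case $k=0$ is immediate from convexity of $\neb(H_0)$, which forces any combinatorial geodesic from $x$ to $y$ into the carrier. For the inductive step, fix a contact-graph geodesic $H_0,\ldots,H_k$, let $y_0$ be the combinatorial gate of $y$ in the convex subcomplex $\neb(H_0)$, choose any combinatorial geodesic $\gamma_0\subset\neb(H_0)$ from $x$ to $y_0$, and apply induction to the pair $(y_0,y)$ along $H_1,\ldots,H_k$ to obtain $\gamma_1,\ldots,\gamma_k$. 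The content is showing that the concatenation $\gamma_0\gamma_1\cdots\gamma_k$ is itself a geodesic in $\cuco X$, equivalently that no hyperplane is crossed twice. A hyperplane $W$ crossed by $\gamma_0$ lies in $\neb(H_0)$ and so is adjacent to $H_0$ in $\contact{\cuco X}$; if $W$ were also crossed by $\gamma_j$ for some $j\geq 2$, the contact-path $H_0\text{-}W\text{-}H_j$ has length two, forcing $j=2$, and the residual $j=2$ case is ruled out using the gate property (that $y_0$ lies on the $\neb(H_0)$-side of every hyperplane separating $y$ from $\neb(H_0)$). This no-backtracking verification is the principal technical point.

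For (3), I would use countability of the $1$--skeleton to exhaust $\cuco X$ by an increasing union $\bigcup_n\cuco X_n$ of finite convex subcomplexes, obtained by iteratively taking combinatorial convex hulls of a countable enumeration of $0$--cubes. The flag complex on $\contact{\cuco X}$ is then the increasing union of the flag complexes on the $\contact{\cuco X_n}$, so it suffices to show that each of these is contractible and that each inclusion is null-homotopic; this reduces to showing contractibility of the contact flag complex of every finite CAT(0) cube complex $\cuco Y$, which I would prove by induction on the number of hyperplanes. The inductive step locates a peripheral hyperplane $H$ (e.g., one whose carrier contains a $0$--cube of maximal combinatorial distance from a basepoint), removes it to obtain a smaller CAT(0) cube complex $\cuco Y'$, and observes that the contact flag complex of $\cuco Y$ is the union of that of $\cuco Y'$ with the closed star of the vertex $H$, whose link is the contact flag complex of $\neb(H)$ and hence contractible by induction. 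The main obstacle is identifying a suitable peripheral hyperplane and verifying that its removal preserves the CAT(0) cube complex structure.
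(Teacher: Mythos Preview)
Your approaches to parts~(1) and~(3) are essentially those of the paper.  For~(1), the paper separates the two steps just as you do: loxodromicity is quoted from earlier work (the classification in Theorem~\ref{thm:NTC}), and WPD is proved by a hyperplane-counting argument showing that any $g$ moving both $H$ and $h^NH$ a bounded amount in $\contact\cuco X$ must move some midpoint of a hierarchy path a bounded $\cuco X$--distance.  For~(3), the paper likewise exhausts by finite convex subcomplexes and proves contractibility of the finite contact complex by induction, removing what it calls a \emph{leaf hyperplane}; your ``peripheral'' hyperplane and star/link decomposition match the paper's Lemma~\ref{lem:decomposition}.

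For part~(2), however, there is a genuine gap.  Your inductive step asserts, implicitly, that the gate $y_0=\gate_{\neb(H_0)}(y)$ lies in $\neb(H_1)$, so that one may ``apply induction to the pair $(y_0,y)$ along $H_1,\ldots,H_k$''.  This is not justified, and it is not obvious.  If some hyperplane $W$ separates $y_0$ from $\neb(H_1)$, then the gate property forces $W$ to cross $\neb(H_0)$ and to leave $y_0,y$ on the same side; tracing the chain $\neb(H_1),\ldots,\neb(H_k)$ shows only that $W$ must cross $\neb(H_2)$, giving an \emph{alternative} contact-graph geodesic $H_0,W,H_2,\ldots,H_k$ rather than a contradiction.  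Even if you switch to inducting on $\dist_{\cuco X}(x,y)$, the base of the induction breaks when $y_0=x$ (i.e., when $x$ is already the gate of $y$), since then $\gamma_0$ is trivial and neither the combinatorial distance nor the contact-graph distance has decreased.  Your no-backtracking argument is fine once the induction is set up, but the setup itself is what fails.

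The paper avoids this entirely: rather than inducting, it chooses the contact-graph geodesic $H_0,\ldots,H_k$ and the carrier-paths $\gamma_i$ \emph{simultaneously} so as to minimise the area of a disc diagram between $\gamma_0\cdots\gamma_k$ and a fixed geodesic $\tau$ from $y$ to $x$, and then shows by a dual-curve argument that every dual curve leaving some $\gamma_i$ must end on $\tau$.  The key point---that a dual curve cannot travel from $\gamma_i$ to $\gamma_{i\pm 2}$---uses area-minimality over \emph{all} choices of contact-graph geodesic, which is exactly the freedom your inductive approach lacks.
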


In the case of mapping class groups, analogues of 
Theorem~\ref{thmi:geom_cg}.\eqref{itemi:wpd} and
Theorem~\ref{thmi:geom_cg}.\eqref{itemi:hierarchy} were proved in  
\cite{BestvinaFujiwara:boundedcohom} and \cite{MasurMinsky:II}. The curve complex is not 
contractible, so Theorem~\ref{thmi:geom_cg}.\eqref{itemi:contr} 
provides a way in which the contact graph is simpler than the 
curve complex.

Theorem~\ref{thmi:geom_cg} has applications to random walks. 
In particular, from 
Theorem \ref{thmi:geom_cg}.\eqref{itemi:wpd},\eqref{itemi:hierarchy} 
and the main result of \cite{Sisto:random_walks_wpd}, when the
non-elementary group $G<\Aut(\cuco X)$ contains a rank-one
element, random
paths in $G$ stay close to geodesics with high probability. 
Further, this property has applications to various parameters associated with
the random walk, including rate of escape and entropy.

\subsection{Factor systems}
The mapping class group, $\MCG(S)$, of a surface $S$ is associated 
 with the curve complex of $S$, together  
 with the collection of curve complexes of subsurfaces of $S$; this association underlies the hierarchy machinery of~\cite{MasurMinsky:II}. 
Analogously, a CAT(0) cube complex $\cuco X$ contains a profusion of
convex subcomplexes, each of which is itself a CAT(0) cube complex,
and the judicious choice of a family of convex subcomplexes enables
the creation of hierarchy machinery.  The role of the collection of
subsurfaces is played by a \emph{factor system} $\factorsup$ in $\cuco
X$, which is a particular uniformly locally finite collection of convex
subcomplexes. 
Table~\ref{table:comparison} summarizes the analogy between the
mapping class group and a CAT(0) cube complex
with a geometric group action and a factor system. We emphasize that 
although the tools and the results we obtain have parallels for 
mapping class groups the techniques that we employ are very 
different.

\begin{center}
\scriptsize
\begin{table}[h]
\begin{tabular}{|p{6.5cm}| p{7.5cm}|}
 \hline \textbf{Mapping class group $\MCG(S)$} & \textbf{Cube complex 
 $\cuco X$ with factor system $\factorsup$ and $G$-action} 
  \\\hline \hline
 Curve complex $\contact S$ is hyperbolic~\cite{MasurMinsky:I} & Contact 
 graph $\contact\cuco X$ and factored contact graph $\fontact \cuco 
 X$ are hyperbolic, indeed, are quasi-trees (Thm.~\ref{thm:hyperbolicity}, Prop.~\ref{prop:fontact_quasi_tree}) \\\hline $\MCG(S)$ acts on $\contact S$ acylindrically~\cite{Bowditch:tight} & $G$ acts on $\contact\cuco X,\fontact \cuco X$ acylindrically (Cor.~\ref{cor:cubical_acyl}) \\\hline Nielsen-Thurston classification~\cite{Thurston:diffeos} & Loxodromic/reducible/generalized reducible (Thm. \ref{thm:NTC}) \\\hline $\exists$ quasi-geodesics in $\MCG(S)$ shadowing geodesics in 
 $\contact S$~\cite{MasurMinsky:II} & $\exists$ geodesics in $\cuco 
 X$ shadowing geodesics in $\contact\cuco X,\fontact \cuco X$ 
 (Prop.~\ref{prop:hierpath}, Prop.~\ref{prop:hier_revisited})\\\hline Subsurfaces & Subcomplexes in $\factorsup$ (Defn.~\ref{defn:factor_system}) \\\hline Projections to subsurfaces ~\cite{MasurMinsky:II} & Projection to $\fontact F$ for $F\in\factorsup$ (Sec.~\ref{subsec:factored_contact_graphs}) \\\hline Formula computing distance in $\MCG(S)$ in terms of curve complex distances~\cite{MasurMinsky:II} & 
 Formula computing distance in  $\cuco X$ in terms of factored 
 contact graph distances (Thm.~\ref{thm:distance_formula}) \\\hline Bounded Geodesic Image~\cite{MasurMinsky:II} & Bounded Geodesic Image, Prop. \ref{prop:BGIIITF} \\\hline Nested, disjoint, overlapping subsurfaces & Parallel into, orthogonal, transverse elements of $\factorsup$ \\\hline  Large Link Lemma~\cite{MasurMinsky:II} & Large Link Lemma, Prop. \ref{prop:bgiII} \\\hline Consistency and realization~\cite{Behrstock:asymptotic},\cite{BKMM:consistency} & Consistency and realization (Thm.~\ref{thm:consistency_realization}) \\\hline\end{tabular}
\smallskip
\caption{}\label{table:comparison}
\end{table}
\normalsize
\end{center}

The collection of subcomplexes which constitute a 
factor system $\factorsup$ in $\cuco
X$ includes $\cuco X$, as well as all combinatorial
hyperplanes of $\cuco X$, further, this collection 
is closed under the following operation: if
$F,F'\in\factorsup$ and $F$ has (combinatorial) projection onto $F'$ of
diameter more than some specified threshold, then the projection of
$F$ onto $F'$ lies in $\factorsup$.  This implies that sufficiently
large hyperplanes of any codimension belong to any factor system, and
indeed each factor system $\factorsup$ contains a minimal factor
system consisting of $\cuco X$, all combinatorial hyperplanes, and
the closure of this family under the above projection.  This
minimal factor system is $\Aut(\cuco
X)$--invariant since each automorphism preserves the set of
hyperplanes.

The reader should have in mind the following example, which already 
shows that the class of groups $G$ acting geometrically on cube 
complexes with $G$--invariant factor systems is very large.  Let
$\Gamma$ be a finite simplicial graph and let $\widetilde S_\Gamma$ be
the universal cover of the Salvetti complex of the corresponding
right-angled Artin group $A_\Gamma$, so that $\widetilde S_\Gamma$ is
a CAT(0) cube complex on which $A_\Gamma$ acts properly with a single
orbit of $0$--cubes~\cite{CharneyDavis:salvetti_cubes}.  Each induced
subgraph $\Lambda$ of $\Gamma$ yields a monomorphism
$A_\Lambda\rightarrow A_\Gamma$ and an $A_\Lambda$--equivariant
embedding $\widetilde S_\Lambda\hookrightarrow\widetilde S_\Gamma$.
The set of all such subcomplexes of $\widetilde S_\Gamma$, and all of
their $A_\Gamma$--translates, forms a factor system for $\widetilde
S_\Gamma$, described in detail in Section~\ref{subsec:examples_of_fs}.
This, and the fact that the existence of a factor system is inherited
by convex subcomplexes
(Lemma~\ref{lem:convex_subcomplex_factor_system}), enables the study
of groups that are \emph{virtually special} in the sense of
Haglund--Wise~\cite{HaglundWiseSpecial} using factor systems:

\begin{propi}\label{propi:special_factor}
Let $\overline{\cuco X}$ be a special cube complex with finitely many hyperplanes.  Then the universal cover $\cuco X$ of $\overline{\cuco X}$ contains a factor system, and hence contains a factor system that is invariant under the action of $\pi_1\overline{\cuco X}$.
\end{propi}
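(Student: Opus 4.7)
The plan is to realize $\cuco X$ as a convex subcomplex of the universal cover of a Salvetti complex and then invoke Lemma~\ref{lem:convex_subcomplex_factor_system} to transfer a factor system down. By a theorem of Haglund--Wise~\cite{HaglundWiseSpecial}, a special cube complex with finitely many hyperplanes admits a combinatorial local isometry into the Salvetti complex $\overline{S}_\Gamma$ of the right-angled Artin group $A_\Gamma$ on the crossing graph $\Gamma$ of the hyperplanes of $\overline{\cuco X}$. A local isometry between non-positively curved cube complexes lifts to a convex isometric embedding of universal covers, and this lift is equivariant with respect to the induced injection $\pi_1\overline{\cuco X}\hookrightarrow A_\Gamma$. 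Hence $\cuco X$ embeds as a convex subcomplex of $\widetilde{S}_\Gamma$.

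By the example described in Section~\ref{subsec:examples_of_fs}, the cube complex $\widetilde{S}_\Gamma$ carries a factor system whose elements are the $A_\Gamma$-translates of the convex subcomplexes $\widetilde{S}_\Lambda$ indexed by induced subgraphs $\Lambda\le\Gamma$. Applying Lemma~\ref{lem:convex_subcomplex_factor_system} to the convex embedding $\cuco X\hookrightarrow\widetilde{S}_\Gamma$ yields a factor system in $\cuco X$, which establishes the first conclusion of the proposition.

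For the invariance under $\pi_1\overline{\cuco X}$, I would appeal to the observation recalled just before the proposition statement: once any factor system exists in $\cuco X$, the minimal one---consisting of $\cuco X$, all of its combinatorial hyperplanes, and the closure of this family under projections of diameter exceeding the chosen threshold---is itself a factor system, and it is invariant under every element of $\Aut(\cuco X)$ because its definition refers only to the cubical structure, the hyperplane set, and the projection operation. In particular it is invariant under the deck action of $\pi_1\overline{\cuco X}$.

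The substantive content of the proof is packaged into Lemma~\ref{lem:convex_subcomplex_factor_system}, where one must verify that intersecting the ambient factor system with the convex subcomplex $\cuco X$ (and closing under the required projection operation) still satisfies the uniform local finiteness and projection-closure axioms of a factor system; the compatibility of gates in $\widetilde{S}_\Gamma$ with the convex subcomplex $\cuco X$ is the geometric ingredient that makes this go through. Once that lemma is in hand, the proof of the proposition reduces to recognizing the Haglund--Wise embedding as a convex embedding into $\widetilde{S}_\Gamma$ and then quoting the factor system already constructed in the right-angled Artin group case.
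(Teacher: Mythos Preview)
Your proof is correct and follows essentially the same route as the paper: the paper obtains Proposition~\ref{propi:special_factor} from Proposition~\ref{prop:raag_factor_system} and Lemma~\ref{lem:convex_subcomplex_factor_system} via the Haglund--Wise local isometry into a Salvetti complex, exactly as you outline (see the paragraph following Lemma~\ref{lem:convex_subcomplex_factor_system} and the proof of Corollary~\ref{cor:compat_special_factor_system}), and invokes the $\Aut(\cuco X)$--invariance of the minimal factor system for the second assertion.
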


In Corollary \ref{cor:compat_special_factor_system}, for special cube 
complexes, we 
describe the factor system explicitly in terms of the 
hyperplanes of $\overline{\cuco X}$. 
Proposition~\ref{propi:special_factor} also enables one
to study many cubical groups which are far from being special: in
Section~\ref{subsec:coloring}, using 
Proposition~\ref{prop:raag_factor_system} together with 
Burger--Mozes \cite{BurgerMozes} and Wise \cite{Wise:CSC} we show 
there exists 
many non-virtually special groups $G$ which act geometrically on a CAT(0) 
cube complex $\cuco X$ with a factor system. Moreover, we produce 
many examples which, unlike those of  Burger--Mozes and Wise, do not 
admit equivariant embeddings into products of trees; these will be 
used in Section~\ref{subsec:coloring}.

Each $F\in\factorsup$ is a convex subcomplex, and is thus a CAT(0)
cube complex whose hyperplanes have the form $H\cap F$, where $H$ is a
hyperplane of $\cuco X$.  This gives a natural injective graph
homomorphism $\contact F\rightarrow\contact\cuco X$, whose image is an
induced subgraph~\cite{Hagen:quasi_arboreal}.  Just as the elements of
the factor system stand in analogy to the subsurfaces, the 
graphs $\contact F$, where $F\in\factorsup$, essentially play the role of the curve
complexes of the subsurfaces.  In order to obtain 
Theorem~\ref{thm:distance_formula} --- our analogue of the Masur--Minsky
distance formula --- we must modify each $\contact F$ slightly, by
coning off each subgraph which is the contact graph of some smaller
element of $\factorsup$.  It is the resulting \emph{factored contact graphs}
$\fontact F$ that actually play the role of curve complexes.
In Section~\ref{subsec:factored_contact_graphs_are_quasi_trees}, 
we show that factored
contact graphs are all quasi-trees. Moreover, when $\factorsup$ is
the minimal factor system described above, then $\fontact\cuco X$ and
$\contact\cuco X$ are quasi-isometric.

In Section~\ref{sec:distance_formula}, we prove the following 
analogue for cubical groups 
of the celebrated Masur--Minsky distance formula 
\cite[Theorem~6.12]{MasurMinsky:II}. Their  
formula has become an essential tool in studying the geometry of the 
mapping class group. Later, we will take the existence of 
such a 
formula as one of the characteristic features of a hierarchically hyperbolic
space. 

\begin{thmi}[Distance formula]\label{thmi:distance_formula_intro}
Let $\cuco X$ be a CAT(0) cube complex with a
factor system~$\factorsup$.  Let $\factorseq$ contain exactly one
representative of each parallelism class in $\factorsup$.  Then there
exists $s_0\geq 0$ such that for all $s\geq s_0$, there are constants
$K\geq 1,C\geq 0$ such that for all $x,y\in\cuco X^{(0)}$,
$$\dist_{\cuco
X}(x,y)\asymp_{_{K,C}}\sum_{F\in\factorseq}\ignore{\dist_{\fontact
F}(\pi_F(x),\pi_F(y))}{s}.$$
\end{thmi}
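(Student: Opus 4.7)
The plan is to establish the two sides of the $\asymp_{K,C}$ separately, modeled on the Masur--Minsky argument but substituting the cubical machinery developed in earlier sections for its MCG counterpart.

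For the direction that the sum is bounded by $\dist_{\cuco X}(x,y)$, I would first use that each projection $\pi_F\co \cuco X\to \fontact F$ is coarsely Lipschitz, which already bounds any individual term linearly in $\dist_{\cuco X}(x,y)$. To control the sum itself, the essential input is the Large Link Lemma (Proposition~\ref{prop:bgiII}): for any threshold $s$, the number of $F\in\factorseq$ with $\dist_{\fontact F}(\pi_F(x),\pi_F(y))\geq s$ is at most linear in $\dist_{\cuco X}(x,y)$. Combining these two facts yields the $\sum\preceq \dist_{\cuco X}(x,y)$ half of the estimate.

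For the reverse direction (the harder one), I would use the hierarchy paths from Theorem~\ref{thmi:geom_cg}.\eqref{itemi:hierarchy} to write an $x$--$y$ combinatorial geodesic as $\gamma_0\gamma_1\cdots\gamma_k$ with $H_0,\ldots,H_k$ a geodesic of $\contact\cuco X$ and $\gamma_i\subseteq \neb(H_i)$. The contact-graph length $k$ is controlled by the $F=\cuco X$ term of the sum. Each $\gamma_i$ lies in a carrier $\neb(H_i)\cong H_i\times[0,1]$, where $H_i$ itself is a CAT(0) cube complex that inherits a factor system from $\factorsup$. I would then induct on a complexity parameter of the factor system (for instance the maximum length of a proper nesting chain, or a dimension-like invariant), applying the distance formula recursively inside each $H_i$ and summing contributions. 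Here the Bounded Geodesic Image Theorem (Proposition~\ref{prop:BGIIITF}) plays the crucial role: any $F$ contributing a large $\fontact F$-projection must be encountered by the hierarchy geodesic, so its contribution can be charged to a definite sub-segment of $\gamma$ without being double-counted across the $\gamma_i$.

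The main obstacle will be executing this induction carefully. One must verify that the factor system induced on each carrier has strictly smaller complexity so that the recursion terminates, and that the threshold $s_0$ together with the multiplicative and additive constants $K,C$ can be chosen uniformly across the recursive levels, since each invocation of Bounded Geodesic Image and Large Link injects constants that must be absorbed. Matching factors that appear inside the sub-carriers with elements of $\factorseq$ modulo parallelism is a further bookkeeping hurdle; the consistency and realization machinery of Theorem~\ref{thm:consistency_realization} is the natural tool for ensuring that these projections behave coherently across the decomposition.
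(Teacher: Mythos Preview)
Your overall shape---hierarchy paths plus induction plus Large Link---matches the paper, but there are two genuine gaps.

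First, your argument for the direction $\sum\preceq\dist_{\cuco X}$ does not work as written. Knowing that each term is at most $C\dist_{\cuco X}(x,y)$ and that the number of contributing terms is at most $C'\dist_{\cuco X}(x,y)$ only gives a \emph{quadratic} bound on the sum, not a linear one. The paper does not handle the two directions by separate mechanisms; both come out of the same inductive decomposition. The key combinatorial input you are missing is Lemma~\ref{lem:length_comparison}, which shows that along a hierarchy path carried by $T_0,\ldots,T_r$ one has $\sum_i\dist_{T_i}(\gate_{T_i}(x),\gate_{T_i}(y))\asymp\dist_{\cuco X}(x,y)$. This is what lets the inductive hypothesis (applied inside each $T_i$) produce a linear estimate in both directions simultaneously.

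Second, you invoke hierarchy paths in $\contact\cuco X$ and then assert that the path length $k$ is controlled by the $F=\cuco X$ term of the sum. But the $\cuco X$ term is $\dist_{\fontact\cuco X}(\pi_{\cuco X}(x),\pi_{\cuco X}(y))$, and since $\fontact\cuco X$ is obtained from $\contact\cuco X$ by coning off subgraphs, the factored distance can be strictly smaller than $k$. The paper uses hierarchy paths in the \emph{factored} contact graph (Proposition~\ref{prop:hier_revisited}), so that $r=\dist_{\fontact\cuco X}(x,y)$ genuinely appears as a term of the sum. Relatedly, the $T_i$ in the paper's proof are arbitrary elements of $\factorsup$, not just hyperplane carriers.

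Two minor corrections: the paper inducts on the local multiplicity $\Delta$ of the factor system (so that $\factorsup_{T_i}$ has multiplicity $\leq\Delta-1$), which gives the uniform constants cleanly; and consistency/realization (Theorem~\ref{thm:consistency_realization}) is not used at all in the proof of the distance formula---the bookkeeping you worry about is handled entirely by Lemma~\ref{lem:old_3.15} (a consequence of Large Link) and Lemma~\ref{lem:length_comparison}.
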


\noindent(Here, $\ignore{A}{s}=A$ if $A\geq s$ and $0$ otherwise. The 
notation $\asymp_{_{K,C}}$ means ``up to bounded multiplicative and 
additive error''.)

In Theorem~\ref{thmi:distance_formula_intro}, we use the notion of
\emph{parallelism}: two convex subcomplexes $F,F'$ of $\cuco X$ are
\emph{parallel} if for all hyperplanes $H$, we have $H\cap
F\neq\emptyset$ if and only if $H\cap F'\neq\emptyset$.  Equivalently,
$F,F'$ are parallel if and only if $\contact F,\contact F'$ are the
same subgraph of $\contact\cuco X$; parallel subcomplexes are
isomorphic.  Just as the Masur--Minsky distance formula involves
summing over all curve complexes of subsurfaces, by identifying
parallel elements of the factor system, our sum is over all
factored contact graphs without repetition.

Another important property of the curve complex $\contact S$ is that
the action of $\MCG(S)$ on $\contact S$ is \emph{acylindrical}, by a
result of Bowditch~\cite{Bowditch:tight}.  We obtain an analogous
result for actions on (factored) contact graphs arising from actions
on cube complexes, and, in Section~\ref{sec:acyl} we will show this holds in considerably greater
generality . The statement in the cubical case is:

\begin{thmi}[Acylindrical hyperbolicity from factor systems]\label{thmi:acyl}
Let the group $G$ act properly and cocompactly on the CAT(0) cube complex $\cuco X$ and suppose that $\cuco X$ contains a factor system.  Then the induced action of $G$ on the contact graph $\contact\cuco X$ of $\cuco X$ is acylindrical.
\end{thmi}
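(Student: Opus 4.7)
The plan is to unfold acylindricity and reduce to the following concrete claim: for every $\epsilon\geq 0$ I must find $R,N$ such that whenever $H,H'$ are vertices of $\contact\cuco X$ with $\dist_{\contact\cuco X}(H,H')\geq R$, the set
\[
S=\{g\in G:\dist_{\contact\cuco X}(H,gH)\leq\epsilon,\ \dist_{\contact\cuco X}(H',gH')\leq\epsilon\}
\]
has at most $N$ elements. Since $G$ acts properly and cocompactly on $\cuco X$, it suffices to produce, for each such pair $(H,H')$, a $0$-cube $p\in\cuco X$ and a constant $D(\epsilon)$ independent of $R$ and of the pair, such that every $g\in S$ satisfies $\dist_{\cuco X}(p,gp)\leq D(\epsilon)$: properness then bounds $|S|$ by a constant depending only on $\epsilon$.

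To pick $p$, I apply Theorem~\ref{thmi:geom_cg}.\eqref{itemi:hierarchy} to $(H,H')$ to obtain a combinatorial geodesic $\gamma\subset\cuco X$ from a $0$-cube $x\in\neb(H)$ to $y\in\neb(H')$ whose hyperplane sequence $H_0=H,H_1,\ldots,H_k=H'$ is a $\contact\cuco X$-geodesic, so $k\geq R$. Let $p$ be the $0$-cube at combinatorial distance $\lfloor k/2\rfloor$ from $x$ along $\gamma$. By $G$-equivariance, $g\gamma$ is a hierarchy path from $gx\in\neb(gH)$ to $gy\in\neb(gH')$ and $gp$ is its midpoint. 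To bound $\dist_{\cuco X}(p,gp)$ I invoke the distance formula (Theorem~\ref{thmi:distance_formula_intro}):
\[
\dist_{\cuco X}(p,gp)\asymp_{K,C}\sum_{F\in\factorseq}\ignore{\dist_{\fontact F}(\pi_F(p),\pi_F(gp))}{s}.
\]
So it suffices to bound each summand uniformly in $F$, and to show that only $N'(\epsilon)$ factors $F$ contribute above the threshold $s$. For a factor $F$ whose image in $\contact\cuco X$ lies far from every $H_i$, the Bounded Geodesic Image principle (Proposition~\ref{prop:BGIIITF}) bounds $\pi_F$ along $\gamma$; applied to the parallel hierarchy path $g\gamma$ (whose endpoints are $\epsilon$-close in $\contact\cuco X$ to $H,H'$), it likewise bounds $\pi_F$ along $g\gamma$, and coarse Lipschitz-ness of $\pi_F$ then gives $\dist_{\fontact F}(\pi_F(p),\pi_F(gp))\leq O(\epsilon)$. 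For a factor $F$ whose hyperplanes meet $\gamma$, hyperbolicity of $\fontact F$ together with fellow-traveling of the hierarchy paths' projections at the midpoint yields the same bound.

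The chief obstacle is the second count: even after bounding individual summands, the number of factors $F$ with $\pi_F(\gamma)$ large grows linearly with $R$, so a priori the total in the distance formula could be unbounded. The hard step is to establish a passing-up / Behrstock-style inequality in the factor-system setting, of the kind provided by the Large Link Lemma (Proposition~\ref{prop:bgiII}) and the Consistency and Realization theorem (Theorem~\ref{thm:consistency_realization}), forcing that only $N(\epsilon)$ factors actually contribute above scale $s$ to $\dist_{\cuco X}(p,gp)$: a factor $F$ contributing must correspond to a simultaneous large $\pi_F$-deviation on both $\gamma$ and $g\gamma$, and with the endpoints pinned together up to $\epsilon$ in $\contact\cuco X$, consistency pins the number of such $F$ to a function of $\epsilon$. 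With this in hand, the distance formula yields $\dist_{\cuco X}(p,gp)\leq D(\epsilon)$, and properness of $G\,{\to}\,\cuco X$ completes the proof.
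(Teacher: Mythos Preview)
Your outline has the right skeleton --- pick a point ``in the middle'' of a hierarchy path, bound its displacement via the distance formula, and invoke properness --- but the step you flag as easy is exactly where the argument breaks, and the step you flag as the chief obstacle is actually harmless.

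The problematic sentence is: ``For a factor $F$ whose hyperplanes meet $\gamma$, hyperbolicity of $\fontact F$ together with fellow-traveling of the hierarchy paths' projections at the midpoint yields the same bound.'' It is true that $\pi_F(\gamma)$ and $\pi_F(g\gamma)$ are quasi-geodesics in $\fontact F$ with $O(\epsilon)$--close endpoints (via Bounded Geodesic Image applied near $x,gx$ and $y,gy$), so they fellow-travel. But $p$ and $gp$ are midpoints in the $\fontact\cuco X$--parametrisation, not in the $\fontact F$--parametrisation. When $\dist_{\fontact F}(\pi_F(x),\pi_F(y))$ is large --- which certainly can occur for factors $F$ with $\rho^F_{\cuco X}$ near the $\fontact\cuco X$--midpoint --- the points $\pi_F(p)$ and $\pi_F(gp)$ can sit at completely different positions along this common fellow-traveller, because the two hierarchy paths may traverse (parallel copies of) $F$ over different portions of their $\fontact\cuco X$--length. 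So the per-term bound fails precisely for these $F$. Conversely, if the per-term bound \emph{did} hold uniformly, your subsequent worry about the number of contributing terms would evaporate: simply take the distance-formula threshold above that bound and the sum is empty.

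The paper's proof of the general statement (Theorem~\ref{thm:acyl_no_box}) confronts exactly this dichotomy. It introduces the set $\mathfrak L_1$ of $\nest$--maximal factors $U$ with $\rho^U_S$ near the midpoint and $\dist_{\fontact U}(\pi_U(x),\pi_U(y))>\epsilon$. If $\mathfrak L_1=\emptyset$, then every relevant factor has small projection of $x,y$, and your midpoint argument goes through essentially as you wrote it: each term is $O(\epsilon)$ and the distance formula finishes. If $\mathfrak L_1\neq\emptyset$, the midpoint $p$ is abandoned; instead one takes the gate $\gate_{P_U}(x)$ to the product region $P_U=E_U\times F_U$ for $U\in\mathfrak L_1$, shows (via Large Link plus a short passing-up argument) that a slightly larger set $\mathfrak L_2\supseteq g\mathfrak L_1$ has cardinality bounded in terms of $\epsilon$, and checks that $g\gate_{P_U}(x)$ coarsely equals $\gate_{P_{gU}}(x)$. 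Properness and the bound on $|\mathfrak L_2|$ then finish. Finally, the passage from $\fontact\cuco X$ to $\contact\cuco X$ is handled separately (Corollary~\ref{cor:cubical_acyl}) by working with the minimal factor system, for which the two graphs are equivariantly quasi-isometric; your write-up conflates them.
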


Theorem~\ref{thmi:acyl} and the results of~\cite{maher2014random} combine to yield the following, which is related to work of Nevo-Sageev on Poisson boundaries of cube complexes~\cite{NevoSageev:Poisson}:

\begin{cori}[Poisson boundary]\label{cori:poisson_boundary}
Let the group $G$ act properly and cocompactly on the CAT(0) cube
complex $\cuco X$ and suppose that $\cuco X$ contains a factor system.  Let $\mu$ be a probability distribution on $G$ with finite
entropy whose support generates a non-elementary group 
acting on
$\contact\cuco X$ and let $\nu$ be the hitting measure on
$\partial\contact\cuco X$.  Then $(\partial \contact\cuco X,\nu)$ is 
isomorphic to  
the Poisson boundary of $(G,\mu)$.
\end{cori}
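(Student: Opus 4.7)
The plan is to deduce this corollary directly from Theorem~\ref{thmi:acyl} by invoking the random walk machinery for acylindrical actions on hyperbolic spaces developed in \cite{maher2014random}. First, one verifies that the hypotheses of the Maher--Tiozzo theorem are satisfied for the action of $G$ on $\contact\cuco X$: the contact graph is hyperbolic by Theorem~\ref{thm:hyperbolicity} (indeed, a quasi-tree), the action is acylindrical by Theorem~\ref{thmi:acyl}, and $\contact\cuco X$ is separable because $G$ is countable and acts cocompactly on a cube complex with countably many hyperplanes.

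Given these inputs, the main theorem of \cite{maher2014random} applies: for any probability measure $\mu$ on $G$ with finite entropy whose support generates a non-elementary subgroup (relative to the action on $\contact\cuco X$), sample paths of the $\mu$-random walk converge almost surely to a single point of $\partial\contact\cuco X$, producing a hitting measure $\nu$ which is the unique $\mu$-stationary Borel probability measure on the boundary. To upgrade this $\mu$-boundary to the full Poisson boundary, Maher--Tiozzo verify Kaimanovich's strip criterion, using hyperbolicity together with acylindricity to construct families of strips whose cardinality grows subexponentially; combined with the finite entropy hypothesis, this identifies $(\partial\contact\cuco X,\nu)$ with the Poisson boundary of $(G,\mu)$.

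The only remaining task is bookkeeping: translating our non-elementarity assumption into the form used in \cite{maher2014random}, namely the existence of two independent loxodromic isometries of $\contact\cuco X$ in the subgroup generated by the support of $\mu$. This follows from a standard ping-pong argument using the loxodromic WPD elements produced by Theorem~\ref{thmi:geom_cg}.\eqref{itemi:wpd}. There is no real obstacle in the proof of the corollary itself: the deep geometric input is entirely concentrated in Theorem~\ref{thmi:acyl}, while the passage from acylindrical hyperbolicity plus finite entropy to identification of the Poisson boundary is precisely the content of \cite{maher2014random}.
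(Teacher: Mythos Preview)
Your proposal is correct and follows exactly the route indicated in the paper, which simply states that the corollary follows by combining Theorem~\ref{thmi:acyl} with the results of~\cite{maher2014random}. You have spelled out in more detail than the paper does how the hypotheses of the Maher--Tiozzo theorem are met (hyperbolicity, separability, acylindricity, non-elementarity), but the underlying argument is the same.
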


Theorem~\ref{thmi:acyl} also allows one to produce free subgroups of $G$ freely generated by finite collections of high powers of elements, each of which acts loxodromically on $\contact \cuco X$ (Corollary~\ref{cor:free_subgroups}).

Using Theorem~\ref{thmi:distance_formula_intro}, together with 
the tools in~\cite{BBF:quasi_tree}, the fact that factored
contact graphs are quasi-trees, and the machinery we develop 
in Section~\ref{sec:par_proj}, we prove:

\begin{thmi}\label{thmi:quasi_trees}
Let $G$ act properly and cocompactly on the CAT(0) cube complex $\cuco X$ and suppose that $\cuco X$ contains a $G$--invariant factor system.  If the action of $G$ is \emph{hereditarily flip-free}, then $G$ quasi-isometrically embeds in the product of finitely many quasi-trees.

Moreover, there exist such $G,\cuco X$ such that, for all finite-index subgroups $G'\leq G$, there is no $G'$--equivariant isometric embedding of $\cuco X$ in the product of finitely many simplicial trees.
\end{thmi}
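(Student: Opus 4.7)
The plan is to combine the distance formula (Theorem~\ref{thmi:distance_formula_intro}) with the projection complex machinery of Bestvina--Bromberg--Fujiwara~\cite{BBF:quasi_tree}. Since $G$ acts properly and cocompactly on $\cuco X$, by the Milnor--\v{S}varc lemma $G$ is quasi-isometric to $\cuco X$, so it suffices to produce a quasi-isometric embedding of $\cuco X$ into a product of finitely many quasi-trees. Choose $\factorseq\subseteq\factorsup$ as in Theorem~\ref{thmi:distance_formula_intro}, containing one representative from each parallelism class. Each $\fontact F$ is a quasi-tree by the result on factored contact graphs, so the target pieces of the desired product are already available; what remains is to organize them into finitely many BBF ``projection complexes.''

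Next, I would verify the projection axioms of~\cite{BBF:quasi_tree} for the family $\{\fontact F:F\in\factorseq\}$ equipped with the gate-type projections between factors developed in Section~\ref{sec:par_proj}, together with the Bounded Geodesic Image and Large Link properties noted in Table~\ref{table:comparison}. The output of such a verification, on an appropriate subcollection, is a single BBF quasi-tree whose distance function recovers the sum of the relevant truncated coordinates. The hereditarily flip-free assumption should be used precisely to produce a $G$-equivariant partition $\factorseq=\bigsqcup_{i=1}^{k}\mathcal E_i$ into finitely many color classes with the property that no two elements in the same $\mathcal E_i$ are transverse with large mutual projections in an obstructing (``flipped'') configuration. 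Uniform local finiteness of $\factorsup$, together with cocompactness of the $G$-action, should force $k$ to be finite and the coloring to be equivariant.

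Applying BBF in each color class yields a quasi-tree $T_i$ and a coarse map $\cuco X\to T_i$ such that
\[
\dist_{T_i}(x,y)\asymp\sum_{F\in\mathcal E_i}\ignore{\dist_{\fontact F}(\pi_F(x),\pi_F(y))}{s}
\]
for $s$ chosen sufficiently large. Summing over $i=1,\ldots,k$ and invoking Theorem~\ref{thmi:distance_formula_intro} gives a quasi-isometric embedding $\cuco X\to\prod_i T_i$, which is $G$-coarsely equivariant by construction, and hence yields the desired quasi-isometric embedding of $G$. The main obstacle is the axiom verification: the projections take values in the \emph{factored} contact graphs rather than the unmodified contact graphs, so one must check that the cone-off does not destroy the coarse Lipschitz and three-point behavior required by~\cite{BBF:quasi_tree}; once this is done, the coloring argument and the sum are essentially formal.

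For the ``moreover'' clause, I would take the example $G$ curated in Section~\ref{subsec:coloring}, constructed by combining Proposition~\ref{prop:raag_factor_system} with the Burger--Mozes and Wise constructions; these give groups acting geometrically on CAT(0) cube complexes with invariant factor systems but which do \emph{not} admit equivariant embeddings into products of simplicial trees, even after passage to a finite-index subgroup. Any $G'$-equivariant isometric embedding $\cuco X\hookrightarrow\prod_{j}T_j$ into a product of simplicial trees would, in particular, partition the hyperplanes of $\cuco X$ among the coordinate trees in a $G'$-equivariant way, producing an equivariant tree-embedding for the ambient construction and contradicting the non-embeddability established for those examples. Thus the mere existence of the example suffices; the work here is the construction in Section~\ref{subsec:coloring}, which I would simply cite.
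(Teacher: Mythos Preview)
Your strategy matches the paper's (Proposition~\ref{prop:qie_prod_trees}): color $\factorseq$ equivariantly into finitely many classes, verify the BBF axioms within each class, produce one quasi-tree per color via~\cite{BBF:quasi_tree}, and compare the resulting product distance with the distance formula. However, your description of what the coloring must achieve is inverted. You want elements of the same color to be pairwise \emph{transverse}---neither orthogonal nor nested---because transversality is exactly the relation under which the Behrstock inequality (Proposition~\ref{prop:behrstock_inequality}) gives the BBF axioms for free (Corollary~\ref{cor:bbf}); there is nothing ``obstructing'' about transverse pairs with large projections. The obstacles to such a coloring are orthogonality and nesting. Hereditary flip-freeness is used in Lemma~\ref{lem:coloring_factor_systems} to handle orthogonality: if $F\times F'\hookrightarrow\cuco X$ and some $gF$ were parallel to $F'$, then any $G$-invariant, parallelism-respecting coloring would force $F$ and $F'$ into the same color class; flip-freeness forbids this. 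Nesting is dealt with by inducting down the factor system.

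For the ``moreover'' clause your plan is correct: the paper exhibits the explicit complex $Z$ of Example~\ref{exmp:self_strangling} and verifies in Proposition~\ref{prop:example} that $\widetilde Z$ admits a factor system, that the action of a suitable index-$2$ subgroup is hereditarily flip-free, and that every finite-index subgroup contains an element sending some hyperplane to a crossing translate. The last property, via the converse direction of Proposition~\ref{prop:sep_embed}, rules out any equivariant isometric embedding in a finite product of simplicial trees, exactly along the lines you sketch.
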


Hereditary flip-freeness is a mild technical condition on the 
factor system $\factorsup$ which holds for most of the examples we 
have discussed.  For the second assertion of the theorem, we exploit 
the existence of
cocompactly cubulated groups with no finite
quotients~\cite{BurgerMozes,Wise:CSC}. These 
groups are lattices in products of trees stabilizing
the factors; this property gives rise to a factor system.  The space 
for this example is assembled from these pieces in such a way that
the existence of a factor system persists, but there is no
longer a finite equivariant coloring of the hyperplanes with
intersecting hyperplanes colored differently.  This lack of a 
coloring precludes the existence of an isometric embedding in a 
product of finitely many trees.

\subsection{Comparison to the theory of the extension graph of a right-angled Artin group}\label{subsubsec:contact_extension}
In the special case where $\cuco X=\widetilde S_\Gamma$ is the 
universal cover of the Salvetti complex $S_\Gamma$ of a  
right-angled Artin group $A_\Gamma$, the machinery 
of factor systems and contact graphs is not the first attempt to 
define an analogue of the curve complex and the attendant 
techniques.  In~\cite{KimKoberda:embed}, Kim--Koberda introduced the 
\emph{extension graph} $\Gamma^e$ associated to the finite simplicial 
graph $\Gamma$ (and thus to $A_\Gamma$).  This graph has a vertex for 
each conjugate of each standard generator of $A_\Gamma$ (i.e., vertex 
of $\Gamma$), with adjacency recording commutation.  In the same 
paper it is shown that, like $\contact\widetilde 
S_\Gamma$, the extension graph is always quasi-isometric to a tree, 
and in~\cite{KimKoberda:curve_graph}, the analogy between $\MCG(S)$, 
with its action on $\contact S$, and $A_\Gamma$, with its action on 
$\Gamma^e$, is extensively developed: it is shown, for instance, that 
this action is acylindrical and obeys a loxodromic-elliptic dichotomy.   It is observed in~\cite{KimKoberda:curve_graph} that, except 
in exceptional cases, there is a surjective graph homomorphism $\contact\widetilde 
S_\Gamma\rightarrow\Gamma^e$, where $\widetilde S_\Gamma$ is the 
universal cover of the Salvetti complex of $A_\Gamma$, which is also 
a quasi-isometry, so many such \emph{geometric} results about the 
action of $A_\Gamma$ on $\Gamma^e$ can be deduced from the results of 
the present paper about the action of $A_\Gamma$ on 
$\contact\widetilde S_\Gamma$.  It should be strongly emphasized that the papers~\cite{KimKoberda:curve_graph,KimKoberda:embed} also explore interesting and less purely geometric issues, particular to right-angled Artin groups, that cannot be treated with factor system tools.

The authors of~\cite{KimKoberda:curve_graph} also set up some
version of hierarchy machinery, with the role of subsurfaces being played by
subgroups of $A_\Gamma$ of the form $A_{\link(v)}$, where $v$ is a
vertex of $\Gamma$, and their conjugates.  
When $\Gamma$ has girth at least $5$, they obtain a distance formula \cite[Proposition~65]{KimKoberda:curve_graph}, 
but, as they note, the formula they give has 
significant differences with the Masur--Minsky distance formula for
the mapping class group.  For example, the sum is taken over specified
projections, which depend on the points whose distance is being
estimated, rather than over all projections. 
Another significant distinction is that their distance 
formula does not measure distance in the right-angled Artin group  
$A_{\Gamma}$, but rather it measures the \emph{syllable length} in that 
space (although not a perfect analogy: their metric is more similar  
to the Weil--Petersson metric 
on Teichm\"{u}ller space than to the word metric on the mapping class group).
The extension graph seems unable to capture distance in the 
right-angled Artin group via a hierarchical construction, since the extension graph 
is bounded when $A_\Gamma=\integers$.
In the present paper, the geometric
viewpoint afforded by factor systems, and in particular the existence
of hierarchy paths (Proposition~\ref{prop:hier_revisited}) and a Large
Link Lemma (Proposition~\ref{prop:BGIIITF}) 
allows us to overcome these issues.

\subsection{Hierarchically hyperbolic spaces}

Our aim in the last part of the paper is to develop a unified
framework to study mapping class group and CAT(0) cube complexes from
a common perspective.  To this end, we axiomatize the machinery
of factored contact graphs/curve complexes, distance formula, etc.,
to obtain the definition of a \emph{hierarchically
hyperbolic space}, which is formally stated in 
Definition~\ref{defn:space_with_distance_formula}. 
This notion includes the two classes of 
groups just mentioned and allows one to prove new results for both of 
these classes simultaneously.  Hierarchically
hyperbolic spaces come with a notion of \emph{complexity}: complexity
0 corresponds to bounded spaces, infinite diameter $\delta$--hyperbolic spaces have complexity 1, and higher complexity hierarchically hyperbolic spaces coarsely contain 
direct products. 

Roughly, a space $\cuco X$ is hierarchically hyperbolic if $\cuco X$ can be
equipped with a set $\mathfrak S$ of uniformly Gromov-hyperbolic
spaces, and projections $\cuco X\rightarrow W$, with $W\in\mathfrak
S$. These projections are required to satisfy various properties reminiscent of those satisfied by
subsurface projections in the mapping class group case and projections
to factored contact graphs in the case of CAT(0) cube complexes with
factor systems.  Hence a space $\cuco X$ may be hierarchically
hyperbolic in multiple ways, i.e., with respect to projections to
distinct families of hyperbolic spaces.

\begin{remi}[HHS is a QI-invariant property]
It is easily seen from Definition~\ref{defn:space_with_distance_formula} that, if $\cuco X$ is hierarchically hyperbolic by virtue of its projections to a set $\mathfrak S$ of hyperbolic spaces, and $\cuco Y\rightarrow\cuco X$ is a quasi-isometry, then we can compose each projection with the quasi-isometry and conclude that $\cuco Y$ is hierarchically hyperbolic with respect to the same set $\mathfrak S$.
\end{remi}

The motivating examples of hierarchically hyperbolic spaces are as follows:

\begin{thmi}[Hierarchically hyperbolic spaces]\label{thmi:hhs}

\begin{enumerate}
 \item [  ]
\item A CAT(0) cube complex with a factor system $\factorsup$ is hierarchically hyperbolic with respect to the set of factored contact graphs $\fontact W$, with $W\in\factorsup$.  (This is summarized in Remark~\ref{rem:cube_complex_case}.)
 \item Let $S$ be a connected, oriented hyperbolic surface of finite type.  Then $\MCG(S)$ is hierarchically hyperbolic with respect to the collection of curve complexes of subsurfaces of $S$~\cite{MasurMinsky:I,MasurMinsky:II,Behrstock:asymptotic,BKMM:consistency}.
 \item Teichm\"{u}ller space $\mathcal T(S)$ with the Weil-Petersson metric is hierarchically hyperbolic with respect to curve complexes of non-annular subsurfaces of $S$~\cite{MasurMinsky:I,MasurMinsky:II,Brock:pants,Behrstock:asymptotic,BKMM:consistency}.
\item $\mathcal T(S)$ with the Teichm\"{u}ller metric is hierarchically hyperbolic with respect to curve complexes of non-annular subsurfaces and combinatorial horoballs associated to annuli~\cite{MasurMinsky:I,Rafi:combinatorial_teichmuller,Durham:augmented,EskinMasurRafi:large_scale_rank}.
\end{enumerate}

\end{thmi}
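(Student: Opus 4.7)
The plan is to verify that each of the four families of spaces satisfies the axioms of a hierarchically hyperbolic space as laid out in Definition~\ref{defn:space_with_distance_formula}. The bulk of the substantive work is for item (1); items (2)--(4) are essentially dictionary translations of the Masur--Minsky hierarchy machinery and its extensions, so I would treat them as short verifications citing the literature indicated in the statement, with one tricky point in case (4).

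For item (1), given a CAT(0) cube complex $\cuco X$ with factor system $\factorsup$, I would take $\mathfrak S$ to be the set of factored contact graphs $\{\fontact F : F\in \factorseq\}$ (choosing one representative per parallelism class). The axioms would then be checked from the results of Part~\ref{part:general} as follows. Uniform hyperbolicity of the spaces in $\mathfrak S$ is Proposition~\ref{prop:fontact_quasi_tree}. The projections $\pi_F \co \cuco X \to \fontact F$ are the ones developed in Section~\ref{subsec:factored_contact_graphs}; their coarse Lipschitz property and coarse surjectivity are part of that construction. The nesting, orthogonality, and transversality relations are defined using the parallel-into, orthogonal, and transverse relations on elements of $\factorsup$ recorded in Table~\ref{table:comparison}, and the corresponding relative projection maps come from the combinatorial gate projection between parallel copies. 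Bounded Geodesic Image is Proposition~\ref{prop:BGIIITF}, the Large Link Lemma is Proposition~\ref{prop:bgiII}, the consistency and realization axioms are Theorem~\ref{thm:consistency_realization}, the existence of hierarchy paths shadowing geodesics in the factored contact graphs is Proposition~\ref{prop:hier_revisited}, and the distance formula axiom is Theorem~\ref{thmi:distance_formula_intro}. So the verification is really a matter of assembling the results of Part~\ref{part:general} and matching them term-by-term against Definition~\ref{defn:space_with_distance_formula}.

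For items (2) and (3), I would take $\mathfrak S$ to be the set of (isotopy classes of) subsurfaces of $S$ (respectively, non-annular subsurfaces), equipped with the curve graphs $\mathcal C(Y)$. Uniform hyperbolicity of curve graphs is \cite{MasurMinsky:I}; subsurface projections $\pi_Y$ are from \cite{MasurMinsky:I}; the nesting/orthogonality/transversality trichotomy is the standard nested/disjoint/overlapping trichotomy of subsurfaces, with relative projections given by the Masur--Minsky subsurface projections between subsurfaces. Bounded Geodesic Image and the Large Link Lemma are in \cite{MasurMinsky:II}; consistency and realization are in \cite{Behrstock:asymptotic,BKMM:consistency}; the distance formula is \cite[Theorem~6.12]{MasurMinsky:II} for $\MCG(S)$ and, combined with Brock's quasi-isometry between the Weil--Petersson Teichm\"{u}ller space and the pants graph \cite{Brock:pants}, yields the Weil--Petersson case once annular factors are removed. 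The existence of hierarchy paths is from \cite{MasurMinsky:II}. So (2) and (3) reduce to noting that the axioms of Definition~\ref{defn:space_with_distance_formula} have been designed to be exactly what is proved in this body of work.

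For item (4), the set $\mathfrak S$ will consist of the curve graphs of non-annular subsurfaces together with the combinatorial horoballs over the annular curve graphs (the horoball construction is what converts these into uniformly hyperbolic spaces adapted to the Teichm\"{u}ller metric). The relevant distance formula is Rafi's combinatorial formula for the Teichm\"{u}ller metric \cite{Rafi:combinatorial_teichmuller,EskinMasurRafi:large_scale_rank}, refined via the augmented machinery of \cite{Durham:augmented}. The main obstacle in the proof will be the Teichm\"{u}ller metric case: one must verify that Rafi's formula, once reorganized in terms of combinatorial horoballs, satisfies the exact form of the distance formula axiom and that the consistency and Bounded Geodesic Image axioms continue to hold for these horoball-modified projections. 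The other three cases are essentially bookkeeping, whereas (4) requires one non-trivial identification between Rafi's thick--thin analysis and the hierarchically hyperbolic framework.
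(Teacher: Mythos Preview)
Your proposal is correct and matches the paper's treatment essentially verbatim: the theorem has no standalone proof, item~(1) is dispatched by Remark~\ref{rem:cube_complex_case} via exactly the axiom-by-axiom matching you outline, and items~(2)--(4) are handled purely by the literature citations in the statement. Two minor corrections: the results you invoke live in Part~\ref{part:distance_formula} rather than Part~\ref{part:general}, and the paper attributes the consistency inequalities of Definition~\ref{defn:space_with_distance_formula}.\eqref{item:dfs_transversal} to Proposition~\ref{prop:behrstock_inequality} (with the orthogonality-container clause handled via Lemma~\ref{lem:fundamental_fs_property}), reserving Theorem~\ref{thm:consistency_realization} for the realization axiom alone.
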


In a forthcoming paper we will show that
fundamental groups of non-geometric 3-manifolds are also
hierarchically hyperbolic, and that a metric space that is hyperbolic 
relative to a collection of hierarchically hyperbolic subspaces is 
hierarchically hyperbolic
\cite{BehrstockHagenSisto:MoreHHSexamples}. 
We note that it is already known that relatively hyperbolic groups 
admit a distance formula  \cite{Sisto-distformrelhyp}.
Another interesting
question is whether a right-angled Artin group endowed with the
syllable length metric is a hierarchically hyperbolic space.

\medskip

In Section~\ref{sec:quasi_box}, after defining hierarchically hyperbolic spaces, we study quasi-Lipschitz maps from balls in $\mathbb R^n$, and 
more general nilpotent Lie groups, into these spaces. The next three results will all follow directly from our Theorem 
\ref{thm:density_point_ascone} which provides a single unifying statement in terms of asymptotic cones.  Our first
result is a generalization 
of a result from \cite[Theorem~A]{EskinMasurRafi:large_scale_rank} 
which is about mapping class groups and Teichm\"uller spaces.

\begin{thmi}[Quasi-boxes in hierarchically hyperbolic spaces]\label{thmi:qb_in_hs}
 Let $\cuco X$ be a hierarchically hyperbolic space. Then for every $n\in\mathbb N$ and every $K,C,R_0,\epsilon_0$ the following holds.
There exists $R_1$ so that for any ball $B\subseteq \mathbb R^n$ of radius at least $R_1$ and
$f\colon B\to \cuco X$ a $(K,C)$--quasi-Lipschitz map, there is a ball
$B'\subseteq B$ of radius $R'\geq R_0$ such that $f(B')$ lies inside the
$\epsilon_0 R'$--neighborhood of a standard box.
\end{thmi}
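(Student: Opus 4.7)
The plan is to argue by contradiction via asymptotic cones, reducing the statement to the density-point Theorem \ref{thm:density_point_ascone}. Suppose the conclusion fails for some fixed $n,K,C,R_0,\epsilon_0$. Then there exist radii $R_i\to\infty$ and $(K,C)$--quasi-Lipschitz maps $f_i\co B(R_i)\to\cuco X$ from Euclidean balls in $\mathbb R^n$ such that for every $R'\geq R_0$, no sub-ball $B'\subseteq B(R_i)$ of radius $R'$ has $f_i$--image contained in the $\epsilon_0 R'$--neighborhood of a standard box of $\cuco X$.

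The next step is to rescale and pass to an ultralimit. Take the metric on $\cuco X$ scaled by $1/R_i$, fix basepoints $\ast_i$ in the image of $f_i(0)$, and form the ultralimit along a non-principal ultrafilter $\omega$. The rescaled maps $f_i$ converge to a $K$--Lipschitz map $f_\omega\co\overline{B}\to\cuco X_\omega$, where $\overline{B}\subseteq\mathbb R^n$ is the closed unit ball and $\cuco X_\omega$ is the associated asymptotic cone. By the scale-invariance of the failure hypothesis, for every $p\in\overline{B}$ and every $r\in(0,1]$ the image $f_\omega(B(p,r))$ fails to lie within the $\epsilon_0 r$--neighborhood of any ultralimit of standard boxes, i.e., of any ``standard box'' of the asymptotic cone.

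Now invoke Theorem \ref{thm:density_point_ascone}. This result, formulated directly in the asymptotic cone, provides a density point $p\in\overline{B}$ and a scale $r>0$ at which $f_\omega(B(p,r))$ lies within the $(\epsilon_0/2)r$--neighborhood of a standard box in the cone. This contradicts the previous paragraph; moreover, pulling this containment back through the ultralimit gives, for $\omega$--almost every $i$, a sub-ball of $B(R_i)$ of radius $rR_i$ (eventually $\geq R_0$) whose $f_i$--image is $\epsilon_0(rR_i)$--close to an honest standard box of $\cuco X$, producing the desired contradiction.

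The principal obstacle is Theorem \ref{thm:density_point_ascone} itself. Its proof will rely on the HHS axioms to analyze the geometry of $\cuco X_\omega$: the distance formula, together with the hyperbolic projections $\pi_W\co\cuco X\to W$ for $W\in\mathfrak S$, should show that $\cuco X_\omega$ admits a tree-graded-like structure whose pieces are products of $\mathbb R$--trees indexed by pairwise-orthogonal collections of domains from $\mathfrak S$, and that these pieces are precisely the ultralimits of standard boxes. A Lebesgue density argument applied to the finite-dimensional Lipschitz map $f_\omega$, combined with transversality of distinct pieces, forces $f_\omega$ to locally factor through a single such piece near a density point. The delicate part is controlling the ``active support'' --- the set of coordinates $W\in\mathfrak S$ for which $\pi_W\circ f_\omega$ is non-constant in a neighborhood of $p$ --- and showing that it consists of pairwise-orthogonal domains, so that the piece it spans really is a standard box rather than a more general convex subset of the cone.
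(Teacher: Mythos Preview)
Your reduction---contradiction, pass to an ultralimit, invoke Theorem~\ref{thm:density_point_ascone}, and pull the containment back to $\omega$--almost every $m$---is exactly the paper's approach.

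Two small corrections. First, Theorem~\ref{thm:density_point_ascone}.\eqref{item:ab} is not stated as producing a single density point $p$ and scale $r$ in one fixed cone; rather, it outputs a new sequence of centers $(p_m)$ and radii $(r_m)$ with $(r^0_m)\ll(r_m)\leq(r^1_m)$, and the conclusion $\seq f(\seq B)\subseteq\seq F$ holds in the ultralimit taken with \emph{that} rescaling. The paper applies it with $r^0_m$ constant (so that $r_m\to\infty$, hence $r_m\geq R_0$ $\omega$--a.e.), then unpacks $\seq f(\seq B)\subseteq\seq F$ to conclude $f_m(B(p_m,r_m))\subseteq\neb_{\epsilon_0 r_m}(F_m)$ $\omega$--a.e., contradicting the standing hypothesis. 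Your second paragraph effectively does the same pull-back, just phrased at a fixed scale; this is fine once you adjust for how the theorem is actually stated.

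Second, your speculative last paragraph about how Theorem~\ref{thm:density_point_ascone} is proved diverges from the paper. The paper does not analyze $\cuco X_\omega$ as tree-graded with pieces being products of $\reals$--trees, nor does it use a Lebesgue density argument. Instead, it builds (Lemma~\ref{lem:qgamma_cone}) a Lipschitz projection $\seq q_{\subseq\gamma}\colon\seqcuco\to\seq\gamma$ to an ultralimit of hierarchy paths that is locally constant off an ultralimit product region $\seq P_{\subseq U}$, applies Rademacher's theorem to the real-valued composition $\seq q_{\subseq\gamma}\circ\seq f^1$ to find a point of nonzero differential, uses this to trap $\seq f^1$ near $\seq P_{\subseq U}$ at small scales, runs an underspill argument to extract the new scale sequence $(r_m)$, and finishes by induction on complexity applied to the $E_U$--factor. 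Your ``active support'' picture is morally related, but the actual mechanism is differentiability plus the locally-constant projection, not density plus transversality of pieces.
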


In Theorem~\ref{thmi:qb_in_hs}, we do not require $B,B'$ to be centered at the same point in $\reals^n$.

Our proof uses methods different from those used in 
\cite{EskinMasurRafi:large_scale_rank}. 
Our approach is much shorter 
and does not rely on 
partitions of the set of subsurfaces (or an analogue thereof), which 
plays an important role in their proof. In particular, we do not 
rely on the results from
\cite{BBF:quasi_tree}; this is one reason why our results can be applied in the 
case of CAT(0)
cube complexes where the techniques of
\cite{EskinMasurRafi:large_scale_rank} would fail.  
However, our approach and theirs share some commonalities, for 
instance, we use Rademacher's Theorem (applied to maps that arise at the level of 
asymptotic cones), while in
\cite{EskinMasurRafi:large_scale_rank}, the authors use a coarse
differentiation result.

Using a generalization
of Rademacher's theorem due to Pansu, we consider the case of 
quasi-Lipschitz maps from more
general nilpotent Lie groups.

\begin{thmi}[Restriction on nilpotent groups in hierarchically hyperbolic spaces]\label{thmi:nilpemb}
 Let $\cuco X$ be a hierarchically hyperbolic space. Then for every
simply connected nilpotent Lie group $\mathcal N$, with a 
left-invariant Riemannian metric, and every $K,C$ there exists $R$ 
with the following property. For every $(K,C)$--quasi-Lipschitz map 
$f\colon B\to\cuco X$ from a ball in $\mathcal N$ into $\cuco X$ and for every $n\in\mathcal N$ we have $\diam(f(B\cap n[\mathcal N,\mathcal N]))\leq R$. In particular, if a finitely generated nilpotent group admits a quasi-isometric embedding into $\cuco X$ then it is virtually abelian.
\end{thmi}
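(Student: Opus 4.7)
We argue by contradiction via asymptotic cones and Pansu's differentiation theorem. Suppose no such $R$ exists; a standard extraction argument produces $r_n \to \infty$, balls $B_n \subset \mathcal N$ of radius $r_n$, elements $g_n \in \mathcal N$, and $(K,C)$-quasi-Lipschitz maps $f_n : B_n \to \cuco X$ for which the rescaled diameter $\frac{1}{r_n}\diam f_n(B_n \cap g_n[\mathcal N,\mathcal N])$ is bounded below by some $\epsilon > 0$. Rescaling both metrics by $1/r_n$ and passing to an ultralimit, Pansu's asymptotic-cone theorem identifies the limit of $\mathcal N$ with the Carnot group $\mathcal N_\infty$ (the graded nilpotent Lie group associated to the lower central filtration of $\mathcal N$) endowed with a left-invariant Carnot--Carath\'{e}odory metric, and the $f_n$ ultralimit to a Lipschitz map $\bar f : B \to \cuco X_\omega$ from a ball in $\mathcal N_\infty$, whose image on some coset $g_\infty[\mathcal N_\infty,\mathcal N_\infty] \cap B$ has diameter at least $\epsilon$.

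Now apply Theorem~\ref{thm:density_point_ascone} to $\bar f$. At a Haar-density point $p$ of $B$, $\bar f$ agrees on a small neighborhood, up to arbitrarily small error, with its projection onto a standard box in $\cuco X_\omega$, that is, a product of finitely many $\reals$-trees. Since the branch-point set in each $\reals$-tree is negligible, by passing to a further subset of full measure we may assume the local image of $\bar f$ lies in a bi-Lipschitz copy of $\reals^N$. Pansu's differentiation theorem then gives, at almost every such $p$, a Pansu derivative $D\bar f(p) : \mathcal N_\infty \to \reals^N$ which is a graded Lie-group homomorphism. Because $\reals^N$ has only a single grading layer while $[\mathcal N_\infty, \mathcal N_\infty]$ is generated by elements of strictly positive grading, the homomorphism $D\bar f(p)$ vanishes identically on $[\mathcal N_\infty, \mathcal N_\infty]$.

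A Lipschitz map from $\mathcal N_\infty$ whose Pansu derivative vanishes almost everywhere on the closed subgroup $H = [\mathcal N_\infty, \mathcal N_\infty]$ must be constant on each $H$-coset, by a Fubini-type integration along horizontal paths inside $H$. This forces $\bar f$ to be constant on each commutator coset meeting $B$, contradicting the positive diameter along $g_\infty[\mathcal N_\infty, \mathcal N_\infty] \cap B$. The ``in particular'' assertion then follows by passing to the Mal'cev completion $\mathcal N$ of a finitely generated nilpotent $N$: a quasi-isometric embedding $N \hookrightarrow \cuco X$ extends to one of $\mathcal N$, the main statement forces $[\mathcal N, \mathcal N]$ to have bounded image, and properness of the embedding then forces $[\mathcal N, \mathcal N]$ to be trivial, so $N$ is virtually abelian.

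\textbf{Main obstacle.} The most delicate step is bridging the density-point conclusion of Theorem~\ref{thm:density_point_ascone} with the hypotheses of Pansu's theorem: $\reals$-trees are not locally Euclidean at branch points, so one must verify that the branch-point set in each factor pulls back to a null set under $\bar f$, allowing the target to be replaced by $\reals^N$ on a set of full Haar measure. Once this reduction is justified, the graded-homomorphism constraint on the Pansu derivative collapses the commutator directions automatically, and the contradiction is immediate.
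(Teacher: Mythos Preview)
Your proposal has a genuine gap in how it invokes Theorem~\ref{thm:density_point_ascone}. That theorem has two parts: part~\eqref{item:nilp} says the ultralimit map is constant along commutator cosets (which is exactly what you need), while part~\eqref{item:ab}---the standard-box conclusion you describe---explicitly assumes $\mathcal N$ is \emph{abelian}; its proof uses Rademacher's theorem, which fails for non-abelian Carnot groups. So you cannot invoke the standard-box conclusion for a non-abelian $\mathcal N$, and if you invoke part~\eqref{item:nilp} you are done immediately with nothing left to prove. As written, the argument is either circular or appeals to a statement that is unavailable in the nilpotent case.

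The paper's route (which is the content of the proof of Theorem~\ref{thm:density_point_ascone}.\eqref{item:nilp}) is much simpler and sidesteps your ``main obstacle'' entirely. If the ultralimit map $\seq f$ sends two points $\seq x\neq\seq y$ in the same commutator coset to distinct points, Lemma~\ref{lem:qgamma_cone} produces a Lipschitz map $\seq q_{\subseq\gamma}\colon\seqcuco\to\seq\gamma$ onto an ultralimit of hierarchy paths---an interval in $\reals$, not a product of trees---with $\seq q_{\subseq\gamma}(\seq x)\neq\seq q_{\subseq\gamma}(\seq y)$. Pansu's theorem applied to the single $\reals$-valued Lipschitz map $\seq q_{\subseq\gamma}\circ\seq f$ then forces constancy on commutator cosets, a contradiction. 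No standard boxes, no branch points, no Fubini argument are needed. Your treatment of the ``in particular'' clause via the Mal'cev completion is fine and matches the paper.
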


The final conclusion of Theorem \ref{thmi:nilpemb} is known in the
case where $\cuco X$ is a CAT(0) space~\cite{Pauls:no_nilp_in_cat0}.
Although it does not appear to be in the literature, the conclusion of
Theorem~\ref{thmi:nilpemb} for $\MCG$ can be alternatively proved
using~\cite{Hume:mcg_product_of_trees} and the results of \cite{Pauls:no_nilp_in_cat0}.

The following theorem generalizes the Rank Theorems from
\cite{BehrstockMinsky:dimension_rank,EskinMasurRafi:large_scale_rank, 
Hamenstadt:TT3arxiv, Kleiner:CBA}:

\begin{thmi}[Rank]\label{thmi:rank}
 Let $\cuco X$ be a hierarchically hyperbolic space with respect to a
 set $\mathfrak S$.  If there exists a quasi-isometric embedding
 $\mathbb R^n\to\cuco X$ then $n$ is at most the maximal cardinality
 of a set of pairwise-orthogonal elements of $\mathfrak S$ and, in
 particular, at most the complexity of~$\cuco X$.
\end{thmi}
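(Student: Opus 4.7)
The plan is to derive Theorem~\ref{thmi:rank} from Theorem~\ref{thmi:qb_in_hs} by a coarse volume-counting argument. Let $\nu$ denote the maximal cardinality of a set of pairwise-orthogonal elements of $\mathfrak S$. By the definition of standard box (recorded in Section~\ref{sec:quasi_box}), a standard box $\mathcal B$ is, coarsely, a product of hierarchy geodesics indexed by a pairwise-orthogonal set $\mathcal O\subseteq\mathfrak S$, so $|\mathcal O|\leq \nu$. Suppose towards a contradiction that $f\co\mathbb R^n\to\cuco X$ is a $(K,C)$--quasi-isometric embedding with $n>\nu$.

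First I would fix a small parameter $\epsilon_0>0$ (to be determined) and apply Theorem~\ref{thmi:qb_in_hs} to the restriction of $f$ to balls of radius tending to infinity. This produces a ball $B'\subset\mathbb R^n$ of radius $R'\geq R_0$, with $R'$ arbitrarily large, such that $f(B')$ lies inside the $\epsilon_0 R'$--neighborhood of a standard box $\mathcal B$ with indexing set $\mathcal O$. Next, in $B'$ I can choose a subset $P$ of points, pairwise at distance at least $3\epsilon_0 R'$, with $|P|\geq c_n\,\epsilon_0^{-n}$ for a constant $c_n>0$ depending only on $n$. Since $f$ is a $(K,C)$--quasi-isometric embedding, the images $f(P)$ are pairwise at distance at least $3\epsilon_0 R'/K - C$, which exceeds $2\epsilon_0 R'$ once $R'$ is large enough. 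Moving each $f(p)$ to a nearest point of $\mathcal B$, which costs at most $\epsilon_0 R'$, produces a subset $Q\subseteq \mathcal B$ of size $|Q|=|P|\geq c_n\,\epsilon_0^{-n}$ whose points are pairwise at distance at least $\epsilon_0 R'$ in $\cuco X$.

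The key remaining estimate is that any standard box $\mathcal B$ indexed by $\mathcal O$ is coarsely bi-Lipschitz to a product of $|\mathcal O|$ intervals in $\mathbb R$, each of length $O(R')$, with constants depending only on the HHS structure. Given this, an elementary volume count in a product of $|\mathcal O|\leq\nu$ intervals of length $O(R')$ shows that any subset with pairwise distances at least $\epsilon_0 R'$ has size at most $C_\nu\,\epsilon_0^{-\nu}$, where $C_\nu$ depends only on $\nu$ and the HHS constants. Combining with the bound on $|Q|$ forces
\[
c_n\,\epsilon_0^{-n}\leq C_\nu\,\epsilon_0^{-\nu},
\]
which fails once $\epsilon_0$ is chosen small enough (depending only on $n$ and $\nu$), since $n>\nu$. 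This contradiction gives $n\leq\nu$, and since $\nu$ is bounded by the complexity of $\cuco X$, the theorem follows.

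The main obstacle is establishing the coarse product structure on a standard box used in the counting step. The natural way to do this is to apply the axiomatic distance formula (the hierarchically hyperbolic analogue of Theorem~\ref{thmi:distance_formula_intro}) to pairs of points in $\mathcal B$ and observe that, because the elements of $\mathcal O$ are pairwise orthogonal and $\mathcal B$ is built out of hierarchy geodesics in those factors, only the projections to the hyperbolic spaces $W_U$ with $U\in\mathcal O$ register non-trivial distance; the remaining terms are uniformly bounded. Coupled with the uniform hyperbolicity of each $W_U$ and the fact that hierarchy geodesics in them are coarsely $1$--dimensional, this yields the required bi-Lipschitz identification of $\mathcal B$ with a product of intervals and closes the argument.
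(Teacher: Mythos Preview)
Your approach is correct but differs from the paper's. The paper derives the rank bound in one line from Theorem~\ref{thm:density_point_ascone}.\eqref{item:ab}: a suitable rescaled ultralimit of $f$ is a bilipschitz embedding of $\reals^n$ (the cone of $\reals^n$) into an ultralimit $\seq F$ of standard boxes; since a standard box is a uniform quasi-isometric embedding of a product of at most $\nu$ intervals, $\seq F$ is bilipschitz to a subset of $\reals^k$ with $k\le\nu$, and a bilipschitz copy of $\reals^n$ inside $\reals^k$ forces $n\le k$ by invariance of topological dimension. You instead pass through Theorem~\ref{thmi:qb_in_hs} and replace the cone-level dimension argument by a finitary packing count. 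This buys you a proof that stays in $\cuco X$ and avoids ultralimits and dimension theory (beyond what already goes into Theorem~\ref{thmi:qb_in_hs}), at the price of tracking constants carefully.

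On that front, two of your inequalities are off. From $3\epsilon_0R'$--separation in $P$ you only get $\dist_{\cuco X}(f(p),f(q))\ge 3\epsilon_0R'/K-C$, and this does \emph{not} exceed $2\epsilon_0R'$ for large $R'$ once $K\ge 3/2$. Moreover, moving each $f(p)$ to the box costs $\epsilon_0R'$ on each side, so separation $>2\epsilon_0R'$ in $f(P)$ only yields separation $>0$ in $Q$. Both are repaired by taking the separation in $P$ to be $4K\epsilon_0R'$ (so $|P|\ge c_n(K\epsilon_0)^{-n}$); then the images are $(4\epsilon_0R'-C)$--separated, $Q$ is $\epsilon_0R'$--separated for large $R'$, and your packing comparison $c'_n\epsilon_0^{-n}\le C_\nu\epsilon_0^{-\nu}$ goes through as written.
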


When $\cuco X$ is a CAT(0) cube complex with a factor system and 
$\Aut(\cuco X)$ acts cocompactly, then such a space  
naturally has two 
hierarchically hyperbolic structures. One of these structures has 
hierarchy paths that are 
combinatorial geodesics and one with CAT(0) geodesics; the first is 
obtained explicitly in Section~\ref{subsec:factored_contact_graphs} and 
the existence of the latter follows from the first via a simple argument about 
projections of CAT(0) geodesics and convex hulls of $\ell_{1}$ 
geodesics to factored contact graphs.  
By Theorem~\ref{thmi:qb_in_hs} and cocompactness, the existence of a quasi-isometric embedding $\mathbb R^n\to
\cuco X$ then implies that $\cuco X$ contains both an $\ell_1$--isometrically
embedded copy of $\mathbb R^n$ with the standard tiling and, in the 
CAT(0) metric, an isometrically embedded flat of dimension $n$. We 
thus recover the cubical version of a theorem of Kleiner, 
see \cite[Theorem~C]{Kleiner:CBA}.  We also note that, in the special case of
top-dimensional quasiflats in CAT(0) cube complexes, Huang has
very recently proved a stronger statement~\cite[Theorem~1.1]{Huang:quasiflats}.

\medskip

Finally, we relate hierarchically hyperbolic groups to acylindrically
hyperbolic groups, as studied in~\cite{Osin:acyl}.  Although
natural, the definition of an \emph{automorphism} of a hierarchically
hyperbolic space is technical, but includes, in the relevant cases,  all elements of $\MCG$ and
all isometries of a cube complex with a factor system.  The ``maximal
element'' $S$ and $\fontact S$ referred to below are $\cuco X$ and its
factored contact graph in case $\cuco X$ is a CAT(0) cube complex,
while they are the surface $S$ and its curve complex when $G$ is the
mapping class group of $S$.

\begin{thmi}\label{thmi:acyl_hhs_intro}
Let $\cuco X$ be hierarchically hyperbolic with respect to the set $\mathfrak S$ of hyperbolic spaces and let $G\leq\Aut(\mathfrak S)$ act properly and cocompactly on $\cuco X$. Let $S$ be the maximal element of $\mathfrak S$ and denote by $\fontact S$ the corresponding hyperbolic space. Then $G$ acts acylindrically on $\fontact S$.
\end{thmi}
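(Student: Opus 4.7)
The plan is to reduce acylindricity of $G$ on $\fontact S$ to the proper cocompact action of $G$ on $\cuco X$ via the distance formula built into the definition of a hierarchically hyperbolic space. Given $\epsilon>0$, for $R$ sufficiently large and any $x,y\in\fontact S$ with $d_{\fontact S}(x,y)\geq R$, I aim to exhibit a point $p=p(x,y)\in\cuco X$ such that every $g\in G$ satisfying $d_{\fontact S}(x,gx),d_{\fontact S}(y,gy)\leq\epsilon$ moves $p$ by a uniformly bounded amount $M=M(\epsilon)$ in $\cuco X$. Proper cocompactness of the $G$--action then yields a uniform bound (depending on $M$ but not on $p$) on the cardinality $|\{g\in G:d_{\cuco X}(p,gp)\leq M\}|$; this provides the required $N$.

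Concretely, I would take $\tilde x,\tilde y\in\cuco X$ to be coarse lifts of $x,y$ under $\pi_S$ and choose $p$ on a hierarchy path from $\tilde x$ to $\tilde y$ with $\pi_S(p)$ near the midpoint of an $\fontact S$--geodesic $[x,y]$. The distance formula gives $d_{\cuco X}(p,gp)\asymp\sum_{W\in\mathfrak S}\ignore{d_{\fontact W}(\pi_W(p),\pi_W(gp))}{s}$, and I bound each term. For $W=S$: $g$ is an isometry almost fixing the endpoints of the long geodesic $[x,y]$, so by hyperbolicity of $\fontact S$, $d_{\fontact S}(\pi_S(p),g\pi_S(p))\leq\epsilon+O(\delta)$. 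For $W\sqsubsetneq S$ with relative projection $\rho^S_W$ at $\fontact S$--distance greater than a threshold $E$ from $[x,y]$, the Bounded Geodesic Image principle (Proposition~\ref{prop:BGIIITF}) gives that $\pi_W(\tilde x),\pi_W(p),\pi_W(gp)$ all lie within a uniform additive error in $\fontact W$, since both $p$ and $gp$ can be joined to $\tilde x$ by hierarchy paths whose $\fontact S$--projections fellow-travel subintervals of $[x,y]$ and hence remain far from $\rho^S_W$.

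The main obstacle is the class of $W$ with $\rho^S_W$ within $E$ of $[x,y]$, where BGI does not directly apply. Here the key observation is that if $d_{\fontact W}(\pi_W(p),\pi_W(gp))\geq s$, then consistency forces the $\fontact S$--geodesic from $\pi_S(p)$ to $\pi_S(gp)$, of length at most $\epsilon+O(\delta)$, to pass close to $\rho^S_W$. Combined with the Large Link Lemma (Proposition~\ref{prop:bgiII}) applied to this short geodesic, the number of $W$ contributing to the distance formula is bounded by a function of $\epsilon$ alone, independent of $R$. It then remains to bound each such contribution, which I would do using the consistency and realization package (Theorem~\ref{thm:consistency_realization}) together with the facts that $g$ permutes the finite collection of these $W$'s (since $\rho^S_{gW}=g\rho^S_W$ is again close to $g[x,y]$, which is Hausdorff-close to $[x,y]$) and that the $g$--action on each such $\fontact W$ is constrained by the near-fixing at $S$: a large individual displacement would, via realization, propagate to a large movement of $\pi_S$, contradicting the hypothesis. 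Making this uniform across non-trivial $g$--orbits on the relevant $W$'s is the delicate technical step where I expect the bulk of the work to lie.
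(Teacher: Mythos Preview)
Your overall strategy---pick a point $p$ on a hierarchy path from (lifts of) $x$ to $y$ with $\pi_S(p)$ near the midpoint, and bound $\dist_{\cuco X}(p,gp)$ via the distance formula---is exactly what the paper does, but only in one of two cases. The paper splits according to whether or not there exists $U\in\mathfrak S-\{S\}$ with $\rho^U_S$ near the midpoint and $\dist_{\fontact U}(\pi_U(x),\pi_U(y))$ large (and $U$ $\nest$--maximal with this property). When no such $U$ exists, your argument goes through essentially as you describe: for any $W$ with $\rho^W_S$ near the midpoint, the smallness of $\dist_{\fontact W}(\pi_W(x),\pi_W(y))$ plus the hierarchy path property pins down $\pi_W(p)$ near $\pi_W(x)$, and BGI controls $\dist_{\fontact W}(\pi_W(x),\pi_W(gx))$.

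The gap is in the other case. If some $U$ has $\rho^U_S$ near the midpoint and $\dist_{\fontact U}(\pi_U(x),\pi_U(y))$ large, then $\pi_U(p)$ is \emph{not} pinned near either $\pi_U(x)$ or $\pi_U(y)$, and there is no reason for $\dist_{\fontact U}(\pi_U(p),\pi_U(gp))$ to be bounded. Your proposed mechanism---that a large displacement in $\fontact W$ would ``propagate via realization to a large movement of $\pi_S$''---is false: the distance formula says the $\fontact W$--term and the $\fontact S$--term contribute \emph{additively} to $\dist_{\cuco X}(p,gp)$, and there is no implication from one being large to the other being large. Think of a partial pseudo-Anosov supported on a subsurface $Y$ whose boundary sits at the midpoint of $[\pi_S(x),\pi_S(y)]$: it fixes $\pi_S(x),\pi_S(y)$ exactly, yet can translate $\pi_Y(p)$ arbitrarily far. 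Large Link bounds the \emph{number} of such $U$, but not the individual terms, so your sum is not controlled.

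The paper handles this case by abandoning the point $p$ entirely. Instead, for $U$ as above one considers the gate $\gate_{P_U}(x)$ into the product region $P_U=E_U\times F_U$. The $\fontact V$--coordinates of this gate, for $V\nest U$ or $V\orth U$, are just $\pi_V(x)$, and since $\pi_S(x)$ is far from $\rho^U_S\approx\rho^V_S$, BGI gives $\pi_V(x)\approx\pi_V(gx)$. Hence $g\gate_{P_U}(x)\approx\gate_{P_{gU}}(x)$ uniformly. Now one uses that $gU$ lands in a finite set (your Large Link observation, formalized as a set $\mathfrak L_2$ of bounded cardinality), so the map $g\mapsto g\gate_{P_U}(x)$ has bounded image in $\cuco X$, and properness finishes. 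The key conceptual point you are missing is that in the presence of a genuine product region in the middle, no single midpoint $p$ is coarsely fixed by all of $\mathfrak H$; one must instead use the gate of an \emph{endpoint} into that product region.
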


In the case of the acylindricity
of the action of $\MCG$ on the curve complex, 
our argument provides a new proof of a result of Bowditch \cite{Bowditch:tight}. 
We note that our proof is substantially different and 
is also rather short (although it does relies on some amount of
machinery).

Throughout the section on hierarchically hyperbolic spaces, we make
use of a notion of ``gate,'' which, in addition to the uses in this 
paper, we believe will be very
useful for other applications as well.  This notion simultaneously generalizes
gates/projections on (certain) convex subspaces in the cubical context
and coarsely Lipschitz retractions of the marking complex on its
natural subspaces associated with subsurfaces.  Nonetheless, the
definition we give exploits a different point of view, which turns out
to be very convenient and allows us to give different (and 
concise) proofs than previously existing ones for a number of results 
about mapping class groups and CAT(0) cube complexes.

\subsection*{Acknowledgements} The authors thank Matthew Durham,
Jingyin Huang, Sang-hyun Kim, Bruce Kleiner, Thomas Koberda, Urs Lang,
and Kasra Rafi for stimulating discussions and Chris O'Donnell for a
helpful correction. We also thank those that have given us helpful 
feedback on this paper, especially Jacob Russell and the anonymous 
referees for their numerous useful comments.

\section{Background}\label{sec:background} 
We assume that the reader is familiar with basic properties of CAT(0) 
cube complexes, hyperplanes, median graphs, etc., and refer the 
reader to e.g., \cite{BandeltChepoi_survey, ChatterjiNiblo,
Chepoi:cube_median, Hagen:quasi_arboreal, HagenBoundary,
Haglund:graph_product, Wise:quasiconvex_hierarchy, WiseNotes} for
background on these concepts as they are used in this paper.  
Nonetheless,
in Section~\ref{subsec:convex_and_parallelism} and
Section~\ref{subsec:background_contact}, we will review cubical
convexity and make explicit the notions of \emph{gate},
\emph{projection}, and \emph{parallelism}, since they play a
fundamental role, and recall several useful facts about the contact
graph.  In Section~\ref{subsec:raag_extension_graph}, we briefly
discuss background on right-angled Artin groups.  Since, in
Section~\ref{sec:quasi_box}, we will use asymptotic cones, we refer the reader to~\cite{Drutu:cones} for the relevant
background.

\subsection{Convex subcomplexes, combinatorial hyperplanes, gates, and
parallelism}\label{subsec:convex_and_parallelism}Throughout, $\cuco X$
is a CAT(0) cube complex and $\mathcal H$ is the set of hyperplanes.
Unless stated otherwise, we work in the $1$--skeleton of $\cuco X$,
and we denote by $\dist_{\cuco X}$ the graph metric on $\cuco X^{(1)}$.  The
\emph{contact graph} $\contact\cuco X$, defined
in~\cite{Hagen:quasi_arboreal}, is the graph whose vertex set is
$\mathcal H$, with two vertices adjacent if the corresponding
hyperplanes are not separated by a third.

The subcomplex $K\subset\cuco X$ is \emph{full} if $K$ contains each 
cube of $\cuco X$ whose $1$--skeleton appears in $K$.  A subcomplex
$K\subset\cuco X$ is \emph{isometrically embedded} if $K$ is full and
$K\cap \bigcap_iH_i$ is connected for all $\{H_i\}\subset\mathcal H$.  In this case, we say
that $H\in\mathcal H$ \emph{crosses} $K$ when $K\cap H\neq\emptyset$.
The term ``isometrically embedded'' is justified by the well-known
fact (see e.g., \cite{Hagen:quasi_arboreal}) that, if $K\subset\cuco
X$ is isometrically embedded in this sense, then
$K^{(1)}\hookrightarrow\cuco X^{(1)}$ is an isometric embedding with
respect to graph metrics.  The isometrically embedded subcomplex
$K\subset\cuco X$ is \emph{convex} if any of the following equivalent
conditions is met:
\begin{enumerate}
 \item $K$ coincides with the intersection of all combinatorial halfspaces (see below) containing $K$.
 \item  Let $x,y,z\in \cuco X$ be 0-cubes with $x,y\in K$.  Then the median of $x,y,z$ lies in $K$.
 \item  $K^{(1)}$ contains every geodesic of $\cuco X^{(1)}$ whose endpoints lie in $K^{(1)}$.
 \item  Let $c$ be an $n$--cube of $\cuco X$, with $n\geq 2$.  Suppose that $c$ has $n$ codimension-1 faces that lie in $K$.  Then $c\subset K$.
 \item The inclusion $K\rightarrow\cuco X$ is a local isometry.
\end{enumerate}

(Recall that a combinatorial map $\phi:K\to\cuco X$ of cube complexes, with $\cuco X$ nonpositively-curved, is a \emph{local isometry} if $\phi$ is locally injective and, for each $x\in K^{(0)}$, the map induced by $\phi$ on the link of $x$ is injective and has image a full subcomplex of the link of $\phi(x)$; see~\cite{HaglundWiseSpecial,Wise:quasiconvex_hierarchy,WiseNotes} for more on local isometries and local convexity.)

A convex subcomplex $K\subseteq\cuco X$ is itself a CAT(0) cube complex, whose hyperplanes are the subspaces of the form $H\cap K$, where $H\in\mathcal H$.  A useful mantra, following from the definition of convexity, is: ``if $K$ is convex, then any two hyperplanes that cross $K$ and cross each other must cross each other inside of $K$''.  The \emph{convex hull} of $Y\subseteq\cuco X$ is the intersection of all convex subcomplexes containing $Y$.

It follows immediately from the definition that if $K\subseteq\cuco X$ is convex, then for all $x\in\cuco X^{(0)}$, there exists a unique closest 0-cube $\gate_K(x)$, called the \emph{gate} of $x$ in $K$.  The gate is characterized by the property that $H\in\mathcal H$ separates $\gate_K(x)$ from $x$ if and only if $H$ separates $x$ from $K$.  

As discussed in~\cite{Hagen:quasi_arboreal}, it follows from the definition of convexity that the inclusion $K\hookrightarrow\cuco X$ induces an injective graph homomorphism $\contact K\rightarrow\contact\cuco X$ whose image is a full subgraph: just send each $H\cap K$ to $H$.  This allows us to define a projection $\gate_K\colon\cuco X\rightarrow K$.  The map $x\mapsto\gate_K(x)$ extends to a cubical map $\gate_K\colon\cuco X\rightarrow K$ as follows.  Let $c$ be a cube of $\cuco X$ and let $H_1,\ldots,H_d$ be the collection of pairwise-crossing hyperplanes crossing $c$.  Suppose that these are labeled so that $H_1,\ldots,H_s$ cross $K$, for some $0\leq s\leq d$, and suppose that $H_{s+1},\ldots,H_d$ do not cross $K$.  Then the 0-cubes of $c$ map by $g_K$ to the 0-cubes of a uniquely determined cube $\gate_K(c)$ of $K$ in which the hyperplanes $H_1,\ldots,H_s$ intersect, and there is a cubical collapsing map $c\cong[-1,1]^d\rightarrow[-1,1]^s\cong\gate_K(c)$ extending the gate map on the 0-skeleton.  This map is easily seen to be compatible with the face relation for cubes, yielding the desired cubical map $\gate_K\colon\cuco X\rightarrow K$, whose salient property is that for all $x\in\cuco X$, a hyperplane $H$ separates $x$ from $K$ if and only if $H$ separates $x$ from $\gate_K(x)$.  The next lemma follows easily from the definitions and is used freely throughout this paper:

\begin{lem}\label{lem:simple_gate}
Let $\cuco X$ be a CAT(0) cube complex, let $A\subseteq B\subseteq\cuco X$ be convex subcomplexes, and let $x,y\in\cuco X$ be 0-cubes.  Then any hyperplane separating $\gate_A(x),\gate_A(y)$ separates $\gate_B(x),\gate_B(y)$ (and hence separates $x,y$).
\end{lem}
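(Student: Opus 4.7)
\medskip

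\noindent\textbf{Proof plan.} The plan is to reduce everything to the characterization of the gate recalled just above the lemma: for a convex subcomplex $K \subseteq \cuco X$ and a 0-cube $z \in \cuco X$, a hyperplane $H$ separates $\gate_K(z)$ from $z$ if and only if $H$ separates $z$ from $K$ (equivalently, $H$ does not cross $K$ and $z$ lies on the opposite side of $H$ from $K$).

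First, I would observe that if $H$ is a hyperplane separating $\gate_A(x)$ from $\gate_A(y)$, then $H$ must cross $A$. Indeed, both gates lie in $A$ and any combinatorial geodesic between them stays in $A$ by convexity, and any such geodesic must traverse $H$; equivalently, a hyperplane that separates two 0-cubes of a convex subcomplex necessarily intersects that subcomplex. Since $A \subseteq B$, the hyperplane $H$ crosses $B$ as well.

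Next, because $H$ crosses both $A$ and $B$, the gate characterization implies that $H$ does \emph{not} separate $x$ from $A$ (nor from $B$), and therefore $H$ does not separate $x$ from $\gate_A(x)$ nor from $\gate_B(x)$. Hence $x$, $\gate_A(x)$, and $\gate_B(x)$ all lie on the same side of $H$. The identical argument shows $y$, $\gate_A(y)$, and $\gate_B(y)$ all lie on the same side of $H$. Since $\gate_A(x)$ and $\gate_A(y)$ lie on opposite sides of $H$ by hypothesis, it follows that $\gate_B(x)$ and $\gate_B(y)$ lie on opposite sides, and likewise $x$ and $y$ lie on opposite sides. This gives both conclusions.

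There is no real obstacle here; the only point that requires a moment's thought is the first step (that $H$ separating two points of the convex subcomplex $A$ forces $H$ to cross $A$), which is immediate from convexity via the geodesic-preservation property in the definition. Everything else is a direct bookkeeping of which side of $H$ each of the six 0-cubes $x,y,\gate_A(x),\gate_A(y),\gate_B(x),\gate_B(y)$ occupies.
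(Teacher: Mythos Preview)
Your argument is correct and is exactly the straightforward deduction from the gate characterization that the paper has in mind; the paper itself does not spell out a proof, merely noting that the lemma ``follows easily from the definitions.'' Your write-up fills in precisely those details.
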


\begin{defn}[Parallel]\label{defn:parallel}
Convex subcomplexes $F,F'\subseteq\cuco X$ are \emph{parallel} if for
each hyperplane $H$, we have $H\cap F\neq\emptyset$ if and only if
$H\cap F'\neq\emptyset$.  Parallelism is clearly an equivalence relation.
\end{defn}

Hyperplanes lead to important examples of parallel subcomplexes.  For
each hyperplane $H\in\mathcal H$, let $\neb(H)$ denote its
\emph{carrier}, i.e., the union of all closed cubes intersecting $H$.
Then there is a cubical isometric embedding $H\times[-1,1]\cong
\neb(H)\hookrightarrow\cuco X$, and we denote by $H^\pm$ be the images
of $H\times\{\pm1\}$.  These convex subcomplexes are
\emph{combinatorial hyperplanes}.  Observe that $H^+$ and $H^-$ are
parallel: a hyperplane crosses $H^{\pm}$ if and only if it crosses
$H$.  (Very occasionally, it will be convenient to refer to
\emph{combinatorial halfspaces} --- a combinatorial halfspace
associated to $H\in\mathcal H$ is a component of $\cuco
X-H\times(-1,1)$.  A component of the boundary of such a
combinatorial halfspace is one of $H^+$ or $H^-$.)

\begin{rem}[Parallelism and dual cube complexes]The reader accustomed to thinking of cube complexes using Sageev's construction of the cube complex dual to a space with walls~\cite{Sageev:cubes_95,ChatterjiNiblo} might appreciate the following characterization of parallelism: the convex subcomplex $F$ can be viewed as the dual to a wallspace whose underlying set is $\cuco X^{(0)}$ and whose walls are the hyperplanes in the vertex set of $\contact F$ -- one must check that in this case, the restriction quotient $\cuco X\rightarrow F$ is split by a cubical isometric embedding $F\rightarrow\cuco X$.  In general, the splitting need not be unique, depending on a choice of basepoint, and the images of the various embeddings are exactly the representatives of the parallelism class of $F$.
\end{rem}

Observe that $F,F'$ are parallel if and only if their contact graphs are the same subgraph of $\contact\cuco X$.   Parallel subcomplexes are
isomorphic and in fact, the following stronger statement holds, and we shall use it throughout the paper.

\begin{lem}\label{lem:parallel_product}
Let $F,F'\subseteq\cuco X$ be convex subcomplexes.  The following are equivalent:
\begin{enumerate}
 \item $F$ and $F'$ are parallel;
 \item there is a
cubical isometric embedding $F\times[0,a]\rightarrow\cuco X$ whose
restrictions to $F\times\{0\},F\times\{a\}$ factor as
$F\times\{0\}\cong F\hookrightarrow\cuco X$ and $F\times\{a\}\cong
F'\hookrightarrow\cuco X$, and $[0,a]$ is a combinatorial geodesic
segment crossing exactly those hyperplanes that separate $F$ from
$F'$. 
\end{enumerate}
Hence there exists a convex subcomplex $E_F$ such that there is a cubical embedding $F\times E_F$ with convex image such that for each $F'$ in the parallelism class of $F$, there exists a 0-cube $e\in E_F$ such that $F\times\{e\}\rightarrow\cuco X$ factors as $F\times\{e\}\stackrel{id}{\rightarrow} F'\hookrightarrow F$. 
\end{lem}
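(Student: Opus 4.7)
The plan is to prove (2) $\Rightarrow$ (1) by inspection and to reduce (1) $\Rightarrow$ (2), together with the final product decomposition, to a single crossing property between hyperplanes. For the easy direction, the hyperplanes of $\cuco X$ crossing the image of $F\times\{0\}$ are exactly those of the form $h\times[0,a]$ for $h$ a hyperplane of $F$, and the same list arises from $F\times\{a\}$, so the two images are crossed by identical collections of hyperplanes and are thus parallel.

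For (1) $\Rightarrow$ (2), let $\mathcal W$ denote the set of hyperplanes separating $F$ from $F'$ and $\mathcal H_F$ the set of hyperplanes crossing $F$ (equivalently, by parallelism, crossing $F'$). The main step is the \emph{crossing property}: every $H\in\mathcal W$ crosses every $K\in\mathcal H_F$. To establish it, I take $x,y\in F^{(0)}$ on opposite sides of $K$; since $K$ also crosses $F'$, it cannot separate $x$ or $y$ from $F'$, so the gate characterization forces $\gate_{F'}(x)$ and $\gate_{F'}(y)$ to lie on the same sides of $K$ as $x$ and $y$ respectively. Because $H$ separates $F$ from $F'$, it separates both $x$ from $\gate_{F'}(x)$ and $y$ from $\gate_{F'}(y)$, so the four vertices realize all four halfspace combinations of $H$ and $K$, forcing them to cross.

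Given the crossing property, I fix $x_0\in F^{(0)}$, set $x_0'=\gate_{F'}(x_0)$, and choose a combinatorial geodesic $\gamma\colon[0,a]\to\cuco X^{(1)}$ from $x_0$ to $x_0'$; since the hyperplanes separating $x_0$ from $F'$ are precisely those in $\mathcal W$, this geodesic has length $a=|\mathcal W|$ and crosses each element of $\mathcal W$ exactly once. I then define $\phi$ on $0$-cubes by the halfspace prescription: $\phi(x,t)$ agrees with $x$ on every hyperplane in $\mathcal H_F$, with $\gamma(t)$ on every hyperplane in $\mathcal W$, and with $x_0$ on every other hyperplane. Consistency of this prescription reduces, in the only nontrivial pair $(K,H)\in\mathcal H_F\times\mathcal W$, to the crossing property; all remaining pairs are handled by the observation that $x_0,x,\gamma(t)$ lie on a common side of any hyperplane outside $\mathcal H_F\cup\mathcal W$. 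Since the prescription differs from the ultrafilter of $x_0$ on only finitely many hyperplanes, it determines a unique $0$-cube of $\cuco X$, constructible explicitly by walking from $x_0$ to $x$ inside $F$ and then across the first $t$ hyperplanes of $\mathcal W$ translated to start at $x$ (legal by the crossing property). Extending $\phi$ cubically and counting separating hyperplanes then yields $\dist_{\cuco X}(\phi(x_1,t_1),\phi(x_2,t_2))=\dist_F(x_1,x_2)+|t_1-t_2|$, so $\phi$ is an isometric embedding, and by construction it restricts at $t=0,a$ to the inclusions of $F$ and $F'$.

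For the final statement, I take $E_F$ to be the convex hull in $\cuco X$ of $\{\gate_{F''}(x_0):F''\text{ parallel to }F\}$, equivalently the subcomplex dual to the family of hyperplanes that do not cross $F$ but cross every hyperplane of $\mathcal H_F$. Running the same halfspace construction across all of these hyperplanes simultaneously produces a cubical isometric embedding $F\times E_F\to\cuco X$ with convex image, and for each parallel $F'$, setting $e=\gate_{F'}(x_0)$ makes the slice $F\times\{e\}$ map isomorphically onto $F'$ via the canonical parallelism isomorphism. The principal technical hurdle throughout is verifying that the halfspace assignment genuinely determines a $0$-cube of $\cuco X$; this reduces in every case to the crossing property together with the finite-modification argument above.
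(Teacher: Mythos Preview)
Your proof is correct and follows a genuinely different path from the paper's. The paper takes the cubical convex hull $E$ of $F\cup F'$, shows via a minimality argument that the hyperplanes crossing $E$ are exactly $\mathcal H_F\sqcup\mathcal W$ (any other hyperplane would yield a strictly smaller convex subcomplex containing $F\cup F'$), asserts without further detail that each element of $\mathcal W$ crosses each element of $\mathcal H_F$, and then invokes \cite[Proposition~2.5]{CapraceSageev:rank_rigidity} to conclude $E\cong F\times I$. You instead prove the crossing property explicitly via gates---an argument the paper omits---and construct the embedding $\phi$ by hand from consistent halfspace orientations, bypassing the external product-decomposition result. The paper's route is shorter once the Caprace--Sageev citation is granted; yours is self-contained and isolates the crossing property as the essential content, which is useful. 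For the ``hence'' clause the two approaches coincide: enlarge $\mathcal W$ to the set of all hyperplanes separating $F$ from some parallel copy and rerun the argument. (Your alternative description of $E_F$ as dual to the hyperplanes missing $F$ but crossing every element of $\mathcal H_F$ is in fact equivalent to the convex hull of gates: any such hyperplane $H$ separates $F$ from the parallel copy $\gate_{H^-}(F)$.)
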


\begin{proof}
Let $F,F'$ be parallel, let $\mathcal H(F)$ be the set of hyperplanes crossing $F$ (hence $\mathcal H(F')=\mathcal H(F)$) and let $\mathcal S$ be the finite set of hyperplanes separating $F$ from $F'$.  Let $E$ be the cubical convex hull of $F\cup F'$.  Then every element of $\mathcal H(F)$ crosses $E$, and the same is true of $\mathcal S$, since any geodesic starting on $F$ and ending on $F'$ must contain a $1$--cube dual to each element of $\mathcal S$.  Conversely, let $H\in\mathcal H$ cross the hull of $F\cup F'$ and suppose that $H\not\in\mathcal H(F)\cup\mathcal S$.  Then either $H$ crosses $F$ but not $F'$, which is impossible, or $F$ and $F'$ lie in the same halfspace $\OL H$ associated to $H$.  But then $\OL H$ contains a combinatorial halfspace $\OL H^*$ that contains $F,F'$, so $E\cap\OL H^*$ is a convex subcomplex containing $F\cup F'$ and properly contained in $E$ (since $H$ does not cross $E\cap\OL H^*$).  This contradicts that $E$ is the convex hull.  Hence the set of hyperplanes crossing $E$ is precisely $\mathcal H(F)\cup\mathcal S$.  Each hyperplane in $\mathcal H(F)$ crosses each hyperplane in $\mathcal S$, and it follows from~\cite[Proposition~2.5]{CapraceSageev:rank_rigidity} that $E\cong F\times I$ for some convex subcomplex $I$ such that the set of hyperplanes crossing $I$ is precisely $\mathcal S$.  It is easily verified that $I$ is the convex hull of a geodesic segment.  This proves $(1)\Rightarrow(2)$; the other direction is obvious.

The ``hence'' assertion follows from an identical argument, once $\mathcal S$ is taken to be the set of hyperplanes $H$ with the property that for some $F',F''$ in the parallelism class of $F$, we have that $H$ separates $F$ from $F'$.
\end{proof}

\begin{lem}\label{lem:fundamental_fs_property}
For each convex subcomplex $F\subseteq\cuco X$, either $F$ is unique in its parallelism class, or $F$ is contained in a combinatorial hyperplane.
\end{lem}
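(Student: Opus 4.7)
The plan is to reduce to Lemma~\ref{lem:parallel_product} and then extract a combinatorial hyperplane from the product structure provided there. Assume $F$ is not unique in its parallelism class, so there is a convex subcomplex $F'\neq F$ parallel to $F$. Since $F\neq F'$, there is at least one hyperplane separating them, so the set $\mathcal S$ of hyperplanes separating $F$ from $F'$ is nonempty.

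Applying Lemma~\ref{lem:parallel_product}, I obtain a cubical isometric embedding $\phi\colon F\times[0,a]\hookrightarrow\cuco X$ with $a\geq 1$, whose restriction to $F\times\{0\}$ is the inclusion $F\hookrightarrow\cuco X$, whose restriction to $F\times\{a\}$ is (up to the parallelism isomorphism) the inclusion $F'\hookrightarrow\cuco X$, and with $[0,a]$ a combinatorial geodesic crossing precisely the hyperplanes in $\mathcal S$. Let $H$ be the hyperplane of $\cuco X$ dual to the first $1$--cube of the image of $[0,a]$. Then the image of $F\times[0,1]$ under $\phi$ is a convex subcomplex of the carrier $\neb(H)$, so $\phi(F\times[0,1])\hookrightarrow\neb(H)\cong H\times[-1,1]$.

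Under this product decomposition of $\neb(H)$, the $1$--cube $\phi(\{\ast\}\times[0,1])$ maps to a fiber of the form $\{h\}\times[-1,1]$; the $0$--cube $\phi(\ast,0)$ therefore lies in $H\times\{-1\}$, that is, in the combinatorial hyperplane $H^-$. Since $\phi(F\times\{0\})=F$ and the whole subcomplex $\phi(F\times\{0\})$ is disjoint from $H$ (the midpoint of the first interval), it lies entirely on one side; that is, $F\subseteq H^-$. This gives the desired combinatorial hyperplane containing $F$ and completes the proof.

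There is no real obstacle beyond correctly identifying $H$ and seeing that $F\times\{0\}$ must lie in a combinatorial hyperplane, since $H$ is disjoint from $F\times\{0\}$ but carries it. The argument is essentially a one-step consequence of the product decomposition supplied by Lemma~\ref{lem:parallel_product} together with the definition of a combinatorial hyperplane.
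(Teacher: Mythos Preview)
Your proof is correct and follows essentially the same approach as the paper. Both arguments invoke Lemma~\ref{lem:parallel_product} to obtain a cubical isometric embedding $F\times[0,a]\hookrightarrow\cuco X$ with $a\geq 1$ restricting to the inclusion on $F\times\{0\}$, and then observe that $F\times\{0\}$ lands in a combinatorial hyperplane of $\cuco X$; the paper phrases this last step abstractly (cubical isometric embeddings send combinatorial hyperplanes into combinatorial hyperplanes, and $F\times\{0\}$ is a combinatorial hyperplane of $F\times[0,a]$), while you unpack it by explicitly naming the hyperplane $H$ dual to the first edge of $[0,a]$ and checking $F\subseteq H^-$.
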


\begin{proof}
If $F$ is parallel to some $F'\neq F$, then as discussed above, there is a cubical isometric embedding $F\times[0,a]\rightarrow\cuco X$, with $a\geq 1$, whose restriction to $F\times\{0\}$ is the inclusion $F\hookrightarrow\cuco X$.  Cubical isometric embeddings take combinatorial hyperplanes to subcomplexes of combinatorial hyperplanes, and $F\times\{0\}$ is a combinatorial hyperplane of $F\times[0,a]$ since $a\geq 1$, so the claim follows.
\end{proof}

The following will be used often starting in Section~\ref{subsec:hypercarrier}.

\begin{lem}\label{lem:2.6}
If $F,F'$ are convex subcomplexes, then $\gate_F(F')$ and $\gate_{F'}(F)$ are parallel subcomplexes. Moreover, if $F\cap F'\neq\emptyset$, then $\gate_F(F')=\gate_{F'}(F)=F\cap F'$; 
\end{lem}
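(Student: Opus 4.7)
The plan is to prove parallelism by showing that both $\gate_F(F')$ and $\gate_{F'}(F)$ are crossed by exactly the hyperplanes in $\mathcal H(F)\cap\mathcal H(F')$, where $\mathcal H(K)$ denotes the set of hyperplanes of $\cuco X$ crossing the convex subcomplex $K$. Parallelism then follows directly from Definition~\ref{defn:parallel}. The key preliminary observation is a gate-map crossing criterion: for 0-cubes $x,y\in\cuco X$ and a hyperplane $H$, the hyperplane $H$ separates $\gate_F(x)$ from $\gate_F(y)$ if and only if $H\in\mathcal H(F)$ and $H$ separates $x$ from $y$. This follows from the defining property of the gate, which says that the hyperplanes separating $x$ from $\gate_F(x)$ are exactly those separating $x$ from $F$; hence any $H\in\mathcal H(F)$ has $x$ and $\gate_F(x)$ on a common side, while any $H\notin\mathcal H(F)$ has $\gate_F(x),\gate_F(y)\in F$ on a common side.

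Granting this criterion, one direction of $\mathcal H(\gate_F(F'))=\mathcal H(F)\cap\mathcal H(F')$ holds because $\gate_F(F')\subseteq F$ and any hyperplane crossing $\gate_F(F')$ must separate some pair $\gate_F(x),\gate_F(y)$ with $x,y\in F'$, forcing it into $\mathcal H(F')$ by convexity of $F'$ together with the criterion. The reverse direction follows by picking, for each $H\in\mathcal H(F)\cap\mathcal H(F')$, vertices $x,y\in F'$ separated by $H$ and observing that the criterion then puts $H$ in $\mathcal H(\gate_F(F'))$. The symmetric argument identifies $\mathcal H(\gate_{F'}(F))$ with the same intersection, establishing parallelism.

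For the ``moreover'' clause, I would pick any $y\in F\cap F'$ and show that $\gate_F(x)\in F\cap F'$ for every $x\in F'$. The set of hyperplanes separating $x$ from $y$ partitions into those not crossing $F$ (which are exactly those separating $x$ from $\gate_F(x)$) and those crossing $F$ (which must separate $\gate_F(x)$ from $y$, since $x$ and $\gate_F(x)$ lie on a common side of any such hyperplane). This yields the equality $\dist_{\cuco X}(x,y)=\dist_{\cuco X}(x,\gate_F(x))+\dist_{\cuco X}(\gate_F(x),y)$, so $\gate_F(x)$ lies on a combinatorial geodesic between $x$ and $y$; convexity of $F'$ then forces $\gate_F(x)\in F'$, hence $\gate_F(x)\in F\cap F'$. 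The reverse inclusion $F\cap F'\subseteq\gate_F(F')$ is immediate since $\gate_F$ fixes $F$ pointwise, and the symmetric argument handles $\gate_{F'}(F)$. The main obstacle is just the bookkeeping of the two hyperplane cases in the separation argument; no substantive difficulty arises beyond the gate-map crossing criterion above.
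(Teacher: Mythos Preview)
Your proposal is correct and, for the parallelism statement, essentially identical to the paper's proof: both show that the hyperplanes crossing $\gate_F(F')$ are precisely those in $\mathcal H(F)\cap\mathcal H(F')$, using the gate characterization (your ``crossing criterion'' is the paper's Lemma~\ref{lem:simple_gate} unpacked).

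For the ``moreover'' clause the arguments diverge slightly in presentation. The paper picks $y\in\gate_F(F')$, writes $y=\gate_F(y')$ with $y'\in F'$, takes the median $m$ of $y,y',x$ for some $x\in F\cap F'$, and uses convexity of $F$ and $F'$ to force $m\in F\cap F'$ and then $y=m$. You instead fix $y\in F\cap F'$ and show directly, via the hyperplane partition, that $\gate_F(x)$ lies on a combinatorial geodesic from $x$ to $y$, so convexity of $F'$ puts it in $F'$. These are two ways of saying the same thing (the median point is exactly the one witnessing distance additivity), and both are equally short; neither buys anything the other does not.
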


\begin{proof}
Suppose that $H$ is a hyperplane crossing $\gate_F(F')$.  Then $H$ separates $\gate_F(x),\gate_F(y)$ for some $x,y\in F'$.  By Lemma~\ref{lem:simple_gate}, $H$ separates $x,y$ and hence crosses $F'$.  Thus $H$ crosses both $F$ and $F'$.  Conversely, suppose that a hyperplane $H$ crosses $F$ and $F'$, separating $x,y\in F'$.  Then $H$ cannot separate $x$ from $\gate_F(x)$ or $y$ from $\gate_F(y)$, so $H$ separates $\gate_F(x)$ and $\gate_F(y)$, and in particular crosses $\gate_F(F')$.  Thus the set of hyperplanes crossing $\gate_F(F')$ is precisely the set of hyperplanes $H$ that cross both $F$ and $F'$.  Similarly, the set of hyperplanes crossing $\gate_{F'}(F)$ is the set of hyperplanes $H$ that cross both $F$ and $F'$.  Hence $\gate_F(F'),\gate_{F'}(F)$ are parallel.

Suppose that $F\cap F'\neq\emptyset$ and let $x\in F\cap F'$.  Then $\gate_F(x)=x$, by definition, so $\gate_F(F')\supseteq F\cap F'$.  On the other hand, let $y\in\gate_F(F')$, so $y=\gate_F(y')$ for some $y'\in F'$.  Let $m$ be the median of $y,y',x$ for some $x\in F\cap F'$.  By convexity of $F$ and of $F'$, we have $m\in F\cap F'$ and $\dist_{\cuco X}(y,y')\geq\dist_{\cuco X}(y',m)$.  Hence $y=m$, so $y\in F\cap F'$.  Thus $\gate_F(F')\subseteq F\cap F'$, as required.
\end{proof}

The following lemmas concern the projection of geodesics onto hyperplanes:

\begin{lem}\label{lem:background_hyperplane_segment}
Let $\alpha\subseteq\cuco X$ be a combinatorial geodesic and let $K\subseteq\cuco X$ be a convex subcomplex.  Then $\gate_K(\alpha)$ is a geodesic in $K$.  Moreover, suppose that there exists $R$ such that $\dist_{\cuco X}(a,K)\leq R,\dist_{\cuco X}(a',K)\leq R$ where $a,a'$ are the initial and terminal 0-cubes of $\alpha$.  Then $\dist_{\cuco X}(t,K)\leq 2R$ for all 0-cubes $t$ of $\alpha$.
\end{lem}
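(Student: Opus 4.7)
The plan is to prove the two assertions by reasoning about hyperplane crossings, using the characterization of the gate via separation and the key fact that the gate map $\gate_K \co \cuco X \to K$ is cubical.

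For the first assertion, I will write $\alpha$ as a sequence of 0-cubes $a=a_0, a_1,\ldots, a_n=a'$, where the 1-cube joining $a_i$ to $a_{i+1}$ is dual to a hyperplane $H_i$. Since $\alpha$ is a combinatorial geodesic, the hyperplanes $H_0,\ldots,H_{n-1}$ are pairwise distinct (each is crossed exactly once by $\alpha$). Because $\gate_K$ is a cubical map, the 1-cube dual to $H_i$ either maps to a 1-cube of $K$ dual to $H_i\cap K$ (when $H_i$ crosses $K$) or collapses to a 0-cube (when $H_i$ does not cross $K$). Thus the walk $\gate_K(a_0), \gate_K(a_1),\ldots, \gate_K(a_n)$ in $K^{(1)}$ traverses distinct hyperplanes of $K$ (the surviving $H_i\cap K$), so after removing the collapsed edges it is a geodesic of $K$.

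For the second assertion, I will fix a 0-cube $t$ on $\alpha$ and bound $|S_t|$, where $S_t$ is the set of hyperplanes separating $t$ from $K$; by the defining property of the gate, $|S_t|=\dist_{\cuco X}(t,K)$. The main observation is that no $H\in S_t$ crosses $K$: if $H$ crossed $K$, then $K$ would contain 0-cubes on both sides of $H$, hence $\gate_K(t)$ would lie on the same side as $t$, contradicting $H\in S_t$. Therefore, for each $H\in S_t$, the subcomplex $K$ is entirely contained in one closed halfspace $\OL H$ of $H$, and $t$ lies in the opposite open halfspace.

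The heart of the argument is now a parity/geodesicity argument. Let $S_a,S_{a'}$ be the sets of hyperplanes separating $a$, respectively $a'$, from $K$; by hypothesis $|S_a|,|S_{a'}|\leq R$. I claim that $S_t\subseteq S_a\cup S_{a'}$. Indeed, fix $H\in S_t$ and suppose toward contradiction that $H\notin S_a\cup S_{a'}$; since $K\subseteq\OL H$, this forces both $a$ and $a'$ to lie in $\OL H$. But $t$ lies in the opposite open halfspace, so the geodesic $\alpha$ from $a$ to $a'$ must cross $H$ at least twice (once to reach $t$, once to return), contradicting the fact that a combinatorial geodesic crosses each hyperplane at most once. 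Hence $|S_t|\leq |S_a|+|S_{a'}|\leq 2R$, so $\dist_{\cuco X}(t,K)\leq 2R$. The only potential subtlety is verifying that every hyperplane separating $t$ from $K$ truly must cross $\alpha$ or separate an endpoint from $K$; this is handled cleanly by the halfspace dichotomy together with geodesicity, so I do not anticipate a real obstacle.
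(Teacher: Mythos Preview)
Your proof is correct and follows essentially the same approach as the paper's. For the first assertion you invoke the cubical nature of $\gate_K$ to conclude that the image path crosses each hyperplane at most once, which is precisely the paper's argument that no hyperplane can separate $\gate_K(a_i)$ from $\gate_K(a_{i+1})$ and also $\gate_K(a_j)$ from $\gate_K(a_{j+1})$; for the second assertion your inclusion $S_t\subseteq S_a\cup S_{a'}$ is exactly the paper's claim that any hyperplane separating $t$ from $K$ must separate $a$ or $a'$ from $K$, proved by the same halfspace-plus-geodesicity contradiction.
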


\begin{proof}
Let $\alpha'=\gate_K(\alpha)$.  Any hyperplane separating $t,t'\in\alpha'$ separates $s,s'\in\alpha$ with
$\gate_K(s)=t,\gate_K(s')=t'$, by the definition of gates.  Hence the
set of hyperplanes crossing $\alpha'$ has the following properties: if
$H,H'$ cross $\alpha'$ and are separated by $H''$, then $H''$ crosses
$\alpha'$; if $H,H',H''$ are pairwise-disjoint hyperplanes crossing
$\alpha'$, then one of them separates the other two.  Hence, if
$a_0,\ldots,a_m$ is an ordered sequence of 0-cubes of $\alpha$, then
$\gate_K(a_0),\ldots,\gate_K(a_m)$ has the property that
$\gate_K(a_i),\gate_K(a_{i+1})$ are either equal or adjacent for all
$i$.  If a hyperplane $H$ separates $\gate_K(a_i)$ from
$\gate_K(a_{i+1})$ and also separates $\gate_K(a_j)$ from
$\gate_K(a_{j+1})$, then $H$ must separate $a_i$ from $a_{i+1}$ and
$a_j$ from $a_{j+1}$, a contradiction.  This proves the first assertion.

To prove the second assertion, let $\alpha'=\gate_K(\alpha)$.  Let $\mathcal H$ be the set of hyperplanes separating $\alpha$ from $\gate_K(\alpha)$.  For all $t\in\alpha$, any hyperplane $V$ separating $t$ from $K$ must not cross $K$.  Hence $V$ separates $a$ or $a'$ from $K$.  The total number of such $V$ is at most $2R-|\mathcal H|$.
\end{proof}

\subsection{Contact and crossing graphs and complexes}\label{subsec:background_contact}
As before, $\cuco{X}$ is an arbitrary CAT(0) cube complex and
$\contact\cuco X$ its contact graph.  Recall that hyperplanes $H,H'$
represent adjacent vertices of $\contact\cuco X$ if
$\neb(H)\cap\neb(H')\neq\emptyset$, which occurs if either $H\cap
H'\neq\emptyset$ (in which case $H,H'$ \emph{cross}, denoted $H\bot
H'$), or if there are 1-cubes, dual to $H,H'$ respectively, with a
common 0-cube that do not form the corner of a 2-cube, in which case
$H,H'$ \emph{osculate}.  If $H,H'$ are adjacent in $\contact\cuco X$,
we say that they \emph{contact}, denoted $H\coll H'$. As we just did, 
we often abuse notation by saying a pair of hyperplanes are adjacent, 
when we really mean that the vertices represented by these 
hyperplanes are adjacent; similarly, we will talk about a sequence of 
hyperplanes forming a geodesic, etc.  We sometimes refer to the \emph{crossing graph} $\crossing\cuco X$ of
$\cuco X$, the spanning subgraph of $\contact\cuco X$ formed
by removing open edges that record osculations.

The following statements are the beginning of the analogy between, on 
one hand, $\cuco X$ and its contact graph and, on the other hand, 
$\MCG(S)$ and the curve complex of the surface $S$.  

\begin{thm}[Hyperbolicity of contact graphs; \cite{Hagen:quasi_arboreal}]\label{thm:hyperbolicity}
For any CAT(0) cube complex $\cuco X$, there is a simplicial tree $T$ and a $(10,10)$--quasi-isometry $\contact\cuco X\rightarrow T$.  
\end{thm}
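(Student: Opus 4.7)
The plan is to invoke Manning's bottleneck criterion: a geodesic metric space is quasi-isometric to a simplicial tree if and only if there is $\Delta\geq 0$ such that every pair $U,V$ admits a midpoint $W$ (with $d(U,W)$ and $d(V,W)$ each essentially $d(U,V)/2$) past which every path from $U$ to $V$ passes within distance $\Delta$. Producing such a $\Delta$ for $\contact\cuco X$, with $\Delta$ small and universal, will yield not only the quasi-tree property but, after bookkeeping, the explicit $(10,10)$-constants.

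The bridge between $\contact\cuco X$ and $\cuco X$ itself is the following lifting construction: any edge-path $K_0, K_1, \ldots, K_m$ in $\contact\cuco X$ can be promoted to a combinatorial path in $\cuco X^{(1)}$ by choosing a $0$-cube $x_i \in \neb(K_i)\cap\neb(K_{i+1})$ and joining successive $x_{i-1}, x_i$ by a geodesic inside $\neb(K_i)$. Every $1$-cube of this lifted path is dual to some $K_i$ from the original contact path.

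The central combinatorial input is a separation lemma: for a $\contact\cuco X$-geodesic $H_0,\ldots,H_n$ with $n\geq 4$ and any middle index $2\leq k\leq n-2$, the hyperplane $H_k$ separates $\neb(H_0)$ from $\neb(H_n)$ in $\cuco X$. The argument proceeds by contradiction: were $\neb(H_0)$ and $\neb(H_n)$ to lie on the same side of $H_k$, a careful analysis of the carriers and halfspaces of the intermediate $H_i$, using that non-adjacency in $\contact\cuco X$ forces separation by a third hyperplane in $\cuco X$ together with disc-diagram-style arguments, would exhibit contact edges bypassing $H_k$ and shortcutting the geodesic, a contradiction. The main obstacle is carrying out this separation argument cleanly while tracking the numerical constants.

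Granting this, given $U,V\in\contact\cuco X$ at distance $n\geq 4$, fix a geodesic and let $W=H_{\lfloor n/2\rfloor}$. Any alternative path from $U$ to $V$ lifts to a path in $\cuco X^{(1)}$ from $\neb(U)$ to $\neb(V)$ and hence must cross $W$; the $1$-cube where it crosses is dual to $W$ and lies in some $\neb(K_j)$, so $K_j\coll W$, placing the path within distance $1$ of $W$ in $\contact\cuco X$. Short distances are handled directly. An application of Manning's criterion, with the bottleneck constant tracked through its proof, then yields the claimed $(10,10)$-quasi-isometry onto a simplicial tree.
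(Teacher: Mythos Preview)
The paper does not actually prove Theorem~\ref{thm:hyperbolicity}; it is quoted from \cite{Hagen:quasi_arboreal} as background. So there is no ``paper's own proof'' to compare against directly. That said, the paper does prove the closely related Proposition~\ref{prop:fontact_quasi_tree} (factored contact graphs are quasi-trees), and your approach via Manning's bottleneck criterion matches that argument in spirit.

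One substantive concern: your separation lemma as stated --- that for a $\contact\cuco X$--geodesic $H_0,\ldots,H_n$ the hyperplane $H_k$ itself separates $\neb(H_0)$ from $\neb(H_n)$ --- is likely too strong. In the paper's proof of Proposition~\ref{prop:fontact_quasi_tree}, the analogous step does not claim the midpoint vertex separates; rather, after choosing a hierarchy path $\gamma$ from $\gate_{T}(T')$ to $\gate_{T'}(T)$ so that every hyperplane crossing $\gamma$ separates $T$ from $T'$, one argues that the hyperplane $T'_i$ underlying the midpoint either separates or crosses a hyperplane that does. This yields a separating hyperplane within distance $2$ of the midpoint, hence bottleneck constant $3$. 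Your lifted-path idea is exactly the right mechanism for the final step (any alternative path must cross the separating hyperplane, giving a contact edge to it), but the disc-diagram contradiction you sketch for the strong separation claim is where the real work lies, and you have not supplied it. If you weaken the claim to ``some hyperplane within bounded $\contact\cuco X$--distance of $H_k$ separates,'' the argument goes through cleanly along the lines of Proposition~\ref{prop:fontact_quasi_tree}, though you would then have to revisit the $(10,10)$ constants, since the bottleneck bound feeds directly into Manning's quantitative statement.
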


Under fairly mild geometric hypotheses, whether or not the quasi-tree $\contact\cuco X$ is actually a quasi-point can be detected by examining the \emph{simplicial boundary}; moreover, if $\cuco X$ admits an essential, proper, cocompact group action, then $\contact\cuco X$ is either a join of infinite subgraphs, or is unbounded~\cite{HagenBoundary}.  This is closely related to the action of rank-one elements on $\contact\cuco X$ (recall that $g\in\Aut(\cuco X)$ is \emph{rank-one} if it is hyperbolic and no axis bounds a half-flat in $\cuco X$).  As for the extension graph of a right-angled Artin group, there is a ``Nielsen-Thurston classification'' describing how elements of $\Aut(\cuco X)$ act on the contact graph. 

\begin{thm}[``Nielsen-Thurston classification''; \cite{HagenBoundary}]\label{thm:NTC}
Let $\cuco X$ be a CAT(0) cube complex such that every clique in $\contact\cuco X$ is of uniformly bounded cardinality.  Let $g\in\Aut(\cuco X)$.  Then one of the following holds:
\begin{enumerate}
 \item (``Reducible'')  There exists $n>0$ and a hyperplane $H$ such that $g^nH=H$.
 \item (``Reducible'')  There exists a subgraph $\Lambda$ of $\contact\cuco X$ such that $g\Lambda\subset\Lambda$ and the following holds: $\Lambda=\Lambda_0\sqcup\Lambda_1$, and each vertex of $\Lambda_0$ is adjacent to all but finitely many vertices of $\Lambda_1$.  In this case, $g$ is not a rank-one element.
 \item (``Loxodromic'')  $g$ has a quasigeodesic axis in $\contact\cuco X$.  
\end{enumerate}
In the last case, $g$ is contracting and no positive power of $g$ stabilizes a hyperplane.
\end{thm}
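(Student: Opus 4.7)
My plan is to dichotomize based on Haglund's semisimplicity theorem for cubical isometries, combined with the hyperbolicity of $\contact\cuco X$ (Theorem~\ref{thm:hyperbolicity}). Assume no positive power of $g$ stabilizes any hyperplane, else (1) holds. Then semisimplicity (valid here because the bounded-clique hypothesis forces $\cuco X$ to be finite-dimensional) implies either $g$ is combinatorially elliptic, fixing a $0$-cube after passing to a positive power, or $g$ is combinatorially loxodromic with a bi-infinite combinatorial axis $\gamma \subset \cuco X^{(1)}$ on which $g$ translates.

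First I would rule out the elliptic case under our standing assumption: if $g$ fixes a $0$-cube $v$, then some power of $g$ permutes the hyperplanes of any fixed finite ball around $v$, and in particular stabilizes some hyperplane containing $v$ in its carrier, contradicting the hypothesis. So $g$ is combinatorially loxodromic, and I would consider the bi-infinite sequence $\{H_n\}_{n\in\integers}$ of hyperplanes dual to the successive $1$-cubes of $\gamma$, on which $g$ acts by translation.

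The central dichotomy is whether $\gamma$ bounds a combinatorial half-flat in $\cuco X$. If it does not, so that $g$ is rank-one, then I claim $\{H_n\}$ is a parameterized quasigeodesic in $\contact\cuco X$: a contact-graph geodesic from $H_0$ to $H_N$ of length much less than $N$ would yield a chain of hyperplanes pairwise contacting along $\gamma$ with endpoints hitting $\gamma$ at combinatorial distance $N$ apart, and iterating this with the bounded-clique hypothesis would produce an infinite family of hyperplanes crossing $\gamma$ and pairwise crossing (via a pigeonhole on contact types), which assembles into a flat strip along $\gamma$, contradicting rank-one. Hence $g$ has a quasigeodesic axis in $\contact\cuco X$, giving case~(3); the final contraction statement follows from the standard equivalence, for cubical isometries, between rank-one-ness and having a contracting axis. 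Conversely, if $\gamma$ does bound a combinatorial half-flat, then there exists an infinite family $\{V_m\}$ of hyperplanes each crossing infinitely many $H_n$. Setting $\Lambda_0 = \{V_m\}$ and $\Lambda_1 = \{H_n\}$, the subgraph $\Lambda = \Lambda_0 \sqcup \Lambda_1$ is $g$-invariant since $g$ stabilizes the half-flat, and each $V_m$ is adjacent to all but finitely many $H_n$ by construction, placing us in case~(2).

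The main obstacle is making the quasigeodesic estimate in the rank-one case precise: one needs to convert contact-graph shortcuts into collections of mutually contacting hyperplanes crossing $\gamma$ in a controlled configuration, and then apply the uniform clique bound to force a genuine flat strip rather than just a bounded thickening. I expect the heart of the argument to be a careful enumeration of the hyperplanes appearing in such a shortcut and a Ramsey-type reduction to the case of pairwise-crossing hyperplanes, after which the bounded-clique hypothesis yields the linear lower bound on the $\contact\cuco X$-displacement of $g^n$.
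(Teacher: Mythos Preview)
The paper does not prove this theorem: it is quoted from~\cite{HagenBoundary}, and the paragraph following the statement only indicates which pieces will be used and where in~\cite{HagenBoundary} they come from. In particular, the description of $\Lambda$ in case~(2) is attributed to Theorem~D of~\cite{HagenBoundary} (which goes through the simplicial boundary, not an ad~hoc half-flat construction), and the relevant implication for case~(3) is stated contrapositively: if $g$ is \emph{not} rank-one, then either $g$ stabilizes a cube or its axis lies near a half-flat $F$, and~\cite[Proposition~5.1]{HagenBoundary} says the hyperplanes crossing $F$ have bounded diameter in $\contact\cuco X$, so the $\langle g\rangle$--orbit in $\contact\cuco X$ is bounded.

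Compared to that outline, your argument has two concrete gaps. First, in your treatment of case~(2) you assert that ``$g$ stabilizes the half-flat'', but this is not part of the hypothesis: $g$ merely has an axis bounding \emph{some} half-flat, and $g$ may well move that half-flat to another one containing the same axis. Consequently your set $\Lambda_0=\{V_m\}$ need not be $g$--invariant, and your $\Lambda$ need not satisfy $g\Lambda\subset\Lambda$. The actual construction of $\Lambda$ in~\cite{HagenBoundary} is rather different and uses the simplicial boundary. Second, your direct argument in the rank-one case (shortcuts in $\contact\cuco X$ $\Rightarrow$ chains of contacting hyperplanes $\Rightarrow$ Ramsey $\Rightarrow$ pairwise-crossing $\Rightarrow$ flat strip) is, as you suspect, where the difficulty lies, and it does not go through as written: a short $\contact\cuco X$--geodesic from $H_0$ to $H_N$ gives hyperplanes that contact in a chain but need not cross $\gamma$, and mutual contact does not promote to mutual crossing in a way that yields a half-flat. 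The approach actually taken is the contrapositive one above --- bound the $\contact\cuco X$--diameter of the hyperplanes meeting a half-flat --- which sidesteps the Ramsey step entirely.
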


We will not use the subgraph $\Lambda$ in the rest of this paper; that description follows from Theorem~D of~\cite{HagenBoundary}.  We will use the following: if $g\in\Aut(\cuco X)$ is not rank-one, then it either stabilizes a cube in $\cuco X$ and thus has a bounded orbit in $\contact\cuco X$, or its combinatorial axis lies uniformly close to a half-flat $F$ in $\cuco X$.  As explained in~\cite[Proposition~5.1]{HagenBoundary}, the set of hyperplanes crossing $F$ has uniformly bounded diameter in $\contact\cuco X$, and hence the same is true of the $\langle g\rangle$ orbit in $\contact\cuco X$.

\subsection{Special cube complexes, right-angled Artin groups, and the extension graph}\label{subsec:raag_extension_graph}
The \emph{right-angled Artin group} $A_\Gamma$ presented by the graph 
$\Gamma$ is the group with presentation: $$\left\langle \Vertices(\Gamma)\mid\{[v,w]:v,w\in\Vertices(\Gamma),\{v,w\}\in\Edges(\Gamma\}\right\rangle,$$
i.e., the generators are the vertices of $\Gamma$ and two generators commute if and only if the corresponding vertices span an edge of $\Gamma$.  The \emph{Salvetti complex} $S_\Gamma$ is the nonpositively-curved cube complex with a single 0-cube and an $n$--cube for each set of $n$ pairwise-commuting generators, for $n\geq 1$.  Note that $S_\Gamma$ is compact, and $A_\Gamma$ finitely generated, if and only if $\Gamma$ is finite.  See the survey~\cite{CharneySurvey} for more details.

The universal cover $\widetilde S_\Gamma$ is a CAT(0) cube complex with special features.  For each induced subgraph $\Lambda$ of $\Gamma$, the inclusion $\Lambda\hookrightarrow\Gamma$ induces an injective local isometry $S_\Lambda\rightarrow S_\Gamma$, lifting to a $A_\Lambda$--equivariant convex embedding $\widetilde S_\Lambda\hookrightarrow\widetilde S_\Gamma$.  In particular, when $\Lambda$ is the link of a vertex $v$, then $\widetilde S_\Lambda$ is a combinatorial hyperplane in $\widetilde S_\Gamma$ which is a copy of a hyperplane whose dual $1$--cubes are labeled by the generator $v$.  Moreover, when $\Lambda$ is the star of $v$, the subcomplex $\widetilde S_\Lambda\cong\mathbf R\times\widetilde S_{\link(v)}$, where $\mathbf R$ is a convex subcomplex isometric to $\reals$ and stabilized by $\langle v\rangle$.

The \emph{extension graph} $\Gamma^e$ associated to $\Gamma$, 
introduced by Kim--Koberda in~\cite{KimKoberda:embed} is the graph 
with a vertex for each $A_\Gamma$ conjugate of each generator $v$ 
with $v\in\Gamma^{(0)}$ and an edge joining the distinct vertices 
corresponding to $v^g,w^h$ if and only if $[v^g,w^h]=1$.  Like the 
contact graph of a cube complex, $\Gamma^e$ is always a 
quasi-tree~\cite{KimKoberda:embed}, and in fact in many situations, 
$\Gamma^e$ is quasi-isometric to $\contact\widetilde S_\Gamma$, as 
explained in~\cite{KimKoberda:curve_graph}.  

\part{Geometry of the contact graph}\label{part:general} 
We now discuss projection to the contact graph and identify ``hierarchy paths'' in a cube complex.  These results will be reworked in Part~\ref{part:distance_formula} once we have introduced factor systems.  

Disc diagram
techniques, originating in unpublished notes of Casson and developed
in~\cite{Chepoi:cube_median, Sageev:cubes_95,
Wise:quasiconvex_hierarchy}, have proven to be a useful tool in
studying the geometry of $\contact\cuco X$
(see~\cite{ChepoiHagen:embedding,Hagen:quasi_arboreal,HagenBoundary}),
and we will continue to make use of these here.

\section{Hierarchy paths}\label{sec:hierpath}
The next proposition establishes Theorem~\ref{thmi:geom_cg}.\eqref{itemi:hierarchy}.

\begin{prop}[``Hierarchy paths'']\label{prop:hierpath}
Let $\cuco{X}$ be a CAT(0) cube complex with contact graph $\contact{\cuco{X}}$ and let $x,y\in\cuco{X}$ be 0-cubes.  Then there exist hyperplanes $H_0,\ldots,H_k$ with $x\in \neb(H_0),y\in \neb(H_k)$, and combinatorial geodesics $\gamma_i\rightarrow \neb(H_i)$, such that $H_0\coll H_1\coll \ldots\coll H_k$ is a geodesic of $\contact{\cuco{X}}$ and $\gamma_0\gamma_1\cdots\gamma_k$ is a geodesic joining $x,y$.
\end{prop}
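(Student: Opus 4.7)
The plan is to proceed by induction on $d_{\cuco X}(x,y)$, recursively constructing the hierarchy path via iterated gate projections. The base case $d_{\cuco X}(x,y)\leq 1$ is immediate (take $k=0$ and $\gamma_0$ either trivial or the single $1$--cube joining $x$ to $y$).

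The engine is the following gate sub-lemma: for any hyperplane $H$ with $x\in\neb(H)$, setting $y_0:=\gate_{\neb(H)}(y)$, one has
\[
d_{\cuco X}(x,y)=d_{\cuco X}(x,y_0)+d_{\cuco X}(y_0,y).
\]
The argument partitions the hyperplanes $V$ separating $x$ from $y$ according to whether $V$ crosses $\neb(H)$: those that cross are precisely the hyperplanes separating $x$ from $y_0$ (since by the gate characterization $y_0$ lies on the same side of $V$ as $y$), while those disjoint from $\neb(H)$ are precisely the hyperplanes separating $y_0$ from $y$; symmetric inclusions show these two families exhaust the corresponding hyperplane counts. A corollary, which will be used to splice the pieces together, is that any combinatorial geodesic from $x$ to $y_0$ in $\neb(H)$ (which exists by convexity of the carrier) concatenated with any combinatorial geodesic from $y_0$ to $y$ is a combinatorial geodesic from $x$ to $y$.

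For the inductive step, choose $H_0$ to be a hyperplane dual to the first $1$--cube of some combinatorial geodesic from $x$ to $y$; then $H_0$ separates $x$ from $y$, so $d_{\cuco X}(x,y_0)\geq 1$. If $y\in\neb(H_0)$, finish with $k=0$ and $\gamma_0$ any combinatorial geodesic from $x$ to $y$ in $\neb(H_0)$. Otherwise $d_{\cuco X}(y_0,y)<d_{\cuco X}(x,y)$, so the inductive hypothesis applied to $(y_0,y)$ yields a geodesic $H_1,\ldots,H_k$ of $\contact\cuco X$ with $y_0\in\neb(H_1)$, $y\in\neb(H_k)$, and combinatorial geodesics $\gamma_i\to\neb(H_i)$ concatenating to a geodesic from $y_0$ to $y$. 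Letting $\gamma_0$ be any combinatorial geodesic from $x$ to $y_0$ in $\neb(H_0)$, the sub-lemma ensures $\gamma_0\gamma_1\cdots\gamma_k$ is a combinatorial geodesic from $x$ to $y$, and $y_0\in\neb(H_0)\cap\neb(H_1)$ gives either $H_0=H_1$ (absorbed into $\gamma_0$) or $H_0\coll H_1$ in $\contact\cuco X$.

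The main obstacle is verifying that the assembled sequence $H_0,H_1,\ldots,H_k$ is a geodesic in $\contact\cuco X$, not merely a walk --- a priori the inductively produced segment might admit a shortcut once $H_0$ is prepended. To handle this I plan to strengthen the inductive statement to further require that $k$ equal $k^\ast(x,y):=\min\{d_{\contact\cuco X}(V,V'):x\in\neb(V),\,y\in\neb(V')\}$, and to choose $H_0$ as part of a realizing pair for $k^\ast(x,y)$. A key observation helping to rule out shortcuts: each $H_i$ with $i\geq 1$ separates $y_0$ from $y$ (being crossed by the geodesic from $y_0$ to $y$), and the gate characterization then forces $H_i\cap\neb(H_0)=\emptyset$; this constrains which hyperplanes can contact $H_0$ in any proposed shortcut and, combined with the identity $k^\ast(y_0,y)=k^\ast(x,y)-1$ (provable from the realizing choice of $H_0$ together with the gate sub-lemma), forces the assembled path to have length exactly $k^\ast(x,y)$ and hence be geodesic in $\contact\cuco X$.
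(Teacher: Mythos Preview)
Your inductive approach via gates is genuinely different from the paper's proof, which proceeds by choosing an arbitrary $\contact\cuco X$--geodesic from some $H\ni x$ to $H'\ni y$, building a candidate path $\gamma$ through the carriers, and then using a minimal-area disc diagram bounded by $\gamma$ and a geodesic $\tau$ to show that no dual curve from $\gamma$ can return to $\gamma$; hence $|\gamma|\leq|\tau|$ and $\gamma$ is geodesic. There is no induction on $\dist_{\cuco X}(x,y)$ and no gate map in the paper's argument.

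Your argument, however, has a genuine gap at the step where you verify that $H_0,H_1,\ldots,H_k$ is a $\contact\cuco X$--geodesic. Your ``key observation'' that each $H_i$ with $i\geq1$ separates $y_0$ from $y$ is false: the hierarchy path $\gamma_1\cdots\gamma_k$ lies in $\bigcup_i\neb(H_i)$, but it need not \emph{cross} each $H_i$. For a concrete failure, take $\cuco X$ to be a long $1\times n$ strip with an extra edge attached at one corner; the horizontal hyperplane $H$ has carrier equal to the whole strip, and a hierarchy path from one end of the bottom row to the other can be carried by $H$ without ever crossing it. Even if the observation held, the conclusion $H_i\cap\neb(H_0)=\emptyset$ only rules out \emph{crossing}, not osculation, so it does not prevent $H_0\coll H_i$ in $\contact\cuco X$.

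What your argument actually needs is the identity $k^\ast(y_0,y)=k^\ast(x,y)-1$, which alone suffices (the assembled path then has length $k^\ast(x,y)$, and since $x\in\neb(H_0),\,y\in\neb(H_k)$ forces $\dist_{\contact\cuco X}(H_0,H_k)\geq k^\ast(x,y)$, it is geodesic). The inequality $k^\ast(y_0,y)\geq k^\ast(x,y)-1$ is indeed easy from $y_0\in\neb(H_0)\cap\neb(H_1)$, but the reverse inequality is not. Your sketch (``the realizing choice of $H_0$ together with the gate sub-lemma'') does not establish it: given a realizing geodesic $H_0,J_1,\ldots,J_{k^\ast}$ with $y\in\neb(J_{k^\ast})$, you would need something like $y_0\in\neb(J_1)$, and this does not follow from the gate characterization alone. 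You also need to reconcile the two competing requirements on $H_0$ (separating $x$ from $y$ versus realizing $k^\ast$); this can be done, but you have not done it. The paper's disc-diagram argument sidesteps all of this by working globally rather than inductively.
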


\begin{proof}
Since the set of hyperplane carriers covers $\cuco{X}$, there exist
hyperplanes $H,H'$ with $x\in \neb(H),y\in \neb(H')$.  Let
$k=d_{\contact{\cuco{X}}}(H,H')$, and let $H=H_0,H_1,\ldots,H_k=H'$ be
a geodesic sequence of hyperplanes in $\contact{\cuco{X}}$, so that
$\neb(H_i)\cap \neb(H_{i+1})\neq\emptyset$ for $0\leq i\leq k-1$ and
$\neb(H_i)\cap \neb(H_j)=\emptyset$ for $|i-j|>1$.  Let $\gamma_0$ be
a combinatorial geodesic of $\neb(H_0)$ joining $x$ to a 0-cube in
$\neb(H_0)\cap \neb(H_1)$.  For $1\leq i\leq k-1$, let $\gamma_i$ be a
geodesic of $\neb(H_i)$ joining the terminal 0-cube of $\gamma_{i-1}$
to a 0-cube of $\neb(H_i)\cap \neb(H_{i+1})$.  Finally, let $\gamma_k$
be a geodesic of $\neb(H_k)$ joining the terminal 0-cube of
$\gamma_{k-1}$ to $y$.  Since hyperplane carriers are convex
subcomplexes of $\cuco{X}$, each $\gamma_i$ is a geodesic of
$\cuco{X}$ and in particular contains at most one $1$-cube dual to each
hyperplane.  Let $\gamma=\gamma_0\cdots\gamma_k$.

We now show that the above choices can be made in such a way that the path $\gamma$ is
immersed in $\cuco{X}$, i.e., $\gamma$ has no self-intersections.
Since $\neb(H_i)\cap \neb(H_j)=\emptyset$ for $|i-j|>1$ we can
restrict our attention to intersections between $\gamma_{i}$ and
$\gamma_{i+1}$. Any such point of intersection must lie in
$\neb(H_i)\cap \neb(H_{i+1})$ and we can then replace
$\gamma_i$ and $\gamma_{i+1}$ by $\gamma'_i$ and $\gamma'_{i+1}$,
where $\gamma'_i\subset \gamma_i$ and $\gamma'_{i+1}\subset
\gamma'_{i+1}$ are the geodesic subpaths obtained by restricting to the
subpath before,
respectively, after, the intersection point. Applying this
procedure for each $i$ to the first intersection point
ensures that $\gamma$ is immersed.

Let $\tau$ be a
combinatorial geodesic of $\cuco{X}$ joining $y$ to $x$, so that
$\gamma\tau$ is a closed path in $\cuco{X}$ and
let $D\rightarrow\cuco{X}$ be a disc diagram with boundary path $\gamma\tau$.  Suppose that $D$ has minimal area among all disc diagrams with this boundary path, and that the choice of geodesic of $\contact{\cuco{X}}$, and the subsequent choice of paths $\gamma_i$, were made in such a way as to minimize the area of $D$.  See Figure~\ref{fig:RAAGhierarchypaths}.

\begin{figure}[h]
\includegraphics[width=0.75\textwidth]{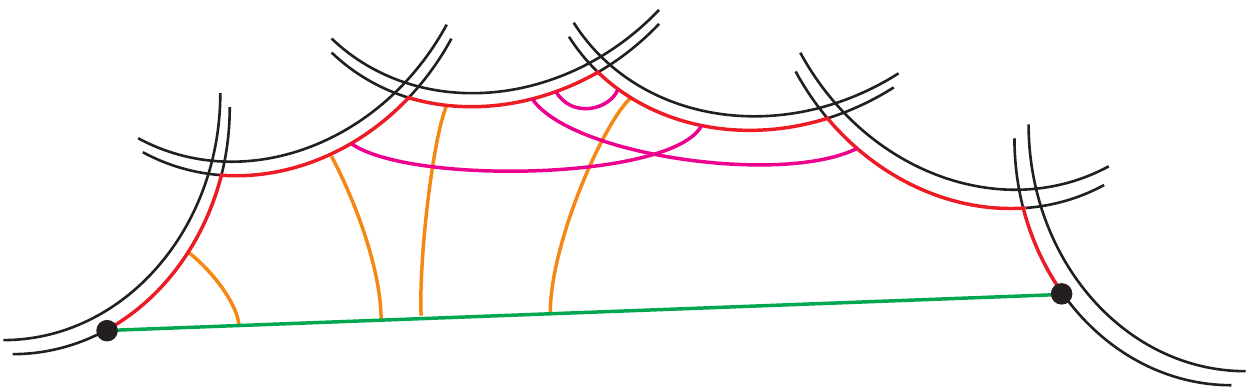}
\caption{The diagram $D$, showing some possible (in orange) and
impossible (in pink) dual curves.}\label{fig:RAAGhierarchypaths}
\end{figure}

Note that for all $i$, if $K,L$ are dual curves emanating from
$\gamma_i$, then $K,L$ do not intersect.  Otherwise, convexity of
$\neb(H_i)$ would enable a new choice of $\gamma_i$, lowering
the area of~$D$.

Let $K$ be a dual curve in $D$ emanating from $\gamma$.  It suffices
to show that $K$ must end on $\tau$.  Indeed, it will then follow
that $|\gamma|\leq|\tau|$, whence $\gamma$ is a geodesic of the
desired type.  To this end, suppose that $K$ emanates from a 1-cube of
$\gamma_i$.  Then $K$ cannot end on $\gamma_i$, since $\gamma_i$
contains exactly one 1-cube dual to the hyperplane to which $K$ maps.
$K$ cannot end on $\gamma_j$ with $|i-j|>2$, for then
$\contact{\cuco{X}}$ would contain a path $(H_i,V,H_j)$, where $V$ is
the hyperplane to which $K$ maps, contradicting the fact that
$d_{\contact{\cuco{X}}}(H_i,H_j)=|i-j|$.  If $K$ ends on
$\gamma_{i\pm2}$, then $\gamma_{i\pm1}$ could be replaced by a path in
$\neb(V)$ that is the image of a path in the interior of $D$,
contradicting our minimal-area choices.  Indeed, recall that we chose the $\contact\cuco X$--geodesic, and the associated geodesics $\gamma_i$, so that the resulting diagram $D$ (which can be constructed given any such choices) is of minimal area among all diagrams constructible by any choice of such $\contact\cuco X$ and $\cuco X$ geodesics.

Finally, suppose $K$ ends on
$\gamma_{i+1}$.  Then no dual curve emanating from the part of
$\gamma_i\gamma_{i+1}$ subtended by the 1-cubes dual to $K$ can cross
$K$.  Hence the 1-cube of $\gamma_i$ dual to $K$ is equal to the
1-cube of $\gamma_{i+1}$ dual to $K$, contradicting the fact that
$\gamma$ is immersed.
\end{proof}

\begin{defn}
    A geodesic $\gamma=\gamma_1\ldots\gamma_n$ such that
    $\gamma_i\rightarrow \neb(H_i)$ for $1\leq i\leq n$ and
    $H_1\coll\ldots\coll H_n$ is a geodesic of $\contact\cuco X$ is a
    \emph{hierarchy path}.
    The geodesic $H_1\coll\ldots\coll H_n$ \emph{carries}
    $\gamma$, and each $\gamma_i$ is a \emph{syllable} of
    $\gamma$.
\end{defn}

The following is immediate from the product structure of hyperplane-carriers:

\begin{prop}\label{prop:canonical_hierarchy_path}
Let $\gamma=\gamma_1\cdots\gamma_n$ be a hierarchy path carried by $H_1\coll\ldots\coll H_n$.  Then there exists a hierarchy path $\gamma'=\gamma_1'\cdots\gamma'_n$, joining the endpoints of $\gamma$ and carried by $H_1\coll\ldots\coll H_n$, such that for each $i$, we have $\gamma'_i=A_ie$, where the combinatorial geodesic $A_i$ lies in $H_i\times\{\pm1\}\subset H_i\times[-1,1]\cong \neb(H_i)$ and $e$ is either a 0-cube or a 1-cube dual to $H_i$.
\end{prop}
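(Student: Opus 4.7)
The plan is to exploit the product structure $\neb(H_i)\cong H_i\times[-1,1]$ on each hyperplane-carrier in order to relocate any edge dual to $H_i$ to the very end of the $i$-th syllable, leaving an initial segment supported in one of the combinatorial hyperplanes $H_i^{\pm}\cong H_i\times\{\pm 1\}$.

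First, I would observe that the hyperplanes of $\cuco X$ crossing $\neb(H_i)$ are precisely $H_i$ together with those hyperplanes of the form $V$ with $V\cap H_i\neq\emptyset$; in product coordinates the latter are $V'\times[-1,1]$ for $V'$ a hyperplane of $H_i$, while $H_i$ itself corresponds to $H_i\times\{0\}$. Consequently any combinatorial geodesic inside $\neb(H_i)$ crosses $H_i$ at most once, and whether it does so is detected by which of $H_i^+, H_i^-$ contains each of its endpoints.

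Next, I would fix a syllable $\gamma_i$ with initial $0$-cube $x_i$ lying in $H_i^{\epsilon_i}$ for some $\epsilon_i\in\{\pm 1\}$ and terminal $0$-cube $y_i$, and split into two cases. If $y_i\in H_i^{\epsilon_i}$, then by convexity of $H_i^{\epsilon_i}$ the geodesic $\gamma_i$ lies entirely in $H_i^{\epsilon_i}$; I set $A_i:=\gamma_i$ and take $e:=y_i$ to be the trivial $0$-cube. If instead $y_i\in H_i^{-\epsilon_i}$, write $y_i=(q,-\epsilon_i)$ in product coordinates, define $y_i':=(q,\epsilon_i)\in H_i^{\epsilon_i}$, and let $e$ be the unique $1$-cube from $y_i'$ to $y_i$; this $1$-cube is dual to $H_i$. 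Choosing $A_i$ to be any combinatorial geodesic in $H_i^{\epsilon_i}$ from $x_i$ to $y_i'$, the product metric yields $|A_i|+1=\dist_{\cuco X}(x_i,y_i)=|\gamma_i|$, so $\gamma_i':=A_ie$ is again a geodesic in $\neb(H_i)$ with the same endpoints as $\gamma_i$ and of the required form.

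Finally, I would set $\gamma':=\gamma_1'\gamma_2'\cdots\gamma_n'$. Since each $\gamma_i'$ shares its initial and terminal $0$-cubes with $\gamma_i$, the concatenation is a well-defined edge path from $x$ to $y$ satisfying $|\gamma'|=\sum_i|\gamma_i'|=\sum_i|\gamma_i|=|\gamma|=\dist_{\cuco X}(x,y)$, and is therefore a geodesic; as each $\gamma_i'$ still lies in $\neb(H_i)$, and $H_1\coll\cdots\coll H_n$ is by hypothesis a geodesic of $\contact\cuco X$, the path $\gamma'$ is a hierarchy path of the desired form. I do not anticipate any substantive obstacle beyond unpacking the product structure; the one mild subtlety worth noting is that requiring $e$ to appear at the end of $\gamma_i'$ forces $A_i\subset H_i^{\epsilon_i}$ with $\epsilon_i$ determined by the initial vertex $x_i$ (rather than by $y_i$).
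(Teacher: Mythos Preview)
Your proof is correct and is precisely the argument the paper has in mind: the paper simply asserts that the proposition ``is immediate from the product structure of hyperplane-carriers,'' and your write-up unpacks exactly that product structure $\neb(H_i)\cong H_i\times[-1,1]$ to push the (at most one) $H_i$-dual edge to the end of each syllable. The only comment is that in the ``same side'' case you could alternatively replace $\gamma_i$ by any geodesic in $H_i^{\epsilon_i}$ rather than invoking that every geodesic stays there, but since the paper's notion of convexity does include the latter, your version is fine as written.
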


A hierarchy path satisfying the conclusion of Proposition~\ref{prop:canonical_hierarchy_path} is \emph{reduced}.  A reduced hierarchy path $\gamma_1\cdots\gamma_n$ is a \emph{hereditary hierarchy path} if it is trivial or if for each $i$, the subpath path $A_i\rightarrow H_i\times\{\pm1\}$ of $\gamma_i$ is a hereditary reduced hierarchy path in $H_i\times\{\pm1\}$.  From Proposition~\ref{prop:canonical_hierarchy_path} and the definition of a hereditary reduced hierarchy path, we obtain:

\begin{cor}\label{cor:fd_hereditary_reduced}
Let $\cuco X$ be a finite-dimensional CAT(0) cube complex and let $x,y\in\cuco X^{(0)}$.  Then $x$ and $y$ are joined by a hereditary reduced hierarchy path.
\end{cor}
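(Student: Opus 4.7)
The plan is to proceed by induction on the dimension $d$ of $\cuco X$. The base case $d=0$ is immediate: every 0-cube is its own component, so the required path is trivial (hence hereditary by definition). For the inductive step, suppose $d\geq 1$ and that the corollary holds for all CAT(0) cube complexes of dimension at most $d-1$. Given 0-cubes $x,y\in\cuco X$, first apply Proposition~\ref{prop:hierpath} and then Proposition~\ref{prop:canonical_hierarchy_path} to obtain a reduced hierarchy path $\gamma=\gamma_1\cdots\gamma_n$ from $x$ to $y$, carried by a geodesic $H_1\coll\cdots\coll H_n$ of $\contact\cuco X$, with each syllable of the form $\gamma_i=A_ie_i$, where $A_i$ is a combinatorial geodesic in the combinatorial hyperplane $H_i^{\pm}:=H_i\times\{\pm 1\}\subset\neb(H_i)$ and $e_i$ is either a $0$-cube or a $1$-cube dual to $H_i$.

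Each combinatorial hyperplane $H_i^{\pm}$ is a convex subcomplex of $\cuco X$ and is isomorphic as a CAT(0) cube complex to the hyperplane $H_i$. Since every cube of $H_i$ is a face of a cube of $\cuco X$ of one higher dimension (obtained by taking the product with a $1$-cube dual to $H_i$), we have $\dim H_i^{\pm}\leq d-1$. The inductive hypothesis therefore supplies a hereditary reduced hierarchy path $A_i'$ in $H_i^{\pm}$ joining the endpoints of $A_i$. Setting $\gamma_i':=A_i'e_i$ and $\gamma':=\gamma_1'\cdots\gamma_n'$, the claim is that $\gamma'$ is the desired hereditary reduced hierarchy path from $x$ to $y$ in $\cuco X$.

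The verification that has to be done carefully, and which is the main obstacle, is showing that the replacement preserves the global geodesic structure, so that $\gamma'$ is still a reduced hierarchy path in $\cuco X$ (not merely a concatenation of hierarchy paths in the combinatorial hyperplanes). The key observation is that because $A_i$ and $A_i'$ are combinatorial geodesics with the same endpoints in the convex subcomplex $H_i^{\pm}$, they cross exactly the same hyperplanes of $\cuco X$; in particular $|A_i'|=|A_i|$, so $\gamma_i'$ is a combinatorial geodesic in $\neb(H_i)$ of the same length as $\gamma_i$. Consequently $|\gamma'|=|\gamma|$, the set of hyperplanes crossed by $\gamma'$ equals the set crossed by $\gamma$, and no hyperplane is crossed twice by $\gamma'$. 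Thus $\gamma'$ is a geodesic of $\cuco X$ carried by $H_1\coll\cdots\coll H_n$, with the syllable decomposition $\gamma_i'=A_i'e_i$ witnessing that $\gamma'$ is reduced. Since each $A_i'$ is hereditary reduced in $H_i^{\pm}$ by construction, $\gamma'$ is a hereditary reduced hierarchy path, completing the induction.
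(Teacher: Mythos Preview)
Your proof is correct and is essentially the same argument the paper has in mind. The paper's ``proof'' consists only of the sentence ``From Proposition~\ref{prop:canonical_hierarchy_path} and the definition of a hereditary reduced hierarchy path, we obtain:'', and your induction on dimension is precisely the natural way to unpack that: the recursive definition of \emph{hereditary} reduced hierarchy path terminates because combinatorial hyperplanes have strictly lower dimension, and your verification that replacing each $A_i$ by $A_i'$ preserves the global geodesic (same endpoints in the convex $H_i^{\pm}$, hence same set of hyperplanes crossed, hence same length and no repeats) is the one point requiring care.
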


\section{Projection to the contact graph}\label{sec:projection}
The notion of projecting geodesics in $\cuco X$ to $\contact\cuco X$ was discussed in~\cite[Section~2]{HagenBoundary}, motivating the following definition:

\begin{defn}[Projection to the contact graph]\label{defn:proj_to_contact}
Let $K$ be a convex subcomplex of $\cuco X$. For each $k\in K$,
let $\{H_i\cap K\}_{i\in\mathcal I}$ be the
maximal collection of hyperplanes of $K$ with
$k\in\cap_{i\in\mathcal I}\neb(H_i)$. Define
$\rho_K\colon K\rightarrow 2^{\contact K}$ by setting $\rho_K(k)=\{H_i\}_{i\in\mathcal I}$.  The
\emph{projection of $\cuco X$ on $\contact K$} is the map
$\pi_K=\rho_K\circ\gate_K\colon \cuco X\rightarrow 2^{\contact K}$.  Note
that $\pi_K$ assigns to each point of $\cuco X$ a clique in the
contact graph of $K$ and hence a clique in $\contact\cuco X$.
\end{defn}

If $H$ is a hyperplane, let $H^\pm$ be $H\times\{\pm1\}\subset
\neb(H)\cong H\times[-1,1]$. There is a bijection between hyperplanes
which intersect $H^{+}$ and ones which intersect $H^{-}$; moreover,
associated hyperplanes contact [respectively, cross] in $H^{+}$ if and only if they
contact [respectively, cross] in $H^{-}$, whence $\contact H^{+}$ and $\contact H^-$ are
both the same subset of $\contact\cuco X$.  Abusing notation slightly, we let $\pi_H$
denote the projection from $\cuco X$ to $\contact H^+=\contact
H^-\subset\contact\cuco X$.  This map is defined as in Definition~\ref{defn:proj_to_contact} since $H^\pm$ is a convex subcomplex, and it is not hard to see that it is
independent of whether we took gates in $H^+$ or $H^-$ (another option
is to pass to the first cubical subdivision and just project to $H$ and then to its contact graph, since subdividing makes $H$ into a
subcomplex).

Let $x,y\in\cuco X$ be 0-cubes, and let $H$ be a hyperplane that does
not separate $x$ from $y$.  Let $H^+$ be the copy of $H$ bounding
$\neb(H)$ that lies in the component of $\cuco X-H$ containing $\{x,y\}$.
It is easily checked that any hyperplane that separates
$\gate_{H^+}(x)$ from $\gate_{H^+}(y)$ must separate $x$ from $y$.  In
particular, if $H$ does not separate $x$ from $y$ and it does not
cross any hyperplane separating $x$ from 
$y$, then $\pi_H(x)=\pi_H(y)$.  In other words, if $\gamma\subset\cuco
X$ is a geodesic with $\pi_{\cuco X}(\gamma)\cap B_1^{\crossing\cuco
X}(H)=\emptyset$, then $\pi_H(x)=\pi_H(y)$, where $x,y$ are the endpoints of
$\gamma$.  Let $x'\in\gamma$ be a 0-cube and suppose that the
hyperplane $U$ crosses $H$ and separates the pair $x,x'$, hence separating
$\gate_{H^+}(x)$ from $\gate_{H^+}(x')$.  Then $U$ separates $x,y$,
and hence belongs to $\pi_{\cuco X}(\gamma)\cap B_1^{\crossing\cuco
X}(H)$, contradicting our assumption.  Hence $\pi_H(\gamma)=\pi_H(x)=\pi_H(y)$.  Thus:

\begin{prop}[Bounded geodesic image]\label{prop:bgiI}
For any $H\in\contact{\cuco{X}}$ and any geodesic $\gamma\subseteq\cuco{X}$ whose image in $\contact{\cuco{X}}$ is disjoint from $B_{1}^{\crossing\cuco X}(H)$, the projection $\pi_{H}(\gamma)$ is a clique.
\end{prop}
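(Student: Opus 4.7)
The plan is to show that $\gate_{H^+}$ is constant on the $0$-skeleton of $\gamma$; once this is established, $\pi_H=\rho_{H^+}\circ\gate_{H^+}$ takes the same value---a single clique of $\contact H^+\subseteq\contact\cuco X$---at every $0$-cube of $\gamma$, and hence $\pi_H(\gamma)$ is itself a clique.

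First I would rephrase the hypothesis. Since $\gamma$ is a combinatorial geodesic, the hyperplanes separating any two of its $0$-cubes are exactly those dual to the subpath of $\gamma$ between them, so they all appear in the image of $\gamma$ in $\contact\cuco X$. The assumption that this image misses $B_1^{\crossing\cuco X}(H)$ therefore says (i) $H$ separates no pair of $0$-cubes of $\gamma$, and (ii) no hyperplane that separates a pair of $0$-cubes of $\gamma$ crosses $H$.

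Next, fix $0$-cubes $x',y'$ of $\gamma$ and suppose some hyperplane $V$ of $\cuco X$ separates $\gate_{H^+}(x')$ from $\gate_{H^+}(y')$. By Lemma~\ref{lem:simple_gate} (applied with $A=B=H^+$), $V$ also separates $x'$ from $y'$. On the other hand, $\gate_{H^+}(x')$ and $\gate_{H^+}(y')$ both lie in the convex subcomplex $H^+$, so by cubical convexity $V$ must cross $H^+$, which (since $H^+$ and $H^-$ are parallel faces of $\neb(H)\cong H\times[-1,1]$) means that $V$ crosses $H$. These two conclusions contradict the reformulated hypothesis, so no such $V$ exists, and $\gate_{H^+}(x')=\gate_{H^+}(y')$.

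Consequently there is a single $0$-cube $p\in H^+$ with $\gate_{H^+}(x')=p$ for every $0$-cube $x'$ of $\gamma$, whence $\pi_H(\gamma)=\rho_{H^+}(p)$, which is by definition the clique of $\contact H^+$ consisting of those hyperplanes of $H^+$ whose carriers contain $p$. There is no real obstacle here: the whole argument is the careful bookkeeping already suggested in the paragraph preceding the statement, with the only subtle point being the observation that a hyperplane of $\cuco X$ separating two $0$-cubes of $H^+$ is forced to cross $H$, which is immediate from the convexity of $H^+$.
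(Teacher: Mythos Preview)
Your proof is correct and follows essentially the same approach as the paper: both arguments show that $\gate_{H^+}$ is constant on the $0$-cubes of $\gamma$ by noting that any hyperplane $V$ separating two gates would have to cross $H$ (since it crosses the convex subcomplex $H^+$) and also separate the corresponding $0$-cubes of $\gamma$, contradicting the hypothesis. One minor remark: in your appeal to Lemma~\ref{lem:simple_gate}, the relevant instance is $A=H^+$, $B=\cuco X$ (not $A=B=H^+$), which is exactly the parenthetical ``and hence separates $x,y$'' in that lemma.
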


\section{Weak proper discontinuity of the action on the contact graph}\label{sec:WPD}

We now consider the elements which act \emph{weakly properly 
discontinuously (WPD)}, in the sense of 
\cite{BestvinaFujiwara:boundedcohom}.
In particular, we study the 
\emph{WPD elements} of a group $G$ acting on $\cuco X$, and prove
Theorem~\ref{thmi:geom_cg}.\eqref{itemi:wpd}.  By definition, as an
isometry of $\contact\cuco X$, some $h\in G$ is WPD if for all
$\epsilon\geq 1$ and hyperplanes $H$, there exists $N>0$ such that
$$\big|\{g\in G:\dist_{\contact\cuco
X}(H,gH)<\epsilon,\dist_{\contact\cuco
X}(h^NH,gh^NH)<\epsilon\}\big|<\infty.$$

In the presence of a factor system and a cocompact group action, we achieve a stronger conclusion in Section~\ref{sec:acyl}, namely that the action on the contact graph is acylindrical.  

\begin{prop}\label{prop:WPD}
Let $\cuco X$ be a CAT(0) cube complex on which the group $G$ acts metrically properly by isometries.  Suppose that $h\in G$ is loxodromic on $\contact\cuco X$.  Then $h$ is WPD.
\end{prop}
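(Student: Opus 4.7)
The plan is to argue by contradiction, combining Theorem~\ref{thm:NTC}, the quasi-tree structure of $\contact\cuco X$ (Theorem~\ref{thm:hyperbolicity}), and metric properness of the $G$-action on $\cuco X$. Since $h$ acts loxodromically on $\contact\cuco X$, Theorem~\ref{thm:NTC} ensures that no positive power of $h$ stabilizes a hyperplane and that $h$ is rank-one; hence $h$ admits a combinatorial quasi-axis $\alpha\subset\cuco X$ which is contracting in $\cuco X$, and whose dual hyperplanes $\{V_i\}$ form a quasi-geodesic in $\contact\cuco X$ with $hV_i=V_{i+\tau}$ for the combinatorial translation length $\tau$. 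Enlarging $\epsilon$ by a bounded additive constant if needed, I take $H=V_0$, and suppose for contradiction that for some $\epsilon\geq 1$, the set
\[S_N=\{g\in G\colon\dist_{\contact\cuco X}(V_0,gV_0)<\epsilon\text{ and }\dist_{\contact\cuco X}(V_{N\tau},gV_{N\tau})<\epsilon\}\]
is infinite for every $N$.

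For $g\in S_N$, the quasi-geodesics $V_0,V_\tau,\ldots,V_{N\tau}$ and $gV_0,gV_\tau,\ldots,gV_{N\tau}$ in the quasi-tree $\contact\cuco X$ have corresponding endpoints within $\contact$-distance $\epsilon$. Standard fellow-traveling in quasi-trees then yields a middle interval $[a,b]$ of length $\geq cN-O(\epsilon)$, and for each $i\in[a,b]$ an index $j(i)$ with $|j(i)-i|\leq K$ and $\dist_{\contact\cuco X}(V_i,gV_{j(i)})\leq K$, where $K=K(\epsilon)$.

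The main obstacle is to promote this $\contact$-fellow-traveling of hyperplane sequences to genuine $\cuco X$-closeness of the axes themselves. Here I plan to exploit the contracting property of $\alpha$: since $g\alpha$ is the contracting quasi-axis of $ghg^{-1}$ and its dual hyperplane sequence $\contact$-fellow-travels that of $\alpha$ over a long middle interval, any combinatorial geodesic in $\cuco X$ joining a middle point of $g\alpha$ to its $\gate_\alpha$-projection on $\alpha$ must be short. Indeed, a long such geodesic would, by the bounded geodesic image property (Proposition~\ref{prop:bgiI}), introduce a hyperplane $\contact$-far from both $\alpha$'s and $g\alpha$'s hyperplane sequences, contradicting the fellow-traveling. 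Consequently, on a long middle subsegment of $\alpha$, the image $g\alpha$ lies within some bounded $\cuco X$-distance $D=D(\epsilon)$ of $\alpha$.

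Granted this, pick a $0$-cube $y=y_c\in\alpha$ in the middle region. There is an integer $m(g)$ with $\dist_{\cuco X}(gy,h^{m(g)}y)\leq D$, so $h^{-m(g)}g$ moves $y$ by at most $D$; by metric properness of the $G$-action on $\cuco X$, $h^{-m(g)}g$ ranges over a finite set $F$. Writing $g=h^{m(g)}f$ with $f\in F$, the hypothesis $\dist_{\contact\cuco X}(V_0,gV_0)<\epsilon$ yields $\dist_{\contact\cuco X}(V_0,h^{m(g)}V_0)\leq\epsilon+\max_{f\in F}\dist_{\contact\cuco X}(V_0,fV_0)$, and since $h$ acts loxodromically on $\contact\cuco X$ this bounds $|m(g)|$. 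Thus $g$ ranges over a finite set, contradicting $|S_N|=\infty$.
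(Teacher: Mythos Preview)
Your overall architecture is reasonable, and the final bookkeeping (writing $g=h^{m(g)}f$ with $f$ in a finite set, then bounding $|m(g)|$ via loxodromicity) is fine.  The gap is in the step you flag as ``the main obstacle'': promoting $\contact\cuco X$--fellow-travelling of the hyperplane sequences of $\alpha$ and $g\alpha$ to $\cuco X$--closeness of the axes.  Your justification via Proposition~\ref{prop:bgiI} does not deliver this.  That proposition bounds the diameter of $\pi_H(\gamma)$ for a single hyperplane $H$; it says nothing about the number of hyperplanes crossed by a geodesic $\sigma$ from a middle point of $g\alpha$ to its nearest point on $\alpha$, nor about where those hyperplanes sit in $\contact\cuco X$.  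Concretely, every hyperplane $W$ crossed by $\sigma$ could lie at $\contact$--distance~$1$ from a single middle $V_i$ (namely, $W$ could cross $V_i$), and there is a priori no bound on how many such $W$ there are --- this is exactly what the ``contracting'' property must rule out, but you have not used it in a form that does so.  A secondary issue: you invoke Theorem~\ref{thm:NTC} to obtain that $h$ is rank-one and contracting, but that theorem carries a uniform clique-size hypothesis on $\contact\cuco X$ that is absent from the present proposition.

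The paper's proof supplies precisely the missing bridge.  It invokes \cite[Theorem~2.3]{HagenBoundary} to obtain a uniform bound $M$ on the length of each syllable $\gamma_i$ of a hierarchy path $\gamma=\gamma_0\cdots\gamma_p$ from $x\in H$ to $h^Nx\in h^NH$; this is the concrete manifestation of contracting that your sketch gestures at but never makes explicit.  With $M$ in hand, the paper chooses $y\in\gamma_i$ near the middle and shows that every hyperplane $W$ separating $y$ from $gy$ must cross both $\gamma$ and $g\gamma$; the fact that $H_0,\ldots,H_p$ and $gH_0,\ldots,gH_p$ are $\contact\cuco X$--geodesics with $\epsilon$--close endpoints then forces $W$ to cross $\gamma_s$ with $|i-s|\leq\epsilon+6$.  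Since each syllable contributes at most $M$ hyperplanes, $\dist_{\cuco X}(y,gy)\leq M(\epsilon+6)$ and properness finishes.  If you want to salvage your approach, the right move is to replace the BGI appeal by the syllable-length bound $M$ and then run the hyperplane-counting argument directly.
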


\begin{proof}
Fix $\epsilon\geq 1$ and let $H$ be a hyperplane.  By hypothesis,
$\langle h\rangle H$ is a quasigeodesic in $\contact\cuco X$.  It
follows, c.f., \cite[Theorem~2.3]{HagenBoundary}, that there exists
$M=M(h,H,\cuco X)$ such that for all $N\geq 0$, and any hierarchy path
$\gamma=\gamma_1\cdots\gamma_p$ joining some $x\in H$ to $h^Nx\in
h^NH$ and carried on a geodesic $H=H_0\coll H_1\coll\ldots\coll
H_p=h^NH$ of $\contact\cuco X$, we have $|\gamma_i|\leq M$ for $0\leq
i\leq p$.  (Recall that $\gamma$ is a geodesic, being a hierarchy path; hence $\gamma$ contains at most one $1$--cube dual to each hyperplane.)

Suppose that $g\in G$ has the property that $\dist_{\contact\cuco
X}(H,gH)<\epsilon$ and $\dist_{\contact\cuco X}(h^NH,gh^NH)<\epsilon$.
Suppose, moreover, that $N$ has been chosen so that $p>2\epsilon+12$ and choose $i$ so that
$|\frac{p}{2}-i|\leq 2$.  Choose a point $y\in\gamma_i$ and let
$\mathcal H(y)$ be the set of hyperplanes separating $y$ from $gy\in
g\gamma_i$.  
%

We claim that each $W\in\mathcal H(y)$ intersects $\gamma$ and $g\gamma$.  Indeed, suppose that $W$ intersects $\gamma$ but not $g\gamma$.  Suppose that $W$ intersects $\gamma_j$ with $j\geq i$.  Then $W$ separates $x,h^Nx$ (since $\gamma$ is a geodesic) but not $gx,gh^Nx$, so $W$ separates $x,gx$ or $h^Nx,gh^Nx$.  Since $W$ also separates $y,gy$, the former must hold.  Hence $p=\dist_{\contact\cuco X}(H,h^NH)\leq \epsilon+|p-i|\leq p/2+\epsilon$, contradicting our choice of $N$.  The case where $W$ intersects $\gamma_j,j<i$ is similar.
%

%


Hence, if $W$ crosses $\gamma_s$ with $s\geq i$,
then $W$ crosses $g\gamma_t$, with $t\leq i$.  (We have the reverse conclusion if $s\leq i$.)  The fact that
$H_0\coll H_1\coll\ldots\coll H_p$ and its $g$--translate are
$\contact\cuco X$--geodesics implies that $W$
intersects $\gamma_s$ with $|i-s|\leq \epsilon+6$.  Hence $\dist_{\cuco
X}(y,gy)\leq M(\epsilon+6)$, whence the number of such $g$ is finite
since the action of $G$ on $\cuco X$ is proper.
\end{proof}

We say that a group is nonelementary if it does not contain a cyclic subgroup of finite index, and that an action of the group $G$ is WPD if it admits a WPD element and $G$ is nonelementary.

\begin{cor}[Characterization of WPD elements]\label{cor:wPD}
Let $\cuco X$ be a uniformly locally finite CAT(0) cube complex.  Then for any nonelementary group $G$ acting properly on $\cuco X$, one of the following holds: 
\begin{enumerate}
 \item The induced action of $G$ on $\contact\cuco X$ is WPD, and the WPD elements are precisely those $h\in G$ that are rank-one and do not have positive powers that stabilize hyperplanes.
 \item Every $h\in G$ is either not rank-one, or has a positive power 
 stabilizing a hyperplane. 
\end{enumerate}
\end{cor}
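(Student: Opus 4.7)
The corollary combines Theorem~\ref{thm:NTC} with Proposition~\ref{prop:WPD}. Uniform local finiteness of $\cuco X$ ensures cliques in $\contact\cuco X$ are uniformly bounded, so Theorem~\ref{thm:NTC} applies to every element of $G$.

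The dichotomy is clean: either no element of $G$ is both rank-one and free of hyperplane-stabilizing positive powers (giving alternative (2) of the corollary directly), or some $h\in G$ is rank-one with no positive power stabilizing a hyperplane. In the latter case, $h$ cannot lie in alternatives (1) or (2) of Theorem~\ref{thm:NTC} (the first hypothesis rules out (1), the second rules out (2)), so by alternative (3) of that theorem $h$ acts loxodromically on $\contact\cuco X$. Proposition~\ref{prop:WPD} then gives that $h$ is WPD, and together with nonelementarity of $G$ this establishes that the action of $G$ on $\contact\cuco X$ is WPD.

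To complete alternative (1), I would verify the characterization of WPD elements. The $\Leftarrow$ direction is Proposition~\ref{prop:WPD} applied via Theorem~\ref{thm:NTC} exactly as above. For the $\Rightarrow$ direction, suppose $h'\in G$ is WPD (and in particular of infinite order, as is standard for WPD elements) and apply Theorem~\ref{thm:NTC} to $h'$. If some $(h')^n$ stabilized a hyperplane $H$, then for every integer $k$ the element $g_k:=(h')^{kn}$ fixes $H$ and, since it commutes with $(h')^N$, also fixes $(h')^N H$; hence $\dist_{\contact\cuco X}(H,g_k H)=\dist_{\contact\cuco X}((h')^N H,g_k (h')^N H)=0$ for every $N$, producing infinitely many group elements inside the set whose finiteness is demanded by WPD, a contradiction. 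If instead $h'$ failed to be rank-one, the paragraph following Theorem~\ref{thm:NTC} gives that $\langle h'\rangle\cdot H$ has uniformly bounded diameter $D$ in $\contact\cuco X$; taking $\epsilon>D$ in the WPD definition, every power of $h'$ satisfies both distance conditions, again contradicting WPD. Hence $h'$ satisfies alternative (3) of Theorem~\ref{thm:NTC}, whose last sentence gives the desired characterization.

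The principal difficulty is the converse direction: one must unwind the WPD condition in each of the two reducible alternatives of Theorem~\ref{thm:NTC}, in each case exhibiting an infinite family of group elements satisfying both distance conditions. The essential input in both reducible cases is the boundedness of the cyclic orbit in $\contact\cuco X$, arising either directly from a hyperplane-stabilizing power or, in the non-rank-one case, from the half-flat picture of the axis and its uniformly bounded set of crossing hyperplanes.
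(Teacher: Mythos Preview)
Your proof is correct and follows the same approach as the paper: the paper's own proof is the single line ``Apply Proposition~\ref{prop:WPD} and Theorem~\ref{thm:NTC},'' and you have unpacked precisely this. Your treatment of the converse direction is more explicit than strictly necessary---since WPD elements are by convention loxodromic, one can simply invoke the last sentence of Theorem~\ref{thm:NTC} to conclude that a loxodromic element is rank-one with no hyperplane-stabilizing power---but your direct verification that cases (1) and (2) of Theorem~\ref{thm:NTC} violate the WPD inequality is a valid and self-contained alternative.
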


\begin{proof}
Apply Proposition~\ref{prop:WPD} and Theorem~\ref{thm:NTC}.
\end{proof}

The following is an application to determining acylindrical 
hyperbolicity. Following \cite{CapraceSageev:rank_rigidity}, we say that the action of the group $G$ on the CAT(0) cube complex $\cuco X$ is essential if every halfspace contains points in a fixed $G$--orbit arbitrarily far away from the associated hyperplane.

\begin{cor}
    Let $G$ be a nonelementary group acting properly and essentially on a 
    uniformly locally finite CAT(0) cube complex, $\cuco X$. 
    Suppose that $\cuco X$ is not a product of unbounded subcomplexes
    and at least one of the following two holds:
    \begin{enumerate}
        \item $G$ acts cocompactly on $\cuco X$.
    
        \item $G$ acts with no fixed point in the simplicial boundary $\partial_\triangle \cuco X$ in the sense of \cite{HagenBoundary}.
    \end{enumerate}
    Then $G$ is acylindrically hyperbolic.
\end{cor}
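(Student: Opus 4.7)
The strategy is to find a loxodromic WPD element for the action of $G$ on the hyperbolic space $\contact\cuco X$ (Theorem~\ref{thm:hyperbolicity}); combined with non-elementarity of this action, Osin's characterization \cite{Osin:acyl} then yields acylindrical hyperbolicity of $G$. By Corollary~\ref{cor:wPD}, it suffices to exhibit some $h \in G$ that is rank-one and no positive power of which stabilizes any hyperplane, together with a second such element whose pair of attracting/repelling points on $\partial\contact\cuco X$ is disjoint from that of $h$.

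Rank-one elements exist in both cases by rank rigidity. In case (1), this is Caprace--Sageev's main theorem \cite{CapraceSageev:rank_rigidity}: since the action is essential, non-elementary, cocompact, and $\cuco X$ is not a product, $G$ must contain a rank-one element. In case (2), an analogous conclusion is obtained from the hypothesis that $G$ has no fixed point on $\partial_\triangle\cuco X$ using the simplicial boundary techniques of \cite{HagenBoundary}, which relate boundary fixed points to invariant flat-like structures that the hypothesis rules out.

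The main obstacle is upgrading ``rank-one'' to ``no positive power stabilizes a hyperplane''. For this I would use the Double Skewering Lemma from \cite{CapraceSageev:rank_rigidity} (or its analogue in case (2)) to construct a rank-one element $h$ whose combinatorial axis $\gamma$ crosses hyperplanes belonging to several distinct $G$--orbits. If $h^N H = H$ for some hyperplane $H$, then either $\gamma \cap H$ is a single $0$--cube $p$ (since $\gamma$ is a geodesic in $\cuco X$), which $h^N$ would then have to fix while simultaneously translating $\gamma$ nontrivially, a contradiction; or $\gamma$ is disjoint from $H$, in which case the $h^N$--invariance of $\dist_{\cuco X}(\gamma(t),H)$ along $\gamma$ forces $\gamma$ into a bounded neighborhood of $H$, and the fact that $\gamma$ crosses hyperplanes from orbits unrelated to $H$ rules out this confinement via a disc-diagram argument in the spirit of Proposition~\ref{prop:hierpath}. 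A suitable conjugate of $h$ by an element moving its axis far away gives a second loxodromic element on $\contact\cuco X$ with disjoint endpoints, establishing non-elementarity.

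With a WPD element and non-elementary action of $G$ on the hyperbolic space $\contact\cuco X$ in place, Osin's criterion \cite{Osin:acyl} then gives acylindrical hyperbolicity of $G$, completing the proof.
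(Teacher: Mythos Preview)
Your overall strategy matches the paper's: use Osin's criterion by producing a loxodromic WPD element for the $G$--action on the hyperbolic space $\contact\cuco X$, and obtain that element via Corollary~\ref{cor:wPD} from a rank-one $h\in G$ no positive power of which stabilizes a hyperplane.  The difference is that the paper does not attempt to construct such an $h$; it simply invokes \cite[Theorem~5.4]{HagenBoundary}, which, under either hypothesis (1) or (2) together with essentiality and irreducibility, directly furnishes a rank-one element with no positive power stabilizing a hyperplane.  Everything after that is Corollary~\ref{cor:wPD} and Osin.

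Your attempt to reprove this existence result has two issues.  First, the second case of your ``upgrade'' argument is not complete: if $\gamma$ lies in a bounded neighborhood of $\neb(H)$, the claim that this is incompatible with $\gamma$ crossing hyperplanes ``from orbits unrelated to $H$'' is not justified, and the appeal to a disc-diagram argument ``in the spirit of Proposition~\ref{prop:hierpath}'' is too vague to carry weight.  (A cleaner way to close this case: if $\gamma\subset\neb_R(\neb(H))$, then $\pi_{\cuco X}(\gamma)$ lies in a bounded neighborhood of the vertex $H$ in $\contact\cuco X$, contradicting the fact, from Theorem~\ref{thm:NTC}, that a rank-one $h$ with no power stabilizing a hyperplane acts loxodromically on $\contact\cuco X$ --- but of course this is circular here, since loxodromicity is what you are trying to establish.)  Second, your construction via double-skewering of an $h$ whose axis crosses hyperplanes from several $G$--orbits does not by itself rule out the confinement in the second case.

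Finally, the paragraph about a second loxodromic with disjoint endpoints is unnecessary.  Osin's criterion \cite[Theorem~1.2]{Osin:acyl} only requires that $G$ be non-virtually-cyclic (which is your hypothesis ``nonelementary'') and admit an action on a hyperbolic space with a single loxodromic WPD element; non-elementarity of the action on $\contact\cuco X$ need not be verified separately.
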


\begin{proof} By \cite[Theorem~1.2]{Osin:acyl} it suffices to find a 
    hyperbolic space on which $G$ acts with a loxodromic WPD element. 
    By \cite[Theorem~5.4]{HagenBoundary} there exists a rank-one element $g\in G$ no positive power of which stabilizes a hyperplane.
    The conclusion follows from Corollary \ref{cor:wPD}.
\end{proof}

A major motivation for studying WPD actions arises from a result of Bestvina-Fujiwara relating WPD actions to bounded cohomology.  Recall that the space $\widetilde{QH}(G)$, which is the quotient of the space of quasimorphisms of $G$ by the subspace generated by bounded functions and homomorphisms $G\rightarrow\reals$, coincides with the kernel of the map $\homology^2_b(G,\reals)\rightarrow\homology^2(G,\reals)$.  Theorem~7 of~\cite{BestvinaFujiwara:boundedcohom} asserts that if $G$ admits a WPD action, then $\widetilde{QH}(G)$ is infinite-dimensional.  This yields an alternative proof of the dichotomy obtained by Caprace-Sageev in~\cite[Theorem~H]{CapraceSageev:rank_rigidity}, as a consequence of rank-rigidity: a group $G$ admitting a sufficiently nice action on a CAT(0) cube complex $\cuco X$ that is not a product has infinite-dimensional $\widetilde{QH}(G)$; instead of using rank-rigidity and results of~\cite{BurgerMonod:boundedcohom,CapraceMonod:structure,Monod:continuous}, one can deduce their result from rank-rigidity, Corollary~\ref{cor:wPD}, and~\cite{BestvinaFujiwara:boundedcohom}.

\section{Contractibility}\label{sec:contractibility}
We now prove Theorem~\ref{thmi:geom_cg}.\eqref{itemi:contr}.  The \emph{contact complex} $\concom{\cuco{X}}$ of the CAT(0) cube complex $\cuco{X}$ is the flag complex with 1-skeleton $\contact{\cuco{X}}$.

\begin{thm}\label{thm:main}
Let $\cuco{X}$ be a CAT(0) cube complex with countable 0-skeleton and at least one 1-cube.  Then $\concom\cuco X$ is contractible.
\end{thm}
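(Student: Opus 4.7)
The plan is to reduce to finite CAT(0) cube complexes via an exhaustion argument using countability of $\cuco X^{(0)}$, then to induct on the number of hyperplanes. The main technical work lies in the inductive step.

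For the reduction, I would enumerate the $0$-cubes as $\{v_i\}_{i \geq 1}$ and let $Y_n$ be the convex hull in $\cuco X$ of $\{v_1,\dots,v_n\}$. Each $Y_n$ is a finite convex subcomplex and $\cuco X=\bigcup_n Y_n$. By the full-subgraph property for convex subcomplexes recalled in Section~\ref{subsec:convex_and_parallelism}, each $\concom Y_n$ is a full subcomplex of $\concom\cuco X$, and since every simplex of $\concom\cuco X$ has finitely many vertices, $\concom\cuco X=\bigcup_n\concom Y_n$. Any continuous map $S^k\to\concom\cuco X$ factors through some $\concom Y_n$ by compactness, so once each $\concom Y_n$ is shown contractible, $\concom\cuco X$ has trivial homotopy groups and is contractible as a CW complex.

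For finite $\cuco X$, I would induct on the number $N$ of hyperplanes, the base case $N=1$ giving a single vertex. For the inductive step, I identify an \emph{extremal} hyperplane $H$ --- one for which $\overline{H}^+=H^+$ as subcomplexes, so that no $1$-cube of $\cuco X\setminus\neb(H)$ is incident to $H^+$. Such an $H$ exists in any finite CAT(0) cube complex: take $H$ with $\overline{H}^+$ minimal under inclusion among combinatorial halfspaces of $\cuco X$, and if $\overline{H}^+\supsetneq H^+$ then analyzing a $1$-cube $e$ at a vertex of $\overline{H}^+\setminus H^+$ whose dual hyperplane does not cross $H$ produces a strictly smaller combinatorial halfspace, a contradiction. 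Let $\cuco X':=\overline{H}^-$, which is a convex subcomplex with hyperplane set $\mathcal H\setminus\{H\}$; then $\concom\cuco X'$ coincides with the full subcomplex of $\concom\cuco X$ on hyperplanes other than $H$, and is contractible by induction.

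Decomposing $\concom\cuco X=\cstar(H)\cup\concom\cuco X'$ with intersection $\link(H)$, the star is a cone and hence contractible, and $\concom\cuco X'$ is contractible by induction; so provided $\link(H)$ is also contractible, the homotopy pushout --- which applies because full subcomplex inclusions are cofibrations --- yields contractibility of $\concom\cuco X$. \emph{The main obstacle is thus to prove that $\link(H)$ is contractible.} By extremality of $H$, every hyperplane contacting $H$ meets the cubical ``half-star'' $\nu$ of $H^-$ in $\cuco X'$, either by crossing $H^-$ (equivalently $H$) or by osculating $H$ at a $0$-cube of $H^-$ (osculations at $H^+$ are ruled out by extremality). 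A check using medians shows that $\nu$ is a convex subcomplex of $\cuco X'$, hence itself a CAT(0) cube complex, whose hyperplane set is precisely $\link(H)$. Since $\nu$ has strictly fewer than $N$ hyperplanes, the inductive hypothesis yields $\concom\nu=\link(H)$ contractible, completing the induction.
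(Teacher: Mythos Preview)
Your proof is correct and follows essentially the same strategy as the paper: exhaust by finite convex subcomplexes, then induct on the number of hyperplanes via a leaf/extremal hyperplane and the decomposition $\concom\cuco X=\cstar(H)\cup\concom\cuco X'$ with intersection $\link(H)$. The only difference is that the paper obtains a convex subcomplex realizing $\link(H)$ by citing an external result, whereas you construct one explicitly as the cubical neighborhood $\nu$ of $H^-$ in $\cuco X'$.
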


\begin{proof}
For each $n\geq 0$, choose a convex, compact subcomplex $\mathcal B_n$ in such a way that $\mathcal B_m\subseteq\mathcal B_n$ for $m\leq n$ and $\cup_{n\geq0}\mathcal B_n=\cuco X$.  This choice is possible since $\cuco{X}^{(0)}$ may be written as an increasing union of finite sets; $\mathcal B_n$ can then be taken to be the cubical convex hull of the $n^{th}$ one.

Since each $\mathcal B_n$ is convex in $\cuco X$, it is itself a CAT(0) cube complex whose hyperplanes have the form $H\cap\mathcal B_n$, where $H$ is a hyperplane of $\cuco X$.  Moreover, the map $H\cap\mathcal B_n\rightarrow H$ induces an embedding $\concom\mathcal B_n\rightarrow\concom\cuco X$ whose image is a full subcomplex (i.e., any $k+1$ 0-simplices of $\concom\mathcal B_n$ span a $k$--simplex of $\concom\mathcal B_n$ if and only if their images in $\concom\cuco X$ span a $k$--simplex of $\concom\cuco X$).  Thus the set $\{\concom\mathcal B_n\}_{n\geq 0}$ provides a filtration of $\concom X$ by full subcomplexes, each of which is the contact complex of a compact CAT(0) cube complex.  Indeed, every hyperplane intersects all but finitely many of the $\mathcal B_n$, and hence appears as a 0-simplex in all but finitely many of the subcomplexes $\concom\mathcal B_n$.

For any $m\geq 0$ and any continuous map $f\colon \mathbb S^m\rightarrow\concom\cuco X$, compactness of $\image f$ implies that there exists $n\geq 0$ such that $\image f\subseteq\concom\mathcal B_n$.  By Lemma~\ref{lem:finite_contact_complex_contractible}, $\concom\mathcal B_{n+1}$ is contractible, since it is the contact complex of a compact CAT(0) cube complex.  Hence the inclusion $\concom\mathcal B_n\hookrightarrow\concom\mathcal B_{n+1}$ is null-homotopic, whence $\mathbb S^m\stackrel{f}{\longrightarrow}\concom\mathcal B_n\hookrightarrow\concom\mathcal B_{n+1}\hookrightarrow\concom\cuco X$ is null-homotopic.  It then follows from Whitehead's theorem~\cite{whiteheadI,whiteheadII} that $\cuco X$ is contractible.
\end{proof}

\begin{lem}\label{lem:finite_contact_complex_contractible}
Let $\cuco X$ be a compact CAT(0) cube complex with at least one 1-cube.  Then $\concom \cuco X$ is contractible.
\end{lem}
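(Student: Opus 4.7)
The plan is to apply a nerve lemma to the finite cover of $\cuco X$ by its hyperplane carriers $\{\neb(H):H\in\mathcal H\}$. Since $\cuco X$ is compact, $\mathcal H$ is finite, and every cube of $\cuco X$ contains some $1$--cube and hence lies in $\neb(H)$ for some $H$, so this is indeed a finite closed cover of $\cuco X$. Each carrier $\neb(H)\cong H\times[-1,1]$ is a convex subcomplex of $\cuco X$, and is therefore a CAT(0) cube complex in its own right, hence contractible.

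The next step is to identify the nerve of this cover with $\concom\cuco X$. Given $\{H_1,\ldots,H_k\}\subseteq\mathcal H$, I claim $\neb(H_1)\cap\cdots\cap\neb(H_k)\neq\emptyset$ exactly when the $H_i$ pairwise contact. The ``only if'' direction is the definition of contact. For ``if'', I would invoke the Helly property for finite families of convex subcomplexes of a CAT(0) cube complex: if such subcomplexes pairwise intersect, they have non-empty common intersection. This is a standard consequence of the median structure on $\cuco X^{(0)}$ (equivalently, of existence of gates), and applies to the pairwise-contacting carriers $\neb(H_i)$. Any such intersection is a convex subcomplex, hence a CAT(0) cube complex, hence contractible. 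Consequently, the nerve of $\{\neb(H)\}_{H\in\mathcal H}$ has a $(k-1)$--simplex for each set of $k$ pairwise-contacting hyperplanes, which is precisely the definition of $\concom\cuco X$.

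To finish, I would apply the nerve lemma in the closed/CW setting. One option is to replace each $\neb(H)$ by a small open neighborhood $U_H$ in $\cuco X$ that deformation retracts onto $\neb(H)$ compatibly with intersections, so that each non-empty $U_{H_1}\cap\cdots\cap U_{H_k}$ is homotopy equivalent to $\neb(H_1)\cap\cdots\cap\neb(H_k)$; the classical nerve lemma for good open covers then gives $\concom\cuco X\simeq\cuco X$. Alternatively one can cite Bj\"orner's nerve lemma for regular CW complexes covered by subcomplexes with empty or contractible intersections, which applies directly. Since $\cuco X$ is CAT(0), it is contractible, and the lemma follows.

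The only nontrivial input is Helly for the carriers, which is the one step I would want to pin down carefully with a reference (or a two-line gates argument); the identification of the nerve with $\concom\cuco X$ and the passage through the nerve lemma are routine. The hypothesis of at least one $1$--cube is used only to ensure $\mathcal H\neq\emptyset$ so that $\concom\cuco X$ is non-empty.
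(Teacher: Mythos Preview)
Your proof is correct and takes a genuinely different route from the paper's. The paper argues by induction on the number of hyperplanes: it finds a \emph{leaf hyperplane} $H$ (one bounding a halfspace that is just $H\times[0,1]$), removes its carrier to obtain a smaller convex subcomplex $\cuco A$, and establishes a decomposition $\concom\cuco X\cong\concom\cuco A\cup_L(L\star\{H\})$ where $L$ is the contact complex of another proper convex subcomplex. Induction then gives contractibility of both $\concom\cuco A$ and $L$, and the gluing finishes the argument.

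Your nerve-lemma approach is more direct and conceptual: you recognise $\concom\cuco X$ as the nerve of the cover by carriers, with the Helly property for convex subcomplexes doing the work of showing the nerve is flag (i.e., equals the contact complex rather than just the contact graph). This avoids the inductive bookkeeping and the auxiliary Lemmas~\ref{lem:convex_subcomplex} and~\ref{lem:decomposition}. The trade-off is that you import two external ingredients---the Helly property and a suitable nerve lemma for closed CW covers---while the paper's proof is essentially self-contained once one accepts the basic convexity facts from Section~\ref{sec:background}. Both approaches ultimately rest on the same structural fact (carriers are convex and intersections of convex subcomplexes are convex, hence CAT(0), hence contractible), but yours packages it more efficiently.
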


\begin{proof}
We will argue by induction on $n=|\concom\cuco X^{(0)}|$, i.e., the number of hyperplanes in $\cuco X$.  When $n=1$, the cube complex $\cuco X$ is necessarily a single 1-cube, so $\concom\cuco X$ is a 0-simplex.  Let $n\geq 1$.  Since $\cuco X$ has finitely many hyperplanes, there exists a hyperplane $H\subset\cuco X$ such that one of the components of $\cuco X-H$ has closure $H\times[0,1]$, where $\neb(H)$ is identified with $H\times[-1,1]$ and $H$ with $H\times\{0\}$.  This generalizes the case in which $\cuco X$ is a tree and $H$ is the midpoint of an edge containing a degree-1 vertex; accordingly, such a hyperplane $H$ will be called a \emph{leaf hyperplane}.  When $H$ is a leaf hyperplane, we always denote by $H^+$ the halfspace $H\times[0,1]$ and by $H^-$ the other closed halfspace.

By Lemma~\ref{lem:convex_subcomplex}, $\cuco{A}=\closure{\cuco X-\neb(H)}$ is a convex proper subcomplex of $\cuco X$, so that $\concom\cuco A$ embeds in $\concom\cuco X$ as a full subcomplex.  Moreover, by Lemma~\ref{lem:decomposition}, there is a convex proper subcomplex $H'\subset\cuco X$ such that $\concom\cuco X\cong\concom\cuco A\cup_L\left(L\star\{H\}\right),$ where $L$ is a subcomplex of $\concom\closure{\cuco X-\neb(H)}$ isomorphic to $\concom H'$.  Each of $\cuco A$ and $H'$ is a CAT(0) cube complex whose set of hyperplanes corresponds bijectively to a subset of $\cuco X^{(0)}-\{H\}$, so by induction, $\concom\cuco A$ and $L$ are contractible.  Since $L$ is contractible, there is a homotopy equivalence $f\colon L\star\{H\}\rightarrow L$ that is the identity on $L$, whence the pasting lemma yields a homotopy equivalence $\concom\cuco X\rightarrow\concom\cuco A$.  Since $\cuco A$ is contractible, the same is therefore true of $\concom\cuco X$.
\end{proof}

\begin{lem}\label{lem:convex_subcomplex}
Let $H$ be a leaf hyperplane of the CAT(0) cube complex $\cuco X$.  Then $\closure{\cuco X-\neb(H)}$ is a convex subcomplex of $\cuco X$.
\end{lem}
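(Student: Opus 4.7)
The plan is to identify $\closure{\cuco X - \neb(H)}$ with a combinatorial halfspace of $\cuco X$, and then to invoke the standard fact that combinatorial halfspaces are convex subcomplexes.

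First I would unpack the leaf hypothesis. Using the identification $\neb(H) \cong H \times [-1,1]$ stated just before the lemma, let $H^+ = H \times \{1\}$ and $H^- = H \times \{-1\}$ denote the two combinatorial hyperplanes bounding $\neb(H)$, and let $\cuco Y^+, \cuco Y^-$ denote the two combinatorial halfspaces of $\cuco X$ associated to $H$, i.e., the two components of $\cuco X - H \times (-1,1)$, bounded respectively by $H^+$ and $H^-$. Then $\cuco X = \cuco Y^+ \cup \neb(H) \cup \cuco Y^-$, with $\cuco Y^{\pm} \cap \neb(H) = H^{\pm}$. The leaf hypothesis that one component of $\cuco X - H$ has closure $H \times [0,1] \subseteq \neb(H)$ forces $\cuco Y^+ = H^+$: no cube of $\cuco X$ lies strictly on the $+$ side of $H^+$.

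Second, I would perform a direct computation. Since $\cuco Y^+ = H^+ \subset \neb(H)$, removing the carrier $\neb(H)$ from $\cuco X$ eliminates the entire $+$ side together with both combinatorial hyperplanes, so $\cuco X - \neb(H) = \cuco Y^- \setminus H^-$. Taking the closure in $\cuco X$ restores $H^-$, yielding the identification $\closure{\cuco X - \neb(H)} = \cuco Y^-$.

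Finally, I would invoke convexity of combinatorial halfspaces. The subcomplex $\cuco Y^-$ is by definition a combinatorial halfspace of $\cuco X$, and any combinatorial halfspace is convex: this is immediate from characterization~(1) of convexity in Section~\ref{subsec:convex_and_parallelism}, since a single combinatorial halfspace is trivially the intersection of all combinatorial halfspaces containing it. I do not anticipate any genuine obstacle; the only substantive point is correctly translating the leaf hypothesis into the identity $\cuco Y^+ = H^+$, after which the argument is essentially bookkeeping.
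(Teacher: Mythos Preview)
Your proposal is correct and is essentially the same argument as the paper's: both identify $\closure{\cuco X-\neb(H)}$ with the combinatorial halfspace on the $-$ side (the paper phrases it as the convex hull of the $0$--cube halfspace $H^-\cap\cuco X^{(0)}$), and then convexity is immediate. The only difference is notational---you use $H^{\pm}$ for the bounding combinatorial hyperplanes while in this section of the paper $H^{\pm}$ denote the closed halfspaces themselves---but the content is identical.
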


\begin{proof}
Since $H^+\subset \neb(H)$, the subcomplex $\closure{\cuco X-\neb(H)}$ is exactly the convex hull of the halfspace $H^-\cap\cuco X^{(0)}$ of the 0-skeleton induced by $H$.
\end{proof}

\begin{lem}\label{lem:decomposition}
Let $H$ be a leaf hyperplane of the compact CAT(0) cube complex $\cuco X$.  Then there is convex subcomplex $H'\subsetneq\cuco X$ such that there is an isomorphism
$$\concom\cuco X\cong\concom\closure{\cuco X-\neb(H)}\cup_L\left(L\star\{H\}\right),$$ where $L$ is a subcomplex of $\concom\closure{\cuco X-\neb(H)}$ isomorphic to $\concom H'$.
\end{lem}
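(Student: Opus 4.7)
The plan is to take $H'$ to be the combinatorial carrier of the combinatorial hyperplane $H^-$ (the image of $H\times\{-1\}$ under $\neb(H)\cong H\times[-1,1]$) inside $\cuco A := \closure{\cuco X - \neb(H)}$, i.e., $H'$ is the union of all closed cubes of $\cuco A$ whose $0$--skeletons meet $H^-$. Since $H$ is a leaf hyperplane, the $0$--skeleton of the leaf side is exactly $H^+$; consequently every hyperplane $V \neq H$ of $\cuco X$ is also a hyperplane of $\cuco A$ (if $V$ does not cross $H$, its $1$--skeleton lies on the non-leaf side, while if it crosses $H$ it has dual $1$--cubes in $H^- \subseteq \cuco A$). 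Hence the vertex set of $\concom\cuco A$ embeds in that of $\concom\cuco X$ as the complement of $\{H\}$.

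Next I would verify that $H'$ is convex in $\cuco X$. Convexity of $\cuco A$ is Lemma~\ref{lem:convex_subcomplex}, and $H^-$ is convex as a combinatorial hyperplane; given $0$--cubes $x,y\in H'$, any combinatorial geodesic between them in $\cuco X$ lies in $\cuco A$, and each of its $1$--cubes lies either in $H^-$ or in a cube of $\cuco A$ sharing a $0$--cube with $H^-$, hence in $H'$. This uses the local product structure of $H'$ as a one-sided thickening of $H^-$ inside $\cuco A$.

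I would then identify the link $L$ of $H$ in the flag complex $\concom\cuco X$ with $\concom H'$. A hyperplane $V \neq H$ is in $L$ iff $\neb(V)\cap\neb(H) \neq \emptyset$; because the only $0$--cubes of $\neb(H)$ on the leaf side lie in $H^+$, this is equivalent to $\neb(V)$ containing a $0$--cube of $H^-$. Using that $c \cap H^-$ is a face of $c$ for every cube $c$ meeting $H^-$, together with the observation that every hyperplane crossing $c$ has a dual $1$--cube meeting any prescribed face of $c$ at a $0$--cube, this in turn is equivalent to $V$ being a hyperplane of $H'$. Thus $L$ and $\concom H'$ share a vertex set; both are full subcomplexes of the flag complex $\concom\cuco X$ ($L$ as the link of a vertex, $\concom H'$ by convexity of $H'$), so they coincide.

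The pushout decomposition now follows readily: every simplex of $\concom\cuco X$ either avoids $H$ (and hence lies in $\concom\cuco A$) or contains $H$ (in which case its remaining vertices lie in $L$, placing the simplex in $L\star\{H\}$), and these two pieces intersect exactly along $L$. The main obstacle is the convexity of $H'$; once that is in hand, everything else is a careful unwinding of the definitions of contact, full subcomplexes, and flag complexes.
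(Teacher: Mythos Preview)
Your approach is correct and takes a more explicit route than the paper's. The paper obtains $H'$ by invoking Lemma~7.11 of \cite{Hagen:quasi_arboreal} to produce an isometrically embedded subcomplex whose hyperplanes are exactly those contacting $H$, and then passes to its convex hull; you instead construct $H'$ concretely as the cubical neighbourhood of $H^-$ inside $\cuco A$. Both routes reach the same endpoint --- a convex $H'$ with the correct hyperplane set, so that $\concom H'$ and the link $L$ of $H$ coincide as full subcomplexes of $\concom\cuco X$ on the same vertex set --- but yours is self-contained while the paper's outsources the existence of $H'$ to an external reference.

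The price you pay is that convexity of your $H'$ requires an honest argument, and the phrase ``local product structure of $H'$ as a one-sided thickening of $H^-$'' is too vague to carry it. One clean way to complete this step: first observe that a $0$--cube $x\in\cuco A$ lies in $H'$ if and only if the hyperplanes separating $x$ from $H^-$ pairwise cross (equivalently, $x$ and $\gate_{H^-}(x)$ span a cube). Then check that this condition is inherited by any $w$ on a geodesic between $x,y\in H'$: any hyperplane $V$ separating $w$ from $H^-$ must separate one of $x,y$ from $H^-$, and a short case analysis (both such $V_1,V_2$ on the same side; or on opposite sides, in which case all four ``quadrants'' are nonempty) forces $V_1,V_2$ to cross. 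Alternatively, one can show directly that $H'$ coincides with the intersection of the halfspaces containing $H^-$ associated to hyperplanes not contacting $H$; the key point is that two hyperplanes osculating $H$ which fail to cross cannot both separate a single $0$--cube from $H^-$. With convexity established, the rest of your outline goes through as written.
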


\begin{proof}
Let $\mathcal A=\closure{\cuco X-\neb(H)}$.  Let $\{V_1,\ldots,V_s\}$ be the hyperplanes of $\cuco X$ that contact $H$, and let $L$ be the full subcomplex of $\concom\cuco X$ generated by $\{V_1,\ldots,V_s\}$.  Lemma~7.11 of~\cite{Hagen:quasi_arboreal} implies that there is an isometrically embedded subcomplex $H'\subset\cuco X$ such that a hyperplane intersects $H'$ if and only if that hyperplane belongs to $\{V_1,\ldots,V_k\}$.  By replacing $H'$ if necessary by its convex hull, we may assume that $H'$ is convex, and hence $\concom H'\cong L$.  The decomposition is obvious from the definitions.
\end{proof}

\section{Automorphisms of the contact graph}\label{section:automorphisms}

We now provide a short example which shows that the analogue of 
Ivanov's Theorem, that the mapping class group is the 
automorphism group of the curve graph \cite{IvanovAut}, fails to hold 
for the contact graph. Indeed, the example below shows 
that this failure holds even if one 
considers an edge-colored version of the contact graph differentiating edges 
associated to crossing hyperplanes from those associated to osculating
ones. We note that in the case of RAAGs, the fact that the 
automorphism group of the contact graph may be much larger than the 
RAAG itself is familiar from the case of the extension graph, where 
Kim--Koberda prove that the automorphism of the extension graph is 
often uncountable \cite[Theorem~66]{KimKoberda:curve_graph}.

\begin{exmp} In the cube complex pictured in Figure~\ref{fig:aut_example}
    one sees that the automorphism group 
    of the cube complex is virtually cyclic, and in particular, pairs 
    of valence zero 
    vertices, such as those labelled $a,b$ can not be 
    permuted willy-nilly. Whereas in the contact graph, the pair of 
    simplices (labelled $a', b'$), one for each hyperplane separating off 
    such a vertex, can be swapped arbitrarily; thus the contact graph 
    contains a $(\mathbb Z/2\integers)^{\infty}$ subgroup.
    
    \begin{figure}[h]\label{fig:aut_example}
    \begin{overpic}[width=0.8\textwidth]{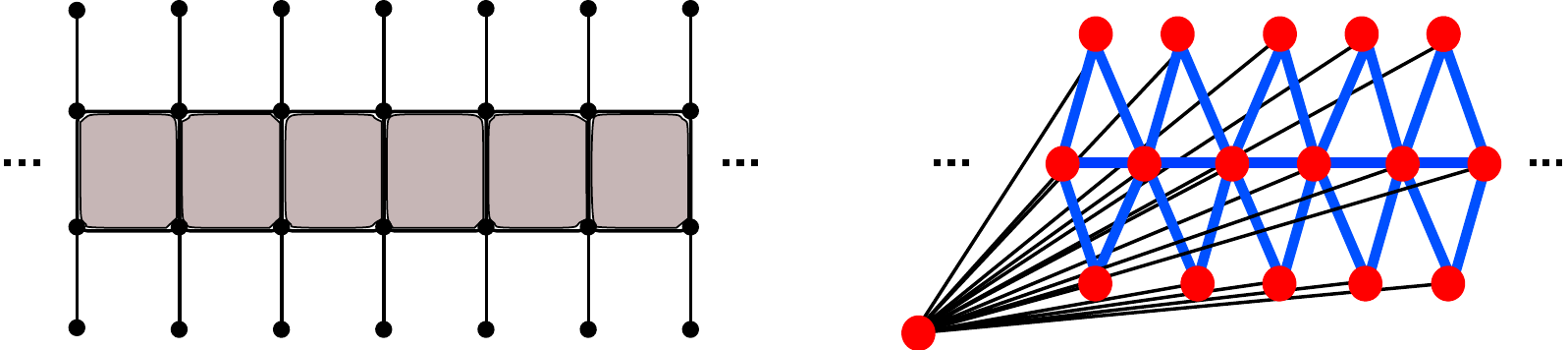}
      \put(33,21){$a$}
      \put(33,-1){$b$}
      \put(88,21){$a'$}
      \put(88,-1){$b'$}
      \put(23,-3){$\cuco X$}
      \put(77,-3){$\fontact \cuco X$}
    \end{overpic}
    \caption{Cube complex drawn at left; its associated contact 
    graph at right.}
    \end{figure}
\end{exmp}

\part{Factor systems, projections, and the distance formula}\label{part:distance_formula}
Throughout, $\cuco X$ is a CAT(0) cube complex.  The setting in which we will work is established in Definition~\ref{defn:factor_system} and forces $\cuco X$ to be uniformly locally finite.  

\section{Factored contact graphs}\label{sec:projection_targets}

\subsection{Factor systems}\label{subsec:hypercarrier}
\begin{defn}[Factor system]\label{defn:factor_system}
A set of subcomplexes, $\factorsup$, which satisfies the 
following is called a \emph{factor system} in $\cuco X$:
\begin{enumerate}
 \item\label{item:fs_0}  $\cuco X\in\factorsup$. 
 \item\label{item:fs_convex} Each $F\in\factorsup$ is a nonempty convex
 subcomplex of $\cuco X$.  
 \item\label{item:fs_local_finite} There exists $\Delta\geq 1$ such that for all $x\in\cuco X^{(0)}$, at most $\Delta$ elements of $\factorsup$ contain $x$.
 \item\label{item:fs_all_hyperplanes} Every nontrivial convex subcomplex parallel to a combinatorial hyperplane 
 of $\cuco X$ is in $\factorsup$. 
 \item\label{item:fs_closed_proj} There exists $\xi\geq0$ such that for all $F,F'\in\factorsup$, either $\gate_F(F')\in\factorsup$ or $\diam(\gate_F(F'))<\xi$.
 \end{enumerate}
\end{defn}

Convexity of each $F\in\factorsup$ implies that each $F$ is
a CAT(0) cube complex whose hyperplanes have the form $H\cap F$, where
$H$ is a hyperplane of $\cuco X$, and the map $H\cap F\mapsto H$
induces an injective graph homomorphism $\contact
F\hookrightarrow\contact\cuco X$ whose image is an induced subgraph,
which, by an abuse of notation, we also denote $\contact F$.

If $\factorsup$ is a factor system for $\cuco X$, then for each $F\in\factorsup$, the set $\factorsup_F=\{H\in\factorsup:H\subseteq F\}$ is a factor system in $F$ such that each point in $F$ lies in at most $\Delta-1$ elements of $\factorsup_F$.  (The distinction between $\factorsup_F$ and $\{H\cap F:H\in\factorsup\}$ is small: the latter consists of the former, together with some subcomplexes of diameter at most $\xi$.  It is mainly for convenience that we choose to work, everywhere, with $\factorsup_F$ rather than $\{H\cap F:H\in\factorsup\}$.)

\subsection{Examples of factor systems}\label{subsec:examples_of_fs}
\subsubsection{Special groups}\label{subsubsec:compact_special_factor_system}
Universal covers of special cube complexes, defined
in~\cite{HaglundWiseSpecial}, contain factor systems provided the co-special action has finitely many orbits of hyperplanes, as we shall 
show below.  This provides a
very large class of groups for which the distance formula from
Section~\ref{sec:distance_formula} holds.  Let us start by
studying universal covers of Salvetti complexes.

The following definition is tailored to the proof of Proposition \ref{prop:raag_factor_system}.

\begin{defn}\label{defn:rich_subgfs}
 Let $\Gamma$ be a simplicial graph. A collection $\mathcal R$ of subgraphs of $\Gamma$ is \emph{rich} if
 \begin{enumerate}
  \item $\Gamma\in\mathcal R$,
  \item all links of vertices of $\Gamma$ are in $\mathcal R$, and
  \item if $A,B\in\mathcal R$ then $A\cap B\in\mathcal R$.
 \end{enumerate}
\end{defn}

The collection of all subgraphs of $\Gamma$ is a rich family.
Also, any graph admits a minimal rich family, consisting of $\Gamma$
together with all nonempty intersections of links of vertices.

\begin{prop}\label{prop:raag_factor_system}
 Let $\Gamma$ be a finite simplicial graph, $A(\Gamma)$ a 
 right-angled Artin group, and $\widetilde S_\Gamma$ the 
 universal cover of its 
 Salvetti complex.  Let $\mathcal
 R$ be a rich family of subgraphs of~$\Gamma$.
 
 Let $\factorsup$ be the ($A(\Gamma)$--invariant) collection of convex subcomplexes of $\widetilde S_\Gamma$ containing all lifts of the sub-Salvetti complexes $S_\Lambda$ of $S_\Gamma$, for all $\Lambda\in\mathcal R$. Then $\factorsup$ is a factor system.
 \end{prop}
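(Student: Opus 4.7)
I plan to verify the five axioms of Definition~\ref{defn:factor_system} for the collection $\factorsup = \{g\widetilde S_\Lambda : g \in A(\Gamma),\ \Lambda \in \mathcal R\}$, i.e., the $A(\Gamma)$-orbit of the standard lifts of the sub-Salvetti complexes indexed by $\mathcal R$. Axioms (1), (2), and (4) follow quickly from the background recalled in Section~\ref{subsec:raag_extension_graph} together with the definition of richness: $\widetilde S_\Gamma \in \factorsup$ because $\Gamma \in \mathcal R$; each $g\widetilde S_\Lambda$ is a nonempty convex subcomplex since $S_\Lambda \hookrightarrow S_\Gamma$ is a local isometry and $A(\Gamma)$ acts by isometries; and each combinatorial hyperplane of $\widetilde S_\Gamma$ is a copy of $\widetilde S_{\link(v)}$, sitting as a side of the carrier $\mathbf{R} \times \widetilde S_{\link(v)}$ of a hyperplane labeled $v$. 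Since $A(\Gamma)$ acts transitively on hyperplanes of each label, every combinatorial hyperplane --- and every subcomplex parallel to one --- is an $A(\Gamma)$-translate of some $\widetilde S_{\link(v)}$, which lies in $\factorsup$ because $\link(v)\in\mathcal R$ by Definition~\ref{defn:rich_subgfs}(2).

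For axiom (3), I plan to show that each $0$-cube of $\widetilde S_\Gamma$ lies in at most one element of $\factorsup$ per $\Lambda \in \mathcal R$, which gives $\Delta \leq |\mathcal R|<\infty$ because $\Gamma$ is finite. This uses that $A(\Gamma)$ acts freely and transitively on the $0$-skeleton while $A_\Lambda$ acts transitively on the $0$-cubes of $\widetilde S_\Lambda$: if a $0$-cube $x$ lies in both $g_1\widetilde S_\Lambda$ and $g_2\widetilde S_\Lambda$, then $g_2^{-1}g_1 \in A_\Lambda$, and since $A_\Lambda$ stabilizes $\widetilde S_\Lambda$, we conclude $g_1\widetilde S_\Lambda = g_2\widetilde S_\Lambda$.

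The main obstacle is axiom (5), where the intersection-closure hypothesis (Definition~\ref{defn:rich_subgfs}(3)) is essential. The claim I will prove is that for any $F = g\widetilde S_\Lambda,\ F' = g'\widetilde S_{\Lambda'} \in \factorsup$, the gate $\gate_F(F')$ equals $h\widetilde S_{\Lambda \cap \Lambda'}$ for some $h \in A(\Gamma)$, so it lies in $\factorsup$ since $\Lambda \cap \Lambda' \in \mathcal R$ by richness (and in the edge case $\Lambda \cap \Lambda' = \emptyset$ the gate is a single $0$-cube, satisfying the bounded-diameter clause of (5) trivially). By equivariance of the gate I may assume $g = 1$, so $F = \widetilde S_\Lambda$. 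Lemma~\ref{lem:2.6} identifies the hyperplanes crossing $\gate_F(F')$ as exactly those crossing both $F$ and $F'$, hence labeled by some $v\in\Lambda\cap\Lambda'$. The hard step will be identifying $\gate_F(F')$ with a specific translate of $\widetilde S_{\Lambda \cap \Lambda'}$. My plan is to directly compute the gates of the $0$-cubes $g'h \cdot \ast$ of $F'$ as $h$ ranges over $A_{\Lambda'}$: using that the gate to $\widetilde S_\Lambda$ of $w \cdot \ast$ is given by the ``$\Lambda$-part'' of $w$ obtained by deleting generators outside $\Lambda$ from a geodesic normal form for $w$, the image of these projections is a single coset $g_0 A_{\Lambda \cap \Lambda'} \cdot \ast = g_0 \widetilde S_{\Lambda \cap \Lambda'}$ for some $g_0 \in A_\Lambda$, because only the $(\Lambda\cap\Lambda')$-generators appearing in $h$ survive the projection.
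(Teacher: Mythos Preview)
Your verification of axioms (1)--(4) is fine. The gap is in axiom~(5): the asserted identity $\gate_F(F')=h\widetilde S_{\Lambda\cap\Lambda'}$ is false in general, and the ``deletion'' description of the gate that you invoke to justify it is incorrect.

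Take $\Gamma$ with vertex set $\{a,b,c\}$ and a single edge joining $a$ and $b$, and let $\mathcal R$ be the (rich) family of all subgraphs. With $\Lambda=\Lambda'=\{a,b\}$, set $F=\widetilde S_{\{a,b\}}$ and $F'=c\cdot\widetilde S_{\{a,b\}}$. No hyperplane crosses both $F$ and $F'$: an $a$--hyperplane meeting $F$ is dual to an $a$--edge based at some $b^k$, and since $a$ commutes only with $b$ this hyperplane never reaches the $a$--edges based at $cb^k$ in $F'$; likewise for $b$--hyperplanes. Hence $\gate_F(F')$ is a single $0$--cube, whereas your formula predicts a translate of $\widetilde S_{\{a,b\}}\cong\reals^2$. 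The source of the error is that ``delete the letters outside $\Lambda$ from a geodesic word for $w$'' computes the retraction homomorphism $A(\Gamma)\to A_\Lambda$, not the cubical gate: this rule would send $ca$ to $a$, but in fact $\gate_F(ca\cdot\ast)=1\cdot\ast$ since $\dist(ca,a)=|a^{-1}ca|=3>2=\dist(ca,1)$. The gate of $w\cdot\ast$ in $\widetilde S_\Lambda$ is rather the maximal \emph{prefix} $u\in A_\Lambda$ in a geodesic factorisation $w=uv$, and this depends on how the remaining letters of $v$ commute with $\Lambda$. The paper handles exactly this: after arranging $1\in\gate_F(F')$ and writing $F'=g\widetilde S_{\Lambda'}$, it introduces a third subgraph $\Lambda_2$, the common link in $\Gamma$ of the generators labelling a geodesic from $1$ to $g$, and proves that the $0$--skeleton of $\gate_F(F')$ is $A_{\Lambda\cap\Lambda'\cap\Lambda_2}$. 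Richness then applies because $\Lambda_2$ is an intersection of links, hence in $\mathcal R$. In the counterexample $\Lambda_2=\link(c)=\emptyset$, recovering the single-point gate.
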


 In other words, $\factorsup$ contains a subcomplex stabilized by each conjugate of $A(\Lambda)$, for each subgraph $\Lambda$ of $\Gamma$.
 
\begin{proof}[Proof of Proposition~\ref{prop:raag_factor_system}]
In the definition of a factor system, item~\eqref{item:fs_0} holds because $\Gamma\in \mathcal R$. Items~\eqref{item:fs_convex} and~\eqref{item:fs_local_finite} are clear. Item~\eqref{item:fs_all_hyperplanes} holds because combinatorial hyperplanes in $\widetilde S_\Gamma$ are exactly lifts of sub-Salvetti complexes of $S_\Gamma$ corresponding to links of vertices. More precisely, if a hyperplane $H$ is dual to a 1-cube corresponding to the generator $v\in \Gamma^{(0)}$, then the combinatorial hyperplanes on the two sides of $H$ are lifts of $S_{\link(v)}\subseteq S_{\Gamma}$.

The content of the proposition is hence that Item \eqref{item:fs_closed_proj} holds (for $\xi=0$). To simplify the notation, we will identify the 0-skeleton of $\widetilde S_\Gamma$ with $A(\Gamma)$. Under this identification, the 0-skeleton of each $F\in\factorsup$ corresponds to a coset of $A(\Lambda)<A(\Gamma)$ for some subgraph $\Lambda\in\mathcal R$, and each such coset is the 0-skeleton of some $F\in\factorsup$.

Let $F,F'\in\factorsup$, whose 0-skeleta are (possibly after applying an element of $A(\Gamma)$) $A(\Lambda_0)$ and $gA(\Lambda_1)$, for some subgraphs $\Lambda_0,\Lambda_1$ of $\Gamma$ and $g\in A(\Gamma)$. We can assume, using the action of some $g\in A(\Gamma_0)$, that $1\in \gate_F(F')$. Recall from Lemma~\ref{lem:parallel_product} and Lemma~\ref{lem:2.6} that $\gate_F(F')$ is in a natural way one of the factors in a product region $R$ of $\widetilde S_\Gamma$, the other factor being (naturally identified with) a possibly trivial geodesic $\gamma$ from $1$ to $g$, up to changing the choice of $g$ within the same coset of $A(\Lambda_1)$. Also, $\gate_F(F')\times\{1\}$ is contained in $F$ and $\gate_F(F')\times\{g\}$ is contained in $F'$. Let $\Lambda_2$ be the link in $\Lambda$ of the set of vertices of $\Lambda$ that label some 1-cube along $\gamma$. The following claim concludes the proof.

{\bf Claim:} The 0-skeleton of $\gate_F(F')$ is $A(\Lambda)$, where $\Lambda=\bigcap_{i=0,1,2} \Lambda_i\in \mathcal R$.

Let us first show $\gate_F(F')^{(0)}\subseteq A(\Lambda)$. Consider a geodesic $\delta$ joining $1$ to $h\in \gate_F(F')^{(0)}$. Any 1-cube $e$ of $\delta$ is contained in $F$ as well as parallel to a 1-cube of $F'$, which implies that the label $v$ of $e$ belongs to $\Lambda_0\cap \Lambda_1$. Let us now show that $e$ also belongs to $\Lambda_2$. Once we have done that, it is clear that $h$ can be written as a product of generators each belonging to $A(\Lambda)$.

Fix any 1-cube $e'$ of $\gamma$. The product region $R$ contains a square with two 1-cubes parallel to $e$ and two 1-cubes parallel to $e'$. This means that the labels of $e,e'$ commute and are distinct. As this holds for any $e'$, the label of $e$ belongs to $\Lambda_2$, as required.

We are left to show $A(\Lambda) \subseteq \gate_F(F')^{(0)}$. If $h\in A(\Lambda)$, there exists a geodesic $\delta$ from $1$ to $h$ whose 1-cubes are labeled by elements of $\gamma$. It is then easy to see that $\widetilde S_\Gamma$ (and in fact $R$) contains a product region naturally identified with $\delta\times \gamma$ with the property that $\delta\times\{1\}$ is contained in $F$ and $\delta\times\{g\}$ is contained in $F'$. In particular, $\delta$, and thus $h$, is contained in $\gate_F(F')$, as required.
\end{proof}

\begin{defn}[Induced factor system]\label{defn:induced_factor_system}
Let $\cuco X$ be a CAT(0) cube complex with a factor system $\factorsup$ and let $\cuco Y\subseteq\cuco X$ be a convex subcomplex.  The \emph{induced factor system} $\factorsup_{\cuco Y}$ be the set of nonempty subcomplexes of $\cuco Y$ of the form $F\cap\cuco Y, F\in\factorsup$.
\end{defn}

\begin{lem}\label{lem:convex_subcomplex_factor_system}
Let $\cuco X$ be a CAT(0) cube complex with a factor system $\factorsup$ and let $\cuco Y\subseteq\cuco X$ be a convex subcomplex.  Then $\factorsup_{\cuco Y}$ is a factor system in $\cuco Y$.
\end{lem}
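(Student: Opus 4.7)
The plan is to verify each of the five clauses of Definition~\ref{defn:factor_system} for the proposed collection $\factorsup_{\cuco Y}=\{F\cap\cuco Y:F\in\factorsup,\,F\cap\cuco Y\neq\emptyset\}$. Clauses (1)--(3) are immediate: $\cuco Y=\cuco X\cap\cuco Y$ realises $\cuco Y\in\factorsup_{\cuco Y}$; an intersection of two convex subcomplexes is convex; and since every element of $\factorsup_{\cuco Y}$ containing a fixed $x\in\cuco Y^{(0)}$ arises from a distinct element of $\factorsup$ containing $x$, the local finiteness constant carries over (at most $\Delta$). So the real content is clauses (4) and (5), and I would treat these in reverse order because clause (5) sets up an identity that also clarifies clause (4).

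For clause (5), the crucial observation is the compatibility of gates with restriction to $\cuco Y$. Given any $F\in\factorsup$ with $F\cap\cuco Y\neq\emptyset$ and any $y\in\cuco Y^{(0)}$, I would argue that $\gate_F^{\cuco X}(y)\in\cuco Y$: pick $z\in F\cap\cuco Y$, then a geodesic from $y$ to $z$ lies in $\cuco Y$ by convexity, and $\gate_F^{\cuco X}(y)$ is the last vertex of such a geodesic lying in $F$, so it lies in $\cuco Y\cap F=F\cap\cuco Y$. Consequently $\gate_F^{\cuco X}(y)=\gate_{F\cap\cuco Y}^{\cuco Y}(y)$. Together with the double-gate identity $\gate_{F_1}(\gate_{F_2}(z))=z$ for $z\in\gate_{F_1}(F_2)$ (which follows from the product structure in Lemma~\ref{lem:2.6} and Lemma~\ref{lem:parallel_product}), this yields
\[\gate_{F_1\cap\cuco Y}^{\cuco Y}(F_2\cap\cuco Y)\;=\;\gate_{F_1}^{\cuco X}(F_2)\cap\cuco Y\]
for all $F_1,F_2\in\factorsup$ meeting $\cuco Y$. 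Applying clause (5) for $\factorsup$ in $\cuco X$, the right-hand side either lies in $\factorsup$ (so its intersection with $\cuco Y$ lies in $\factorsup_{\cuco Y}$) or has diameter $<\xi$ (in which case so does its subcomplex in $\cuco Y$). Thus clause (5) holds for $\factorsup_{\cuco Y}$ with the same constant $\xi$.

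For clause (4), let $h$ be a hyperplane of $\cuco Y$, so $h=H\cap\cuco Y$ for a unique $H\in\mathcal{H}(\cuco X)$. I would first show that the combinatorial hyperplanes of $h$ inside $\cuco Y$ are exactly $h^\pm=H^\pm\cap\cuco Y$. The point is that $\neb_{\cuco X}(H)\cap\cuco Y$ inherits a product structure $(H\cap\cuco Y)\times[-1,1]$: pick any $1$--cube of $\cuco Y$ dual to $H$, giving both endpoints $(x_0,\pm 1)\in\cuco Y$; then for any $(x,+1)\in H^+\cap\cuco Y$, a geodesic in $H\times[-1,1]$ from $(x_0,-1)$ to $(x,+1)$ that first moves in the $H$ direction and then crosses $H$ passes through $(x,-1)$, which must lie in $\cuco Y$ by convexity. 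Hence $h^\pm=H^\pm\cap\cuco Y\in\factorsup_{\cuco Y}$ since $H^\pm\in\factorsup$ by clause (4) for $\cuco X$. To handle a general nontrivial convex subcomplex $F\subseteq\cuco Y$ parallel in $\cuco Y$ to $h^+$, I would produce a parallel copy $F^*$ of $H^+$ in $\cuco X$ with $F^*\cap\cuco Y=F$: by Lemma~\ref{lem:parallel_product} applied in $\cuco Y$, $F$ sits as $h^+\times\{t\}$ in a product $h^+\times I\hookrightarrow\cuco Y\hookrightarrow\cuco X$, where $I$ is a combinatorial geodesic dual to the hyperplanes $V_1,\ldots,V_k$ of $\cuco Y$ separating $h^+$ from $F$; one extends to a product $H^+\times I\hookrightarrow\cuco X$, sets $F^*=H^+\times\{t\}\in\factorsup$ by clause (4) for $\cuco X$, and compares crossing-hyperplane sets to see that $F^*\cap\cuco Y$ is a convex subcomplex of $\cuco Y$ parallel to $h^+$ and containing $F$; since two parallel convex subcomplexes sharing a $0$--cube must coincide, $F^*\cap\cuco Y=F$.

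The main obstacle is the extension from $h^+\times I$ to $H^+\times I$ inside $\cuco X$: one must check that every hyperplane of $\cuco X$ crossing $H^+$ but not $h^+$ still crosses each $\tilde V_j$. This requires exploiting the fact that $h^+\times I$ already exists as a convex product in $\cuco X$ (so hyperplanes in $\mathcal{H}(h^+)$ cross every $\tilde V_j$) together with an argument using half-space orientations: the $\tilde V_j$ do not cross $h^+$ in $\cuco X$ (since $V_j$ does not cross $h$ in $\cuco Y$, and $h^+\subseteq\cuco Y$), and any hyperplane $U$ of $\cuco X$ crossing $H$ can be analysed via the two-sided position of $U$ with respect to $\tilde V_j$ to deduce the required crossing. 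Everything else in the argument is a straightforward bookkeeping exercise, and no new constants beyond $\Delta$ and $\xi$ are needed.
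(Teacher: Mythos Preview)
Your treatment of clauses (1)--(3) and (5) is correct and essentially the same as the paper's.  For clause~(5) both arguments hinge on the identity $\gate_{F_1\cap\cuco Y}^{\cuco Y}(F_2\cap\cuco Y)=\gate_{F_1}^{\cuco X}(F_2)\cap\cuco Y$, and both rest on the fact that $\gate_F^{\cuco X}(y)\in\cuco Y$ whenever $y\in\cuco Y$ and $F\cap\cuco Y\neq\emptyset$ (the paper isolates this as Lemma~\ref{lem:convex_intersect} and proves it with medians; your geodesic argument is an equally valid proof).  The only difference is that you deduce the $\supseteq$ inclusion from the ``double--gate'' involution coming from Lemmas~\ref{lem:parallel_product} and~\ref{lem:2.6}, whereas the paper argues directly with separating hyperplanes; both routes work.

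For clause~(4) there is a genuine gap.  The paper's proof is brief: it only records that each combinatorial hyperplane $h^+$ of $\cuco Y$ is of the form $W^+\cap\cuco Y$ with $W^+\in\factorsup$, and stops there.  You correctly notice that one must also treat subcomplexes $F\subseteq\cuco Y$ merely \emph{parallel} to $h^+$, and you propose to do so by extending the product $h^+\times I\hookrightarrow\cuco Y$ to a product $H^+\times I\hookrightarrow\cuco X$ and setting $F^*=H^+\times\{t\}$.  This extension does not exist in general, so the ``main obstacle'' you flag cannot be overcome along these lines.  A concrete counterexample: let $\cuco X$ be the L--tromino, i.e., the union of the three unit squares $[0,1]^2$, $[1,2]\times[0,1]$, $[0,1]\times[1,2]$; let $\cuco Y=[0,2]\times[0,1]$; take $h$ at $x=\tfrac12$, so $h^+=\{0\}\times[0,1]$ and $H^+=\{0\}\times[0,2]$; and let $F=\{2\}\times[0,1]$, which is parallel to $h^+$ in $\cuco Y$.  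Then $I$ is dual to $\tilde V_1=\{x=\tfrac12\}$ and $\tilde V_2=\{x=\tfrac32\}$, while $H^+$ is crossed by $U=\{y=\tfrac32\}$.  In $\cuco X$ the hyperplanes $U$ and $\tilde V_2$ do \emph{not} cross, so no product $H^+\times I$ embeds in $\cuco X$, and your half--space orientation sketch cannot manufacture the missing crossing.  (In this particular example $F$ happens to be a combinatorial hyperplane of $\cuco X$ already, so the desired conclusion holds---but not for the reason you give.)  A correct treatment of parallel copies has to proceed differently, for instance by showing that any such $F$ is parallel in $\cuco X$ to some combinatorial hyperplane of $\cuco X$ and hence already lies in $\factorsup$.
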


\begin{proof}
Item~\eqref{item:fs_0} of Definition~\ref{defn:factor_system} holds since $\cuco X\cap\cuco Y=\cuco Y$.  Item~\eqref{item:fs_convex} follows since intersections of convex subcomplexes are convex.  Item~\eqref{item:fs_local_finite} follows since $\factorsup$ is a uniformly locally finite collection.  To verify Item~\eqref{item:fs_all_hyperplanes}, let $H^+$ be a combinatorial hyperplane of $\cuco Y$ and let $H$ be the (genuine) hyperplane whose carrier contains $H^+$ as one of its bounding copies.  By convexity of $\cuco Y$, the hyperplane $H$ has the form $W\cap\cuco Y$ for some hyperplane $W$ of $\cuco X$, and hence $H^+=W^+\cap\cuco Y$, where $W^+$ is one of the combinatorial hyperplanes bounding the carrier of $W$.  But $W^+\in\factorsup$, by item~\eqref{item:fs_all_hyperplanes}, so $W^+\cap\cuco Y=H^+\in\factorsup_{\cuco Y}$.  

It suffices to verify item~\eqref{item:fs_closed_proj}, namely that $\factorsup_{\cuco Y}$ is closed under (large) projection.  To that end, let $F,F'\in\factorsup$ and suppose that $\diam(\gate_{F\cap\cuco Y}(F'\cap\cuco Y))\geq\xi$, where $\xi$ is the constant associated to $\factorsup$ by Definition~\ref{defn:factor_system}.  Then, by item~\eqref{item:fs_closed_proj}, applied to $\factorsup$, we have $\gate_F(F')\in\factorsup$.  It thus suffices to show that $\gate_F(F')\cap\cuco Y=\gate_{F\cap\cuco Y}(F'\cap\cuco Y)$.  For convenience, let $A=F\cap\cuco Y,A'=F'\cap\cuco Y$.  Since $A\subseteq\cuco Y$, we have $\gate_A(A')\subseteq\cuco Y$.  Since $A\subseteq F$ and $A'\subseteq F'$, we have $\gate_A(A')\subseteq\gate_F(F')$.  Hence $\gate_A(A')\subseteq\gate_F(F')\cap\cuco Y$.  

Conversely, suppose that $x\in\gate_F(F')\cap\cuco Y$.  Then $x\in A$, since $x\in F\cap\cuco Y$.  Since $x\in\gate_F(F')$, there exists $x'\in F'$ such that a hyperplane $H$ separates $x'$ from $F$ if and only if $H$ separates $x'$ from $x$.  Let $z=\gate_{\cuco Y}(x')$.  Note that convexity of $\cuco Y$ is  used here to make $z$ well-defined.  Let $V$ be a hyperplane separating $z$ from $x$.  Then either $V$ separates $x'$ from $x$, and hence from $F$, whence $V$ separates $z$ from $A$, or $V$ separates $x,x'$ from $z$.  Suppose the latter.  Since $V$ separates $x'$ from $z$, it must separate $x'$ from $Y$ since $z=\gate_{\cuco Y}(x')$.  But then $V$ cannot cross $\cuco Y$, and hence cannot separate $z\in\cuco Y$ from $x\in\cuco Y$, a contradiction.  Thus $V$ separates $z$ from $x$ if and only if $V$ separates $z$ from $A$, so $x=\gate_A(z)$.  It remains to show that $z\in A'$, but this follows from Lemma~\ref{lem:convex_intersect}.       
\end{proof}

\begin{lem}\label{lem:convex_intersect}
Let $\cuco X$ be a CAT(0) cube complex and let $\cuco Y,\cuco Z$ be convex subcomplexes, with $A=\cuco Y\cap\cuco Z$.  Then for all $x\in\cuco Z$, we have $\gate_{\cuco Y}(x)\in A$.
\end{lem}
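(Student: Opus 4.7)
The plan is to invoke the characterization of convex subcomplexes of a CAT(0) cube complex as precisely those full subcomplexes closed under taking medians with an arbitrary third point, namely: if $u,v\in K$ and $w\in\cuco X$, then the median of $u,v,w$ lies in $K$. This is one of the equivalent definitions of convexity recalled in Section~\ref{subsec:convex_and_parallelism}. The lemma has content only when $A\neq\emptyset$, which is the case in the applications; assuming this, pick any $a\in A$.

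Set $p=\gate_{\cuco Y}(x)$; since $p\in\cuco Y$ already, it suffices to show $p\in\cuco Z$. Let $m$ be the median of $x$, $p$, and $a$. Because $p,a\in\cuco Y$ and $\cuco Y$ is convex, $m\in\cuco Y$; because $x,a\in\cuco Z$ and $\cuco Z$ is convex, $m\in\cuco Z$. Hence $m\in A$. It remains only to identify $m$ with $p$.

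For this, recall that by the defining property of the median, $m$ lies on a combinatorial geodesic joining $x$ to $p$, so
\[
\dist_{\cuco X}(x,p)=\dist_{\cuco X}(x,m)+\dist_{\cuco X}(m,p).
\]
Since $m\in\cuco Y$ and $p=\gate_{\cuco Y}(x)$ is a closest $0$--cube of $\cuco Y$ to $x$, we also have $\dist_{\cuco X}(x,p)\leq\dist_{\cuco X}(x,m)$. Combining these forces $\dist_{\cuco X}(m,p)=0$, i.e., $p=m$, and therefore $p\in A$.

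There is no real obstacle to overcome; the argument reduces entirely to the median-closedness of convex subcomplexes and the distance-minimizing characterization of the gate. The only point worth double-checking is that the equality $\gate_{\cuco Y}(x)=m$ still holds in the degenerate cases when $m$ coincides with one of $x,p,a$, but the displayed equation reduces trivially in those cases and the conclusion is unchanged.
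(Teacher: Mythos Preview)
Your proof is correct and follows essentially the same approach as the paper's: both take the median of $x$, $\gate_{\cuco Y}(x)$, and a point of $A$, use median-closedness of $\cuco Y$ and $\cuco Z$ to place the median in $A$, and finish with the distance-minimizing property of the gate. Your version is in fact slightly cleaner, since you pick an arbitrary $a\in A$ and conclude $m=p$ directly, whereas the paper takes $a=\gate_A(x)$ and argues in two steps ($m=a$, then $y=a$).
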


\begin{proof}
Let $y=\gate_{\cuco Y}(x)$, let $a=\gate_A(x)$, and let $m$ be the median of $x,a$, and $z$.  Then $m\in\cuco Z$ since it lies on a geodesic from $x$ to $a$, and $x,a\in\cuco Z$, and $\cuco Z$ is convex.  Similarly, $m$ lies on a geodesic from $y$ to $a$, and thus $m\in\cuco Y$.  It follows that $m\in A$, whence $m=a$ since $\dist_{\cuco X}(x,m)\leq\dist_{\cuco X}(x,a)$.  But then $\dist_{\cuco X}(x,a)\leq\dist_{\cuco X}(x,y)$, so $y=a$. 
\end{proof}

If $C$ is a special cube complex then $C$ admits a local 
isometry into some Salvetti complex by~\cite[Theorem~1.1]{HaglundWiseSpecial}, and this is the Salvetti complex of a finitely-generated right-angled Artin group when $C$ has finitely many immersed hyperplanes (e.g., when $C$ is compact special). Such a local isometry lifts to a convex embedding at the level of universal covers, whence Proposition~\ref{prop:raag_factor_system} and Lemma~\ref{lem:convex_subcomplex_factor_system} immediately imply that the universal cover of $C$ admits a factor system.  (Note that finite generation of the right-angled Artin group is important, since otherwise the universal cover of the Salvetti complex does not have a factor system because each $0$--cube is contained in infinitely many combinatorial hyperplanes in this case.)  We will now describe such factor systems. 

\begin{defn}
 Let $C$ be a special cube complex whose set $\mathcal H$ of immersed hyperplanes is finite. For every $\mathcal A \subseteq \mathcal H$ and 1-cubes $e,e'$ of $C$, write $e\sim_{\mathcal A} e'$ if there is a path $e=e_0\dots e_n=e'$ in the 1-skeleton of $C$ so that each $e_i$ is dual to some immersed hyperplane from $\mathcal A$. Let $C_{\mathcal A}$ be the collection of full subcomplexes of $C$ whose 1-skeleton is an equivalence class of 1-cubes with respect to the equivalence relation $\sim_{\mathcal A}$.
\end{defn}

Notice that each $D\in C_{\mathcal A}$ is locally convex.

\begin{cor}[Factor systems for special groups]\label{cor:compat_special_factor_system}
Let $\cuco X$ be the universal cover of a special cube complex, $C$,  
with finitely many immersed hyperplanes, $\mathcal H$. Then $\cuco X$ admits a
factor system: the collection of all lifts of
subcomplexes in $\bigcup_{\mathcal A \subseteq \mathcal H} C_{\mathcal
A}$ is a factor system for $\cuco X$.
\end{cor}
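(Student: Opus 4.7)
The plan is to obtain the factor system on $\cuco X$ by pulling back one from the universal cover of a Salvetti complex, using the standard construction of Haglund--Wise. Since $C$ is special with finitely many immersed hyperplanes, \cite{HaglundWiseSpecial} produces a local isometry $C \to S_\Gamma$ into the Salvetti complex of the right-angled Artin group $A_\Gamma$, where $\Gamma^{(0)} = \mathcal H$ and edges of $\Gamma$ record crossings; this lifts to a convex embedding $\cuco X \hookrightarrow \widetilde S_\Gamma$. The collection of all induced subgraphs of $\Gamma$ is rich in the sense of Definition~\ref{defn:rich_subgfs}, since it contains $\Gamma$, contains all links, and is closed under intersection. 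Thus Proposition~\ref{prop:raag_factor_system} supplies a factor system $\factorsup$ in $\widetilde S_\Gamma$ consisting of all lifts of sub-Salvetti complexes $S_{\Gamma[\mathcal A]}$ for $\mathcal A \subseteq \mathcal H$, and Lemma~\ref{lem:convex_subcomplex_factor_system} then gives that the induced family of nonempty intersections $\cuco X \cap F$, for $F \in \factorsup$, is a factor system on $\cuco X$.

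The remaining task, and the main obstacle, is to identify this induced family with the collection of all lifts of subcomplexes in $\bigcup_{\mathcal A \subseteq \mathcal H} C_{\mathcal A}$. I would first verify that each $D \in C_{\mathcal A}$ is locally convex in $C$: if the 1-cubes $e_1, \ldots, e_k$ at a vertex $v$ of $D$ span the corner of a $k$--cube $\sigma$ of $C$, then every 1-cube of $\sigma$ is parallel to some $e_i$, hence dual to a hyperplane in $\mathcal A$ and connected to $e_i$ within $\sigma^{(1)}$ by a path of such 1-cubes; since $D$ is a full subcomplex whose 1-skeleton is a single $\sim_{\mathcal A}$-equivalence class, this forces $\sigma \subseteq D$. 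Local convexity implies that every lift $\widetilde D \subseteq \cuco X$ of $D$ is convex.

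Given such a lift $\widetilde D$, every 1-cube of $\widetilde D$ maps (under $\cuco X \hookrightarrow \widetilde S_\Gamma \to S_\Gamma$) to a 1-cube labeled by an element of $\mathcal A$, so $\widetilde D$ is contained in a unique lift $F$ of $S_{\Gamma[\mathcal A]}$, giving $\widetilde D \subseteq \cuco X \cap F$. The reverse inclusion is the heart of the argument: $\cuco X \cap F$ is convex as an intersection of convex subcomplexes, hence connected, and all its 1-cubes are dual to hyperplanes pushing down to elements of $\mathcal A$, so its image in $C$ is a connected subcomplex whose 1-skeleton lies in a single $\sim_{\mathcal A}$-equivalence class; this class must contain the class defining $D$ (since it contains the image of $\widetilde D$), so the two classes coincide, forcing $\cuco X \cap F$ to equal the connected component $\widetilde D$ of the preimage of $D$ in $\cuco X$. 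The same analysis, run in reverse beginning with a nonempty $\cuco X \cap F$, shows that any such intersection is a lift of some $D \in C_{\mathcal A}$. Once this dictionary is in place, the factor-system axioms for the induced family transfer directly from Lemma~\ref{lem:convex_subcomplex_factor_system}.
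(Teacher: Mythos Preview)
Your proof is correct and follows essentially the same strategy as the paper: embed $\cuco X$ convexly in $\widetilde S_\Gamma$ via Haglund--Wise, pull back the factor system from Proposition~\ref{prop:raag_factor_system} using Lemma~\ref{lem:convex_subcomplex_factor_system}, and then identify the resulting intersections $\cuco X\cap F$ with lifts of the subcomplexes $D\in C_{\mathcal A}$. The only difference is cosmetic: the paper carries out the identification by working with $0$--skeleta viewed as cosets $gA(\Lambda)$ inside $A(\Gamma)$ and tracking paths with edge-labels in $\mathcal A$, while you phrase the same argument in terms of pushing $\cuco X\cap F$ down to $C$ and observing that its image lies in a single $\sim_{\mathcal A}$--class. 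Both arguments hinge on the same two facts---that $\cuco X\cap F$ is connected (by convexity) and that all its $1$--cubes are dual to hyperplanes in $\mathcal A$---so the content is identical.
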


\begin{proof}
Let $\Gamma$ be the \emph{crossing graph} of $C$, which has a vertex for each immersed hyperplane, with two vertices adjacent if the corresponding immersed hyperplanes have nonempty intersection. Then for each immersed hyperplane $H$ of $C$ there is a corresponding 1-cube $e_H$ in $S_\Gamma$. In~\cite[Theorem~1.1]{HaglundWiseSpecial} it is shown that there is a local isometry $\phi\colon  C\to S_\Gamma$ so that if the 1-cube $e$ is dual to the immersed hyperplane $H$ then it gets mapped isometrically to $e_H$.

The local isometry $\phi$ lifts to a convex embedding 
$\tilde{\phi}\colon \cuco X\rightarrow\widetilde S_\Gamma$ 
(see e.g.~\cite[Lemma~3.12]{WiseNotes}), so in view of Proposition~\ref{prop:raag_factor_system} and Lemma~\ref{lem:convex_subcomplex_factor_system} there is a factor system on $\cuco X$ consisting of all preimages of elements of the factor system $\factorsup$ for $\widetilde S_\Gamma$, where $\factorsup$ is the factor system associated to the collection of all subgraphs of $\Gamma$ described in Proposition~\ref{prop:raag_factor_system}.

From now on we identify the 0-skeleton of $\widetilde S_\Gamma$ with $A(\Gamma)$, and regard the 1-cubes of $\widetilde S_\Gamma$ as labeled by an immersed hyperplane of $C$ (they are naturally labeled by vertices of $\Gamma$, which are immersed hyperplanes of $C$).

Let us consider an element of $F\in\factorsup$ that intersects $\tilde{\phi}(\cuco X)$ in, say, $g\in A(\Gamma)$. We want to show that $F'=F\cap \tilde{\phi}(\cuco X)$ is the image via $\tilde{\phi}$ of the lift $\widetilde D$ of some $D\in \bigcup C_{\mathcal A}$.

The 0-skeleton of $F$ is $gA(\Lambda)$ for some subgraph $\Lambda$ of $\Gamma$ whose set of vertices will be denoted $\mathcal A$. By convexity of $F'$ we know that we can connect $g$ to any element of the 0-skeleton of $F'$ (that is to say $gA(\Lambda)\cap \tilde{\phi}(\cuco X)$) by a path in the 1-skeleton of $F'$ all whose 1-cubes are labeled by elements of $\mathcal A$.
On the other hand, if we can connect $g$ to, say, $h$ by a path in the 1-skeleton of $F'$ all whose 1-cubes are labeled by elements of $\mathcal A$, then $h\in gA(\Lambda)$. These facts easily imply that the 0-skeleton of $F'$ coincides with the 0-skeleton of $\tilde{\phi}(\widetilde D)$ for some $D\in C_{\mathcal A}$, which in turn implies $\tilde{\phi}(\widetilde D)=F\cap \tilde{\phi}(\cuco X)$.

It can be similarly shown that for any $D\in \bigcup_{\mathcal A\subseteq \mathcal H} C_{\mathcal A}$, we have that $\phi(\widetilde D)$ is of the form $F\cap \tilde{\phi}(\cuco X)$ for some $F\in\factorsup$, and this concludes the proof.
\end{proof}

The same proof goes through to show the following more general 
version of the corollary:

\begin{cor}
 Let $\cuco X$ be the universal cover of the special cube complex $C$, whose set of immersed hyperplanes is finite, and let $\Gamma$ be the crossing graph of the immersed hyperplanes of $C$. If $\mathcal R$ is a rich collection of subgraphs of $\Gamma$, then the collection of all lifts of subcomplexes in $\bigcup_{\Lambda\in \mathcal R} C_{\Lambda^{(0)}}$ is a factor system for $\cuco X$.
\end{cor}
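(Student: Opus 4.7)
The plan is to mimic the strategy of Corollary \ref{cor:compat_special_factor_system} almost verbatim, substituting the rich family $\mathcal R$ for the collection of all subgraphs and using the more refined version of Proposition \ref{prop:raag_factor_system}. By Haglund--Wise, there is a local isometry $\phi\colon C\to S_\Gamma$ carrying the $1$--cube dual to an immersed hyperplane $H$ isometrically to $e_H$. This lifts to an $\Aut$--equivariant convex embedding $\tilde\phi\colon\cuco X\hookrightarrow\widetilde S_\Gamma$. Apply Proposition \ref{prop:raag_factor_system} with the rich family $\mathcal R$ to obtain a factor system $\factorsup_{\mathcal R}$ on $\widetilde S_\Gamma$, and then apply Lemma \ref{lem:convex_subcomplex_factor_system} to the convex subcomplex $\tilde\phi(\cuco X)\subseteq\widetilde S_\Gamma$ to conclude that the induced collection $(\factorsup_{\mathcal R})_{\tilde\phi(\cuco X)}$ is a factor system in $\cuco X$ (identifying $\cuco X$ with its image).

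The only remaining task is to identify this induced factor system, \emph{as a set of subcomplexes}, with the collection of all lifts of subcomplexes in $\bigcup_{\Lambda\in\mathcal R}C_{\Lambda^{(0)}}$. For this I would adapt directly the two paragraphs at the end of the proof of Corollary \ref{cor:compat_special_factor_system}. Identify $\widetilde S_\Gamma^{(0)}$ with $A(\Gamma)$ and recall that each element of $\factorsup_{\mathcal R}$ has $0$--skeleton a coset $gA(\Lambda)$ with $\Lambda\in\mathcal R$. Fix $F\in\factorsup_{\mathcal R}$ meeting $\tilde\phi(\cuco X)$ at $g$, with vertex set $gA(\Lambda)$, and let $\mathcal A=\Lambda^{(0)}$. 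By convexity of $F'=F\cap\tilde\phi(\cuco X)$, one can connect $g$ to any other $0$--cube of $F'$ by a $1$--skeleton path in $F'$ whose edges are labeled by elements of $\mathcal A$; conversely, any such path in $\tilde\phi(\cuco X)$ stays in $gA(\Lambda)$. Projecting via $\tilde\phi^{-1}$ shows $F'=\tilde\phi(\widetilde D)$ for a lift $\widetilde D$ of some $D\in C_{\mathcal A}$. The reverse inclusion is analogous: given $D\in C_{\mathcal A}$ with $\mathcal A=\Lambda^{(0)}$ and $\Lambda\in\mathcal R$, any lift $\widetilde D$ maps into $\widetilde S_\Gamma$ with $0$--skeleton contained in a single coset of $A(\Lambda)$, and the corresponding element $F\in\factorsup_{\mathcal R}$ satisfies $F\cap\tilde\phi(\cuco X)=\tilde\phi(\widetilde D)$.

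The only genuinely new point beyond the previous proof is verifying that the projection step \eqref{item:fs_closed_proj} in Proposition \ref{prop:raag_factor_system} still lands inside $\mathcal R$ when we restrict to a rich family. But this is exactly what the definition of richness is designed to handle: the Claim in that proof identifies $\gate_F(F')^{(0)}$ as a coset of $A(\Lambda_0\cap\Lambda_1\cap\Lambda_2)$, where $\Lambda_0,\Lambda_1$ are the subgraphs defining $F,F'$ and $\Lambda_2$ is a link of vertices in $\Lambda_0$ (namely the link of the label set of $\gamma$). Since $\mathcal R$ is closed under intersections and contains all links of vertices, and contains $\Lambda_0,\Lambda_1$ by hypothesis, $\Lambda_0\cap\Lambda_1\cap\Lambda_2\in\mathcal R$. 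So the argument for Proposition \ref{prop:raag_factor_system} goes through with ``all subgraphs'' replaced by ``$\mathcal R$''.

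The main obstacle, and the reason this requires a proof rather than being immediate, is precisely this closure-under-gates step: one must check that $\Lambda_2$, which is a link within $\Lambda_0$, can be realized as an intersection of $\Lambda_0$ with a member of $\mathcal R$. The richness axioms were written to supply exactly this (links of vertices of $\Gamma$ are in $\mathcal R$, and $\mathcal R$ is closed under intersection), so once one unpacks the Claim in the proof of Proposition \ref{prop:raag_factor_system}, the verification becomes routine; no new geometric input is required beyond what was already used for the previous corollary.
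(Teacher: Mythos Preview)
Your proposal is correct and matches the paper's approach exactly: the paper's proof of this corollary is simply the one-line observation that ``the same proof goes through,'' and you have spelled out precisely how. One small redundancy: your final two paragraphs re-verify closure under gates for the rich family $\mathcal R$, but this is already the content of Proposition~\ref{prop:raag_factor_system} as stated (it is formulated for an arbitrary rich family, not just the family of all subgraphs), so you can simply invoke it rather than reopening its proof.
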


\subsubsection{A non-special example}\label{subsubsec:non_special_factor_system}

\begin{exmp}[]\label{exmp:self_strangling}
Let $S,T$ be wedges of finitely many circles and let $D$ be a compact, 2-dimensional, nonpositively-curved cube complex with the following properties:
\begin{enumerate}
 \item $D^{(1)}=S\cup T$.
 \item The universal cover $\widetilde D$ of $D$ is $\widetilde S\times\widetilde T$.
  \item $H=\pi_1D$ has no nontrivial finite quotient.
\end{enumerate}
Such $D$ were constructed by Burger-Mozes~\cite{BurgerMozes}, whose complex actually has simple fundamental group, and by Wise~\cite{Wise:CSC}.  Let $\tilde\alpha\to\widetilde S$ and $\tilde\beta\rightarrow\widetilde T$ be nontrivial combinatorial geodesics mapping to immersed closed combinatorial paths $\alpha,\beta\rightarrow D$.  Since each of $\widetilde S,\widetilde T$ is convex in $\widetilde S\times\widetilde T$, each of the maps $\alpha\rightarrow D,\beta\rightarrow D$ is a local isometry of cube complexes.  Hence $[\alpha],[\beta]\in H-\{1\}$ respectively lie in the stabilizers of the hyperplanes $\widetilde S\times\{o\},\{o'\}\times\widetilde T$ of $\widetilde S\times\widetilde T$, where $(o',o)$ is a basepoint chosen so that $o',o$ are arbitrary midpoints of 1-cubes of $\widetilde S,\widetilde T$.  Suppose moreover that $|\alpha|=|\beta|=r\geq 1$.

Let $R=[0,r]\times[0,2]$.  Regard $R$ as a 2-dimensional non-positively-curved cube complex in the obvious way, and form a complex $Z_0$ from $R\sqcup D$ by identifying $\{0\}\times[0,2]$ with $\{r\}\times[0,2]$ and identifying the images of $[0,r]\times\{0\}$ and $[0,r]\times\{2\}$ with $\alpha$ and $\beta$ respectively.  Since $\alpha$ and $\beta$ are locally convex in both $R$ and $D$, the complex $Z_0$ is nonpositively-curved, and its fundamental group is $H*_{\langle[\alpha]\rangle^t=\langle[\beta]\rangle}$. 

Let $A=[0,1]\times[0,r]$ with the obvious cubical structure, and attach $A$ to $Z_0$ by identifying $\{0\}\times[0,r]$ with the ``meridian'' in $R$, i.e., the image of $[0,r]\times\{1\}$.  Finally, attach a square by gluing a length-2 subpath of its boundary path to the image of $[0,1]\times\{0\}\cup[0,1]\times\{r\}\subset A$ in $A\cup Z_0$.  The resulting complex is $Z$.  Since $C$ is locally convex in $Z_0$ and $\{0\}\times[0,r]$ is locally convex in $A$, the complex $Z$ is a nonpositively-curved cube complex.  See Figure~\ref{fig:non_special_fs}.

\begin{figure}[h]
\begin{overpic}[width=0.75\textwidth]{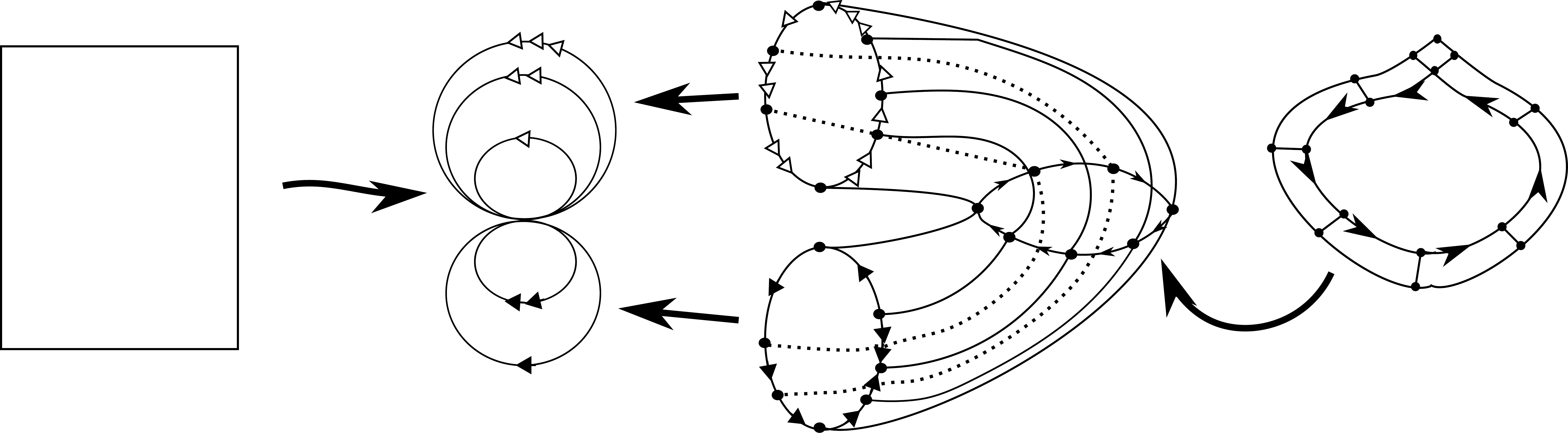}
 \put(90,5){$A$}
 \put(70,3){$R$}
 \put(33,1){$S$}
 \put(33,26){$T$}
 \put(5,15){$D^{(2)}$}
\end{overpic}
\caption{A complex $Z$ with $r=7$.  The square at left represents the $2$-skeleton of $D$.  Note that $A$ is the carrier of a self-intersectign hyperplane, $A$ is attached to $R$ by identifying the ``inner'' boundary component with the waist of $R$, and $R$ is attached to $S\cup T$ by a local isometry of its boundary.}\label{fig:non_special_fs}
\end{figure}

\end{exmp}

The following properties of $Z$ will be needed in Section~\ref{subsec:coloring}.

\begin{prop}\label{prop:Z_factor_system}
The universal cover $\widetilde Z$ of $Z$ has a factor system.
\end{prop}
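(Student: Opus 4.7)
The plan is to define a factor system $\factorsup$ for $\widetilde Z$ explicitly by listing convex subcomplexes of each ``type'' arising from the gluing construction, and then verify the five clauses of Definition~\ref{defn:factor_system} by case analysis. Note that $\widetilde Z$ is uniformly locally finite since $Z$ is a compact nonpositively curved cube complex, and each of the three ``pieces'' $D^{(2)}$, $R$, and $A$ (together with the attached square closing $A$) is locally convex in $Z$: this is clear for $D^{(2)}$ since $\widetilde D = \widetilde S\times \widetilde T$, it is clear for $R$ since $R=[0,r]\times[0,2]$ is attached to $D$ along $\alpha\cup\beta$ by a local isometry, and for $A$ it follows from the fact that $\{0\}\times[0,r]$ is locally convex in $A$ and the attaching map is a local isometry. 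By~\cite[Lemma~3.12]{WiseNotes} each elevation of $D^{(2)}, R, A$ is therefore a convex subcomplex of $\widetilde Z$.

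I would take $\factorsup$ to be the smallest $\pi_1Z$--invariant collection of convex subcomplexes of $\widetilde Z$ containing: (i) $\widetilde Z$ itself; (ii) every elevation of $D^{(2)}, R, A$ in $\widetilde Z$; (iii) within each elevation of $\widetilde D\cong \widetilde S\times \widetilde T$, every convex subcomplex appearing in the canonical factor system of $\widetilde S\times \widetilde T$ provided by Proposition~\ref{prop:raag_factor_system} (in particular, every subcomplex of the form $\widetilde S\times\{v\}$ and $\{u\}\times \widetilde T$); and (iv) every subcomplex parallel to a combinatorial hyperplane of $\widetilde Z$. Clauses \eqref{item:fs_0}, \eqref{item:fs_convex}, \eqref{item:fs_all_hyperplanes} of Definition~\ref{defn:factor_system} are built into this definition. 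Clause~\eqref{item:fs_local_finite} follows because there are finitely many $\pi_1Z$--orbits of elevations of each type (since $Z$ is compact and the list of types is finite), and each $0$--cube of $\widetilde Z$ lies in only finitely many subcomplexes from each finite-orbit family by uniform local finiteness.

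The crux is clause~\eqref{item:fs_closed_proj}: closure under large gate-projections. Here one does a case analysis on the pair of types of $F,F'\in\factorsup$. By Lemma~\ref{lem:2.6}, the set of hyperplanes crossing $\gate_F(F')$ equals the set of hyperplanes crossing both $F$ and $F'$, so the type of $\gate_F(F')$ is determined by this intersection pattern. The gluing structure of $Z$ severely restricts which hyperplane-types can simultaneously cross two elevations: for instance, a hyperplane crossing both an elevation of $\widetilde R$ and an elevation of $\widetilde A$ must be a lift of the meridian hyperplane of $R$; a hyperplane crossing both an elevation of $\widetilde D$ and an elevation of $\widetilde R$ must be a lift of one of the hyperplanes of $D$ dual to $\alpha$ or $\beta$; and so on. In each case one either gets that $\gate_F(F')$ has uniformly bounded diameter (bounded in terms of $|\alpha|=|\beta|=r$, the width of $R$, and the width of $A$), in which case clause~\eqref{item:fs_closed_proj} is met with $\xi$ taken to be this bound; or else $\gate_F(F')$ coincides with an already-listed element of $\factorsup$ --- typically a combinatorial hyperplane, a lift of a sub-Salvetti complex inside a $\widetilde D$-elevation (handled by Proposition~\ref{prop:raag_factor_system}), or a geodesic segment/line within an elevation of $\widetilde R$ or $\widetilde A$.

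The main obstacle will be managing the case when both $F$ and $F'$ are elevations of $\widetilde A$, because the self-identification by the attached square is the non-special feature that prevents a naive coloring argument: an elevation of $A$ is not contained in a single combinatorial hyperplane, and two elevations of $\widetilde A$ can share many hyperplanes while being genuinely distinct convex subcomplexes. However, since $A$ is (abstractly) a finite quotient of the strip $[0,1]\times\reals$, each elevation of $\widetilde A$ is itself isometric to a strip, and two such strips in $\widetilde Z$ either share a sub-strip or have bounded gate. This forces $\gate_F(F')$ to be either a sub-strip (which is itself an elevation of some convex sub-rectangle of $A$, and can be taken to belong to $\factorsup$) or to have diameter bounded by~$r$ plus a constant. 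Once this case is dispatched, the remaining cases (involving $D$--elevations, $R$--elevations, and combinatorial hyperplanes) are routine by the structure of products of trees and of $\reals^2$. This completes the verification that $\factorsup$ is a factor system.
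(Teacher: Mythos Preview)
Your approach is substantially different from the paper's, and much more laborious. The paper exploits a trick that you missed: since $D^{(1)}=S\cup T=(S\times T)^{(1)}$ and $\widetilde D=\widetilde S\times\widetilde T=\widetilde{S\times T}$, one can form a complex $Z'$ exactly as $Z$ was formed but with $S\times T$ in place of $D$, and then $\widetilde{Z'}\cong\widetilde Z$ as cube complexes. The complex $Z'$ is easily seen to be virtually special, so Corollary~\ref{cor:compat_special_factor_system} gives a factor system on $\widetilde{Z'}=\widetilde Z$ immediately, with no case analysis at all. The entire proof is three sentences.

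Your direct-construction approach is not unreasonable in spirit --- indeed, the paper later (in the proof of Proposition~\ref{prop:example}) does explicitly compute the minimal factor system of $\widetilde Z$, though for a different purpose (verifying hereditary flip-freeness) and after already knowing a factor system exists. But your sketch has real gaps and at least one error. The claim that ``$A$ is (abstractly) a finite quotient of the strip $[0,1]\times\reals$'' and that elevations of $A$ are infinite strips is wrong: $A=[0,1]\times[0,r]$ is a simply-connected rectangle (with one extra square attached), so its elevations in $\widetilde Z$ are compact --- the paper describes them as carriers of compact hyperplanes, segments of length $r+2$ attached along segments of length $r$. This undermines your handling of the $A$--$A$ case, which you flagged as the main obstacle. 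More generally, the combinatorial hyperplanes of $\widetilde Z$ are not contained in single piece-elevations (the ``bushy'' hyperplanes thread through infinitely many $\widetilde D$-- and $\widetilde R$--elevations), so verifying closure under projection by restricting attention to pieces is more delicate than your sketch suggests. The paper's substitution trick sidesteps all of this.
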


\begin{proof}
Let $Z'$ be constructed exactly as in the construction of $Z$ in
Example~\ref{exmp:self_strangling}, except beginning with $S\times T$
instead of with the complex $D$.  The 1-skeleta of $D$ and $S\times T$
are identical, so the paths $\alpha,\beta$ exist in $S\times T$,
rendering this construction possible.  It is easily verified that the
universal cover of $Z'$ is isomorphic to $\widetilde Z$ and that $Z'$
has a finite-sheeted special cover.  The claim follows from
Corollary~\ref{cor:compat_special_factor_system}.
\end{proof}

\begin{prop}\label{prop:D_cross_translates}
Let $G\leq_{f.i.}\pi_1Z$.  Then there exists a hyperplane $H$ of $\widetilde D$ and $g\in G$ such that $gH$ crosses $H$.
\end{prop}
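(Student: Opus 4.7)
My plan is to combine two ingredients: the fact that $\pi_1 D$ sits inside every finite-index subgroup of $\pi_1 Z$, and the presence of a self-intersecting hyperplane carried by $A$, as noted in the caption of Figure~\ref{fig:non_special_fs}.

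\smallskip

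\noindent\emph{Step 1: $\pi_1 D \subseteq G$.}  The first step is to show that since $\pi_1 D$ has no nontrivial finite quotient, it has no proper finite-index subgroup at all. Given any finite-index subgroup $K \leq \pi_1 D$, its normal core $K_0 = \bigcap_{h \in \pi_1 D} hKh^{-1}$ has finite index in $\pi_1 D$ and is normal, so $\pi_1 D/K_0$ is a finite quotient of $\pi_1 D$ (injecting into the symmetric group on $\pi_1 D/K$), hence trivial by hypothesis; thus $K_0 = \pi_1 D$ and so $K = \pi_1 D$. Applying this to $K = G \cap \pi_1 D$ gives $\pi_1 D \subseteq G$. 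The problem therefore reduces to producing $g \in \pi_1 D$ and a hyperplane $H$ of $\widetilde D$ such that $gH$ crosses $H$ in $\widetilde Z$.

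\smallskip

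\noindent\emph{Step 2: Extract the self-intersection.} The construction of $Z$ is engineered so that the vertical hyperplane of $A$ merges, through the two diagonals of the attached square (each identified with the hyperplanes of $A$ dual to $e_1$ and $e_2$), into a single hyperplane $\Sigma$ of $Z$ with a genuine self-intersection at the center of the square. Lifting to $\widetilde Z$, this yields a hyperplane $\widetilde \Sigma$ of $\widetilde Z$ and an element $g_0 \in \pi_1 Z$ such that $g_0 \widetilde \Sigma$ crosses $\widetilde \Sigma$ inside a lift of the square.

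\smallskip

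\noindent\emph{Step 3: Pass to hyperplanes of $\widetilde D$.} Within the lift of $A$ containing the crossing, $\widetilde \Sigma$ crosses each horizontal hyperplane of the $A$-lift (since in $A$, the vertical hyperplane crosses every horizontal one). Each horizontal hyperplane of the $A$-lift extends through the meridian of the corresponding strip-lift, then through the vertical hyperplane of that strip, and so restricts in $\widetilde D$ (resp.\ in a translate) to a $T$-type hyperplane dual to an edge of a lift of $\alpha$ and an $S$-type hyperplane dual to an edge of a lift of $\beta$. Choosing $H$ to be such a hyperplane of $\widetilde D$, the crossing of $g_0 \widetilde \Sigma$ with $\widetilde \Sigma$ propagates to a crossing of $g_0' H$ with $H$ in $\widetilde Z$ for a suitable $g_0'$ related to $g_0$.

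\smallskip

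\noindent\emph{Step 4: Land the element in $G$.} Finally, by cocompactness of the $\pi_1 D$-action on $\widetilde D$, there are only finitely many $\pi_1 D$-orbits of hyperplanes of $\widetilde D$. A pigeonhole argument shows that among the hyperplanes of $\widetilde D$ arising from the horizontal hyperplanes of the $A$-lift near the self-intersection, some pair lies in the same $\pi_1 D$-orbit, producing $g \in \pi_1 D \subseteq G$ with $gH$ crossing $H$.

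\smallskip

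\noindent The main obstacle is the last step: the element $g_0$ produced by the self-intersection lies a priori only in $\pi_1 Z$, not in $\pi_1 D$. The crux is to carefully combine $g_0$ with an element of $\pi_1 D$ (using the cocompact $\pi_1 D$-action on $\widetilde D$ and the resulting finite orbit structure on $\widetilde D$-hyperplanes) to absorb the strip-traversing piece and land back in $\pi_1 D$, while preserving the crossing of the two relevant hyperplane-extensions.
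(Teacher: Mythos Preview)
Your Step~1 is correct and is the essential algebraic input; the paper uses exactly this fact. Step~2 is also correct. The genuine gap is in Steps~3--4, and it is not the obstacle you name at the end. The pigeonhole in Step~4 only shows that two of the horizontal hyperplanes of the $A$-lift lie in a common $\pi_1 D$-orbit, but the horizontal hyperplanes of $A$ are pairwise \emph{parallel}, so this produces no crossing at all. The ``propagation'' assertion in Step~3 is likewise unjustified: a crossing between $\widetilde\Sigma$ and $g_0\widetilde\Sigma$ does not by itself force a crossing between some $\widetilde D$-hyperplane $H$ and a translate $g_0'H$.

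The paper's argument is more direct and renders Steps~3--4 unnecessary: one shows that the element $g_0$ from Step~2 already lies in $G$. Phrased via covers, as the paper does: in the finite cover $\widehat Z\to Z$ corresponding to $G$, the meridian $C$ lifts to a closed loop, because $[C]$ is homotopic into $D$ and $\pi_1 D$ has trivial image in every finite quotient of $\pi_1 Z$ (equivalently, $\pi_1 D\subseteq G$, which is your Step~1). Once $C$ lifts to a loop, the strip $A$ (whose fundamental group is generated by $[C]$) and the attached square lift isomorphically, so the self-crossing of $\overline W$ persists in $\widehat Z$; translating back, there is a hyperplane $H$ of $\widetilde Z$ and $g\in G$ with $gH$ crossing $H$. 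Note that this $H$ is a compact hyperplane of $\widetilde Z$, not a hyperplane of $\widetilde D$: the occurrence of $\widetilde D$ in the statement should be read as $\widetilde Z$, consistently with how the proposition is applied later in the paper.
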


\begin{proof}
Let $\overline W\rightarrow Z$ be the immersed hyperplane dual to the
image of $[0,1]\times\{0\}\subset A$ in $Z$.  Then $\overline W$
self-crosses.  Moreover, if $\widehat Z\rightarrow Z$ is a
finite-sheeted cover such that some component $\widehat C$ of the
preimage of the meridian $C$ has the property that $\widehat
C\rightarrow C$ is bijective, then some lift of $W$ to $\widehat Z$ is
again a self-crossing immersed hyperplane.  But $C$ is homotopic into
$D$, whose fundamental group has no finite quotients.  Hence
\emph{any} finite cover of $Z$ has a self-crossing immersed
hyperplane.
\end{proof}

Having shown that there are many interesting groups with factor 
systems, below we show the utility of these systems. First 
though, we propose  
the following question to which we expect a positive answer; we believe that a proof 
of this will require significant work. 

\begin{question}\label{question:geometric_fs}
Does every CAT(0) cube complex which admits a proper,
cocompact group action contain a factor system?
\end{question}

\subsection{Factored contact graphs}\label{subsec:factored_contact_graphs}

\begin{defn}[Factored contact graphs, projection to the factored contact graph]\label{defn:factored_contact_graphs}Let $\factorsup$ be a factor system for $\cuco X$.  For each convex
subcomplex $F\subseteq\cuco X$, the \emph{factored contact graph}
$\fontact F$ is obtained from $\contact F$ as follows.  

Let $F'\in\factorsup_F-\{F\}$ and suppose that $F'$ is either parallel to a combinatorial hyperplane or has diameter $\geq\xi$, and let $[F']$ be its parallelism class.  We add a vertex $v_{F'}$ to $\contact F$, \emph{corresponding to this parallelism class $[F']$}, and join $v_{F'}$ by an edge
to each vertex of $\contact F'\subset\contact F$, i.e.,  
each newly added vertex 
$v_{F'}$ gets connected to each
vertex of $\contact F$ which corresponds to a hyperplane of $F$ that intersect $F'$.  We repeat this for each such $F'\in\factorsup_F-\{F\}$.  We emphasize that $\fontact F$ consists of $\contact F$, together with a cone-vertex for each \emph{parallelism class} of subcomplexes in $\factorsup_F-\{F\}$ that are parallel to combinatorial hyperplanes or have diameter $\geq\xi$ (or both).  Also, observe that $\contact F$ is an induced subgraph of $\fontact F$.

%

The \emph{projection of $\cuco X$ to $\fontact F$} is the map $\pi_F\colon \cuco X\rightarrow 2^{\fontact F^{(0)}}$ obtained by composing the projection to $\contact F$ given in Definition~\ref{defn:proj_to_contact} with the inclusion $\contact F\hookrightarrow\fontact F$.  Recall that this map assigns to each point of $\cuco X$ a clique in $\fontact F$ consisting of the vertices corresponding to the hyperplanes whose carriers contain the given point.
\end{defn}

\begin{rem}
When $F\in\factorsup$ is a single 0-cube, $\contact F=\emptyset$, so $\fontact F=\emptyset$.  Hence $|2^{\fontact F^{(0)}}|=1$ and $\pi_F$ is defined in the obvious way.  
When the constant $\xi$ from Definition~\ref{defn:factor_system} is at least $1$, no $F\in\factorsup$ is a single point.
\end{rem}
%

\begin{lem}\label{lem:subssystem_invariance}
Let $F,F'$ be parallel.  Then there is a bijection $f\colon \factorsup_F\rightarrow\factorsup_{F'}$ so that $f(H)$ is parallel to $H$ for all $H\in\factorsup_F$.
\end{lem}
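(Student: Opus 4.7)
The plan is to build the bijection directly from the product structure of the cubical convex hull of $F\cup F'$. By Lemma~\ref{lem:parallel_product}, since $F$ and $F'$ are parallel, there is a cubical isometric embedding $\iota\colon F\times[0,a]\hookrightarrow\cuco X$ with $\iota(F\times\{0\})=F$ and $\iota(F\times\{a\})=F'$, where the $[0,a]$--factor is a geodesic crossing exactly the hyperplanes separating $F$ from $F'$. For any convex subcomplex $H\subseteq F$, I will set $f(H):=\iota(H\times\{a\})\subseteq F'$. The parallelism of $H$ and $f(H)$ is essentially immediate: a hyperplane $V$ of $\cuco X$ crosses $\iota(H\times\{0\})=H$ iff it is dual to a $1$--cube $\iota(e\times\{0\})$ for some $1$--cube $e$ of $H$, iff $V$ is dual to $\iota(e\times\{a\})$, iff $V$ crosses $\iota(H\times\{a\})=f(H)$.

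The key step is to verify that $f(H)\in\factorsup_{F'}$ whenever $H\in\factorsup_F$. The product decomposition identifies $f(H)$ with $\gate_{F'}(H)$: for $x\in H$, viewed as $(x,0)$ in $F\times\{0\}$, one has $\gate_{F'}(x)=(x,a)$ since the hyperplanes separating $(x,0)$ from $(x,a)$ are precisely the hyperplanes separating $F$ from $F'$, none of which cross $F'$. Thus $\gate_{F'}(H)=f(H)$. Since both $H$ and $F'$ lie in $\factorsup$, axiom~(\ref{item:fs_closed_proj}) of Definition~\ref{defn:factor_system} gives that either $f(H)\in\factorsup$, or $\diam(f(H))<\xi$. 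In the second case, since parallel subcomplexes are isomorphic, $\diam(H)<\xi$ as well, so that $H$ is forced into $\factorsup$ by axiom~(\ref{item:fs_all_hyperplanes}) as a subcomplex parallel to a combinatorial hyperplane of $\cuco X$; transitivity of parallelism then gives that $f(H)$ is also parallel to that combinatorial hyperplane, so $f(H)\in\factorsup$ by the same axiom. Finally, $f(H)\subseteq F'$ by construction, so $f(H)\in\factorsup_{F'}$.

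Bijectivity follows by symmetry: define $g\colon\factorsup_{F'}\to\factorsup_F$ by $g(K):=\iota(K^\ast\times\{0\})$ where $K^\ast\subseteq F$ is the unique convex subcomplex with $\iota(K^\ast\times\{a\})=K$, equivalently $g(K)=\gate_F(K)$. The same argument shows $g$ is well-defined into $\factorsup_F$, and the compositions $g\circ f$ and $f\circ g$ are the identities because the product structure makes gating to $F'$ and back to $F$ return to the starting point: $\gate_F(\gate_{F'}((x,0)))=\gate_F((x,a))=(x,0)$.

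The only delicate point, and where I expect the main obstacle to lie, is the small-diameter case of axiom~(\ref{item:fs_closed_proj}): one must rule out that $H\in\factorsup_F$ is a small-diameter element that happens to lie in $\factorsup$ for some reason other than being a (parallel translate of a) combinatorial hyperplane of $\cuco X$. For the factor systems of interest this is not an issue since the relevant small elements are exactly those coming from axiom~(\ref{item:fs_all_hyperplanes}); more generally, one reads axiom~(\ref{item:fs_closed_proj}) as implicitly guaranteeing that the closure of the system under parallelism is built into the axioms, so that $f$ stays in $\factorsup$ regardless.
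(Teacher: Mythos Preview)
Your approach is essentially the same as the paper's: use the product decomposition from Lemma~\ref{lem:parallel_product} to send each $H\in\factorsup_F$ to its parallel copy $f(H)=\gate_{F'}(H)$ in $F'$, and then argue that $f(H)\in\factorsup$ via axioms~\eqref{item:fs_all_hyperplanes} and~\eqref{item:fs_closed_proj}. The paper's proof is terser but follows the same line: it notes that if $H$ is (parallel to) a combinatorial hyperplane or has diameter $\geq\xi$, then its parallel copy lies in $\factorsup$, and then says ``the existence of $f$ now follows easily.''

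One genuine slip in your middle paragraph: you write that if $\diam(H)<\xi$ then ``$H$ is forced into $\factorsup$ by axiom~\eqref{item:fs_all_hyperplanes} as a subcomplex parallel to a combinatorial hyperplane.'' That inference is not valid --- nothing in the axioms says a small-diameter element of $\factorsup$ must be parallel to a combinatorial hyperplane. You correctly identify this gap in your final paragraph, but the sentence in the middle presents a non-argument as if it were one. The honest statement is what the paper implicitly does and what your last paragraph concedes: the bijection is guaranteed only on the subset of $\factorsup_F$ consisting of elements that are parallel to combinatorial hyperplanes or have diameter $\geq\xi$, and this is all that is ever used downstream (cf.\ Definition~\ref{defn:factored_contact_graphs}, where only such elements receive cone-vertices). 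For a general factor system the lemma as stated may simply fail for ``exotic'' small elements, and neither your argument nor the paper's closes that gap.
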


\begin{proof}
If $F''\in\factorsup$ is contained in $F$, then there is a parallel copy $F'''$ of $F''$ in $F'$. Provided either $F''$ is a combinatorial hyperplane or has diameter at least $\xi$, we have $F'''\in\factorsup$.  The existence of $f$ now follows easily.
\end{proof}

The next lemma is immediate.

\begin{lem}\label{lem:projection_parallel}
Let $F\in\factorsup$ and let $F'$ be a convex subcomplex parallel to $F$.  Then, $\contact F=\contact F'$ and the bijection $f:\factorsup_F\to\factorsup_{F'}$ from Lemma~\ref{lem:subssystem_invariance} descends to a bijection of parallelism classes inducing an isomorphism $\fontact F\to\fontact F'$ that extends the identity $\contact F\to\contact F'$. Moreover, $\pi_F=\pi_{F'}$.
\end{lem}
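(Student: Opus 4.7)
The three assertions are handled in order. The equality $\contact F=\contact F'$ is immediate from Definition~\ref{defn:parallel}: parallelism of $F$ and $F'$ says exactly that the same hyperplanes of $\cuco X$ cross each of them, and $\contact F,\contact F'$ are by construction the corresponding full subgraphs of $\contact\cuco X$.

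For the isomorphism $\fontact F\to\fontact F'$, begin with the bijection $f\colon\factorsup_F\to\factorsup_{F'}$ from Lemma~\ref{lem:subssystem_invariance}, which satisfies that $f(H)$ is parallel to $H$. Transitivity of parallelism then ensures that $f$ sends parallelism classes to parallelism classes, with inverse supplied by the same lemma applied to the pair $(F',F)$. Moreover, since parallel convex subcomplexes are isomorphic and share their set of crossing hyperplanes, the two conditions ``parallel to a combinatorial hyperplane'' and ``having diameter $\geq\xi$'' are preserved by $f$, so $f$ matches up precisely the parallelism classes that give rise to cone vertices in $\fontact F$ and in $\fontact F'$. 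Define the extended map to be the identity on $\contact F=\contact F'$ and to send each cone vertex $v_{[H]}$ to $v_{[f(H)]}$. The only new edges to check are those joining a cone vertex $v_{[H]}$ to the vertices of $\contact H\subseteq\contact F$; since $H$ and $f(H)$ are parallel they have the same contact graph, so these edge sets match.

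For $\pi_F=\pi_{F'}$, apply Lemma~\ref{lem:parallel_product} to realize $F$ and $F'$ as the two copies of $F$ at the ends of a cubical isometric embedding $F\times[0,a]\hookrightarrow\cuco X$; since $\pi_F$ depends only on the convex subcomplex $F$ (and likewise for $F'$), we may replace $F'$ by $F\times\{a\}$ within its parallelism class without loss of generality. Given $x\in\cuco X$, Lemma~\ref{lem:simple_gate} lets us factor gates through the intermediate convex subcomplex $F\times[0,a]$: writing $\gate_{F\times[0,a]}(x)=(y,t)$ one obtains $\gate_F(x)=(y,0)$ and $\gate_{F'}(x)=(y,a)$. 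Any hyperplane $H$ crossing $F$ restricts in the product region to $(H\cap F)\times[0,a]$, and under the product isomorphism $F\times\{0\}\cong F\times\{a\}$ the hyperplanes $H\cap F$ and $H\cap F'$ correspond; hence $(y,0)\in\neb(H)$ if and only if $(y,a)\in\neb(H)$. Therefore the cliques $\pi_F(x),\pi_{F'}(x)$ agree as subsets of $\contact F=\contact F'$.

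The proof is mostly bookkeeping, and the only substantive step is the gate computation via the product structure: the identification of $\gate_F(x)=(y,0)$ and $\gate_{F'}(x)=(y,a)$ as having the same $F$-coordinate is precisely what forces the two projections to output the same clique.
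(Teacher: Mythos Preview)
Your proof is correct. The paper does not supply a proof at all, simply declaring the lemma ``immediate,'' so your argument fills in exactly what the authors left to the reader.

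Two minor comments on exposition, not on correctness. First, the sentence ``we may replace $F'$ by $F\times\{a\}$ within its parallelism class without loss of generality'' is misleading: Lemma~\ref{lem:parallel_product} already identifies the given $F'$ with the image of $F\times\{a\}$, so nothing is being replaced. Second, Lemma~\ref{lem:simple_gate} does not literally state the factorization $\gate_F=\gate_F\circ\gate_{F\times[0,a]}$ that you use; this identity is an easy consequence of the defining property of gates (a hyperplane separates $x$ from $\gate_F(x)$ iff it separates $x$ from $F$), but it would be cleaner to invoke that directly rather than Lemma~\ref{lem:simple_gate}.
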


\begin{rem}[Crossing, osculation, and coning]\label{rem:factored_in_standard_example}
In the case where $\factorsup$ is the factor system obtained by 
closing the set of subcomplexes parallel to hyperplanes under 
diameter-$\geq\xi$ projections, then $\fontact\cuco X$ is quasi-isometric to $\contact\cuco X$.  This is because we are coning off contact graphs of intersections of hyperplane-carriers, and these subgraphs are already contained in vertex-links in $\contact\cuco X$.  More generally, let $H_I=\cap_{i\in I}H_i^+$ be an intersection of combinatorial hyperplanes.  The hyperplanes of $H_I$ have the form $W\cap H_I$, where $W$ is a hyperplane that \emph{crosses} each of the hyperplanes $H_i$.  Coning off $\contact (W^\pm\cap H_I)$ in $\contact H_I$ does not affect the quasi-isometry type, since $\contact(W^\pm\cap H_I)$ is the link of a vertex of $\contact H_I$.  It is when $W$ is disjoint from $H_I$ but exactly one of $W^\pm$ is not --- i.e., precisely when $W$ \emph{osculates with each $H_i$} --- that coning off $\contact(W^+\cap H_I)\subset\contact H_I$ affects the geometry.\end{rem}

The crucial property of projections to factored contact graphs is:

\begin{lem}[Bounded projections]\label{lem:bounded_projections}
Let $\factorsup$ be a factor system for $\cuco X$ and let $\xi$ be the constant from Definition~\ref{defn:factor_system}.  Let $F,F'\in\factorsup$.  Then one of the following holds:
\begin{enumerate}
 \item\label{item:bp_small_proj}  $\diam_{\fontact F}(\pi_F(F'))<\xi+2$.
 \item\label{item:bp_sep_cone}  $F$ is parallel to a subcomplex of $F'$.
\end{enumerate}
\end{lem}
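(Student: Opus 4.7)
The plan is to set $K=\gate_F(F')$ and split into two cases according to whether $K=F$ or $K$ is a proper subcomplex of $F$. Observe that, by construction, $\pi_F(F')=\rho_F(K)$ is precisely the set of hyperplanes of $F$ whose carrier in $F$ contains some $0$-cube of $K$.

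Case 1: $K=F$. By Lemma \ref{lem:2.6}, $\gate_F(F')$ is parallel to $\gate_{F'}(F)$, and the latter is a convex subcomplex of $F'$. Hence $F=K$ is parallel to a subcomplex of $F'$, which is conclusion (2).

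Case 2: $K\subsetneq F$. I will show that conclusion (1) holds, breaking into two sub-cases by the diameter of $K$.

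If $\diam(K)<\xi$, then any two hyperplanes $H_1,H_2\in\pi_F(F')$ have $F$-carriers containing $0$-cubes $y_1,y_2\in K$ with $\dist_F(y_1,y_2)<\xi$. A combinatorial geodesic from $y_1$ to $y_2$ in $F$ crosses at most $\xi-1$ hyperplanes of $F$, and consecutive hyperplanes crossed share a $0$-cube with their neighbors in the geodesic, yielding a path in $\contact F$ from $H_1$ to $H_2$ of length at most $\xi+1$. Hence $\diam_{\fontact F}(\pi_F(F'))\leq \diam_{\contact F}(\pi_F(F'))<\xi+2$.

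If $\diam(K)\geq\xi$, then item~\eqref{item:fs_closed_proj} of Definition~\ref{defn:factor_system} guarantees $K\in\factorsup_F$. Since $K\neq F$ and $\diam(K)\geq\xi$, the construction of $\fontact F$ in Definition~\ref{defn:factored_contact_graphs} inserts a cone vertex $v_{[K]}$ adjacent in $\fontact F$ to every vertex of $\contact K\subseteq\contact F$. Any $H\in\pi_F(F')$ either crosses $K$ --- in which case $H$ represents a vertex of $\contact K$ and is adjacent to $v_{[K]}$ --- or the carrier of $H$ in $F$ meets $K$ only through osculation, in which case $H$ is adjacent in $\contact F$ to some hyperplane of $K$. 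Either way $\dist_{\fontact F}(H,v_{[K]})\leq 2$, so $\diam_{\fontact F}(\pi_F(F'))\leq 4$, which is bounded by $\xi+2$ once $\xi$ has been enlarged to at least $2$ (which may be done without loss of generality in Definition~\ref{defn:factor_system}).

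The main (minor) subtlety is verifying in the osculation branch of Case 2 that every $H\in\pi_F(F')$ lies at $\contact F$-distance at most one from $\contact K$: this uses the convexity of $K$ inside $F$, guaranteeing that if a $0$-cube of $K$ lies in the $F$-carrier of $H$, then $H$ either crosses $K$ or else osculates with some hyperplane crossing $K$ through that $0$-cube.
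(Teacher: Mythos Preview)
Your argument is correct and is essentially the paper's one-line proof fleshed out: with $K=\gate_F(F')$, either $\diam(K)<\xi$ and one bounds $\pi_F(F')$ directly, or $\diam(K)\geq\xi$ and then either $K=F$ (giving conclusion~\eqref{item:bp_sep_cone} via Lemma~\ref{lem:2.6}) or $K\in\factorsup_F-\{F\}$ is coned off in $\fontact F$.

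Two minor remarks. First, in the coned-off subcase you obtain $\diam\leq 4$, which gives the strict inequality $4<\xi+2$ only when $\xi>2$, not $\xi\geq 2$; moreover, enlarging $\xi$ also changes which parallelism classes acquire cone vertices in Definition~\ref{defn:factored_contact_graphs}, so you are really fixing $\xi>2$ from the outset (which is indeed harmless, as item~\eqref{item:fs_closed_proj} only becomes weaker). Second, in the osculation branch the existence of a hyperplane of $K$ whose carrier contains the $0$-cube $y$ follows from connectedness of $K$ together with $\diam(K)\geq\xi>0$ (so $y$ lies on an edge of $K$), rather than from convexity.
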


\begin{proof}
If $\diam(\gate_F(F'))\geq\xi$, then either $\pi_F(F')$ is coned off in $\fontact F$ or $\gate_F(F')=F$.
\end{proof}

We also note the following version of Proposition~\ref{prop:bgiI} for factored contact graphs:

\begin{prop}[Bounded Geodesic Image II: The Factoring]\label{prop:BGIIITF}
Let $F\in\factorsup$ and let $\gamma\rightarrow F$ be a combinatorial geodesic.  Suppose $U\in\factorsup_F$ and that $\dist_{\fontact F}(\pi_F(\gamma),\fontact U)\geq1$.  Then $\gate_U(\gamma)$ consists of a single $0$--cube.  Hence $\pi_U(\gamma)$ is a clique.
\end{prop}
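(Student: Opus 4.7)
The plan is to argue by contradiction. Suppose $\gate_U(\gamma)$ contains at least two distinct $0$--cubes; I will produce a single vertex of $\fontact F$ lying in $\pi_F(\gamma)$ and also in the naturally embedded subgraph $\fontact U\subseteq\fontact F$, which forces $\dist_{\fontact F}(\pi_F(\gamma),\fontact U)=0$ and contradicts the hypothesis. The ``naturally embedded'' refers to the fact that every hyperplane of $F$ crossing $U$ is both a vertex of $\contact U\subseteq\fontact U$ and a vertex of $\contact F\subseteq\fontact F$, while every cone-vertex of $\fontact U$ (indexed by a proper subcomplex $U'\in\factorsup_U$ with $U'\subsetneq U$) also provides a cone-vertex of $\fontact F$, since $\factorsup_U\subseteq\factorsup_F$.

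By Lemma~\ref{lem:background_hyperplane_segment}, applied to the combinatorial geodesic $\gamma$ and the convex subcomplex $U$, the image $\gate_U(\gamma)$ is a combinatorial geodesic of $U$. If it has length at least one, then some hyperplane $W$ of $U$ separates $\gate_U(x)$ from $\gate_U(y)$ for suitable $0$--cubes $x,y$ of $\gamma$. Since $U$ is convex in $\cuco X$, $W$ is the trace $H\cap U$ of a unique hyperplane $H$ of $F$ that crosses $U$, and convexity of $U$ in $F$ ensures that $H$ itself separates $\gate_U(x)$ from $\gate_U(y)$ in $\cuco X$ (any path between them in $\cuco X-H$ could be replaced by a geodesic in $U$ avoiding $W$). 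Applying Lemma~\ref{lem:simple_gate} with $A=U\subseteq F=B$, and using that $\gate_F$ is the identity on $\gamma\subseteq F$, I conclude that $H$ separates $x$ from $y$. Consequently the combinatorial geodesic $\gamma$ contains a $1$--cube dual to $H$, so $H\in\pi_F(\gamma)$ by Definition~\ref{defn:proj_to_contact}. Since $H$ crosses $U$, the vertex $H$ also lies in $V(\contact U)\subseteq V(\fontact U)$, yielding the required contradiction.

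Therefore $\gate_U(\gamma)$ is a single $0$--cube $z$, and $\pi_U(\gamma)=\rho_U(z)$ is precisely the set of hyperplanes of $U$ whose carriers contain $z$; any two such hyperplanes contact at $z$ and so span an edge of $\contact U$, giving the claimed clique in $\fontact U$. I do not anticipate a serious obstacle: the argument is a direct cubical transcription of the bounded-geodesic-image principle of Proposition~\ref{prop:bgiI}, the only new ingredient being the gate/separation comparison supplied by Lemma~\ref{lem:simple_gate}, which converts a separation in $U$ of the gated endpoints into a separation of the original endpoints by a hyperplane of $F$.
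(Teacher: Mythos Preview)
Your proof is correct and follows essentially the same route as the paper's: both assume $\gate_U(\gamma)$ contains two distinct $0$--cubes, produce a hyperplane $H$ crossing $U$ that also separates two points of $\gamma$, and conclude that $H$ lies in both $\pi_F(\gamma)$ and $\fontact U$, contradicting the distance hypothesis. The paper argues the separation step directly (noting that $H$ cannot separate any $0$--cube of $\gamma$ from $U$, hence must separate the endpoints of $\gamma$), whereas you invoke Lemma~\ref{lem:simple_gate}; these are interchangeable, and your additional appeal to Lemma~\ref{lem:background_hyperplane_segment} is harmless but unnecessary, since all you need is two distinct $0$--cubes in $\gate_U(\gamma)$.
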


\begin{proof}
Suppose that $\gate_U(\gamma)$ contains distinct 0-cubes $x,y$ and let $H$ be a hyperplane separating them.  Then $H$ crosses $U$ and thus corresponds to a vertex of $\contact U\subset\fontact U\subseteq\fontact F$.  On the other hand, $H$ cannot separate any 0-cube of $\gamma$ from $U$ and thus separates the endpoints of $\gamma$.  Hence $H$ is a vertex of $\pi_F(\gamma)$.
\end{proof}

\subsection{Hierarchy paths revisited}\label{subsec:hierarchy_paths_revisited}
Let $\factorsup$ be a factor system for $\cuco X$ and let $F\in\factorsup$.  The vertices of $\fontact F$ are naturally associated to subcomplexes of $\cuco X$: to each vertex $V$ of $\contact F$ ($\subset\fontact F$), we associate one of the two combinatorial hyperplanes $V^\pm$ bounding $\neb(V)$.  The remaining vertices are cone points corresponding to parallelism classes of subcomplexes in $\factorsup_F$.
A path of $\fontact F$ is a sequence $(v_0,\ldots,v_n)$ of vertices with consecutive vertices adjacent.  A combinatorial path $\gamma\rightarrow F$ is \emph{carried} by the path $(v_0,\ldots,v_n)$ of $\fontact F$ if $\gamma=\gamma_0e_0\gamma_1e_1\cdots\gamma_ne_n$, where for $0\leq i\leq n$, we have $\gamma_i\rightarrow T_i$ with $T_i$ associated to $v_i$ and $|e_i|\leq 1$.  In this situation, we say that the sequence $T_0,\ldots,T_n$ \emph{carries} $\gamma$ and \emph{represents} $(v_0,\ldots,v_n)$.

\begin{prop}\label{prop:path_in_fontact}
Let $v_0,v_1,\ldots,v_r$ be a path in $\fontact\cuco X$ and let $x\in T_0,y\in T_r$ be 0-cubes, where $T_0,T_r$ are subcomplexes associated to $v_0,v_r$.  Then for $1\leq i\leq r-1$, we can choose for each vertex $v_i$ an associated subcomplex $T_i$ so that $T_0,\ldots,T_r$ carries a path in $\cuco X$ joining $x$ to $y$.\end{prop}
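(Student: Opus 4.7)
The plan is to proceed by induction on $r$, the length of the path $(v_0,\ldots,v_r)$ in $\fontact\cuco X$. For the base case $r=0$, the subcomplex $T_0=T_r$ is convex in $\cuco X$ (being either a combinatorial hyperplane or an element of $\factorsup$), so I take $\gamma$ to be a combinatorial geodesic in $T_0$ from $x$ to $y$, with no transitional $1$-cubes required.

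For the inductive step $r\geq 1$, the strategy is to handle the initial edge $(v_0,v_1)$ by producing a subcomplex $T_1$ associated to $v_1$ together with $0$-cubes $q_0\in T_0$ and $p_1\in T_1$ satisfying $\dist_{\cuco X}(q_0,p_1)\leq 1$; then $\gamma_0$ is a combinatorial geodesic in $T_0$ from $x$ to $q_0$ (using convexity of $T_0$), $e_0$ is the (possibly trivial) connecting $1$-cube, and the remainder of $\gamma$ is produced by applying the inductive hypothesis to the subpath $(v_1,\ldots,v_r)$ with starting $0$-cube $p_1\in T_1$ and terminal subcomplex $T_r$. Since cone-vertices of $\fontact\cuco X$ are adjacent only to hyperplane-vertices of $\contact\cuco X$, the case analysis splits into three cases. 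When $v_0=H$ and $v_1=K$ are both hyperplanes with $H\coll K$: in the crossing case $H\bot K$, any $2$-cube $c\subseteq\neb(H)\cap\neb(K)$ has exactly one $0$-cube in each quadrant $H^{\sigma}\cap K^{\tau}$, so the corner in $T_0=H^{\sigma_0}$ is taken as $q_0$, $T_1=K^{\tau_1}$ is chosen to be the side of $K$ containing $q_0$, and $p_1=q_0$; in the osculating subcase, with $H,K$ osculating at a $0$-cube $w\in H^{\sigma'}\cap K^{\tau'}$ via dual $1$-cubes $e_H,e_K$, set $T_1=K^{\tau'}$ and $p_1=w$, with $q_0=w$ if $T_0=H^{\sigma'}$ and otherwise $q_0$ the other endpoint of $e_H$ (which lies in $H^{-\sigma'}=T_0$) with $e_0=e_H$. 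When $v_0=H$ is a hyperplane and $v_1=v_{F'}$ is a cone-point with $H\cap F'\neq\emptyset$, choose $T_1=F'$; since $H$ crosses $T_1$, there is a $1$-cube dual to $H$ with both endpoints in $T_1$, and the endpoint lying in $T_0$ serves as $q_0=p_1$. The symmetric case where $v_0$ is a cone-point and $v_1=H$ a hyperplane is handled analogously, using that $H$ crosses any representative of the parallelism class associated to $v_0$, so in particular crosses $T_0$.

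The main obstacle is the osculating subcase of Case~(a): the sides $\sigma',\tau'$ realized at the osculation point are dictated by the local cubical geometry rather than freely chosen, so the construction of $T_1$ must be adapted to the given $T_0$ using one of the dual $1$-cubes $e_H$ at the osculation point. Propagation of these side-choices through the induction is delicate, especially at the terminal edge where $T_r$ is fixed by hypothesis, and compatibility at the endpoint must be verified; this is handled using the flexibility in the choice of intermediate $T_i$'s together with Lemma~\ref{lem:parallel_product}, which identifies combinatorial hyperplanes shared between adjacent osculating hyperplanes and allows side-choices to be aligned across the length of the path.
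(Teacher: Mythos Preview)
Your argument is the paper's proof run from the opposite end: you peel off the initial edge $(v_0,v_1)$, choose $T_1$, and recurse on $(v_1,\ldots,v_r)$, while the paper peels off the terminal edge $(v_{r-1},v_r)$, chooses $T_{r-1}$, and recurses on $(v_0,\ldots,v_{r-1})$; the three-case adjacency analysis (cone--hyperplane, hyperplane--cone, hyperplane--hyperplane with crossing/osculating subcases) is identical in both. Your final paragraph overcomplicates matters: once you hand the shorter path $(v_1,\ldots,v_r)$ to the inductive hypothesis with both $T_1$ (just chosen) and $T_r$ (given) fixed, there is no separate ``propagation of side-choices'' to manage by hand, and Lemma~\ref{lem:parallel_product} plays no role here.
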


\begin{proof}
We argue by induction on $r$.  If $r=0$, then the claim follows by
path-connectedness of $T_0$.  Now let $r\geq 1$.  Suppose first that
$v_r$ is a cone-vertex, so that $v_{r-1}$ is a hyperplane-vertex, and
let $V$ be the corresponding hyperplane.
Since $V$ crosses $T_r$,
either combinatorial hyperplane $T_{r-1}$ bounding the carrier of $V$
intersects $T_r$.  There exists a $0$--cube $y'\in T_{r-1}\cap T_r$.  On the other hand, if $v_r$
is a hyperplane-vertex, then $T_r$ is a specified combinatorial
hyperplane parallel to a hyperplane crossing each subcomplex $T_{r-1}$
associated to $v_{r-1}$, if $v_{r-1}$ is a cone-vertex.  If $v_{r-1}$
is also a hyperplane-vertex, then at least one of the possible
choices of $T_{r-1}$ is a combinatorial hyperplane intersecting the
combinatorial hyperplane $T_r$.  Again, we have a 0-cube $y'\in
T_r\cap T_{r-1}$.  In either case, by induction, we can choose
$T_1,\ldots,T_{r-2}$ to carry a path joining $x$ to $y'$, which we
concatenate with a path in $T_r$ joining $y'$ to $y$.  The path
$e_{r-2}$ is nontrivial if $v_{r-1}$ corresponds to a hyperplane
separating $T_r$ from every possible choice of $T_{r-2}$.
\end{proof}

\begin{rem}[Explicit description of paths carried by geodesics]\label{rem:explicit_hier_paths}
In the proof of Proposition~\ref{prop:path_in_fontact}, we used the
fact that a path $v_0,v_1,\ldots,v_r$ in $\fontact F$ has the property
that, if $v_i$ is a cone-vertex, associated to some
$U\in\factorsup_F$, then $v_{i\pm1}$ are hyperplane vertices,
associated to hyperplanes $H_{i\pm1}$ that cross $U$, so that all four
combinatorial hyperplanes $H_{i\pm1}^{\pm}$ intersect $U$.  Hence we
can and shall \emph{always} assume that $|e_{i\pm1}|=0$ when $v_i$ is
a cone-vertex, see Figure~\ref{fig:hier_path_explicit}.  This 
enables us to use the proof of Proposition~\ref{prop:hierpath}
verbatim in the proof of
Proposition~\ref{prop:hier_revisited}. 
\end{rem}

\begin{figure}[h]
\includegraphics[width=0.65\textwidth]{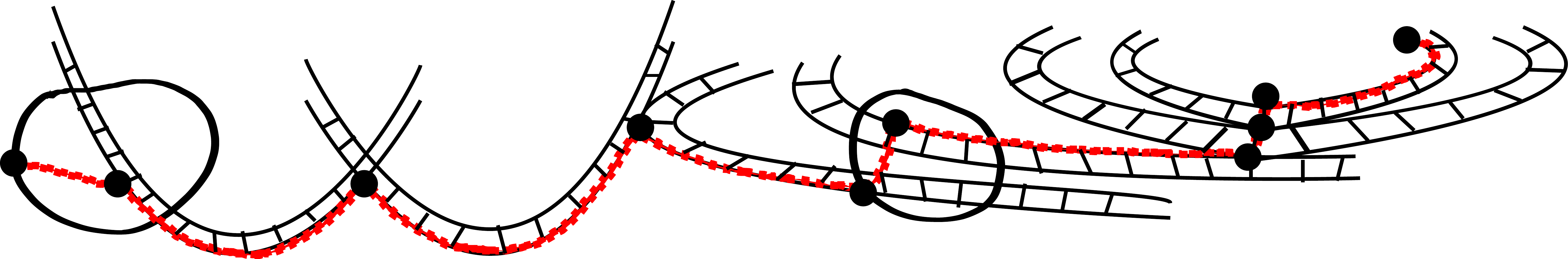}\\
\caption{The two round regions are elements of $\factorsup$; the ladders represent carriers of hyperplanes.  The dotted path is carried by a geodesic in the factored contact graph.  Transitions between various $\gamma_i$ and $e_i$ are indicated by bold vertices.  Notice that there are numerous such paths, but we choose paths with $e_i$ trivial whenever possible.}\label{fig:hier_path_explicit}
\end{figure}

In view of the previous proposition, we will use the same notation for a vertex of $\fontact F$ as for some representative subcomplex.  A \emph{hierarchy path} is a geodesic of $F$ that is carried by a geodesic of $\fontact F$.

\begin{prop}\label{prop:hier_revisited}
Let $F\in\factorsup$.  Then any 0-cubes $x,y\in F$ are joined by a hierarchy path. \end{prop}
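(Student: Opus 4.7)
The plan is to adapt the proof of Proposition~\ref{prop:hierpath} to the factored setting, relying on the structural description of $\fontact F$--geodesics recorded in Remark~\ref{rem:explicit_hier_paths} so that the disc-diagram argument goes through essentially verbatim. First I would pick hyperplane vertices $v_0,v_r$ of $\contact F$ whose hyperplanes have carriers containing $x$ and $y$ respectively, and choose a $\fontact F$--geodesic $v_0,v_1,\ldots,v_r$. To each vertex $v_i$ I would associate a representative subcomplex $T_i$: a combinatorial hyperplane bounding the carrier of $H_i$ if $v_i$ is a hyperplane vertex, and a representative of the parallelism class in $\factorsup_F$ if $v_i$ is a cone vertex. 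By Proposition~\ref{prop:path_in_fontact} these choices can be made so that $T_0,\ldots,T_r$ carries a combinatorial path $\gamma=\gamma_0e_0\gamma_1e_1\cdots e_{r-1}\gamma_r$ from $x$ to $y$ with each $\gamma_i$ a geodesic of $T_i$ and $|e_i|\in\{0,1\}$; by Remark~\ref{rem:explicit_hier_paths} we insist that $e_{i-1}=e_i=0$ whenever $v_i$ is a cone vertex. All these choices (the $\fontact F$--geodesic, the representatives $T_i$, and the subgeodesics $\gamma_i$) are made to minimize the area of a disc diagram $D$ with boundary path $\gamma\tau$, where $\tau$ is a combinatorial geodesic in $F$ from $y$ to $x$.

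Next, I would shorten consecutive $\gamma_i,\gamma_{i+1}$ at any intersection points, exactly as in Proposition~\ref{prop:hierpath}, to make $\gamma$ immersed. It then suffices to show that every dual curve $K$ of $D$ emanating from $\gamma$ ends on $\tau$, as this forces $|\gamma|\leq|\tau|$, making $\gamma$ a combinatorial geodesic carried by $v_0,\ldots,v_r$. Suppose $K$ emanates from a $1$--cube of $\gamma_i$ and is dual to the hyperplane $V$ of $F$. Then $V$ crosses $T_i$: either $v_i$ is a hyperplane vertex and $V\neq H_i$ crosses the combinatorial hyperplane $T_i\subseteq\neb(H_i)$, or $v_i$ is a cone vertex and $V$ is a hyperplane of $T_i\in\factorsup_F$. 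In either case the defining adjacencies of $\fontact F$ imply that the vertex $V\in\contact F\subseteq\fontact F$ is adjacent to $v_i$.

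Now suppose $K$ ends on a $1$--cube of $\gamma_j$. If $j=i$, this contradicts that $\gamma_i$ is a geodesic in the convex subcomplex $T_i$. If $|i-j|>2$, the argument above applied to $v_j$ shows that $V$ is adjacent to both $v_i$ and $v_j$ in $\fontact F$, yielding a path $v_i,V,v_j$ of length $2$ and contradicting that $v_0,\ldots,v_r$ is a $\fontact F$--geodesic. If $|i-j|=2$, then $v_i,V,v_j$ is an alternative two-step path; replacing $v_{i\pm1}$ by $V$ and taking the corresponding $\gamma_{i\pm 1}'$ from the interior of $D$ yields a strictly smaller-area diagram, contradicting minimality. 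The case $|i-j|=1$ follows exactly as in Proposition~\ref{prop:hierpath}: the absence of innermost dual curves crossing $K$ between $\gamma_i$ and $\gamma_j$ forces their $V$--dual $1$--cubes to coincide in $F$, contradicting that $\gamma$ is immersed.

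The main obstacle is verifying that the cases involving cone vertices really do translate from the unfactored proof. The crux is exactly the observation above: any hyperplane $V$ crossing a cone-vertex representative $T\in\factorsup_F$ is by definition joined to the cone vertex $v_T$ by an edge of $\fontact F$. Once this is in hand, each shortcut $V$ produces a genuine path in $\fontact F$, so all the minimality arguments of Proposition~\ref{prop:hierpath} apply unchanged, and we conclude that $\gamma$ is a hierarchy path from $x$ to $y$.
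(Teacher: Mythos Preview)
Your proposal is correct and follows essentially the same approach as the paper, which simply invokes Proposition~\ref{prop:path_in_fontact} and Remark~\ref{rem:explicit_hier_paths} and then states that the disc-diagram argument from Proposition~\ref{prop:hierpath} ``can now be repeated verbatim.'' You spell out in more detail how the cone vertices behave in that argument; the only small omission is that you do not explicitly treat dual curves emanating from (or ending on) the edges $e_i$, but since such an edge is dual to $H_i$ or $H_{i+1}$, the same adjacency reasoning immediately forces those dual curves to end on $\tau$ as well.
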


\begin{proof}
Let $\tau$ be a combinatorial geodesic of $\cuco X$ joining $x$ to
$y$; since $F$ is convex, $\tau\subseteq F$.  Let $T\in\factorsup_F$
contain $x$ (such $T$ exists because every $0$--cube of $F$ is contained in a combinatorial hyperplane of $F$) and let $T'\in\factorsup_F$ contain $y$.  Let
$T=T_0,T_1,\ldots,T_n=T'$ be a geodesic of $\fontact F$.  Then we can
choose the $T_i$ in their parallelism classes so that
$\neb_1(T_i)\cap\neb_1(T_{i+1})\neq\emptyset$ for $0\leq i\leq n-1$.
Using Proposition~\ref{prop:path_in_fontact}, for each $i$, let
$\gamma_i\rightarrow T_i$ be a geodesic segment, chosen so that
$\gamma_1e_1\ldots\gamma_ne_n$ is a piecewise-geodesic path joining
$x$ to $y$, with each $|e_i|\leq 1$ and $|e_i|=0$ except possibly if
$\gamma_i,\gamma_{i+1}$ lie in disjoint combinatorial hyperplanes
representing vertices of $\contact F$ (i.e., non-cone-vertices).  The
disc diagram argument from the proof of
Proposition~\ref{prop:hierpath} can now be repeated verbatim to
complete the proof, because $\factorsup_F$ satisfies
property~\eqref{item:fs_all_hyperplanes} from
Definition~\ref{defn:factor_system}.\end{proof}

\subsection{Factored contact graphs are quasi-trees}\label{subsec:factored_contact_graphs_are_quasi_trees}
In this section, we will use the following criterion of Manning, that determines if a geodesic metric space is quasi-isometric to a tree:

\begin{prop}[Bottleneck criterion; \cite{ManningPseudocharacters}]\label{prop:bottleneck}
Let $(Z,d)$ be a geodesic metric space.  Suppose that there exists $\delta\geq 0$ such that for all $x,y\in Z$, there exists $m=m(x,y)$ such that $d(x,m)=d(y,m)=\frac{1}{2}d(x,y)$ and every path joining $x$ to $y$ intersects the closed $\delta$--ball about $m$.  Then $(Z,d)$ is $(26\delta,16\delta)$--quasi-isometric to a tree.
\end{prop}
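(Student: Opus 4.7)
The plan is to construct an explicit simplicial tree $T$ together with a quasi-isometry $Z \to T$ whose multiplicative and additive constants depend linearly on $\delta$. I would fix a basepoint $p \in Z$ and, for each $z \in Z$, choose a geodesic $\gamma_z$ from $p$ to $z$. The tree will be built from a net in $Z$, with edges dictated by ``parent'' assignments along these geodesics, and the quasi-isometry will just send each point to its nearest net-vertex.

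The crucial geometric input is a fellow-traveling property. For $x, y \in Z$, write $g = \tfrac{1}{2}(d(p,x) + d(p,y) - d(x,y))$ for the Gromov product at $p$. I would show that $\gamma_x$ and $\gamma_y$ stay within Hausdorff distance $O(\delta)$ of one another on their initial segments of length $g$, and then diverge in the sense that $d(\gamma_x(s), \gamma_y(t))$ is comparable to $(s - g) + (t - g)$ once $s, t \gtrsim g$. This follows by iterated application of the bottleneck hypothesis: a bottleneck midpoint of any pair $(\gamma_x(s), \gamma_y(s))$ with $s < g$ must lie near both geodesics and near $[x,y]$, and the triangle inequality then forces the two geodesics to remain close on $[0,g]$. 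Conversely, past parameter $g$, bottleneck midpoints of pairs $(\gamma_x(s), \gamma_y(t))$ are forced onto the geodesic $[x,y]$, which yields the divergence.

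To build the tree, I would take a maximal $\delta$-separated subset $N \subseteq Z$ containing $p$, and for each non-basepoint $z \in N$ assign a parent $\pi(z) \in N$ within $O(\delta)$ of $\gamma_z$ at distance approximately $d(p,z) - C\delta$ from $p$, where $C$ is a universal constant chosen to balance the final error terms. Joining each $z$ to $\pi(z)$ produces a tree $T$ rooted at $p$. The natural map $\Phi\colon Z \to T$ sends each point to its nearest vertex in $N$.

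Finally, I would verify that $\Phi$ is a quasi-isometry. The distance $d_T(x, y)$ between $x, y \in N$ equals $d(p,x) + d(p,y) - 2\ell$, where $\ell$ is the depth of their most recent common ancestor; by the fellow-traveling step, this ancestor sits within $O(\delta)$ of $\gamma_x(g) \approx \gamma_y(g)$, giving $d_T(x,y) \approx d(x,y)$ up to additive error controlled by $\delta$, together with a bounded multiplicative stretching from rounding depths to net-vertices. The hard part is not the qualitative statement but the combinatorial bookkeeping needed to reach the advertised constants $(26\delta, 16\delta)$: each application of the bottleneck hypothesis contributes a factor of $\delta$ of error, and extracting sharp constants forces one to track how many such applications occur at each step of the argument (selecting the net, choosing parents, comparing ancestors), and to choose the separation parameter of $N$ and the offset $C\delta$ in $\pi$ so that the errors compound rather than multiply.
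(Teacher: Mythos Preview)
The paper does not prove this proposition: it is quoted, with attribution, from Manning's paper~\cite{ManningPseudocharacters} and used as a black box in the proof of Proposition~\ref{prop:fontact_quasi_tree}. There is therefore no ``paper's own proof'' to compare against.

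For what it is worth, your sketch follows the same broad strategy as Manning's original argument --- build a rooted tree on a net in $Z$ by assigning parents along chosen geodesics to the basepoint, and use the bottleneck hypothesis to establish a fellow-traveling property that controls how geodesics diverge past the Gromov product --- so the approach is sound in outline. The gap, which you yourself flag, is that the sketch does not actually carry out the bookkeeping that produces the specific constants $(26\delta,16\delta)$; without that, you have at best reproved the qualitative statement that the bottleneck condition implies quasi-tree, not the quantitative proposition as stated.
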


\begin{prop}\label{prop:fontact_quasi_tree}
Let $F\in\factorsup$.  Then $\fontact F$ is $(78,48)$--quasi-isometric to a simplicial tree.
\end{prop}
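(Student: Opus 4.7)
The plan is to verify the bottleneck criterion (Proposition~\ref{prop:bottleneck}) for $\fontact F$ with $\delta=3$, by using hierarchy paths together with a disc-diagram argument mirroring the one in the proof of Proposition~\ref{prop:hierpath}. Fix two vertices $x,y\in\fontact F$, pick $0$-cubes $p\in T_x,\,q\in T_y$ in representative subcomplexes, and invoke Proposition~\ref{prop:hier_revisited} to produce a hierarchy path $\gamma=\gamma_{0}e_{0}\gamma_{1}\cdots\gamma_{n}e_{n}$ from $p$ to $q$ carried by a $\fontact F$-geodesic $x=v_{0},v_{1},\ldots,v_{n}=y$. I would set the midpoint $m=v_{i}$ with $i=\lfloor n/2\rfloor$; since $\gamma$ is an honest $F$-geodesic and the carrying sequence is a $\fontact F$-geodesic, $m$ is an approximate $\fontact F$-midpoint of $x,y$ (this approximate choice only costs a $1/2$ in the final bottleneck constant, which is absorbed by taking $\delta=3$).

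Given any other path $\sigma'=(w_{0},\ldots,w_{k})$ in $\fontact F$ from $x$ to $y$, I would apply Proposition~\ref{prop:path_in_fontact} to lift $\sigma'$ to a combinatorial path $\gamma'=\gamma'_{0}e'_{0}\cdots\gamma'_{k}e'_{k}$ in $F$ from $p$ to $q$ carried by $\sigma'$, with each $\gamma'_{j}\subseteq T_{w_{j}}$. Then I take a disc diagram $D$ with boundary path $\gamma\bar{\gamma'}$, chosen of minimal area; exactly as in Proposition~\ref{prop:hierpath}, since $\gamma$ is a geodesic, every dual curve of $D$ meeting $\gamma$ must terminate on $\gamma'$. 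Choosing $m$ (or if necessary a neighboring $v_{i\pm1}$) so that the syllable $\gamma_{i}\subseteq T_{m}$ contains at least one $1$-cube, we produce a hyperplane $W$ of $F$ that crosses both $\gamma_{i}$ and some portion of $\gamma'$.

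The crux is now an adjacency calculation in $\fontact F$. If $W$ crosses a syllable $\gamma'_{j}\subseteq T_{w_{j}}$, then $W\in\contact T_{m}\cap\contact T_{w_{j}}\subseteq\fontact F$. If $m$ is a cone-vertex the adjacency $m\sim W$ is immediate from Definition~\ref{defn:factored_contact_graphs}; if $m$ is a hyperplane-vertex associated to $H$ with $T_{m}=H^{\pm}$, then $W$ crossing $T_{m}$ means $W\bot H$ in $F$, so again $m\sim W$ in $\contact F\subseteq\fontact F$. The same dichotomy applies to $w_{j}$, giving $d_{\fontact F}(m,w_{j})\leq 2$. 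If instead $W$ is dual to a transition $1$-cube $e'_{j}$, then by Remark~\ref{rem:explicit_hier_paths} both $w_{j},w_{j+1}$ must be hyperplane-vertices for osculating hyperplanes sharing the endpoints of $e'_{j}$, and the same adjacency argument (possibly with one additional step) yields $d_{\fontact F}(m,w_{j})\leq 3$.

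Combining these bounds, every path from $x$ to $y$ in $\fontact F$ (viewed as a geodesic metric space via edge midpoints) meets the closed $3$-ball around $m$, so Proposition~\ref{prop:bottleneck} delivers the $(26\cdot 3,\,16\cdot 3)=(78,48)$ quasi-isometry to a simplicial tree. The main obstacle will be the degenerate edge cases: a midpoint whose syllable $\gamma_{i}$ is a single $0$-cube (so no hyperplane is dual to it), and $W$'s that witness $\gamma\cap\gamma'$ at a transition edge rather than inside a syllable. In the first case, one slides $m$ to $v_{i\pm 1}$ (noting that at least one adjacent syllable is nontrivial since $p\neq q$), and in the second, one checks the osculation-adjacency computation explicitly using the hereditary structure provided by Corollary~\ref{cor:fd_hereditary_reduced}; both loss factors are controlled by the slack built into the choice $\delta=3$.
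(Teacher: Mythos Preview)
Your strategy is genuinely different from the paper's, and the core idea is sound. The paper avoids disc diagrams entirely: it chooses $p\in\gate_{T_x}(T_y)$ and $q=\gate_{T_y}(p)$, so that \emph{every} hyperplane crossing $\gamma$ separates $T_x$ from $T_y$. It then takes a hyperplane vertex $v_i$ within distance $1$ of the midpoint and argues that the hyperplane $T'_i$ (or something it crosses) separates $T_x$ from $T_y$; any other $\fontact F$-path from $T_x$ to $T_y$ must then contain a vertex close to this separator, because separating hyperplanes are unavoidable. Your disc-diagram route sidesteps the gate trick and the separation property: you lift the competing path to $\gamma'$ with the \emph{same} endpoints and use the fact that dual curves from the geodesic side must land on $\gamma'$. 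Both approaches are aiming for the bottleneck criterion with $\delta=3$, but yours is more in the spirit of the proof of Proposition~\ref{prop:hierpath}, while the paper's is closer to an argument in the cube complex itself.

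There is, however, a genuine gap in your edge-case handling. The assertion that ``at least one adjacent syllable is nontrivial since $p\neq q$'' is false: along the carrying geodesic one can have a pattern of the form (cone)--(hyperplane)--(hyperplane)--(cone) in which all four syllables and both intervening transitions collapse to a single $0$-cube $z$, without violating that $v_0,\ldots,v_n$ is a $\fontact F$-geodesic (the only forced adjacencies in $\contact F$ come from pairs of hyperplane-type vertices sharing $z$, and there are just two such vertices, already consecutive). So sliding $m$ by $\pm1$ need not locate a $1$-cube. The repair is not to slide further, but to notice that the first $1$-cube of $\gamma$ on either side of this constant region has $z$ as an endpoint; since $z$ lies in the carrier of each hyperplane-type $v_j$ in the region, the dual hyperplane $W$ to that $1$-cube \emph{contacts} those $H_j$ and is therefore adjacent to them in $\contact F\subseteq\fontact F$. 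This gives $\dist_{\fontact F}(m,W)\leq 2$ directly (at most $1$ if $m$ is hyperplane-type, at most $2$ if $m$ is a cone vertex), and then $\dist_{\fontact F}(m,w_j)\leq 3$ as you want. Your invocation of Corollary~\ref{cor:fd_hereditary_reduced} does not help here; the needed ingredient is this contact observation.
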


\begin{proof}
Denote by $\mathcal H$ the set of vertices of $\fontact F$
corresponding to hyperplanes of $F$ and by $\mathcal V$ the set of
cone-vertices.  Let $T,T'$ be complexes associated to vertices of
$\fontact F$ (regarded as representatives, to be chosen, of
parallelism classes of subcomplexes in $\factorsup_F$).  Let
$T=T_0,T_1,\ldots,T_n=T'$ be a geodesic of $\fontact F$ joining $T$ to
$T'$ and let $m$ be the midpoint of this geodesic.  Suppose, moreover,
that $T_0,\ldots,T_n$ carries a hierarchy path
$\gamma=\gamma_0\cdots\gamma_n$ such that each hyperplane intersecting
$\gamma$ separates $T$ from $T'$; indeed we can choose $\gamma$ to connect a point $p\in\gate_{T}(T')$ to its gate $\gate_{T'}(p)$.  We may assume that $n\geq 4$, for
otherwise Proposition~\ref{prop:bottleneck} is satisfied by $m$ with
$\delta=\frac{3}{2}$.  Hence, since every vertex adjacent to an
element of $\mathcal V$ is in $\mathcal H$, there exists $i$ so that
$T_i$ is a combinatorial hyperplane and $\dist_{\fontact F}(T_i,m)\leq
1$.  Let $T_i'$ be the hyperplane such that $T_i$ is in the image of
$T_i'\times\{\pm1\}$ under $T_i'\times[-1,1]\cong
\neb(T'_i)\hookrightarrow F$.  Since every hyperplane crossing $\gamma_i$ separates $T$ from $T'$, we have that either $T'_i$ separates $T$ from $T'$, or
$T'_i$ crosses a hyperplane that separates $T$ from $T'$.  Hence, there is a
hyperplane $H$ such that $H$ separates $T,T'$ and $\dist_{\fontact
F}(H,m)\leq 2$.  Let $T=S_0,\ldots,S_k=T'$ be another path in
$\fontact F$ joining $T$ to $T'$.  Then, there exists $j$ such that
either $S_j$ is a combinatorial copy of $H$, or $H$ crosses $S_j$.
Thus $\dist_{\fontact F}(m,S_j)\leq 3$, and the claim follows from
Proposition~\ref{prop:bottleneck}.
\end{proof}

\section{The distance formula}\label{sec:distance_formula}

The main theorem of this section is:

\begin{thm}[Distance formula]\label{thm:distance_formula}
Let $\cuco X$ be a CAT(0) cube complex and let $\factorsup$ be a factor system.  Let $\factorseq$ contain exactly one representative of each parallelism class in $\factorsup$.  Then there exists $s_0\geq 0$ such that for all $s\geq s_0$, there are constants $K\geq 1,C\geq 0$ such that for all $x,y\in\cuco X^{(0)}$, $$\dist_{\cuco X}(x,y)\asymp_{_{K,C}}\sum_{F\in\factorseq}\ignore{\dist_{\fontact F}(\pi_F(x),\pi_F(y))}{s}.$$
\end{thm}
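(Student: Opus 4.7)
The proof naturally splits into proving the two inequalities separately. For the upper bound on the sum,
$$\sum_{F \in \factorseq} \ignore{\dist_{\fontact F}(\pi_F(x), \pi_F(y))}{s} \leq K\, \dist_{\cuco X}(x,y) + C,$$
I would first observe that each projection $\pi_F = \rho_F \circ \gate_F$ is coarsely Lipschitz on $0$-skeleta: Lemma~\ref{lem:simple_gate} shows that $\gate_F$ is $1$-Lipschitz, and adjacent $0$-cubes of $F$ project to cliques in $\fontact F$ that are uniformly close. Next, fix a hierarchy path $\gamma$ from $x$ to $y$ (Proposition~\ref{prop:hier_revisited}); for any $F$ with $\dist_{\fontact F}(\pi_F(x), \pi_F(y)) \geq s$, with $s$ chosen larger than the bound from Lemma~\ref{lem:bounded_projections}, Proposition~\ref{prop:BGIIITF} forces $\gamma$ to cross (a representative of the parallelism class of) $F$. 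The uniform local finiteness of $\factorsup$ then bounds the number of such parallelism classes by $\Delta\cdot|\gamma|$, and each individual summand is at most a constant times $|\gamma|$ by coarse Lipschitz-ness of $\pi_F$.

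For the more delicate lower bound on the distance,
$$\dist_{\cuco X}(x,y) \leq K \sum_{F \in \factorseq} \ignore{\dist_{\fontact F}(\pi_F(x), \pi_F(y))}{s} + C,$$
I would proceed by induction on a complexity parameter for $\factorsup$, namely the maximum length of a chain of strict, parallelism-distinct containments in $\factorsup$; this is finite thanks to uniform local finiteness together with item~\eqref{item:fs_all_hyperplanes} of Definition~\ref{defn:factor_system}. Invoking Proposition~\ref{prop:hier_revisited} gives a hierarchy path $\gamma = \gamma_0 e_0 \cdots \gamma_n e_n$ carried by a $\fontact \cuco X$-geodesic $T_0, \ldots, T_n$, so that $|\gamma| \leq n + \sum_i |\gamma_i|$. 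The combinatorial length $n$ is bounded above by $\dist_{\fontact \cuco X}(\pi_{\cuco X}(x), \pi_{\cuco X}(y))$, contributing precisely the $F = \cuco X$ summand on the right-hand side. Each syllable $\gamma_i$ is a geodesic of $T_i$, and by Lemma~\ref{lem:convex_subcomplex_factor_system} the induced factor system on $T_i$ has strictly smaller complexity, so the inductive hypothesis bounds $|\gamma_i|$ in terms of projection distances in $\fontact F$ for $F \in \factorseq$ with $F \subseteq T_i$.

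The main obstacle is assembling the per-syllable inductive bounds into the desired global estimate, where two reconciliations are needed. First, the inductive hypothesis yields projection distances between the endpoints of $\gamma_i$ (computed inside $T_i$), which must be compared with the global quantities $\dist_{\fontact F}(\pi_F(x), \pi_F(y))$; for $F \subseteq T_i$ one has $\pi_F(x)$ close to $\pi_F(\gate_{T_i}(x))$ and in turn close to $\pi_F(\gamma_i^-)$, by Lemma~\ref{lem:simple_gate} and Proposition~\ref{prop:BGIIITF}, with uniform additive error. Second, a single parallelism class could in principle be subsumed by multiple $T_i$'s, risking overcounting; choosing $s$ larger than the constant $\xi+2$ from Lemma~\ref{lem:bounded_projections} ensures that any large projection distance in $\fontact F$ can be attributed to at most boundedly many syllables, keeping the multiplicity controlled. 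Combining these adjustments with the estimate on $n$ completes the induction and yields the distance formula.
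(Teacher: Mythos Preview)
Your inductive approach to the inequality $\dist_{\cuco X}(x,y)\lesssim\sum_F\ignore{\dist_{\fontact F}(\pi_F(x),\pi_F(y))}{s}$ is essentially the paper's: decompose along a hierarchy path carried by $T_0,\ldots,T_n$, account for $n$ via the $\cuco X$--term, and handle each $T_i$ by induction on the complexity.  The paper's version works with the gates $x_i=\gate_{T_i}(x),y_i=\gate_{T_i}(y)$ rather than the syllable endpoints $\gamma_i^\pm$, which sidesteps your first ``reconciliation'' entirely: for $F\subseteq T_i$ one has $\gate_F(x)=\gate_F(x_i)$ on the nose, so $\pi_F(x)=\pi_F(x_i)$ with no additive error.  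The quantitative control then comes from the length-comparison estimate
\[
\dist_{\cuco X}(x,y)-n\ \leq\ \sum_{i=0}^n\dist_{T_i}(x_i,y_i)\ \leq\ 3\,\dist_{\cuco X}(x,y),
\]
proved by a direct hyperplane-counting argument (Lemma~\ref{lem:length_comparison}), and this replaces both of your reconciliations at once.

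There is, however, a genuine gap in your argument for the \emph{other} inequality.  Your direct count gives at most $\Delta\cdot|\gamma|$ parallelism classes with large projection and a bound of $c\,|\gamma|$ on each term, hence only the quadratic estimate $\sum_F\ignore{\dist_{\fontact F}(\pi_F(x),\pi_F(y))}{s}\leq c\,\Delta\,\dist_{\cuco X}(x,y)^2$, which is not enough.  The obstruction is real: a single hyperplane separating $x,y$ can cross many elements of $\factorsup$ that meet $\gamma$, so the double-counting $\sum_F\#\{H:H\text{ crosses }F\}$ is not controlled by $\Delta$.  The paper resolves this by running the \emph{same} induction for this direction as well: by Lemma~\ref{lem:old_3.15} every large term is nested (up to parallelism) in some $T_i$, so the full sum splits as $\ignore{n}{s}$ plus $\sum_i\sum_{F\in\factorseq_{T_i}}\ignore{\cdot}{s}$ (with bounded multiplicity~$3$ since no class is parallel into both $T_i$ and $T_{i'}$ when $|i-i'|\geq 3$), the inner sums are bounded by $K'\dist_{T_i}(x_i,y_i)+C'$ by induction, and the right-hand inequality in the displayed length-comparison above gives the linear bound.  (Incidentally, the statement you need to force $\gamma$ through a parallel copy of $F$ is Proposition~\ref{prop:bgiII}, not Proposition~\ref{prop:BGIIITF}.)
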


The rest of this section is devoted to proving Theorem~\ref{thm:distance_formula}.  Throughout, $\cuco X$ and $\factorsup$ are as in the statement of the theorem.  The constants $\xi\geq1,\Delta\geq1$ are those from Definition~\ref{defn:factor_system}.  For convenience, if $F\in\factorsup$ and $x,y\in\cuco X$, we write $\dist_{\fontact F}(x,y):=\dist_{\fontact F}(\pi_F(x),\pi_F(y))$.  

\subsection{Projection lemmas}\label{subsec:projection_lemmas}
\begin{lem}\label{lem:simple_projection}
Let $\cuco X$ be a CAT(0) cube complex, let $A\subseteq B\subseteq\cuco X$ be convex subcomplexes, and let $x,y\in\cuco X$ be 0-cubes.  Then $\dist_{\fontact A}(\pi_A(x),\pi_A(y))\leq\dist_{\cuco X}(\gate_A(x),\gate_A(y))\leq\dist_{\cuco X}(\gate_B(x),\gate_B(y))$.
\end{lem}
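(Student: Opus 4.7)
The plan is to prove the two inequalities separately; both follow quickly from Lemma~\ref{lem:simple_gate} and from the definition $\pi_A = \rho_A \circ \gate_A$.

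For the second inequality, Lemma~\ref{lem:simple_gate} already says that every hyperplane separating $\gate_A(x)$ from $\gate_A(y)$ also separates $\gate_B(x)$ from $\gate_B(y)$. Since the combinatorial distance between two $0$-cubes in a CAT(0) cube complex equals the number of hyperplanes that separate them, this containment of hyperplane sets yields the inequality immediately.

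For the first inequality, set $n = \dist_{\cuco X}(\gate_A(x),\gate_A(y))$; the case $n=0$ is trivial, so assume $n \geq 1$. Because $A$ is convex in $\cuco X$, there is a combinatorial geodesic $\gamma \subseteq A$ from $\gate_A(x)$ to $\gate_A(y)$ realizing this distance; list its $0$-cubes as $v_0, \ldots, v_n$ and let $H_i$ be the hyperplane of $A$ dual to the $1$-cube between $v_i$ and $v_{i+1}$. Then $v_i, v_{i+1} \in \neb(H_i)$, so $H_i \in \pi_A(v_i) \cap \pi_A(v_{i+1})$, and the carriers of $H_i$ and $H_{i+1}$ share the $0$-cube $v_{i+1}$, so these hyperplanes are adjacent in $\contact A \subseteq \fontact A$. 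This produces a walk $H_0, H_1, \ldots, H_{n-1}$ in $\fontact A$ of length $n-1$ with $H_0 \in \pi_A(\gate_A(x)) = \pi_A(x)$ and $H_{n-1} \in \pi_A(\gate_A(y)) = \pi_A(y)$, yielding $\dist_{\fontact A}(\pi_A(x), \pi_A(y)) \leq n-1 \leq n$.

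There is no real obstacle here: the lemma essentially repackages the gate monotonicity of Lemma~\ref{lem:simple_gate} together with the duality between the $1$-cubes of a cubical geodesic in $A$ and the hyperplanes of $A$ that it crosses. The only point worth checking is that $\pi_A$ factors through $\gate_A$, so one may harmlessly identify $\pi_A(x)$ with $\pi_A(\gate_A(x))$ when estimating projections to $\fontact A$.
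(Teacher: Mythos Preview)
Your proof is correct and follows essentially the same approach as the paper's. The paper's proof is terser: for the first inequality it simply asserts that projection to the factored contact graph is distance non-increasing, and for the second it cites Lemma~\ref{lem:simple_gate}; your argument just unpacks these two assertions explicitly.
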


\begin{proof} 
The first inequality follows since projection to the factored contact
graph is distance non-increasing.
The second inequality is Lemma~\ref{lem:simple_gate}.   
\end{proof}

\begin{lem}\label{lem:length_comparison}
Let $x,y\in\cuco X^{(0)}$ and let $T_0,T_1,\ldots,T_r$ represent a geodesic in $\fontact\cuco X$ with $x\in T_0,y\in T_r$.  For $0\leq i\leq r$, let $x_i=\gate_{T_i}(x),y_i=\gate_{T_i}(y)$.  Then $$\dist_{\cuco X}(x,y)-\dist_{\fontact\cuco X}(x,y)\leq\sum_{i=0}^r\dist_{T_i}(x_i,y_i)\leq 3\dist_{\cuco X}(x,y).$$
\end{lem}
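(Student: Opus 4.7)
The plan is to prove both inequalities by analyzing, for each $i$, exactly which hyperplanes of $\cuco X$ contribute to $\dist_{T_i}(x_i,y_i)$. The key initial observation is that a hyperplane $H$ separates $x_i$ from $y_i$ if and only if $H$ crosses $T_i$ and separates $x$ from $y$: the ``only if'' direction follows from Lemma~\ref{lem:simple_gate} together with the fact that $x_i,y_i\in T_i$ forces any separating hyperplane to cross $T_i$, while the ``if'' direction uses that no hyperplane crossing $T_i$ can separate $x$ from $x_i$ or $y$ from $y_i$ (the set of hyperplanes separating $x$ from $\gate_{T_i}(x)$ is precisely the set of hyperplanes separating $x$ from $T_i$). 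Thus $\dist_{T_i}(x_i,y_i)$ counts precisely the hyperplanes of $\cuco X$ that cross $T_i$ and separate $x$ from $y$; in particular, this quantity is a parallelism invariant of $T_i$.

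For the upper bound I would show that each hyperplane $H$ separating $x$ from $y$ crosses at most three of the $T_i$. If $H$ crosses $T_i$ then the hyperplane-vertex $H$ is adjacent to $T_i$ in $\fontact\cuco X$: when $T_i$ is itself a hyperplane-vertex $V^{\pm}$, this is because $H$ must cross $V$; when $T_i$ is the cone-vertex of some $F\in\factorsup$, it is because $H$ crosses $F$, and by Definition~\ref{defn:factored_contact_graphs} the cone-vertex is joined to every vertex of $\contact F$. Hence if $H$ crosses both $T_i$ and $T_j$ then $\dist_{\fontact\cuco X}(T_i,T_j)\leq 2$ via $H$, forcing $|i-j|\leq 2$ since $T_0,\ldots,T_r$ is a geodesic. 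The initial observation identifies $\sum_i\dist_{T_i}(x_i,y_i)$ with the number of pairs $(i,H)$ for which $H$ separates $x,y$ and crosses $T_i$; summing first over $H$ therefore yields $\sum_i\dist_{T_i}(x_i,y_i)\leq 3\dist_{\cuco X}(x,y)$.

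For the lower bound I would invoke Proposition~\ref{prop:hier_revisited}, whose proof allows the prescribed geodesic $T_0,\ldots,T_r$ to be used after replacing $T_1,\ldots,T_{r-1}$ by parallel representatives; by the initial observation this replacement leaves each $\dist_{T_i}(x_i,y_i)$ unchanged. We thereby obtain a hierarchy path $\gamma=\gamma_0 e_0\gamma_1\cdots e_{r-1}\gamma_r$ from $x$ to $y$, with each $\gamma_i\subseteq T_i$ a geodesic and each $|e_i|\leq 1$. Since $\gamma$ is a geodesic, any hyperplane it crosses separates $x$ from $y$; those crossed by $\gamma_i$ additionally cross $T_i$ and hence, by the initial observation, separate $x_i$ from $y_i$. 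Consequently $|\gamma_i|\leq\dist_{T_i}(x_i,y_i)$, and
$$\dist_{\cuco X}(x,y)=|\gamma|=\sum_i|\gamma_i|+\sum_i|e_i|\leq\sum_i\dist_{T_i}(x_i,y_i)+r.$$
The stated lower bound follows since $r$ and $\dist_{\fontact\cuco X}(x,y)$ agree up to a bounded additive constant (each of $T_0,T_r$ is adjacent in $\fontact\cuco X$ to a vertex of $\pi_{\cuco X}(x),\pi_{\cuco X}(y)$, respectively, because $x\in T_0$ and $y\in T_r$). The only point requiring care is the parallelism replacement invoked for Proposition~\ref{prop:hier_revisited}: this is harmless precisely because every quantity of interest is a parallelism invariant.
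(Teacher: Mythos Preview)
Your argument is correct.  For the upper bound you give exactly the paper's proof: the key observation that a hyperplane $H$ separating $x_i,y_i$ must cross $T_i$ and separate $x,y$ (and conversely), together with the shortcut argument showing $H$ can cross at most three of the $T_i$.  You state the two-way version of the key observation more carefully than the paper, which only makes the ``$H$ crosses $T_i$ and separates $x,y\Rightarrow H\in\mathcal S_i$'' direction explicit.

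For the lower bound you take a genuinely different route.  The paper argues by direct hyperplane counting: every hyperplane $H$ separating $x,y$ either lies in $\bigcup_i\mathcal S_i$ or, failing that, is one of at most $r$ ``exceptional'' hyperplanes (those lying between consecutive $T_i$'s along the geodesic), giving $\dist_{\cuco X}(x,y)\leq|\bigcup_i\mathcal S_i|+r$.  You instead invoke the proof of Proposition~\ref{prop:hier_revisited} to produce a hierarchy path $\gamma$ carried by (parallel representatives of) the given $\fontact\cuco X$--geodesic, and bound $|\gamma_i|\leq\dist_{T_i}(x_i,y_i)$ using the key observation; the $e_i$ then account for the same $r$ exceptional hyperplanes.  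Your approach is slightly less self-contained (it relies on Proposition~\ref{prop:hier_revisited}) but is cleaner about why the choice of parallel representatives is immaterial.  The paper's phrase ``$\neb(H)$ contains $T_i$ in one of the bounding copies of $H$'' is a terse way of handling the exceptional hyperplanes that your hierarchy-path packaging makes more transparent.

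One small point: your final remark that $r$ and $\dist_{\fontact\cuco X}(x,y)$ agree ``up to a bounded additive constant'' is formally weaker than the exact inequality in the statement.  The paper simply identifies $r$ with $\dist_{\fontact\cuco X}(x,y)$ without comment, so you are in no worse shape; in the application to the distance formula only the coarse inequality matters.
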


\begin{proof}
For $0\leq i\leq r$, let $\mathcal S_i$ be the set of hyperplanes
separating $x_i,y_i$, so that $|\mathcal S_i|=\dist_{\cuco
X}(x_i,y_i)=\dist_{T_i}(x_i,y_i)$.  For each $i$, observe that if the
hyperplane $U$ separates $x$ from $y$ and crosses $T_i$, then $U$
separates $x_i$ from $y_i$.  Otherwise, $U$ would separate $x_i$ from
$x$ (say), but the only such hyperplanes are those separating $x$ from
$T_i$, so this situation would contradict the fact that $U$ crosses $T_i$.
Hence the set of hyperplanes separating $x$ from $y$ and crossing some
$T_i$ is $\cup_{i=0}^r\mathcal S_i$.  Each hyperplane $H$ separating
$x,y$ either crosses some $T_i$, or there exists $i$ so that $\neb(H)$
contains $T_i$ in one of the bounding copies of $H$.  Hence
$$\dist_{\cuco X}(x,y)\leq\left|\cup_{i=0}^r\mathcal
S_i\right|+r\leq\sum_{i=0}^r|\mathcal S_i|+r,$$ which establishes the
first inequality.

For some $i\neq j$, let $U\in\mathcal S_i\cap\mathcal S_j$.  Then $|i-j|\leq 2$, since otherwise $T_i,U,T_j$ would provide a shortcut contradicting the fact that $\dist_{\fontact(\cuco X,\mathcal P)}(T_0,T_r)=r$.  Hence at most three elements of $\{\mathcal S_i\}_{i=0}^r$ contain $U$.  Thus $\sum_{i=0}^r|\mathcal S_i|\leq 3|\cup_{i=0}^r\mathcal S_i|$, and the second inequality follows.
\end{proof}

\begin{prop}[Large Link Lemma]\label{prop:bgiII} 
Let $T_0,T_1,\ldots,T_r$ be a geodesic in $\fontact\cuco X$ between 
0-cubes $x\in T_0$ and $y\in T_r$.  Let $F\in\factorsup-\{\cuco X\}$ have the property that $\dist_{\fontact F}(x,y)\geq 4\xi+10$.  Then there exists $i\in\{0,1,\ldots,r\}$ such that $F$ is parallel to some $F'\in\factorsup_{T_i}$.  Moreover, any geodesic of $\cuco X$ contained in $\cup_iT_i$ that joins $x,y$ passes through a subcomplex of $T_i$ parallel to $F'$.
\end{prop}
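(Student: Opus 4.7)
The plan is to argue the contrapositive: assume that for no $i\in\{0,\ldots,r\}$ is $F$ parallel to an element of $\factorsup_{T_i}$, and derive $\dist_{\fontact F}(x,y)<4\xi+10$. Under this assumption, Lemma~\ref{lem:bounded_projections} immediately yields $\diam_{\fontact F}(\pi_F(T_i))<\xi+2$ for every $i$.

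Next, I apply Proposition~\ref{prop:hier_revisited} to produce a hierarchy path $\gamma=\gamma_0 e_0\gamma_1\cdots e_{r-1}\gamma_r$ from $x$ to $y$ carried by $T_0,\ldots,T_r$, with $\gamma_i\subseteq T_i$ and $|e_i|\leq 1$. Since consecutive syllables either meet (when $e_i$ is trivial) or are joined by a single $1$--cube, and since a single $1$--cube moves $\pi_F$ by at most $2$ in $\fontact F$, the projection $\pi_F(\gamma)$ visits $\pi_F(T_0),\pi_F(T_1),\ldots,\pi_F(T_r)$ in turn with small jumps between consecutive syllables. In particular $\pi_F(\bigcup_i T_i)$ is a connected subset of $\fontact F$ containing $\pi_F(x)$ and $\pi_F(y)$.

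The heart of the argument is to show that this connected subset has $\fontact F$--diameter $<4\xi+10$, uniformly in $r$. To this end, I take a geodesic $\eta=v_0,\ldots,v_N$ in $\fontact F$ from $\pi_F(x)$ to $\pi_F(y)$ with $N\geq 4\xi+10$, and show each vertex $v_j$ (either a hyperplane of $F$ or a cone vertex for some $U_j\in\factorsup_F$) corresponds to a vertex of $\fontact\cuco X$ adjacent to some $T_{i(j)}$. For a hyperplane-vertex $H$, the fact that $H$ lies on a geodesic from $\pi_F(x)$ to $\pi_F(y)$ means $H$ separates $\gate_F(x)$ from $\gate_F(y)$, hence separates $x$ from $y$ by Lemma~\ref{lem:simple_gate}, forcing $H$ to cross $\gamma$ and thus some $T_{i(j)}$. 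Cone-vertices are handled via their hyperplane neighbors on $\eta$. Because $T_0,\ldots,T_r$ is a $\fontact\cuco X$--geodesic, any hyperplane adjacent in $\fontact\cuco X$ to both $T_i$ and $T_{i'}$ must have $|i-i'|\leq 2$ (otherwise it would provide a shortcut), so the map $j\mapsto i(j)$ is coarsely monotone with fibers whose size is bounded by $\diam_{\fontact F}(\pi_F(T_i))<\xi+2$. Telescoping then gives $N<4\xi+10$, contradicting our hypothesis.

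The main obstacle is the precise combinatorial counting producing the constant $4\xi+10$: one needs to handle cone-vertices and hyperplane-vertices of $\eta$ uniformly, track the interplay between $\fontact F$--adjacency and $\fontact\cuco X$--adjacency, and account for the ``boundary'' contributions near $\pi_F(x)$ and $\pi_F(y)$. For the ``moreover'' part: given a parallel copy $F'\subseteq T_i$ of $F$, Lemma~\ref{lem:2.6} together with the assumption $\dist_{\fontact F}(x,y)\geq 4\xi+10$ forces $\gate_{F'}(\delta)$ to have many $0$--cubes for any geodesic $\delta\subseteq\bigcup_i T_i$ joining $x$ to $y$; a disc-diagram argument in the spirit of Proposition~\ref{prop:hierpath} then shows $\delta$ must pass through a parallel copy of $F'$ lying inside $T_i$.
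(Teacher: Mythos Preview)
Your contrapositive setup is fine, and invoking Lemma~\ref{lem:bounded_projections} to get $\diam_{\fontact F}(\pi_F(T_i))<\xi+2$ for each $i$ is the right first move.  The gap is in the ``telescoping'': knowing that each $\pi_F(T_i)$ has small diameter and that the assignment $j\mapsto i(j)$ has fibers of size at most $\xi+2$ yields at best $N\leq (r+1)(\xi+2)$, which is \emph{not} uniform in $r$.  Your shortcut observation---that a single hyperplane adjacent in $\fontact\cuco X$ to both $T_i$ and $T_{i'}$ forces $|i-i'|\leq2$---does not bound the range of $i(j)$; nothing prevents the small sets $\pi_F(T_i)$ from marching steadily across $\fontact F$ as $i$ runs from $0$ to $r$.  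The missing ingredient is that $F$ itself, being in $\factorsup-\{\cuco X\}$, is a vertex of $\fontact\cuco X$.  If hyperplanes $V$ and $V'$ separate $\gate_F(x)$ from $\gate_F(y)$ and cross $T_i$ and $T_j$ respectively, then $T_i,V,F,V',T_j$ is a path of length $4$ in $\fontact\cuco X$, forcing $|i-j|\leq4$.  This is what the paper uses: only a bounded block of consecutive $T_i$'s can have nontrivial $\gate_F(T_i)$, and outside that block the projections $\pi_F(T_k)$ are all equal to $\pi_F(x)$ or $\pi_F(y)$; then the pigeonhole gives some $T_{i+j}$ with $\diam_{\fontact F}(\pi_F(T_{i+j}))\geq\xi$, whence $F$ is parallel into $T_{i+j}$ by Lemma~\ref{lem:bounded_projections}.

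A secondary issue: your claim that a hyperplane-vertex on an \emph{arbitrary} $\fontact F$--geodesic from $\pi_F(x)$ to $\pi_F(y)$ separates $\gate_F(x)$ from $\gate_F(y)$ is not true in general; the paper sidesteps this by working directly with the hyperplanes that actually separate $\gate_F(x)$ from $\gate_F(y)$ rather than with an auxiliary geodesic in $\fontact F$.  For the ``moreover'' clause, once $F$ is parallel into $T_i$ no disc-diagram argument is needed: the paper simply notes that for a geodesic $\delta\subset\bigcup_k T_k$, the subpath $\gate_{T_i}(\delta)$ contains a segment from $\gate_{T_i}(\gate_F(x))$ to $\gate_{T_i}(\gate_F(y))$, which lies in a parallel copy of $F$ inside $T_i$.
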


\begin{proof}
Recall that we can choose $T_i$ within its parallelism class for $1\leq i\leq r-1$ so that $\neb_1(T_i)\cap T_{i+1}\neq\emptyset$.
We first exhibit $J\leq 3$ and $i\leq r$ such that for some $j$ 
satisfying $0\leq j\leq
J$ we have $\diam_{\fontact F}(\pi_F(T_{i+j}))\geq\xi$ and for all 
$i'$ satisfying  $i'<i$ or $i'>i+J$ we have $\diam_{\fontact
F}(\pi_F(T_{i'}))=0$. 

Let $V$ be a hyperplane separating $\gate_{F}(x)$ from $\gate_{F}(y)$.
Then $V$ intersects $F$ and hence $V$ cannot separate $x$ or $y$ from
their gates in $F$.  Hence $V$ must separate $x$ from $y$; and thus $V$ intersects some $\neb_1(T_i)$.  Either $V$ separates $\gate_{T_i}(x)$ from $\gate_{T_i}(y)$, or $V$ is the unique hyperplane separating $T_i$ from $T_{i\pm1}$.  Let $\mathcal G$ be the set of hyperplanes $V$ of the former type.

Let $V\in\mathcal G$.  Then $V$ intersects $T_i$ for at least $1$ and at most $3$ values of $i$.  If some other hyperplane $V'\in\mathcal G$ separates $\gate_{F}(x)$ from
$\gate_{F}(y)$, then $V'$ separates $\gate_{T_j}(x)$ from
$\gate_{T_j}(y)$ for at least $1$ and at most $3$ values of $j$, and for any such $i,j$, we have $|i-j|\leq4$;
otherwise $T_i,V,F,V',T_j$ would be a shortcut from $T_i$ to $T_j$,
since $F\neq \cuco X$ represents a vertex in $\fontact\cuco X$.  Let
$i,i+1,\ldots,i+J$ be the indices for which $T_{i+j}$ is crossed by a
hyperplane separating $\gate_F(x)$ from $\gate_F(y)$;  equivalently, the indices so that $\gate_{T_k}(F)$ is trivial if $k<i$ or $k>i+J$.

The graph $\pi_F(\cup_{j=0}^J\gate_F(T_{i+j}))=\pi_F(\cup_{j=0}^r\gate_F(T_{i}))$ has diameter at least $4\xi+10$, since it contains $\pi_F(x),\pi_F(y)$,  
whence $\diam_{\fontact F}(\gate_F(T_{i+j}))\geq \xi$ for some $j$.  Indeed, there is at most one hyperplane separating $T_{i+j}$ from $T_{i+j+1}$, whence at most one hyperplane separates their gates in $F$.  We then have that $F$ is parallel to a subcomplex of $T_{i+j}$ by Lemma~\ref{lem:bounded_projections}.

Let $\gamma\subset\cup_kT_k$ be a geodesic joining $x,y$ and let $\gamma_{i+j}=\gate_{T_{i+j}}(\gamma)$. Then $\gamma_{i+j}$ has a subpath $\tau$ joining $\gate_{T_{i+j}}(\gate_F(x))$ to $\gate_{T_{i+j}}(\gate_F(y))$. The path $\tau$ is parallel to a path in $F$ and hence belongs to some parallel copy of $F$ in $T_{i+1}$.
\end{proof}

\begin{lem}\label{lem:old_3.15}

There exist $s_0\geq 0$ such that for all $x,y\in\cuco X^{(0)}$ and any hierarchy path $\gamma_0e_0\cdots\gamma_re_r$ joining $x$ to $y$ and carried by a geodesic $T_0,\ldots, T_r$ of $\fontact\cuco X$, one of the following holds for all $F\in\factorsup$:
\begin{enumerate}
 \item there exists $i\leq r$ and $F'\in\factorsup_{T_i}$ such that $F$ is parallel to $F'$;
 \item $\dist_{\fontact F}(x,y)<s_0$;
 \item $F=\cuco X$.
\end{enumerate}

\end{lem}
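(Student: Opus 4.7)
The plan is to derive Lemma~\ref{lem:old_3.15} as an essentially immediate corollary of the Large Link Lemma (Proposition~\ref{prop:bgiII}). Observe that the hypothesis of that proposition is exactly tailored to the situation here: the carrying sequence $T_0,\ldots,T_r$ is by definition a geodesic of $\fontact\cuco X$ (since $\gamma$ is a hierarchy path), and its endpoints contain $x$ and $y$ respectively. So the hypotheses of Proposition~\ref{prop:bgiII} match the setup of the lemma with no additional setup needed.

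Concretely, I would set $s_0 = 4\xi+10$, where $\xi$ is the constant from the factor system definition (Definition~\ref{defn:factor_system}). Fix $F \in \factorsup$ and split into cases. If $F = \cuco X$, conclusion~(3) holds. Otherwise, if $\dist_{\fontact F}(x,y) < s_0$, conclusion~(2) holds. In the remaining case, $F \in \factorsup \setminus \{\cuco X\}$ and $\dist_{\fontact F}(x,y) \geq 4\xi + 10$, so Proposition~\ref{prop:bgiII} applies and produces an index $i \in \{0,\ldots,r\}$ together with an element $F' \in \factorsup_{T_i}$ such that $F$ is parallel to $F'$, which is precisely conclusion~(1).

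I do not anticipate a genuine obstacle, since the statement is essentially a repackaging of Proposition~\ref{prop:bgiII}. The only minor thing to double-check is that the $T_i$ appearing in the definition of ``hierarchy path'' (Section~\ref{subsec:hierarchy_paths_revisited}) really do form a geodesic in $\fontact\cuco X$ and not merely a path; this is built into the definition of a hierarchy path as a geodesic of $\cuco X$ carried by a $\fontact\cuco X$--geodesic. Everything else is formal. The slight content, really, lives inside Proposition~\ref{prop:bgiII} itself, not here.
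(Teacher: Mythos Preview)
Your proposal is correct and matches the paper's proof essentially verbatim: the paper also sets $s_0=4\xi+10$ and, for $F\neq\cuco X$ with $\dist_{\fontact F}(x,y)\geq s_0$, invokes Proposition~\ref{prop:bgiII} directly to obtain the required $F'\in\factorsup_{T_i}$.
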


\begin{proof}
Let $s_0=4\xi+10$ and let $F\neq \cuco X$ be an element of $\factorsup$ with $\dist_{\fontact F}(x,y)\geq s_0$.  By Proposition~\ref{prop:bgiII}, there exists $i$ so that $F$ is parallel to some $F'\in\factorsup_{T_i}$, as required.
\end{proof}

\subsection{Proof of the distance formula}\label{subsec:df_proof}
We have now assembled all ingredients needed for:

\begin{proof}[Proof of Theorem~\ref{thm:distance_formula}]
For each $F\in\factorseq$, let $\factorseq_F$ consist of exactly one
element from each parallelism class in $\factorsup_F$.  We
will argue by induction on $\Delta$, which we recall is the maximal 
number of elements in the factor system which can contain any given vertex.\\

\textbf{Base case:} When $\Delta=1$, the fact that $\cuco X$ and each
combinatorial hyperplane belongs to $\factorsup$ ensures that $\cuco
X$ consists of a single $0$--cube, so we are summing over the empty
set of projections, and we are done.\\

\textbf{Induction hypothesis:} Assume $\Delta\geq 2$.  For each
$F\in\factorseq-\{\cuco X\}$, the set $\factorsup_F$ is a factor
system in $F$.  Every 0--cube of $F$ is contained in at most $\Delta-1$
elements of $\factorsup_F$, since each 0--cube of $F$ lies in at most
$\Delta$ elements of $\factorsup$, one of which is $\cuco X$ (which 
is not contained in~$F$).  We can
therefore assume, by induction on $\Delta$, that for all $s\geq s_0$, where $s_0\geq2$ is
the constant from Lemma~\ref{lem:old_3.15}, there exist $K'\geq1,C'\geq0$ so that for all
$F\in\factorseq-\{\cuco X\}$ and all $x,y\in\cuco X$ we have
\begin{equation*}\dist_{F}(\gate_F(x),\gate_F(y))\asymp_{_{K',C'}}\sum_{T\in\factorseq_F}\ignore{\dist_{\fontact T}(\gate_F(x),\gate_F(y))}{s}.
\end{equation*}
\\

\textbf{Choosing a hierarchy path:}  By Proposition~\ref{prop:hier_revisited}, there exists a hierarchy path $\gamma=\gamma_0e_0\cdots\gamma_re_r$ that joins $x$ to $y$ and is carried on a geodesic $T_0,\ldots,T_r$ in $\fontact\cuco X$ from $T_0$ to $T_r$.  For each $i$, let $x_i=\gate_{T_i}(x)$ and $y_i=\gate_{T_i}(y)$.  Recall that each $\gamma_i$ lies in $\neb_1(T_i)$.  At the cost of adding $1$ to our eventual multiplicative constant $K$, we can assume that each $e_i$ is trivial.\\

\textbf{Enumeration of the nonvanishing terms:}  Let $s\geq s_0$ and let $F\in\factorsup$ be such that $\dist_{\fontact F}(x,y)\geq s$.  By Lemma~\ref{lem:old_3.15}, either $F$ is parallel to some $F'\in\factorsup_{T_i}$ for some $i$, or $F=\cuco X$.

Let $\factorsup_{\{T_i\}}$ be the set of all $F\in\factorsup$ that are parallel to a proper subcomplex of some $T_i$, and let $\factorseq_{\{T_i\}}$ contain exactly one representative for each parallelism class of elements of $\factorsup_{\{T_i\}}$. Recalling that $\dist_{\fontact F}(x,y)=\dist_{\fontact F'}(x,y)$ when $F$ is parallel to $F'$, we get
\begin{equation*}\sum_{F\in\factorseq}\ignore{\dist_{\fontact F}(x,y)}{s}=\I+\II+\ignore{r}{s},
\end{equation*}
where $$\I=\sum_{i=0}^r\ignore{\dist_{\fontact T_i}(x,y)}{s}$$ and
$$\II=\sum_{F\in\factorseq_{\{T_i\}}}\ignore{\dist_{\fontact
F}(x,y)}{s}\asymp_{3,0}\sum_{i}\sum_{F\in\factorseq_{T_i}-\{T_i\}}\ignore{\dist_{\fontact
F}(x,y)}{s}.$$ The estimate of $\II$ follows since no factor is
parallel to both $T_i$ and $T_{i'}$ when $|i-i'|\geq3$.\\

\textbf{The upper bound:}  By the inductive hypothesis, $$\I+\II\asymp_{K',C'}\sum_{i=0}^r\dist_{T_i}(\gate_{T_i}(x),\gate_{T_i}(y)).$$

Hence, by Lemma~\ref{lem:length_comparison}, $$\frac{\dist_{\cuco X}(x,y)-r}{3K'}-C'\leq\I+\II,$$ and we obtain the upper bound
\begin{eqnarray*}
\dist_{\cuco X}(x,y)&\leq& 3K'(\I+\II+r)+3C'K'\\
&=&3K'\sum_{F\in\factorseq-\{\cuco X\}}\ignore{\dist_{\fontact F}(x,y)}{s}+3K'\dist_{\fontact\cuco X}(x,y)+3C'K'\\
&\asymp_{1,s}&\sum_{F\in\factorseq-\{\cuco X\}}\ignore{\dist_{\fontact F}(x,y)}{s}+3K'\ignore{\dist_{\fontact\cuco X}(x,y)}{s}+3C'K'.
\end{eqnarray*}

\textbf{The lower bound:}  We have that $\dist_{\cuco X}(x,y)\geq r$ and we have the estimate $\I+\II\leq3K'\dist_{\cuco X}(x,y)+C'r$, by the induction hypothesis and Lemma~\ref{lem:length_comparison}.  Hence $$\I+\II+\ignore{r}{s}\leq(3K'+C'+1)\dist_{\cuco X}(x,y),$$ and we are done.
\end{proof}

\section{Projection of parallelism classes}\label{sec:par_proj}
Let $\cuco X$ be CAT(0) cube complex with a factor system $\factorsup$, and let $\factorseq$ be the set of parallelism classes of elements of $\factorsup$.  We have previously defined projections from $\cuco X$ to factored contact graphs of elements of $\factorsup$, and noted that $\pi_F\colon \cuco X\rightarrow\fontact F$ is independent of the choice of representative of the parallelism class $[F]$.  However, care must be taken in order to define projections of parallelism classes:

\begin{prop}[Projections of parallelism classes are bounded or cover]\label{prop:projections}
Let $F',F''\in\factorsup$ with $F'$ parallel to $F''$. 
Then one of the following holds for each $F\in\factorsup$:
\begin{enumerate}
 \item\label{item:par_1} $\dist_{\fontact
 F}(\pi_F(F'),\pi_F(F''))\leq\xi+2$, where $\xi$ is a constant of the
 factor system;
 \item\label{item:par_2} there is a cubical isometric embedding
 $F'\times F\rightarrow\cuco X$ such that for some $f',f''\in F$, the
 subcomplexes $F',F''$ are the images of $F'\times \{f'\}, F'\times \{f''\}$,
 and for some $f\in F'$, the subcomplex $F$ is the image of $\{f\}\times
 F$.  In particular, $\cup_{F_1\in[F']}\pi_F(F_1)=\fontact F$.
\end{enumerate}
Moreover, whether assertion~\eqref{item:par_1} or~\eqref{item:par_2} holds for $F',F''$ with respect to $F$ is independent of the choice of parallelism class representative of $F$.
\end{prop}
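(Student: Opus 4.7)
The plan is to analyze the gates $K := \gate_F(F')$ and $K' := \gate_F(F'')$ and to derive the dichotomy from them. First, I would note that $K$ and $K'$ are parallel subcomplexes of $F$: by Lemma~\ref{lem:2.6}, the hyperplanes crossing $K$ are exactly those crossing both $F$ and $F'$, and by parallelism of $F'$ and $F''$ these coincide with the hyperplanes crossing both $F$ and $F''$, i.e., those crossing $K'$.

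Next, I would handle the cases where (1) is immediate. If $K = F$, then $\pi_F(F') = V(\contact F)$, so $\pi_F(F') \cap \pi_F(F'') \neq \emptyset$ and the distance is $0$. Otherwise, if the parallelism class $[K]$ admits a cone vertex $v_{[K]}$ in $\fontact F$---i.e., $K \in \factorsup_F - \{F\}$ and either $\diam(K) \geq \xi$ or $K$ is parallel to a combinatorial hyperplane---then both $\pi_F(F')$ and $\pi_F(F'')$ lie in the star of $v_{[K]}$ (identified via Lemma~\ref{lem:projection_parallel}), so their distance is at most $2$. By property~(5) of Definition~\ref{defn:factor_system}, the only remaining case is $\diam(K) < \xi$ with $K$ not parallel to a combinatorial hyperplane.

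In this last case, the crucial observation is that if any hyperplane $H$ of $\cuco X$ crosses both $F$ and $F'$, then by parallelism $H$ also crosses $F''$ and, by Lemma~\ref{lem:2.6}, both $K$ and $K'$; hence $H \in \pi_F(F') \cap \pi_F(F'')$, forcing (1) with distance $0$. Conversely, if (1) fails, then no hyperplane crosses both $F$ and $F'$, so $K$ has no crossing hyperplanes and is a single $0$-cube.

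To establish (2) under this hypothesis, I would combine the product structure $E = \mathrm{hull}(F' \cup F'') \cong F' \times [0,a]$ from Lemma~\ref{lem:parallel_product} (in which every hyperplane of the separating set $\mathcal S$ crosses every hyperplane of $F'$) with the Caprace--Sageev product criterion (\cite[Proposition~2.5]{CapraceSageev:rank_rigidity}). After choosing a representative $F_*$ of $[F]$ intersecting $F'$ in a $0$-cube---obtained by translating $F$ across the hyperplanes separating it from $F'$, using the product structure of the parallelism class given by the ``hence'' part of Lemma~\ref{lem:parallel_product}---I would verify that every hyperplane of $\mathcal{H}(F_*)$ crosses every hyperplane of $\mathcal{H}(F')$ and deduce a product decomposition of the convex hull of $F_* \cup F' \cup F''$ of the form $F' \times F_*$. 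This gives the required cubical isometric embedding $F' \times F_* \hookrightarrow \cuco X$ realizing $F', F''$ as slices of the first factor and $F_*$ as a slice of the second; the ``in particular'' conclusion then follows by letting $f'$ range over $F_*^{(0)}$, since each vertex of $\contact F$ appears in the projection of some parallel slice of $F'$. The main obstacle is verifying the mutual-crossing condition demanded by Caprace--Sageev, which requires exploiting the $0$-cube intersection together with convexity. Finally, the ``moreover'' independence is immediate from Lemma~\ref{lem:projection_parallel}, which canonically identifies $\fontact F \cong \fontact F_*$ and intertwines the projections, so both alternatives are intrinsic to $[F]$.
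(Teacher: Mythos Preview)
Your gate-based approach departs from the paper's, and your reduction to the case where $K=\gate_F(F')$ is a single $0$--cube is sound: since $K$ and $\gate_F(F'')$ are parallel, any hyperplane crossing $K$ lies in $\pi_F(F')\cap\pi_F(F'')$, forcing distance~$0$. The genuine gap is in your argument for~\eqref{item:par_2}. You assert, without proof, both that a parallel copy $F_*$ of $F$ meeting $F'$ exists and that every hyperplane of $F_*$ crosses every hyperplane of $F'$. The first is not supplied by the ``hence'' clause of Lemma~\ref{lem:parallel_product}: that clause only parameterises $[F]$ by a product $F\times E_F$, with no guarantee that this product meets $F'$; producing such an $F_*$ is essentially the content of~\eqref{item:par_2} itself. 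For the mutual-crossing claim, from ``$K$ is a $0$--cube'' you have only established $\mathcal H(F)\cap\mathcal H(F')=\emptyset$, and ``$0$--cube intersection together with convexity'' cannot upgrade this to mutual crossing --- two edges of a tree sharing a vertex have disjoint, non-crossing hyperplane sets. The hyperplanes of $F$ that separate $k$ from $k''=\gate_F(F'')$ do lie in the set $\mathcal S$ separating $F'$ from $F''$, and hence cross all of $\mathcal H(F')$ via the product $F'\times[0,a]$; but $F$ may well have other hyperplanes, and nothing in your argument controls those.

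The paper sidesteps this by applying Lemma~\ref{lem:bounded_projections} not to $F'$ or $F''$ but to the \emph{minimal} $Y\in\factorsup$ containing the bridge $B$ between $F'$ and $F''$: when~\eqref{item:par_1} fails one has $\diam_{\fontact F}(\pi_F(Y))>\xi+2$, so $F$ is parallel into $Y$, and minimality of $Y$ then forces $F$ to be parallel to all of $Y$. This identifies $\mathcal H(F)$ with $\mathcal H(Y)\supseteq\mathcal H(B)=\mathcal S$ and ties $F$ to the already-existing product $F'\times B$, which is then promoted fibrewise to copies of $Y$ over each point of $F'$. Your argument has no substitute for this minimality step.
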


\begin{proof}
By Lemma~\ref{lem:parallel_product}, there is a convex subcomplex $B$
and a cubical isometric embedding $F'\times B\rightarrow\cuco X$ with
the following properties: $B$ is the convex hull of a shortest
geodesic $[0,b]$ joining $F'$ to $F''$ and $F',F''$ are respectively
the images of $F'\times\{0\}$ and $F'\times\{b\}$, and in fact $[0,b]$ joins $\gate_{F'}(F'')$ to $\gate_{F''}(F')$ (the fact that $[0,b]$ is shortest and joins $\gate_{F'}(F'')$ to $\gate_{F''}(F')$ follows from the fact that it only crosses the hyperplanes that separate $F'$ from $F''$, which is part of Lemma~\ref{lem:parallel_product}).  Since
$B$ is, at minimum, contained in a combinatorial hyperplane (indeed,
cubical isometric embeddings take hyperplanes to hyperplanes), it is
contained in some minimal $Y\in\factorsup$.  If $b\leq\xi$, then the
first assertion holds.  Hence suppose $b\geq\xi$.  Then either
$\pi_F(Y)$ has diameter at most $\xi+2$, in which case the first
assertion again holds, or by Lemma~\ref{lem:bounded_projections}, $F$
is parallel to a subcomplex $F_1$ of $Y$ containing $B$.  By
minimality of $Y$, we have $F_1=Y$. Similarly, for all $f\in F'$, we have a
parallel copy $ \{f\}\times F_1$ of $F$ containing $\{f\}\times B$.
Item~\eqref{item:par_2} follows.
\end{proof}

\begin{defn}[Orthogonal]\label{defn:orthogonal}
 Let $[F_1],[F_2]\in\factorseq$.  If there are $F_1',F_1''\in[F_1]$ 
such that Proposition~\ref{prop:projections}.\eqref{item:par_2} holds 
for $F_1',F_1''$ projected to $F_2$, then $[F_1]$ and $[F_2]$ (and 
$F_1$ and $F_2$) are said to be \emph{orthogonal}. 
\end{defn}

\begin{rem}\label{rem:diam_transverse}
Lemma~\ref{lem:bounded_projections} implies that either $F$ is 
parallel to a subcomplex of $F'$ or $\pi_F(F')$ has uniformly bounded 
diameter.  In the latter case, Proposition~\ref{prop:projections} 
implies that either $\cup_{F_1\in[F']}\pi_F(F_1)$ is uniformly 
bounded, or $[F]$ and $[F_1]$ are orthogonal.  In fact, Lemma \ref{lem:bounded_projections} says that if $F$ is not parallel to a subcomplex of $F'$, then $\pi_{F}(F'')$ has diameter bounded by $\xi+2$ for each $F''$ parallel to $F'$. On the other hand, whenever $\pi_{F}(F'')$ has diameter bounded by $\xi+2$ for each $F''$ parallel to $F'$, Proposition~\ref{prop:projections} implies that either $\cup_{F_1\in[F']}\pi_F(F_1)$ has diameter bounded by $3\xi+6$ or $[F]$ and $[F']$ are orthogonal.\end{rem}

Motivated by the remark, we give the following two definitions.
Definition~\ref{defn:parproj} provides projections between parallelism
classes; as explained in Remark~\ref{rem:diam_transverse},
Proposition~\ref{prop:projections} gives conditions ensuring that
these projections are coarsely well-defined. 

\begin{defn}[Transverse]\label{defn:transverse}
Parallelism classes $[F],[F']\in\factorseq$ are \emph{transverse} if
they are not orthogonal and if $F$ is not parallel to a 
subcomplex of $F'$, or vice versa, for some (hence all)
$F\in[F],F'\in[F']$.
\end{defn}

\begin{defn}[Projection of parallelism classes, projection distances]\label{defn:parproj}
Let $[F]\in\factorseq$.  The \emph{projection to $[F]$ (really to $\fontact F$, which depends only on the parallelism class)} is the map $\factorseq-\{[F]\}\rightarrow 2^{\fontact F}$ given by $\pi_{[F]}([F'])=\cup_{F_1\in[F']}\pi_F(F_1)$. 

For each $[Y]\in\factorseq$, define a function $\dist_Y^\pi\colon \left(\factorseq-\{[Y]\}\right)^2\rightarrow[0,\infty]$ by $$\dist_Y^\pi([F],[F'])=\diam_{\fontact Y}(\pi_{[Y]}([F])\cup\pi_{[Y]}([F'])).$$
\end{defn}

We fix, until the end of the section, any subset 
$\factorseq_{_{tr}}\subset \factorseq$ with the property that for each distinct pair 
$[F],[F']\in\factorseq_{_{tr}}$ we have that 
$[F]$ is transverse to $[F']$.

Observe that for each $F\in\factorseq_{_{tr}}$, the restriction of 
$\dist_F^\pi$ to $\factorseq_{_{tr}}$ takes uniformly bounded values.  From the 
definition, it is obvious that 
$\dist_F^\pi([F'],[F''])=\dist_F^\pi([F''],[F'])$ and 
$\dist_F^\pi([F'],[F''])+\dist_F^\pi([F''],[F'''])\geq\dist_F^\pi([F'],[F'''])$ for all $[F],[F'],[F''],[F''']\in\factorseq_{_{tr}}$.  Moreover, the fact that the right-hand side of the distance formula (Theorem~\ref{thm:distance_formula}) is finite shows that, for a suitable $\eta$, we have 
$\left|\{F:\dist^\pi_F([F'],[F''])\geq\eta\}\right|<\infty$ for all $[F'],[F'']\in\factorseq_{_{tr}}$.

\begin{prop}\label{prop:bounded_projection}
For all $[F]\neq[F']$ in $\factorseq_{_{tr}}$, we have $\diam_{\fontact F}(\pi_F(F'))\leq\xi+2$.
\end{prop}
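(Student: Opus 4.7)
The plan is to derive this directly from Lemma~\ref{lem:bounded_projections} together with the definition of transversality. The key observation is that the proposition concerns a \emph{single} representative $F'$ of the parallelism class $[F']$ (not the full union $\cup_{F_1 \in [F']} \pi_F(F_1)$), so there is no need to invoke Proposition~\ref{prop:projections}; only the simpler Lemma~\ref{lem:bounded_projections} is required.

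First, I would apply Lemma~\ref{lem:bounded_projections} to the chosen representatives $F \in [F]$ and $F' \in [F']$. The lemma produces a dichotomy: either the projection $\pi_F(F')$ has diameter strictly less than $\xi+2$ in $\fontact F$, or $F$ is parallel to a subcomplex of $F'$.

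Next, I would use the assumption that $[F]$ and $[F']$ are distinct elements of $\factorseq_{_{tr}}$, and hence (by choice of $\factorseq_{_{tr}}$) transverse in the sense of Definition~\ref{defn:transverse}. Transversality rules out the second alternative of Lemma~\ref{lem:bounded_projections}, since by definition transverse parallelism classes are neither orthogonal nor nested (that is, $F$ is not parallel to any subcomplex of $F'$, nor vice versa). This property is well-defined by the ``for some (hence all)'' clause in Definition~\ref{defn:transverse}, so it applies to the specific pair of representatives we have chosen.

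Hence the first alternative must hold, and we conclude $\diam_{\fontact F}(\pi_F(F')) < \xi + 2$, which is a fortiori the desired bound $\leq \xi+2$. There is essentially no obstacle in this argument; the only subtlety is to parse Definition~\ref{defn:transverse} correctly as meaning that neither of $F$, $F'$ is parallel to a subcomplex of the other (rather than an inclusive-or reading), so that Lemma~\ref{lem:bounded_projections}.(\ref{item:bp_sep_cone}) is indeed excluded.
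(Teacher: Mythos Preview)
Your proof is correct and takes essentially the same approach as the paper: the paper's proof is a single sentence invoking Lemma~\ref{lem:bounded_projections} and noting that transversality (the defining property of $\factorseq_{_{tr}}$) rules out $F$ being parallel to a subcomplex of $F'$.
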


\begin{proof}
This is just Lemma~\ref{lem:bounded_projections} combined with the fact that $F$ cannot be parallel to a subcomplex of $F'$ by our assumptions on $\factorseq_{_{tr}}$.
\end{proof}

The next result is the analogue for cubical groups of the inequality 
that Behrstock established in \cite{Behrstock:asymptotic} for the 
mapping class group. Versions of this inequality in other contexts 
have appeared in \cite{BBF:quasi_tree, BestvinaFeighn:projections, SabalkaSavchuk:projections, Taylor:RAAGoutqie}

\begin{prop}[Behrstock inequality]\label{prop:behrstock_inequality}
For all $[F],[F'],[F'']\in\factorseq_{_{tr}}$, $$\min\{\dist^\pi_F([F'],[F'']),\dist^\pi_{F'}([F],[F''])\}\leq3\xi+6.$$
\end{prop}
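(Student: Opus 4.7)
Proof plan. I would argue by contradiction, supposing that $\dist^\pi_F([F'],[F''])>3\xi+6$ \emph{and} $\dist^\pi_{F'}([F],[F''])>3\xi+6$, and derive a contradiction by combining the Bounded Geodesic Image property (Proposition~\ref{prop:BGIIITF}) with the parallelism-product structure for gates (Lemma~\ref{lem:2.6} and Lemma~\ref{lem:parallel_product}).

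First I would reduce to pointwise statements. By Proposition~\ref{prop:bounded_projection}, the pairwise transversality of $[F],[F'],[F'']$ yields $\diam_{\fontact F}\pi_F(F'),\diam_{\fontact F}\pi_F(F'')\leq\xi+2$, and analogous bounds for projections to $\fontact{F'}$. The contradiction hypothesis then implies that for every choice of $0$--cubes $q\in F$, $p'\in F'$, $p''\in F''$,
$$\dist_{\fontact F}(\pi_F(p'),\pi_F(p''))>3\xi+6-2(\xi+2)=\xi+2\quad\text{and}\quad\dist_{\fontact{F'}}(\pi_{F'}(q),\pi_{F'}(p''))>\xi+2.$$

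Next, I would build a specific geodesic realizing the gate between $F$ and $F''$. Using Lemma~\ref{lem:2.6} and Lemma~\ref{lem:parallel_product}, choose a $0$--cube $q''\in\gate_{F''}(F)$ and let $q:=\gate_F(q'')\in\gate_F(F'')$; any $\cuco X$--geodesic $\gamma$ from $q$ to $q''$ lies in the product region realizing the parallelism of $\gate_F(F'')$ with $\gate_{F''}(F)$, and crosses only hyperplanes separating $F$ from $F''$ (hence crossing neither $F$ nor $F''$). By Lemma~\ref{lem:background_hyperplane_segment}, $\gate_{F'}(\gamma)$ is a geodesic in $F'$ joining $\gate_{F'}(q)\in\gate_{F'}(F)$ to $\gate_{F'}(q'')\in\gate_{F'}(F'')$, whose $\fontact{F'}$-projection has diameter exceeding $\xi+2$ by the first step.

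The crucial step is then to apply Proposition~\ref{prop:BGIIITF} to the $F'$--geodesic $\gate_{F'}(\gamma)$ with respect to the factor $U:=\gate_{F'}(F)\in\factorsup_{F'}$, handling the case $\diam U<\xi$ (where $\pi_{F'}(F)$ is already a clique) directly via Proposition~\ref{prop:bounded_projection}. Since the initial endpoint of $\gate_{F'}(\gamma)$ lies in $U$ while its terminal endpoint is at $\fontact{F'}$-distance exceeding $\xi+2$ from the cone vertex $v_U$, BGI forces $\gate_U$ of a long initial subsegment of $\gate_{F'}(\gamma)$ to collapse to a single $0$--cube. Transporting this collapse through the parallelism $U=\gate_{F'}(F)\cong\gate_F(F')$ afforded by Lemma~\ref{lem:parallel_product} yields a constraint on $\pi_F$ forcing $\pi_F(q'')$ (and hence $\pi_F(F'')$) to lie within the closed $1$--neighborhood in $\fontact F$ of the cone vertex for $\gate_F(F')$, contradicting the pointwise distance bound in $\fontact F$ from the first step.

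The principal obstacle I expect is in the transport step: one must carefully track the isomorphism between the parallel factors $\gate_F(F')\in\factorsup_F$ and $\gate_{F'}(F)\in\factorsup_{F'}$, and verify that the BGI collapse on the $F'$-side produces a genuinely symmetric constraint on the $F$-side via the product structure of Lemma~\ref{lem:parallel_product}. The bookkeeping involves the two distinct cone vertices $v_{\gate_F(F')}\in\fontact F$ and $v_{\gate_{F'}(F)}\in\fontact{F'}$, and it is their compatible behavior under the parallelism-product structure that ultimately drives the constant $3\xi+6$ in the statement.
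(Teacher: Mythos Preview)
Your reduction in the first step is not valid. By Definition~\ref{defn:parproj}, $\dist^\pi_F([F'],[F''])$ is the diameter of $\pi_{[F]}([F'])\cup\pi_{[F]}([F''])$, where $\pi_{[F]}([F'])=\bigcup_{F_1\in[F']}\pi_F(F_1)$ ranges over the \emph{entire} parallelism class. Proposition~\ref{prop:bounded_projection} only bounds $\diam_{\fontact F}\pi_F(F')$ for a \emph{single} representative by $\xi+2$; the relevant bound for the whole union (Remark~\ref{rem:diam_transverse}) is $3\xi+6$. Subtracting $2(\xi+2)$ from $3\xi+6$ therefore does not yield a pointwise lower bound on $\dist_{\fontact F}(\pi_F(p'),\pi_F(p''))$ for arbitrary $p'\in F',p''\in F''$: the large diameter of the union could be realized by points coming from other representatives of $[F']$ or $[F'']$. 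This breaks the input to your steps 4--5, since you no longer know that your specific geodesic $\gate_{F'}(\gamma)$ has large $\fontact{F'}$--diameter.

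Even granting the reduction, your BGI step does not apply as stated: Proposition~\ref{prop:BGIIITF} requires $\pi_{F'}$ of the geodesic to stay at distance $\geq 1$ from $\fontact U$, but your geodesic \emph{begins} in $U=\gate_{F'}(F)$. What is actually true (and what you seem to be reaching for) is that every hyperplane crossed by $\gamma$ separates $F$ from $F''$ and hence misses $F$, so no hyperplane crossing $\gate_{F'}(\gamma)$ can cross $U$; thus $\gate_U\circ\gate_{F'}$ is constant along $\gamma$. But turning this into the asserted contradiction in $\fontact F$ via the ``transport'' through the parallelism $\gate_{F'}(F)\leftrightarrow\gate_F(F')$ is exactly the step you flag as the principal obstacle, and you have not supplied it.

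The paper's proof bypasses all of this with a single application of Lemma~\ref{lem:geodesic_near_gate}: assuming $\dist^\pi_F([F'],[F''])>3\xi+6$, one takes any $x$ in a representative of one class and the geodesic from $x$ to its gate in a representative of the other; the lemma forces this geodesic to pass through some parallel copy $Y$ of $F$, whence the two gates in the target coincide and the projection of $x$ already lies in $\pi_{[F']}([F])$ (resp.\ $\pi_{[F'']}([F])$). This gives the bound directly, with no need for BGI, product regions, or transport between parallel factors.
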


\begin{proof}
Let $F,F',F''$ satisfy $\dist_F^\pi([F'],[F''])>3\xi+6$.  Let $x\in F'$. We claim that there exists $Y$ parallel to $F$ and $y\in Y$ so that $\gate_{F''}(x)=\gate_{F''}(y)$, which will conclude the proof.

Let $\gamma$ be a geodesic from $x$ to $\gate_{F''}(x)$. By Lemma~\ref{lem:geodesic_near_gate},
there exists $Y$ parallel to $F$ such that $\gamma$ intersects $Y$, say at $y$. Since $y$ is on $\gamma$, we have $\gate_{F''}(x)=\gate_{F''}(y)$, as required.
\end{proof}

\begin{lem}\label{lem:geodesic_near_gate}
Let $F\in\factorsup$ and let $x,y\in\cuco X^{(0)}$ satisfy $\dist_{\fontact F}(\pi_F(x),\pi_F(y))>2\xi+4$.  Then any geodesic from $x$ to $y$ enters a parallel copy of $F$.
\end{lem}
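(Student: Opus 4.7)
The plan is to prove the contrapositive: if a geodesic $\gamma$ from $x$ to $y$ enters no parallel copy of $F$, then $\dist_{\fontact F}(\pi_F(x),\pi_F(y))\le 2\xi+4$. By Lemma~\ref{lem:parallel_product}, the union of all parallel copies of $F$ is a convex subcomplex that I will denote $F\times E_F\subseteq\cuco X$, and the hypothesis becomes $\gamma^{(0)}\cap(F\times E_F)^{(0)}=\emptyset$.

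The core of the argument treats the case where the convex hull $[x,y]$ is also disjoint from $F\times E_F$. Standard separation of disjoint convex subcomplexes in a CAT(0) cube complex yields a hyperplane $W$ with $\{x,y\}$ on one side and $F\times E_F$ on the other; let $W^+$ be the combinatorial hyperplane of $\neb(W)$ on the $F\times E_F$ side, which lies in $\factorsup$ by Definition~\ref{defn:factor_system}\eqref{item:fs_all_hyperplanes}. Because $W^+$ is between $\{x,y\}$ and $F$, composition of gates gives $\gate_F(x)=\gate_F(\gate_{W^+}(x))\in\gate_F(W^+)$ and similarly for $y$, so $\pi_F(x),\pi_F(y)\in\pi_F(W^+)$. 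Applying Lemma~\ref{lem:bounded_projections} to $F$ and $W^+$, either $\diam_{\fontact F}(\pi_F(W^+))<\xi+2$ (giving the required bound at once), or $F$ is parallel to a subcomplex $F'\subseteq W^+$. I rule out the latter by reflection: the product structure $\neb(W)\cong W\times[-1,1]$ supplies a cubical isomorphism $W^+\to W^-$ under which the set of hyperplanes crossing $W^+$ is naturally identified with the set of those crossing $W^-$ (both being the hyperplanes that cross $W$). Hence the image $F''\subseteq W^-$ of $F'$ crosses the same hyperplanes of $\cuco X$ as $F'$, so $F''$ is parallel to $F$. Then $F''\in[F]$ forces $F''\subseteq F\times E_F$ by definition, but $F''\subseteq W^-$ lies on the opposite side of $W$ from $F\times E_F$, a contradiction.

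The main obstacle is the remaining case $[x,y]\cap(F\times E_F)\ne\emptyset$, in which some geodesic from $x$ to $y$ enters $F\times E_F$ while our specific $\gamma$ wraps around it; here no single hyperplane $W$ separates $\gamma$ from $F\times E_F$. I would localize: for each $v\in\gamma^{(0)}$ pick a hyperplane $W_v$ separating $v$ from $F\times E_F$, and rerun the Case A/B dichotomy together with the reflection argument above to rule out Case B at every such $W_v$, concluding $\diam_{\fontact F}(\pi_F(W_v^+))<\xi+2$ in each instance. Assembling these local bounds into a global bound on $\dist_{\fontact F}(\pi_F(x),\pi_F(y))$ is the delicate piece, and I expect it to require a Bounded Geodesic Image-style argument in the spirit of Proposition~\ref{prop:BGIIITF}, applied to the gated geodesic $\gate_{F\times E_F}(\gamma)\subseteq F\times E_F$ and the product decomposition of $F\times E_F$, to control the drift of $\pi_F(v)$ in $\fontact F$ as $v$ traverses $\gamma$.
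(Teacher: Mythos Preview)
Your first case is correct and the reflection argument is a clean way to rule out Case~B. The genuine gap is the second case, which you explicitly leave open: when $[x,y]\cap(F\times E_F)\neq\emptyset$ but the chosen geodesic $\gamma$ avoids $F\times E_F$, your localization produces, for each $v\in\gamma$, a set $\pi_F(W_v^+)$ of diameter $<\xi+2$ containing $\pi_F(v)$, but there is no mechanism forcing these sets to overlap or stay close in $\fontact F$ as $v$ varies along $\gamma$. The Bounded Geodesic Image statement (Proposition~\ref{prop:BGIIITF}) goes the wrong direction for this: it bounds $\pi_U(\gamma)$ when $\pi_F(\gamma)$ stays far from $\fontact U$, not the other way around. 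So as written, the second case is not an ``assembly'' problem but a missing argument.

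The paper avoids your case split entirely by a different reduction: rather than working with the whole product $F\times E_F$, it replaces $F$ by the parallel copy \emph{closest to $\gamma$} and shows that $\gamma$ must enter that particular copy. Minimality then does the work your reflection does (if a separating hyperplane $H$ existed between $\gamma$ and $F$, one finds a parallel copy of $F$ inside the combinatorial hyperplane of $H$ on the $\gamma$ side, contradicting minimality), so no hyperplane separates $\gamma$ from $F$. The remaining analysis is a direct hyperplane argument: look at the sets $\mathfrak L$ of hyperplanes separating $x$ from $\{y\}\cup F$ and $\mathfrak R$ of hyperplanes separating $y$ from $\{x\}\cup F$, take the innermost $L\in\mathfrak L$ and $R\in\mathfrak R$, and split on whether $L$ and $R$ cross. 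If they do not, there is a $0$--cube on $\gamma$ separated from $F$ only by hyperplanes that would have to separate all of $\gamma$ from $F$, which is impossible, so that cube lies in $F$. If they cross, then $\pi_F(x)\subseteq\pi_F(L^-)$ and $\pi_F(y)\subseteq\pi_F(R^-)$ with $\pi_F(L^-)\cap\pi_F(R^-)\neq\emptyset$; the hypothesis $\dist_{\fontact F}(\pi_F(x),\pi_F(y))>2\xi+4$ then forces one of these to have diameter $>\xi+2$, whence $F$ is parallel into $L$ or $R$, again contradicting the minimal choice. The minimality trick is exactly what lets the paper dispense with your problematic second case.
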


\begin{proof}
Let $\gamma$ be a geodesic from $x$ to $y$.  By Lemma~\ref{lem:projection_parallel} --- which implies  $\dist_{\fontact F}(\pi_F(a),\pi_F(b))=\dist_{\fontact F'}(\pi_{F'}(a),\pi_{F'}(b))$ for all $a,b\in\cuco X$ whenever $F,F'$ are parallel --- we can assume that $F$ was chosen within its parallelism class so that $\dist_{\cuco X}(F,\gamma)$ is minimal.  

Observe that no hyperplane separates $\gamma$ from $F$.  Indeed, suppose that $H$ is such a hyperplane.  Then $\gate_F(x),\gate_F(y)\subset\gate_F(H)$.  By our assumption that $\dist_{\fontact F}(\pi_F(x),\pi_F(y))>2\xi+4$, we have $\diam_{\fontact F}(\gate_F(H))>2\xi+4>\xi+2$, whence, by Lemma~\ref{lem:bounded_projections}, $F$ is parallel to a subcomplex $F'$ of the combinatorial hyperplane $H^-$ on $\neb(H)$ which is separated from $F$ by $H$.  For any $z\in\gamma$, any hyperplane separating $z$ from $F'$ separates $z$ from $F$, whence $\dist_{\cuco X}(\gamma,F')<\dist_{\cuco X}(\gamma,F)$, contradicting our choice of $F$.

Let $\mathfrak L$ be the set of hyperplanes separating $x$ from $\{y\}\cup F$ and let $\mathfrak R$ be the set of hyperplanes separating $y$ from $\{x\}\cup F$.  If $\mathfrak L=\emptyset$, then either $x\in F$, since no hyperplane separates $x$ and $y$ from $F$ (for otherwise, by convexity of halfspaces, that hyperplane would separate $\gamma$ from $F$).  Similarly, $\mathfrak R\neq\emptyset$.  

Let $L\in\mathfrak L$ be closest to $y$ and let $R\in\mathfrak R$ be closest to $x$.  Suppose that $L\cap R=\emptyset$.  Then there exists a $0$--cube $z\in\gamma$ so that no hyperplane in $\mathfrak L\cup\mathfrak R$ separates $z$ from $F$.  Hence any hyperplane separating $z$ from $F$ separates $\{x,y\}$, and thus $\gamma$, from $F$, which is impossible.  

Thus $L$ and $R$ cross.  Since $\gate_F(x),\gate_F(y)\in\gate_F(L)\cup\gate_F(R)$, arguing as above yields, say, $\diam_{\fontact F}(\gate_F(L))>\xi+2$, so $F$ is parallel into $L$, violating our choice of $F$ as above.
\end{proof}

Observe that Propositions~\ref{prop:bounded_projection} and~\ref{prop:behrstock_inequality}, together with the discussion preceding them, imply the following, which we record for later convenience:

\begin{cor}[BBF Axioms]\label{cor:bbf}
Let $\factorsup$ be a factor system and let $\factorseq$ be the set of parallelism classes in $\factorsup$.  Let $\factorseq_{_{tr}}$ be a subset of $\factorseq$ such that $[F],[F']$ are transverse for all distinct $[F],[F']\in\factorseq_{_{tr}}$.  Then $\{\dist_Y^\pi:[Y]\in\factorseq_{_{tr}}\}$ satisfies Axioms~$(0)-(4)$ of~\cite[Section~2.1]{BBF:quasi_tree}.
\end{cor}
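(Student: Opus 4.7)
The plan is to verify BBF Axioms $(0)$–$(4)$ one by one, with almost every assertion already explicitly established in the material immediately preceding the corollary. The whole content is to observe that the functions $\dist_Y^\pi$ restricted to $\factorseq_{_{tr}}$ satisfy each axiom, with constants coming from $\xi$ (from the factor system) and from the distance formula.

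First I would dispatch the symmetry axiom and the triangle inequality, which are built into the definition: since $\dist_Y^\pi([F],[F'])=\diam_{\fontact Y}(\pi_{[Y]}([F])\cup\pi_{[Y]}([F']))$, the symmetry is immediate, and since $\diam_{\fontact Y}$ is a diameter in a metric graph, the inequality
\[
\dist_Y^\pi([F'],[F''])+\dist_Y^\pi([F''],[F'''])\geq\dist_Y^\pi([F'],[F'''])
\]
holds for all $[F'],[F''],[F''']\in\factorseq_{_{tr}}$. These observations were already noted in the text.

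Next I would verify boundedness of the diagonal-type term $\dist_Y^\pi([F],[F])$, together with boundedness of $\pi_{[Y]}([F])$ itself (some formulations of the axioms require the latter). This is exactly the statement of Proposition~\ref{prop:bounded_projection}: for $[F]\neq[Y]$ with both in $\factorseq_{_{tr}}$, the fact that $[F]$ and $[Y]$ are transverse --- in particular, neither is parallel into the other --- means $\diam_{\fontact Y}(\pi_Y(F'))\leq\xi+2$ for every $F'\in[F]$, and Proposition~\ref{prop:projections} then rules out the ``covering'' alternative, so taking the union over $[F]$ still yields diameter at most $3\xi+6$ by Remark~\ref{rem:diam_transverse}. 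This gives the required projection bound with constant controlled by $\xi$.

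Then I would invoke Proposition~\ref{prop:behrstock_inequality} verbatim to obtain the Behrstock-type inequality: for distinct transverse $[F],[F'],[F'']$, we have
\[
\min\{\dist_F^\pi([F'],[F'']),\,\dist_{F'}^\pi([F],[F''])\}\leq 3\xi+6,
\]
which is the standard form of the BBF ``inconsistency'' axiom. Finally, for the finiteness axiom I would use Theorem~\ref{thm:distance_formula}: for any fixed $[F'],[F'']$, the right-hand side of the distance formula is finite (equal up to multiplicative and additive error to $\dist_{\cuco X}(x,y)$ for $x\in F'$, $y\in F''$, say), so for any threshold $\eta$ exceeding the relevant cutoff $s_0$ the set of $[Y]\in\factorseq$ with $\ignore{\dist_{\fontact Y}(x,y)}{\eta}\neq 0$ is finite, and $\factorseq_{_{tr}}\subseteq\factorseq$ inherits the bound. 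I do not anticipate any serious obstacle here: the work was done in Propositions~\ref{prop:bounded_projection} and \ref{prop:behrstock_inequality} and in Theorem~\ref{thm:distance_formula}, and the only thing to check is that none of these cited results required assumptions beyond pairwise transversality, which is exactly how $\factorseq_{_{tr}}$ was defined.
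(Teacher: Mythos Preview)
Your proposal is correct and follows essentially the same approach as the paper: the paper simply records this corollary as a consequence of Propositions~\ref{prop:bounded_projection} and~\ref{prop:behrstock_inequality} together with the preceding discussion (which already noted symmetry, the triangle inequality, and finiteness via the distance formula). If anything, your write-up is slightly more explicit than the paper's, particularly in invoking Remark~\ref{rem:diam_transverse} to explain why the union over the parallelism class stays bounded.
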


\section{Efficient embeddings into products of trees}\label{subsec:coloring}

We now let $\cuco X$ be a CAT(0) cube complex with a proper cocompact
action by a group $G$, and we let $\factorsup$ be a $G$--invariant
factor system (recall that the existence of \emph{any} factor system
ensures the existence of a $G$--invariant one).  In this
section we produce a very particular $G$--equivariant
quasi-isometric embedding of $\cuco X$ into the product of finitely many
quasi-trees.  We begin with the following fact which is 
well-known in the study of cubical groups:

\begin{prop}[Embeddings in products of trees]\label{prop:sep_embed}
Suppose that there exists $G'\leq_{f.i.}G$ such that no hyperplane in $\cuco X$ crosses its $G'$--translates.  Then there exists a finite collection $\{T_1,\ldots,T_k\}$ of simplicial trees, a $G$--action on $\prod_{i=1}^kT_i$, and a $G$--equivariant isometric embedding $\cuco X\rightarrow\prod_{i=1}^kT_i$.  

Conversely, suppose that $G$ acts properly and cocompactly on a CAT(0) cube complex $\cuco X$, and that there is a $G$--equivariant cubical isometric embedding $\cuco X\rightarrow\prod_{i=1}^kT_i$ with each $T_i$ a simplicial tree.  Then there exists a finite-index subgroup $G'\leq G$ such that for all hyperplanes $H$ of $\cuco X$ and all $g\in G'$, the hyperplanes $gH$ and $H$ do not cross.
\end{prop}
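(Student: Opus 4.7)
The plan is straightforward in both directions: the hypothesis of the first direction lets us partition the hyperplanes into finitely many equivariant colors and take restriction quotients to obtain the trees; the converse lets us extract such a coloring from the factor decomposition of $\prod T_{i}$.

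For the first direction, I would begin by replacing $G'$ with a finite-index normal subgroup of $G$ contained in the given $G'$; the non-crossing property passes to subgroups, so this is harmless. Since $G$ is cocompact on $\cuco X$ and $G'$ has finite index, $G'$ acts cocompactly, so there are finitely many $G'$-orbits $\mathcal O_{1}, \ldots, \mathcal O_{k}$ of hyperplanes. By hypothesis, hyperplanes lying in the same $\mathcal O_{i}$ pairwise fail to cross. For each $i$, let $T_{i}$ be the restriction quotient of $\cuco X$ obtained by forgetting all hyperplanes outside $\mathcal O_{i}$; because the retained hyperplanes are pairwise non-crossing, $T_{i}$ is a simplicial tree, and the restriction gives a cubical map $\pi_{i}\colon\cuco X \to T_{i}$. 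Normality of $G'$ implies that $G$ permutes the orbits $\mathcal O_{i}$, yielding a homomorphism $G\to S_{k}$; combining this permutation of factors with the induced $G'$-action on each $T_{i}$ gives a $G$-action on $\prod T_{i}$ making $\pi=(\pi_{1},\ldots,\pi_{k})$ equivariant. To check that $\pi$ is an $\ell^{1}$-isometric embedding, observe that $\dist_{T_{i}}(\pi_{i}(x),\pi_{i}(y))$ equals the number of hyperplanes in $\mathcal O_{i}$ separating $x$ from $y$; summing over $i$ counts each hyperplane of $\cuco X$ exactly once and recovers $\dist_{\cuco X}(x,y)$.

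For the converse, suppose we have a $G$-equivariant cubical isometric embedding $f\colon\cuco X\to\prod_{i=1}^{k}T_{i}$. Each hyperplane of $\cuco X$ is sent by $f$ into a unique hyperplane of $\prod T_{i}$, which is in turn dual to an edge of exactly one factor $T_{i}$; call this index the \emph{color} of the hyperplane. Two hyperplanes of $\cuco X$ of the same color do not cross, since two hyperplanes of a single tree never cross inside $\prod T_{i}$. Because hyperplanes coming from distinct factors always cross in $\prod T_{i}$, the factorization of $\prod T_{i}$ as a cube complex is canonical (after discarding any single-point factors), so the $G$-action on $\prod T_{i}$ permutes the $T_{i}$ via a homomorphism $\phi\colon G\to S_{k}$. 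Setting $G'=\ker\phi$, which has finite index in $G$, every element of $G'$ preserves colors, so for $g\in G'$ and any hyperplane $H$ the translate $gH$ has the same color as $H$ and hence does not cross it.

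The main point requiring care is the claim in the converse that the $G$-action permutes the factors $T_{i}$. This depends on the fact that the decomposition of a product of cube complexes into its irreducible factors is canonical once trivial factors are discarded, so any cube complex automorphism must permute them; implicit here is also the verification that color assignment is well-defined on hyperplanes (not just on individual $1$-cubes), which follows since parallel $1$-cubes in $\cuco X$ map to parallel $1$-cubes in $\prod T_{i}$.
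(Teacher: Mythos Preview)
Your proof is correct and follows essentially the same approach as the paper's: pass to the normal core, take $G'$--orbits of hyperplanes as colors, build the trees as restriction quotients (the paper phrases this as the dual cube complex to the wallspace $(\cuco X^{(0)}, G'\cdot H_i)$ and cites \cite{ChepoiHagen:embedding} for the embedding, whereas you verify the isometric embedding directly by counting separating hyperplanes), and for the converse pull back the coloring by tree factor and take $G'=\ker\phi$. Your explicit justification that $G$ permutes the tree factors via the uniqueness of the irreducible product decomposition is a detail the paper leaves implicit.
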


\begin{proof}
By passing to the normal core, we can assume $G'$ is normal.  Let $H_1,\ldots,H_k$ be a complete list of representatives of $G'$--orbits of hyperplanes.  For each $i$, the cube complex dual to the wallspace $(\cuco X^{(0)},G'\cdot H_i)$ is a simiplicial tree $T_i$ on which $G'$ acts by isometries, and $G$ acts on $\prod_{i=1}^kT_k$, permuting the factors, with $G'$ stabilizing each factor (if $h\in G'$ and $g\in G$ and $H_j=gH_i$, then $hH_j=hgH_i=g(g^{-1}hg)H_i=gH_i$ since $G'$ is normal).  The existence of the embedding follows from~\cite[Corollary~1]{ChepoiHagen:embedding}; equivariance is easily checked.

Conversely, let $\cuco X\rightarrow\prod_{i=1}^kT_i$ be a $G$--equivariant embedding, and let $G'$ be the kernel of the action of $G$ on the factors of $\prod_{i=1}^k\cuco X$.  Since $\cuco X\rightarrow\prod_{i=1}^kT_i$ sends hyperplanes to hyperplanes, and sends crossing hyperplanes to crossing hyperplanes, the coloring of the hyperplanes of $\prod_{i=1}^nT_i$ by the tree factor that they cross pulls back to a $G'$--invariant coloring of the hyperplanes of $\cuco X$ by $k$ colors, with the property that hyperplanes of like color do not cross.  By definition, two hyperplanes in the same $G'$ orbit have the same color.\end{proof}

An element of $\factorsup-\{\cuco X\}$ is \emph{maximal} if it is not properly contained in an element of $\factorsup-\{\cuco X\}$.

\begin{defn}[Hereditarily flip-free]\label{defn:flip_free}
Let $G$ act properly and cocompactly on the cube complex $\cuco X$.  The action is \emph{flip-free (with respect to $\factorsup$)} if for each cubical isometric embedding $A\times B\rightarrow\cuco X$ such that $A,B\in\factorsup-\{\cuco X\}$ and each of $A,B$ is parallel to a maximal element of $\factorsup-\{\cuco X\}$, no $g\in G$ has $gA$ parallel to $B$.  The action is \emph{hereditarily flip-free} with respect to $\factorsup$ if for each $H\in\factorsup$, the action of $\stabilizer_G(H)$ on $H$ is flip-free with respect to $\factorsup_{H}$.\end{defn}

\begin{lem}[Coloring the factor system]\label{lem:coloring_factor_systems}
Let $G$ act properly and cocompactly on $\cuco X$, let $\factorsup$ be a $G$--invariant factor system, and suppose that there exists $G'\leq_{f.i.}G$ such that the action of $G'$ on $\cuco X$ is hereditarily flip-free with respect to $\factorsup$.  Then there exists $k\in\naturals$ and a coloring $\chi\colon \factorsup\rightarrow\{1,\ldots,k\}$ such that:
\begin{enumerate}
 \item \label{item:color_par}if $F,F'\in\factorsup$ are parallel, then $\chi(F)=\chi(F')$;
 \item \label{item:color_trans}$\chi(gF)=\chi(F)$ for all $F$ and all $g\in G'$;
 \item \label{item:color_nonmax}if $F$ is parallel to a proper subcomplex of $F'$, then $\chi(F)\neq\chi(F')$;
 \item \label{item:color_orth}if $[F]$ and $[F']$ are orthogonal, then $\chi(F)\neq\chi(F')$.
\end{enumerate}
Hence there is a $G'$--invariant coloring $\chi\colon \factorseq\rightarrow\{1,\ldots,k\}$ such that for $1\leq i\leq k$, the set $\factorseq_i=\chi^{-1}(i)$ is a $G'$--invariant collection satisfying the hypotheses of Corollary~\ref{cor:bbf}.
\end{lem}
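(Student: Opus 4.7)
The plan is to construct $\chi$ by grouping parallelism classes into $G'$-orbits and then showing hereditary flip-freeness forces orthogonal or nested pairs to lie in distinct orbits. By cocompactness of the $G$-action on $\cuco X$ together with the uniform local finiteness of $\factorsup$ (Definition~\ref{defn:factor_system}.\eqref{item:fs_local_finite}), there are only finitely many $G$-orbits, and hence only finitely many $G'$-orbits, of parallelism classes in $\factorseq$. Enumerating representatives of these $G'$-orbits as $\mathcal O_1,\dots,\mathcal O_k$ and setting $\chi([F])=i$ when $[F]\in\mathcal O_i$ (extended to $\factorsup$ by $\chi(F)=\chi([F])$) immediately secures properties \eqref{item:color_par} and \eqref{item:color_trans}.

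The content of the proof lies in verifying \eqref{item:color_nonmax} and \eqref{item:color_orth}: for every $g\in G'$ and every $[F]\in\factorseq$ with $[gF]\neq[F]$, the classes $[F]$ and $[gF]$ must be transverse. For the orthogonality property \eqref{item:color_orth}, suppose $[F]$ is orthogonal to $[gF]$. Then Definition~\ref{defn:orthogonal} together with Proposition~\ref{prop:projections}.\eqref{item:par_2} produces a cubical isometric embedding $F'\times F_2\hookrightarrow\cuco X$ with $F'\in[F]$ and $F_2\in[gF]$. If $F$ is parallel to a maximal element of $\factorsup-\{\cuco X\}$ (so the same holds for $gF$, since $g$ preserves the set of maximal elements), then $gF'\in[gF]=[F_2]$ means $gF'$ is parallel to $F_2$, which directly contradicts the flip-free hypothesis of Definition~\ref{defn:flip_free} applied to the $G'$-action. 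The non-maximal case is handled by choosing $H\in\factorsup-\{\cuco X\}$ minimal among elements properly containing $F$; such $H$ exists because the uniform local finiteness of $\factorsup$ forces every chain in $\factorsup$ through $F$ to be finite. In $H$ the subcomplex $F$ is maximal in $\factorsup_H-\{H\}$, and after replacing $gF$ by a parallel copy inside $H$ (and $g$ by an appropriate element of $\stabilizer_{G'}(H)$ witnessing the same orthogonality), hereditary flip-freeness of the $\stabilizer_{G'}(H)$-action on $H$ yields the contradiction.

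Property \eqref{item:color_nonmax} (proper nesting) follows by a similar argument: since $g$ acts as a cube-complex automorphism, it induces a bijection between the hyperplanes crossing $F$ and those crossing $gF$, so if $gF$ were parallel to a proper subcomplex of $F$ then iterating $g$ would yield either a strictly descending chain of hyperplane sets (precluded by the uniform local finiteness of $\factorsup$ and properness of the action) or, via Lemma~\ref{lem:parallel_product}, a product decomposition of $F$ in which $g$ realizes a flip that is ruled out by hereditary flip-freeness at the appropriate level of the factor system. Once \eqref{item:color_par}--\eqref{item:color_orth} are in hand, the ``hence'' assertion is immediate: each $\factorseq_i=\chi^{-1}(i)$ is $G'$-invariant by \eqref{item:color_trans}, and any two distinct classes in $\factorseq_i$ are neither nested (by \eqref{item:color_nonmax}) nor orthogonal (by \eqref{item:color_orth}), hence are transverse in the sense of Definition~\ref{defn:transverse}. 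The hypothesis of Corollary~\ref{cor:bbf} is therefore satisfied.

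The main obstacle will be executing the reduction to the maximal case in the orthogonality argument: the element $g\in G'$ need not stabilize the ambient $H$, so one must take care to replace $g$ with an element of $\stabilizer_{G'}(H)$ witnessing the same orthogonality inside $H$. The closure of $\factorsup$ under sufficiently large gate-projections (Definition~\ref{defn:factor_system}.\eqref{item:fs_closed_proj}) supplies enough intermediate elements of $\factorsup$ to perform this reduction, and the induction terminates at maximal elements where flip-freeness applies directly.
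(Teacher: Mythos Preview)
Your argument for property~\eqref{item:color_nonmax} can be made to work: if $gF$ is parallel to a proper subcomplex of $F$, then taking gates in $F$ of the iterates $g^{-n}F$ produces a strictly descending chain $F\supsetneq \gate_F(g^{-1}F)\supsetneq\gate_F(g^{-2}F)\supsetneq\cdots$ of elements of $\factorsup$ (each is parallel to a $G'$--translate of $F$ and hence has diameter $\geq\xi$), and any $0$--cube in the $(\Delta+1)^{\text{st}}$ term lies in more than $\Delta$ elements of $\factorsup$, contradicting Definition~\ref{defn:factor_system}.\eqref{item:fs_local_finite}.

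The orthogonality argument, however, has a genuine gap at exactly the place you flag as ``the main obstacle''.  Suppose $[F]\orth[gF]$ with $F$ non-maximal and $H$ minimal properly containing $F$.  You need an element $g'\in\stabilizer_{G'}(H)$ with $g'F$ parallel to $gF$, so that hereditary flip-freeness (which constrains only $\stabilizer_{G'}(H)$, not all of $G'$) applies.  But $g$ need not stabilize $H$, and nothing in the hypotheses forces $[F]$ and $[gF]$ to lie in the same $\stabilizer_{G'}(H)$--orbit of parallelism classes in $\factorsup_H$.  Indeed, there is no reason for a parallel copy of $gF$ to lie in $H$ at all, and even if one does, it need not be maximal in $\factorsup_H-\{H\}$ (maximality of $gF$ is only guaranteed in $\factorsup_{gH}$).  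The vague appeal to closure under gate-projections does not produce the required element of $\stabilizer_{G'}(H)$.  The paper avoids this problem by a genuinely different construction: rather than coloring by $G'$--orbits of parallelism classes, it builds $\chi$ as a \emph{tuple} $(\chi_0,\chi_1,\ldots,\chi_n)$, where $\chi_0$ records the $\sim$--class of a maximal element and each $\chi_i$ is obtained by induction on the complexity of $\factorsup_{F_i}$ and then extended $G'$--equivariantly.  This stratified construction matches the stratified nature of hereditary flip-freeness and sidesteps the need to transport elements of $G'$ into stabilizers of intermediate factors.
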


\begin{proof}
For $n\geq 1$ and $\{q_i\}_{i=1}^n$ determined below, let $\{c^0_1,\ldots,c^0_{q_n}\},\{c^1_1,\ldots,c^1_{q_1}\}\ldots,\{c^n_1,\ldots,c^n_{q_n}\}$ be disjoint sets of colors, and let $\varepsilon$ be an extra color.

Define the relation $\sim$ on $\factorsup$ to be the transitive closure of the union of the following two relations: $F\sim F'$ if $F,F'$ are parallel; $F\sim F'$ if there exists $g\in G'$ such that $F=gF$.  Since the $G$--action preserves parallelism, if $F\sim F'$, then there exists $g\in G'$ with $F,gF'$ parallel. 

We say that $F\in\factorsup-\{\cuco X\}$ is \emph{maximal} if it is not parallel to a proper subcomplex of any other $F'\in\factorsup-\{\cuco X\}$. Notice that maximal elements exist. In fact, if we had a chain $F_1,\dots,F_N$, with $N$ larger than the local finiteness constant $\Delta$ of the factor system, so that $F_i$ is parallel to a proper subcomplex of $F_{i+1}$, then there would be some $0$--cube in $\bigcap_i\gate_{F_N}(F_i)$, violating local finiteness.  Let $F_1,\ldots,F_{n}\in\factorsup$ be maximal elements of $\factorsup-\{\cuco X\}$, chosen so that any other maximal $F\in\factorsup-\{\cuco X\}$ satisfies $F\sim F_i$ for some unique $i$; there are finitely many $\sim$-classes since our hypotheses ensure that there are finitely many $G$-orbits in $\factorsup$.  Let $\factorsup_0\subseteq\factorsup$ consist of those $F$ such that $F\sim F_i$ for some $i$, so that each maximal factor subcomplex is in $\factorsup_0$.  Define $\chi_0\colon \factorsup_0\rightarrow\{c_1^0,\ldots,c_{n}^0\}$ by $\chi_0(F)=c^0_i$, where $i$ is the unique index with $F\sim F_i$.  The coloring $\chi_0$ of $\factorsup_0$ satisfies items~\eqref{item:color_par},~\eqref{item:color_trans},~\eqref{item:color_nonmax} by definition (in particular, maximality ensures that item~\eqref{item:color_nonmax} holds).  Item~\eqref{item:color_orth} follows from the assumption that the action of $G'$ is flip-free.  Indeed, if $[F]$ and $[F']$ are orthogonal then, by definition, we can choose $F,F'$ within their parallelism classes so that $F\times F'\hookrightarrow\cuco X$ isometrically.  But if $\chi_0(F)=\chi_0(F')$, then $F$ is parallel to $gF'$ for some $g\in G'$, contradicting flip-freeness since each of $F,F'$ is parallel to a maximal element, namely a translate of some $F_i$.   

Note that since $\factorsup_0$ is a locally finite collection and $G'$ acts on $\cuco X$ cocompactly, $G_F':=\stabilizer_{G'}(F)$ acts cocompactly on $F$ for each $F\in\factorsup$.  For each $F\in\factorsup$, let $\factorsup_{F}$ be the induced factor system, so that $\cup_F\factorsup_{F}=\factorsup$.  By hereditary flip-freeness of the overall action of $G'$ on $\cuco X$, the action of $G_F'$ on each $F$ is flip-free with respect to $\factorsup_{F}$.  Hence, by induction, for $1\leq i\leq n$, we have a coloring $\chi'_{F_i}\colon \factorsup_{F_i}\rightarrow\{c^i_1,\ldots,c_{q_i}^i\}$ satisfying all four conclusions of the proposition with respect to the $G'_{F_i}$--action.  

We define $\chi_i\colon \cup_{F\sim F_i}\factorsup_{F}\rightarrow\{c^i_1,\ldots,c_{q_i}^i\}$ as follows.  If $F$ is parallel to $F_i$, then each $F'\in\factorsup_{F}$ is parallel to some $F'_i\in\factorsup_{F_i}$, and if $F'$ is also parallel to $F''_i\in\factorsup_{F_i}$, then $\chi_{F_i}'(F'_i)=\chi_{F_i}'(F''_i)$.  Hence let $\chi_i(F')=\chi_{F_i}(F_i')$.  We now extend $G'$--equivariantly to obtain $\chi_i$.  More precisely, for each $F\sim F_i$, choose $g\in G'$ so that $gF$ is parallel to $F_i$, and define $\chi_i$ on $\factorsup_{F}$ so that $\chi_i(F')=\chi_i(gF')$ for all $F'\in\factorsup_{F}$.  This is independent of the choice of $gF$ because of how $\chi_i$ was defined on the subcomplexes parallel to $F_i$.  Moreover, since $\chi_{F_i}'$ is $G'_{F_i}$--invariant, and parallel complexes have identical stabilizers, this is independent of the choice of $g$.

Let $F\sim F_i$ and let $F',F''\in\factorsup_{F}$.  Suppose that $\chi_i(F')=\chi_i(F'')$.  Choose $g\in G'$ so that $gF$ is parallel to $F_i$.  Then $\chi_i(gF')=\chi_i(gF'')=\chi_i(F_i')=\chi_i(F_i'')$, where $F_i',F_i''\in\factorsup_{F_i}$ are parallel to $gF',gF''$ respectively.  Hence neither of $F_i''$ nor $F_i'$ is parallel to a subcomplex of the other, and they are not orthogonal, by our induction hypothesis.  Hence the same is true of $F',F''$, so that $\chi_i$ correctly colors $\factorsup_{F}$.     

For $i\geq 0$, we extend $\chi_i$ to the rest of $\factorsup$ by letting $\chi_i(H)=\varepsilon$ when $H\not\in\cup_{F\sim F_i}\factorsup_{F}$ (or $H\not\in\factorsup_0$ when $i=0$).  We finally define the coloring $\chi$ on $\factorsup$ by $$\chi(F)=\left(\chi_0(F),\chi_{1}(F),\ldots,\chi_{n}(F)\right).$$  (The total number of colors is $(n+1)\prod_{i=1}^n(q_i+1)$.)  

Let $F,F'\in\factorsup$.  If the set of $i$ for which $F$ is contained in $\factorsup_{H}$ for some $H\sim F_i$ differs from the corresponding set for $F'$, then $\chi(F)\neq\chi(F')$ since there is at least one coordinate in $\chi(F)$ equal to $\varepsilon$ for which the corresponding coordinate in $\chi(F')$ is not $\varepsilon$, or vice versa.  It follows from the definition of the $\chi_i$ that $F$ cannot be parallel to a proper subcomplex of $F'$, and they cannot be orthogonal, if $\chi(F)=\chi(F')$.  Hence $\chi$ is the desired coloring.  
\end{proof}

We can now prove Theorem~\ref{thmi:quasi_trees}:

\begin{prop}[Embedding in a product of quasi-trees]\label{prop:qie_prod_trees}
Let $G,\cuco X,$ and $\factorseq$ be as in Lemma~\ref{lem:coloring_factor_systems}.  Then there is a partition $\factorseq\cup\{[\cuco X]\}=\sqcup_{i=1}^k\factorseq_i$ such that each $\bbf(\factorseq_i)$ is a quasi-tree, $G$ acts by isometries on $\prod_{i=1}^k\bbf(\factorseq_i)$, and there is a $G$--equivariant quasi-isometric embedding $\cuco X\rightarrow\prod_{i=1}^k\bbf(\factorseq_i)$.
\end{prop}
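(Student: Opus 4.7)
The plan is to combine the coloring supplied by Lemma~\ref{lem:coloring_factor_systems} with the Bestvina--Bromberg--Fujiwara (BBF) projection complex machinery of~\cite{BBF:quasi_tree}. First I apply Lemma~\ref{lem:coloring_factor_systems} to obtain a $G'$--invariant coloring of $\factorseq$ by $\{1,\ldots,k_0\}$, and add $[\cuco X]$ as a separate (trivially $G$--invariant) color class, producing a partition $\factorseq\cup\{[\cuco X]\}=\sqcup_{i=1}^k\factorseq_i$ in which each $\factorseq_i$ is a $G'$--invariant collection of pairwise transverse parallelism classes. By Corollary~\ref{cor:bbf}, the projection distances $\{\dist_Y^\pi\}_{[Y]\in\factorseq_i}$ then satisfy BBF Axioms~(0)--(4).

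Next, I apply the BBF construction to each $\factorseq_i$ to produce the projection complex $\bbf(\factorseq_i)$, into which each $\fontact F$ with $[F]\in\factorseq_i$ embeds quasi-isometrically and whose metric is coarsely equal to a threshold-truncated sum of projection distances. Since each $\fontact F$ is a uniform quasi-tree by Proposition~\ref{prop:fontact_quasi_tree}, the main quasi-tree theorem of~\cite{BBF:quasi_tree} yields that each $\bbf(\factorseq_i)$ is itself a quasi-tree with constants depending only on $\cuco X$ and $\factorsup$. The $G'$--action on $\factorseq_i$ induces an isometric $G'$--action on $\bbf(\factorseq_i)$; a general $g\in G$ sends $\factorseq_i$ to $\factorseq_{\sigma_g(i)}$ for some permutation $\sigma_g$ of $\{1,\ldots,k\}$, inducing an isometry $\bbf(\factorseq_i)\to\bbf(\factorseq_{\sigma_g(i)})$. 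These assemble into a $G$--action on $\prod_{i=1}^k\bbf(\factorseq_i)$ that permutes factors appropriately.

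Now I define $\Phi\colon\cuco X\to\prod_{i=1}^k\bbf(\factorseq_i)$ coordinate-wise: for each $i$, pick any $[F_i]\in\factorseq_i$ and let $\Phi_i(x)$ be the image of a vertex of $\pi_{F_i}(x)$ under the quasi-isometric inclusion $\fontact F_i\hookrightarrow\bbf(\factorseq_i)$. Coarse $G$--equivariance (sufficient since we only claim a quasi-isometric embedding) follows from the equivariance $\pi_{gF}(gx)=g\pi_{F}(x)$ and compatibility of $G/G'$ with the permutation action on the factors. To prove that $\Phi$ is a quasi-isometric embedding, I juxtapose two distance formulas: the cubical one from Theorem~\ref{thm:distance_formula}, $\dist_{\cuco X}(x,y)\asymp\sum_{[F]\in\factorseq}\ignore{\dist_{\fontact F}(x,y)}{s}$, and the BBF distance formula for each projection complex, $\dist_{\bbf(\factorseq_i)}(\Phi_i(x),\Phi_i(y))\asymp\sum_{[F]\in\factorseq_i}\ignore{\dist_{\fontact F}(x,y)}{K}$. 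Summing the latter over $i$ recovers (after possibly enlarging the threshold, which only adjusts constants) the former, so $\Phi$ is a quasi-isometric embedding.

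The main obstacle will be arranging the coloring to be compatible with the full $G$--action, since Lemma~\ref{lem:coloring_factor_systems} only provides a $G'$--invariant coloring. The resolution is to allow $G$ to \emph{permute} the factors of the product rather than to insist on a $G$--invariant partition; together with the observation that the BBF construction is natural with respect to bijections of the index set that intertwine the projection data, this gives the $G$--action on $\prod\bbf(\factorseq_i)$. A secondary bookkeeping issue is matching the thresholds $s$ and $K$ appearing in the two distance formulas, but both formulas remain valid (with worse constants) as the threshold grows, so one can take the larger of the two thresholds throughout.
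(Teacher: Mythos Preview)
Your overall strategy---color via Lemma~\ref{lem:coloring_factor_systems}, apply the BBF construction to each color class via Corollary~\ref{cor:bbf}, and compare Theorem~\ref{thm:distance_formula} with the BBF distance estimate---is exactly what the paper does. However, your explicit map $\Phi_i$ does not work, and the formula you attribute to BBF is not what \cite[Theorem~4.13]{BBF:quasi_tree} gives for your map.

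You send every $x\in\cuco X$ to $\pi_{F_i}(x)$ in a \emph{single fixed} piece $\fontact F_i\hookrightarrow\bbf(\factorseq_i)$. Since each piece is quasi-isometrically embedded in the BBF complex, this forces $\dist_{\bbf(\factorseq_i)}(\Phi_i(x),\Phi_i(y))\asymp\dist_{\fontact F_i}(\pi_{F_i}(x),\pi_{F_i}(y))$, a single term rather than the sum over $\factorseq_i$ that you claim. These genuinely differ: pick $[F']\in\factorseq_i$ transverse to $[F_i]$ and choose $x,y$ with $\pi_{F_i}(x)=\pi_{F_i}(y)$ lying close to $\pi_{F_i}(F')$; then the Behrstock inequality imposes no constraint on $\pi_{F'}(x),\pi_{F'}(y)$, so $\dist_{\fontact F'}(x,y)$ can be arbitrarily large while your $\Phi_i$ collapses $x$ and $y$. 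The same problem undermines your equivariance claim: $g\cdot\Phi_i(x)=g\pi_{F_i}(x)$ lies in $\fontact(gF_i)$, a different piece from $\Phi_i(gx)=\pi_{F_i}(gx)\in\fontact F_i$, and there is no reason for these to be close in $\bbf(\factorseq_i)$.

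The remedy, which is what \cite[Section~5.3]{BBF:quasi_tree} does and what the paper is implicitly invoking, is to use the orbit map: fix $x_0$ and set $\Phi_i(gx_0)=g\cdot\pi_{F_i}(x_0)\in\fontact(gF_i)$, so that $\Phi_i(gx_0)$ and $\Phi_i(hx_0)$ land in \emph{distinct} pieces. Now \cite[Theorem~4.13]{BBF:quasi_tree} applies directly and yields $\sum_{[F]\in\factorseq_i}\ignore{\dist^\pi_F([gF_i],[hF_i])}{K}$; since $\pi_{[F]}([gF_i])$ has uniformly bounded diameter (transversality) and lies within bounded distance of $\pi_F(gx_0)$, each term coarsely equals $\dist_{\fontact F}(gx_0,hx_0)$. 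Summing over $i$ then matches Theorem~\ref{thm:distance_formula} as you intended, and equivariance is immediate from the definition.
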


\begin{proof}
Lemma~\ref{lem:coloring_factor_systems} allows us to finitely and equivariantly color the elements of $\factorsup$ so that elements of the same color are transverse.  By Corollary~\ref{cor:bbf} and~\cite[Theorem~A]{BBF:quasi_tree}, each color gives rise to a quasi-tree and a map from $\cuco X$ to each quasi-tree.  Comparing Theorem~\ref{thm:distance_formula} to the distance estimate~\cite[Theorem~4.13]{BBF:quasi_tree} shows, as in~\cite[Section~5.3]{BBF:quasi_tree}, that these maps give the desired quasi-isometric embedding in the product of the quasi-trees associated to the various colors.
\end{proof}

\begin{rem}\label{rem:no_crossing}
Proposition~\ref{prop:example} shows that Proposition~\ref{prop:sep_embed} does not imply Proposition \ref{prop:qie_prod_trees}.  Any stipulation that no hyperplane crosses its $G'$--translates is satisfied when, for example, hyperplane-stabilizers are separable
(this uses Scott's criterion for separability~\cite{Scott:surfaces}). 
Such separability occurs when the action of $G$ on $\cuco X$ is virtually cospecial
(e.g., when $G$ is word-hyperbolic~\cite{Agol:virtual_haken}); this is
stronger than assuming the action of $G$ on $\cuco X$ is
virtually flip-free.  
\end{rem}

\begin{prop}\label{prop:example}
There exists a group $G$ acting properly and cocompactly on a CAT(0) cube complex $\cuco X$ in such a way that $\cuco X$ admits a $G$--equivariant quasi-isometric embedding into the product of finitely many quasi-trees, but $\cuco X$ does not admit a $G'$--equivariant isometric embedding in a finite product of trees for any finite-index $G'\leq G$.
\end{prop}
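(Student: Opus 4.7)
The plan is to take $\cuco X = \widetilde Z$ with $G = \pi_1 Z$, where $Z$ is the non-positively curved cube complex constructed in Example~\ref{exmp:self_strangling}. Both halves of the conclusion will then follow from results already in place in this section, combined with a flip-freeness verification that uses the product structure of $\widetilde D \cong \widetilde S \times \widetilde T \subset \widetilde Z$.

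For the negative statement, Proposition~\ref{prop:sep_embed} says that the existence of a $G'$-equivariant cubical isometric embedding $\widetilde Z \to \prod_{i=1}^k T_i$ forces a finite-index subgroup $G''\leq G'$ no element of which translates a hyperplane across itself. But Proposition~\ref{prop:D_cross_translates} directly supplies, for every finite-index subgroup of $G$, a hyperplane $H$ of the convex subcomplex $\widetilde D \subset \widetilde Z$ and an element of that subgroup carrying $H$ to a hyperplane crossing $H$. Combining these rules out any such equivariant isometric embedding.

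For the positive statement, Proposition~\ref{prop:Z_factor_system} provides a $G$-invariant factor system $\factorsup$ on $\widetilde Z$ (obtained by passing through the virtually special surrogate $Z'$ constructed from $\widetilde S \times \widetilde T$ in place of $\widetilde D$). By Proposition~\ref{prop:qie_prod_trees} it suffices to check that the action of some finite-index subgroup $G'' \leq G$ is hereditarily flip-free with respect to $\factorsup$. The strategy is to construct a $G$-equivariant two-coloring of the maximal proper elements of $\factorsup$ according to whether, inside a translate of $\widetilde D$, they are parallel to the $\widetilde S$-factor or to the $\widetilde T$-factor: this is coherent because every maximal proper element of $\factorsup$ lies in some translate of $\widetilde D$, because $\pi_1 D$ preserves the product decomposition of $\widetilde D$ and hence the two classes, and because the attaching loops $\alpha \subset S$ and $\beta \subset T$ of $R$ and $A$ in $Z$ distinguish the two directions. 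After passing to a finite-index subgroup $G'' \leq G$ preserving this coloring we obtain flip-freeness; hereditary flip-freeness follows by applying the same argument to the induced factor systems in each convex $F \in \factorsup$, since the $S/T$ dichotomy restricts well to factor subcomplexes.

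The main obstacle will be verifying that the two-coloring extends $G$-equivariantly to every maximal proper element of $\factorsup$ and that the elements of $\factorsup$ introduced by the self-strangling annulus $A$ — the very pieces responsible for the self-crossing hyperplanes of Proposition~\ref{prop:D_cross_translates} — do not create a translation swapping an $S$-type maximal factor with a $T$-type one. Handling this requires unpacking the description of $\factorsup$ in terms of the convex embedding of $\widetilde Z$ into $\widetilde{S_\Gamma}$ supplied by the surrogate $Z'$ and Corollary~\ref{cor:compat_special_factor_system}, and tracking how iterated gate-projections of combinatorial hyperplanes inherit an $S$- or $T$-label; the design of the gluing in $Z$ is precisely what arranges for the two-colorability of directions to survive while simultaneously destroying separability of hyperplane-stabilizers.
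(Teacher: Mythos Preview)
Your setup and the negative half are exactly as in the paper: take $G=\pi_1Z$, $\cuco X=\widetilde Z$, and combine Proposition~\ref{prop:D_cross_translates} with the converse direction of Proposition~\ref{prop:sep_embed}. That part is fine.

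The gap is in your flip-freeness argument. Your plan rests on the claim that ``every maximal proper element of $\factorsup$ lies in some translate of $\widetilde D$'' and can therefore be labeled $S$--type or $T$--type. This is false. The \emph{bushy} combinatorial hyperplanes of $\widetilde Z$ are maximal in $\factorsup-\{\cuco X\}$ but are not contained in any single translate of $\widetilde D$: they thread through consecutive vertex spaces of the Bass--Serre decomposition, and the HNN gluing (which identifies $\widetilde\alpha\subset\widetilde S$ with $\widetilde\beta\subset\widetilde T$ across $R$) forces such a hyperplane to appear as a $\widetilde T$--slice in one copy of $\widetilde D$ and as a $\widetilde S$--slice in the adjacent one. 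So a global $S/T$ two-coloring of maximal elements cannot be defined, and the argument as written does not go through. Your final paragraph flags this as an ``obstacle,'' but the proposed resolution (tracking $S/T$ labels through iterated gates via the surrogate $Z'$) is aiming at a coherence that simply does not exist for bushy hyperplanes.

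The paper avoids this by first \emph{replacing} the Proposition~\ref{prop:Z_factor_system} factor system with the minimal one for a large threshold $\xi\geq r+1$, then computing it explicitly: it consists only of $\cuco X$ together with (parallel copies of) combinatorial hyperplanes, and these come in three flavors --- compact, two-ended (parallel to lifts of the meridian $C$), and bushy. The key structural facts are that $\widetilde Z$ contains no product of two bushy hyperplanes, and that the only relevant products $A\times B$ with $A,B$ parallel to maximal elements are of the form $\widetilde\alpha\times\widetilde\beta$ inside a single $\widetilde S\times\widetilde T$. Only at this point does a parity argument enter: the index--$2$ kernel $G''$ of $G\to\integers_2$ (sending the class of $C$ to $1$) acts with even translation on the Bass--Serre tree, so $G''$--translates of $\widetilde\beta$ remain ``$\beta$--type'' and are never parallel to $\widetilde\alpha$. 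Hereditary flip-freeness is then a one-line observation: every proper element of $\factorsup$ is a tree and hence contains no nontrivial product. In short, the two-coloring idea is morally right, but it applies to the two-ended hyperplanes (via Bass--Serre parity), not to arbitrary maximal elements, and isolating that case requires the explicit computation of $\factorsup$ that your proposal omits.
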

	
\begin{proof}
We will actually find $G$ and $\cuco X$ such that the following three properties hold:
\begin{enumerate}
 \item \label{item:ff_fs}$\cuco X$ admits a factor system $\factorsup$.
 \item \label{item:ff_sc}For every finite-index subgroup $G'\leq G$, there exists a hyperplane $H$ of $\cuco X$ and some $g\in G'$ such that the hyperplanes $H,gH$ cross.
 \item \label{item:ff_ff}The action of $G''$ on $\cuco X$ is hereditarily flip-free for some $G''\leq_{f.i.}G$.
\end{enumerate}

Let $Z$ be the complex from Example~\ref{exmp:self_strangling}, whose
notation we will use throughout this proof, let $G=\pi_1Z$, and let
$\cuco X=\widetilde Z$.  Property~\eqref{item:ff_fs} is
Proposition~\ref{prop:Z_factor_system} and
Property~\eqref{item:ff_sc} is
Proposition~\ref{prop:D_cross_translates}.  Thus, by
Proposition~\ref{prop:sep_embed}, $\cuco X$ has no $G$--equivariant
isometric embedding in a finite product of trees.  The existence of 
the desired 
quasi-isometric embedding will follow from Proposition~\ref{prop:qie_prod_trees} once
we have established property~\eqref{item:ff_ff}.

To that end, it will be convenient to compute an explicit factor system for $\cuco X$.  We take $\xi\geq r+1$ and let $\factorsup$ be the smallest subset containing $\cuco X$, all combinatorial hyperplanes of $\cuco X$, and $\gate_F(F')$ whenever $F,F'\in\factorsup$ and $\diam(\gate_F(F'))\geq\xi$.    

We first describe the combinatorial hyperplanes in $\cuco X$.  We can
and shall assume that the paths $\alpha,\beta\colon [0,r]\rightarrow
S,T$ from Example~\ref{exmp:self_strangling} are surjective.  This is
for convenience only; it reduces the number of isomorphism types of
hyperplane that we need to consider.  Let $\cuco X_0$ be the universal
cover of $Z_0$, which decomposes as a tree of spaces whose
vertex-spaces are copies of $\widetilde S\times\widetilde T$ and whose
edge-spaces are lines.  Then $\cuco X$ is formed from $\cuco X_0$ by
attaching all lifts of the carrier of the self-crossing hyperplane $W$
(which is a segment of length $r+2$) along a segment of length $r$.

$\cuco X$ thus has compact hyperplanes, all of which are lifts of $W$.
Each corresponds to a cardinality-$2$ parallelism class of
combinatorial hyperplanes.  The projection of such a combinatorial
hyperplane $W^+$ onto some other hyperplane is either an isomorphism
or has image of diameter at most $r$, whence this projection is
excluded from $\factorsup$ by our choice of $\xi$.  The projection of
any other hyperplane onto $W^+$ also has diameter at most $r$ and is
similarly excluded.  Every product subcomplex of the form $W^+\times
E$ with $W^+$ a compact combinatorial hyperplane has the property that
$E$ is contained in a $1$--cube and thus is not a combinatorial
hyperplane.  Hence the compact hyperplanes do not affect whether the 
action of $G$
is flip-free.

The remaining combinatorial hyperplanes are of two types: there are \emph{two-ended} combinatorial hyperplanes, which are parallel to lifts of $\widetilde C$, and \emph{bushy} hyperplanes, that are trees of the following type.  Each is formed by beginning with a copy of $\widetilde S$, attaching a path of length $2$ to each vertex, attaching a copy of $\widetilde T$ to the end of each of these paths, etc.  Then, at the midpoint of each of the length-$2$ paths (i.e., the points at which the hyperplane intersects a lift of $\widetilde C$), attach a single 1-cube (dual to a compact hyperplane).  Observe that any subcomplex parallel to a bushy hyperplane is a bushy hyperplane, but that two-ended hyperplanes are parallel to lines properly contained in bushy hyperplanes.

Let $T,T'$ be subcomplexes parallel to two-ended hyperplanes and let $B,B'$ be bushy hyperplanes.  If $B,B'$ intersect some lift of $\widetilde S\times\widetilde T$ but do not intersect, then $B$ and $B'$ are parallel, so that $\gate_B(B')=B$.  If $B,B'$ are distinct and intersect, then $\gate_B(B')$ is a single point.  If $B,B'$ do not intersect a common vertex-space, then $\gate_B(B')=\gate_B(T)$ for some two-ended hyperplane separating $B$ from $B'$.  But $\gate_B(T)$ is either a single point or is a parallel copy of $T$ for any bushy $B$ and two-ended $T$.  Hence $\factorsup$ consists of $\cuco X$, combinatorial hyperplanes and their parallel copies.  Maximal elements of $\factorsup-\{\cuco X\}$ are bushy hyperplanes or lifts of $\widetilde C$.

Let $G''$ be the unique index-$2$ subgroup of $G$ (this is the kernel 
of the map $G\rightarrow\integers_2$ sending the element represented 
by $C$ to $1$).  Then the action of $G''$  on $\cuco X$ is flip-free.  Indeed, $\cuco X$ does not contain the product of two bushy hyperplanes, so we need only consider the case of a product $\widetilde\alpha\times\widetilde\beta$ in some $\widetilde S\times\widetilde T$.  But any $G''$--translate of $\widetilde\beta$ is either in the same product piece --- and thus parallel to $\beta$ --- or at even distance from $\widetilde\beta$ in the Bass--Serre tree, and hence is again a component of the preimage of $\beta$, and hence not parallel to $\widetilde\alpha$.  To verify hereditary flip-freeness, it suffices to note that each element of $\factorsup-\{\cuco X\}$ is a tree, and thus contains no nontrivial products.
\end{proof}

\section{Consistency and realization}\label{sec:consistency_realization}
In this section, we prove an analogue in the cubical context of the  
Consistency Theorem in the mapping class group;   
see \cite[Theorem~4.3]{BKMM:consistency}. (For an analogue of 
this theorem in Teichm\"{u}ller space with the Teichm\"{u}ller metric, see 
\cite[Section~3]{EskinMasurRafi:large_scale_rank}.) 
For the purposes of this section, $\cuco X$ is an arbitrary CAT(0) 
cube complex with a factor system $\factorsup$.  Let $\xi$ be the 
constant from Definition~\ref{defn:factor_system}.  For each 
$F\in\factorsup$, let $\mathfrak BF= \{S\subseteq\contact F^{(0)}: 
\diam_{\fontact F}(S)\leq 1\}$.

If $U,V\in\factorsup$, and $V$ is parallel to a proper subcomplex of $U$, then there is a map $\subp{V}{U}\colon \mathfrak BU\rightarrow 2^{\fontact V^{(0)}}$ defined as follows: let $b\in\mathfrak BU$ be a clique in $\contact U$ and let $\subp{V}{U}(b)=\cup_{W}\pi_V(W)$, where $W$ varies over all combinatorial hyperplanes parallel to hyperplanes of $U$ in the clique $b$.

Let $\tup b\in\prod_{[F]\in\factorseq}\mathfrak BF$ be a tuple, whose
$[F]$--coordinate we denote by $b_F$.  The tuple $\tup b$ is
\emph{realized} if there exists $x\in\cuco X$ such that $\pi_F(x)=b_F$
for all $[F]\in\factorseq$; note that by Lemma~\ref{lem:projection_parallel}  
$b_{F}$ is independent of the choice of representative $F\in [F]$.  In this section we first give a set of 
\emph{consistency conditions} on coordinate projections which hold for any element of  
$x\in\cuco X$ (for the analogue in the mapping class group see 
\cite{Behrstock:asymptotic}). We then show that this set of necessary 
consistency conditions on a vector of coordinates is, essentially, 
also sufficent for that vector to be realized by an element of 
$\cuco X$.

\begin{rem}
In this section, we use the sets $\mathfrak BF$, simply because the
projection of a point to the contact graph is always a clique.  In the
more general context introduced in Section~\ref{sec:quasi_box} ---
specifically in Definition~\ref{defn:space_with_distance_formula} ---
we abstract the conditions on tuples in
$\prod_{[F]\in\factorseq}\mathfrak BF$ provided by the next several 
results, in terms of tuples in
$\prod_{S\in\mathfrak S}2^{S}$, where $\mathfrak S$ is some set of
uniformly hyperbolic spaces, and the tuple restricted to 
each coordinate is a 
uniformly bounded (though not necessarily diameter--$1$) set.
In the cubical setting, instead of doing this using $S=\fontact F$,
we instead use $\mathfrak BF$ because it yields more 
refined statements.
\end{rem}

Recall that the parallelism classes $[F],[F']\in\factorseq$ are \emph{transverse} if
they are not orthogonal and if $F$ is not parallel to a
subcomplex of $F'$, or vice versa, for some (hence all)
$F\in[F],F'\in[F']$.
We begin with the consistency conditions, which take the form of the following inequalities:

\begin{prop}[Realized tuples are consistent]\label{prop:realization_properties}
Let $\tup b\in\prod_{[F]\in\factorseq}\mathfrak BF$ be realized by $x\in\cuco X$.  There exists $\kappa_0=\kappa_0(\xi)$ such that for all $U,V\in\factorsup$, the following hold:
\begin{enumerate}
 \item \label{item:cr_trans} If $U$ and $V$ are transverse, then
 $$\min\left\{\dist_{\fontact U}(b_U,\pi_U(V)),\dist_{\fontact
 V}(b_V,\pi_V(U))\right\}\leq\kappa_0;$$
 \item \label{item:cr_contain} If $V$ is parallel to a proper subcomplex of $U$, then $$\min\left\{\dist_{\fontact U}(b_U,\pi_U(V)),\diam_{\fontact V}(b_V\cup\subp{V}{U}(b_U))\right\}\leq\kappa_0.$$
\end{enumerate}
\end{prop}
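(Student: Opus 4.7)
The plan is to exploit the fact that the tuple $\tup b$ is realized by a single $0$-cube $x\in\cuco X$, so that $b_U=\rho_U(\gate_U(x))$ and $b_V=\rho_V(\gate_V(x))$, and to convert a large projection hypothesis into strong geometric constraints via the gate/geodesic arguments already developed for Proposition~\ref{prop:behrstock_inequality}.

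For item~\eqref{item:cr_trans}, I will assume $\dist_{\fontact U}(b_U,\pi_U(V))>2\xi+4$ and set $y_V:=\gate_V(x)$, so that $b_V=\rho_V(y_V)$. Since $\pi_U(y_V)\subseteq\pi_U(V)$, the distance $\dist_{\fontact U}(\pi_U(x),\pi_U(y_V))$ also exceeds $2\xi+4$, so by Lemma~\ref{lem:geodesic_near_gate} any combinatorial geodesic from $x$ to $y_V$ enters a parallel copy $U'$ of $U$ at some $0$-cube $u$. Because $u$ lies on a geodesic ending at $\gate_V(x)$, the hyperplane-separation argument from the proof of Proposition~\ref{prop:behrstock_inequality} yields $\gate_V(u)=y_V$, hence $\pi_V(u)=b_V\subseteq\pi_V(U')$. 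Since $U,V$ are transverse (and in particular not orthogonal), Proposition~\ref{prop:projections}\eqref{item:par_1} combined with Lemma~\ref{lem:bounded_projections} bounds the diameter of $\pi_V(U)\cup\pi_V(U')$ in $\fontact V$ in terms of $\xi$, giving the required bound on $\dist_{\fontact V}(b_V,\pi_V(U))$.

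For item~\eqref{item:cr_contain}, I will first choose a parallelism representative of $V$ so that $V\subsetneq U$; then $b_V=\pi_V(y_U)$ with $y_U:=\gate_U(x)$, by iterated gates. Suppose $\dist_{\fontact U}(b_U,\pi_U(V))>\kappa_0$ (to be fixed), and consider any hyperplane $\hat H$ with $\hat H\cap U\in b_U$. If $\hat H$ crosses $V$, then $\hat H\cap U$ crosses $V$ in $U$, so $\hat H\cap U\in\rho_U(V)=\pi_U(V)$, contradicting the hypothesis; otherwise $V$ lies entirely on one side of $\hat H$, which forces $\gate_V(\hat H^+)=\gate_V(\hat H^-)$ and hence $\pi_V(\hat H^+)=\pi_V(\hat H^-)$. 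If moreover $V$ were parallel into $\hat H^+$, then by Lemma~\ref{lem:convex_intersect} applied to $\hat H^+$ and $U$ a parallel copy of $V$ would lie in the combinatorial hyperplane $L:=\hat H^+\cap U$ of $U$, and the cone vertices $v_{[V]},v_{[L]}\in\fontact U$ would be at bounded distance from each other (via any hyperplane of $U$ crossing $V$, which necessarily crosses $L$) and from $\hat H\cap U$ (via any hyperplane crossing the parallel pair $L,\hat H\cap U$), again contradicting the hypothesis once $\kappa_0$ exceeds this path length. Hence Lemma~\ref{lem:bounded_projections} gives $\diam_{\fontact V}(\pi_V(\hat H^\pm))<\xi+2$; since $y_U$ lies in $\hat H^+$ or $\hat H^-$ we have $b_V=\pi_V(y_U)\subseteq\pi_V(\hat H^\pm)$, so every piece making up $\subp{V}{U}(b_U)$ contains $b_V$ and has diameter less than $\xi+2$, yielding $\diam_{\fontact V}(b_V\cup\subp{V}{U}(b_U))<2(\xi+2)$.

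The hard part will be the middle sub-case of item~\eqref{item:cr_contain}: ruling out the possibility that $V$ is parallel into some $\hat H^\pm$ requires Lemma~\ref{lem:convex_intersect} to place the parallel copy of $V$ inside $L\subseteq U$, followed by a careful assembly of a uniformly short $\fontact U$--path from $\hat H\cap U\in b_U$ to $\pi_U(V)$ through the cone vertices $v_{[L]}$ and $v_{[V]}$, all while choosing $\kappa_0$ large enough (depending only on $\xi$) to absorb the various bounds $\xi+2$, $2\xi+4$, and the length of this path.
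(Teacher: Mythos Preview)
Your approach is essentially correct, and for item~\eqref{item:cr_trans} it takes a genuinely different route from the paper's proof.  You assume one of the two distances is large and directly bound the other by invoking Lemma~\ref{lem:geodesic_near_gate} to produce $u\in U'$ on a geodesic from $x$ to $\gate_V(x)$, then use Proposition~\ref{prop:projections}.\eqref{item:par_1} and Lemma~\ref{lem:bounded_projections} to pass from $\pi_V(U')$ back to $\pi_V(U)$.  The paper instead assumes \emph{both} distances exceed $2\xi+4$, chooses a hyperplane $W$ crossing $V$ and separating $x$ from $U$, uses Lemma~\ref{lem:bounded_projections} to exhibit a parallel copy $U'$ of $U$ inside the combinatorial hyperplane $W^+$, and then shows that Proposition~\ref{prop:projections}.\eqref{item:par_1} fails for $U,U'$ projected to $V$, forcing $[U]\orth[V]$---a contradiction with transversality.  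Your argument recycles the machinery already assembled for Proposition~\ref{prop:behrstock_inequality}; the paper's argument is self-contained and does not rely on Lemma~\ref{lem:geodesic_near_gate}.

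For item~\eqref{item:cr_contain}, your argument works but is more elaborate than necessary, and there is a small gap: the cone vertex $v_{[V]}\in\fontact U$ exists only when $V$ is parallel to a combinatorial hyperplane or has diameter $\geq\xi$, which you have not verified.  This is easily bypassed: in your sub-case where $V$ is parallel into $\hat H^+$, pick any hyperplane $J$ of $U$ crossing $V$; then $J$ crosses the parallel copy of $V$ inside $L$, hence crosses $\hat H\cap U$, and $J\in\pi_U(V)$, so $\dist_{\fontact U}(\hat H\cap U,\pi_U(V))\leq 1$ directly---no cone vertices needed.  The paper's proof is shorter still: with $W_i$ the combinatorial hyperplanes of $U$ containing $y_U=\gate_U(x)$, it observes that if $\gate_V(W_i)$ contained two points, a separating hyperplane would cross both $V$ and $W_i$, forcing $\dist_{\fontact U}(b_U,\pi_U(V))\leq 1$; hence each $\gate_V(W_i)$ is the single point $\gate_V(y_U)=\gate_V(x)$, so $\pi_V(W_i)=b_V$ and in fact $\subp{V}{U}(b_U)=b_V$ exactly.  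This avoids your case analysis on whether $\hat H$ crosses $V$ and whether $V$ is parallel into $\hat H^+$.
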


A tuple $\tup b$ (realized or otherwise) satisfying the conclusions 
of the above proposition is said to be $\kappa_0$--\emph{consistent}. 
More generally, a tuple $\tup b$ satisfying inequalities~\eqref{item:cr_trans} and~\eqref{item:cr_contain} from Proposition~\ref{prop:realization_properties}, with $\kappa_0$ replaced by some $\kappa\geq\kappa_0$, will be called \emph{$\kappa$-consistent}. 

\begin{proof}[Proof of Proposition~\ref{prop:realization_properties}]
First suppose that neither $U$ nor $V$ is parallel to a subcomplex of
the other.  Suppose that $\dist_{\fontact U}(b_U,\pi_U(V))>2\xi+4$.
Then $\dist_U(\gate_U(x),\gate_U(V))>\xi$, so that the set $\mathcal
H_U$ of hyperplanes crossing $U$ and separating $x$ from $V$ has
cardinality at least $\xi$.  Suppose also that $\dist_{\fontact
V}(b_V,\pi_V(U))>2\xi+4$, so that the set $\mathcal H_V$ of
hyperplanes crossing $V$ and separating $x$ from $U$ has cardinality
at least $\xi$.  Let $W\in\mathcal H_V$ and let $W^+$ be an associated
combinatorial hyperplane not separated from $x$ by any hyperplane
crossing $V$.  Notice that $\pi_U(W^+)$ contains $\pi_U(x)=b_U$
(because $W$ separates $x$ from $U$) and intersects $\pi_U(V)$.  In
particular $\diam_{\fontact U}(\pi_U(W^+))>\xi+2$ and hence, by
Lemma~\ref{lem:bounded_projections}, $U$ is parallel to a subcomplex
$U'$ of $W^+$ containing $\gate_{W^+}(x)$.  We have $\diam_{\fontact
V}(\pi_V(U'))\leq\xi+2$ for otherwise $V$ would be parallel into $U'$
which is impossible since $U'$ is parallel to $U$.  Also, $\pi_V(U')$
contains $\pi_V(x)=b_V$ and hence $\dist_{\fontact
V}(\pi_V(U),\pi_V(U'))>\xi+2$, i.e.,
Proposition~\ref{prop:projections}.(1) does not hold, so $U$ and $V$
are orthogonal by Proposition~\ref{prop:projections}.

Let $V$ be parallel to a proper subcomplex of $U$ and let $\{W_i\}$ be the set of combinatorial hyperplanes of $U$ containing $\gate_U(x)$.  Suppose that $\dist_{\fontact U}(b_U,\pi_U(V))>\xi+2$.  Then for each $i$, we have that $\gate_V(W_i)$ is a single point (since otherwise there would be a hyperplane intersecting $\gate_V(W_i)$ and hence intersecting both $V$ and $W_i$), hence $\gate_V(W_i)=\gate_V(x)$, whence $b_V=\subp{V}{U}(b_U)$.  In either case, $\kappa_0=\xi+2$ suffices.
\end{proof}

\begin{rem} We note that Proposition~\ref{prop:behrstock_inequality} 
    can alternatively be proven as an immediate consequence of 
    Proposition~\ref{prop:realization_properties} and 
    Lemma~\ref{lem:bounded_projections}. The formulation here is very 
    close to that in the mapping class group, see 
    \cite{Behrstock:asymptotic} and \cite{BKMM:consistency}.
\end{rem}

\begin{thm}[Consistency and realization]\label{thm:consistency_realization}
Let $\kappa_0\geq 1$ be the constant from
Proposition~\ref{prop:realization_properties}.  For each
$\kappa\geq\kappa_0$, there exists $\theta\geq 0$ such that, if $\tup
b\in\prod_{[F]\in\factorseq}\mathfrak BF$ is $\kappa$--consistent,
then there exists $y\in\cuco X$ such that for all $[F]\in\factorseq$,
we have $\dist_{\fontact F}(b_F,\pi_F(y))\leq\theta$.
\end{thm}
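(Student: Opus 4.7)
The plan is to proceed by induction on the \emph{complexity} $n$ of $\factorsup$, defined as the maximal length of a strict chain $F_0 \subsetneq F_1 \subsetneq \cdots \subsetneq F_n = \cuco X$ in $\factorsup$; finiteness follows from Definition~\ref{defn:factor_system}.\eqref{item:fs_local_finite}. The base case $n=0$ is vacuous, as then $\cuco X$ is a single $0$--cube. For the inductive step, I first choose a $0$--cube $y_0$ approximately realizing the top coordinate $b_{\cuco X}$: the definition of $\mathfrak B\cuco X$ makes $b_{\cuco X}$ a set of hyperplanes which is either a clique in $\contact \cuco X$ in the literal sense (yielding, by the Helly property for hyperplane carriers, a common $0$--cube $y_0 \in \bigcap_{H \in b_{\cuco X}} \neb(H)$), or consists of hyperplanes clustering near a cone-vertex associated to some proper $U \in \factorsup$, in which case I choose $y_0 \in U$, so that every element of $\pi_{\cuco X}(y_0)$ crosses $U$ and thus lies at $\fontact \cuco X$-distance $1$ from the cone-vertex, hence close to $b_{\cuco X}$.

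I then split the remaining coordinates $b_F$, with $F \in \factorsup - \{\cuco X\}$, into \emph{far factors} where $\dist_{\fontact \cuco X}(b_{\cuco X}, \pi_{\cuco X}(F)) > M$ for a threshold $M = M(\kappa,\xi)$, and \emph{close factors} otherwise. For every far factor, consistency inequality~\eqref{item:cr_contain} of Proposition~\ref{prop:realization_properties} forces $b_F$ to lie close to $\subp{F}{\cuco X}(b_{\cuco X})$, while a Bounded Geodesic Image argument (Proposition~\ref{prop:BGIIITF}) applied to a hierarchy path from $y_0$ into $F$ shows $\pi_F(y_0)$ lies close to $\subp{F}{\cuco X}(b_{\cuco X})$ as well; hence $\pi_F(y_0)$ automatically lies close to $b_F$.

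If every close factor $F$ satisfies $\dist_{\fontact F}(\pi_F(y_0), b_F) \leq \theta$, I take $y = y_0$ and stop. Otherwise, I choose a close factor $F$ maximal, under parallel-containment, among those where this fails, and use Proposition~\ref{prop:projections} with Lemma~\ref{lem:parallel_product} to select a parallel copy $F'$ of $F$ a bounded distance from $y_0$. The restriction $\tup b|_{\factorsup_{F'}}$ inherits $\kappa$--consistency, and by Lemma~\ref{lem:convex_subcomplex_factor_system} the induced system $\factorsup_{F'}$ has complexity at most $n-1$, so the inductive hypothesis yields $y \in F'$ realizing $\tup b|_{\factorsup_{F'}}$ within a controlled error. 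To confirm $y$ realizes each $b_G$ for $G \in \factorsup \setminus \factorsup_{F'}$: when $G$ is transverse to $F'$, the Behrstock inequality (Proposition~\ref{prop:behrstock_inequality}) together with the consistency condition~\eqref{item:cr_trans} of Proposition~\ref{prop:realization_properties} and the maximality of $F$ shows $\pi_G(y) \in \pi_G(F')$ is close to $b_G$; when $F'$ is parallel into $G$, consistency condition~\eqref{item:cr_contain} applies directly.

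The main obstacle I anticipate is uniform control of $\theta$ through the induction: each step contributes a bounded additive error, and without care this could compound with complexity. The remedy is to fix $\theta$ in advance as a sufficiently large function of $\kappa$, $\xi$, and $\Delta$, using that complexity is capped by $\Delta$ from Definition~\ref{defn:factor_system}.\eqref{item:fs_local_finite} to bound the total number of inductive stages. A secondary technical difficulty, internal to the close-factor step, is simultaneously guaranteeing the existence of a correctly-placed parallel copy $F'$ near $y_0$ and ensuring that passing to $F'$ does not destabilize the transverse coordinates; both rest on the product decomposition of parallelism classes from Lemma~\ref{lem:parallel_product} combined with the maximality of $F$.
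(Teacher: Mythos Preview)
Your inductive scheme has the right skeleton, but the close-factor step has a genuine gap.  You select a \emph{single} maximal bad close factor $F$, pass to a parallel copy $F'$, and realize the restricted tuple inside $F'$ by induction.  The problem is verifying that the resulting $y\in F'$ works at an arbitrary $G\in\factorsup\setminus\factorsup_{F'}$.  You treat only the cases $G\transverse F'$ and $F'$ parallel into $G$; the case $G\orth F'$ is omitted entirely, and in that case moving $y$ within $F'$ gives no control over $\pi_G(y)$.  Even in the transverse case your justification does not close: consistency gives
\[
\min\bigl\{\dist_{\fontact G}(b_G,\pi_G(F')),\ \dist_{\fontact F'}(b_{F'},\pi_{F'}(G))\bigr\}\leq\kappa,
\]
and when the second term is the small one you obtain no bound on $\dist_{\fontact G}(b_G,\pi_G(y))$.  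Maximality of $F$ among bad factors only excludes bad factors into which $F$ is properly parallel; it says nothing about a bad $G$ transverse or orthogonal to $F$, and such $G$ certainly exist in general.

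The paper's proof confronts exactly this issue and resolves it with a structural step you are missing.  It considers the full set $\factorseq_{\max}$ of maximal bad factors simultaneously, proves it is finite (via the Large Link Lemma), and then---this is the key claim---shows that a suitably extremal subfamily $\factorseq'_{\max}\subseteq\factorseq_{\max}$ is \emph{pairwise orthogonal}: for any transverse pair in $\factorseq_{\max}$, exactly one side wins a distance comparison to an auxiliary realization point $x$, and only the winners populate $\factorseq'_{\max}$.  Orthogonality yields an embedded product $P=\prod_{[F]\in\factorseq'_{\max}}F$ with an orthogonal complement $Q$, and one inducts separately in each factor and in $Q$ to assemble $y$.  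Without this orthogonality, fixing $b_F$ by moving into $F'$ can destroy agreement at a transverse or orthogonal $G$, and no iteration will terminate with a uniform $\theta$.
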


\begin{proof}
For each $[U]\in\factorseq$, let $E_U$ be the smallest convex subcomplex of $\cuco X$ with the property that $y\in E_U$ if $\pi_U(y)$ intersects the $2\kappa$--neighborhood of $b_U$ in $\fontact U$.  By Lemma~\ref{lem:projection_parallel}, we have $E_U=E_{U'}$ if $[U]=[U']$. 

Moreover, we claim that $\pi_U(E_U)$ has diameter at most $5\kappa$.
Indeed, let $\mathcal E$ be the set of all $0$--cubes $x$ so that
$\pi_U(x)\cap\neb_{2\kappa}(b_U)\neq\emptyset$.  Given $x,y\in\mathcal
E$, let $H_1,\ldots,H_k$ and $V_1,\ldots,V_\ell$ be sequences of
subcomplexes, representing geodesics in $\fontact U$, so that $x\in
H_1,y\in V_\ell$, and $H_k,V_1$ correspond to points of $b_U$.  Hence
there is a combinatorial path from $x$ to $y$ and carried on
$\bigcup_iH_i\cup\bigcup_jV_j$; by construction, each $0$--cube on
this path lies in $\mathcal E$.  Hence $\mathcal E$ is $1$--coarsely
connected.  Now let $z\in E_U-\mathcal E$ be a $0$--cube and let $H$
be a hyperplane whose carrier contains $z$ and which crosses a
geodesic from $z$ to a point of $\mathcal E$.  Since $E_U$ is the
convex hull of $\mathcal E$, the hyperplane $H$ separates two
$0$--cubes of $\mathcal E$ lying in $\neb(H)$.  Hence $\dist_{\fontact
U}(\pi_U(z),b_U)\leq 2\kappa+1$ and thus $\pi_U(E_U)\leq 5\kappa$.

Suppose $[U],[V]\in\factorseq$ are transverse.  By
$\kappa$--consistency and Lemma~\ref{lem:bounded_projections}, we have
that either $V\subseteq E_U$ or $U\subseteq E_V$.  If $V$ is parallel
to a proper subcomplex of $U$, then the same is true, by
$\kappa$--consistency.  Finally, if $U\times V$ isometrically embeds
in $\cuco X$, then some representative of $[V]$ lies in $E_U$ and some
representative of $[U]$ lies in $E_V$.  Hence, in all cases, at least
one representative subcomplex corresponding to one of the cliques
$b_U$ or $b_V$ belongs to $E_U\cap E_V$, i.e., $E_U\cap
E_V\neq\emptyset$ for all $U,V$.  Thus, by the Helly property, for
any finite collection $[F_1],\dots,[F_n]$ of elements of $\factorseq$,
we can find an $x\in \cuco X$ so that
$\dist_{\fontact{F_i}}(\pi_{F_i}(x), b_{F_i})\leq 5\kappa$ for each~$i$.

Fix any 0-cube $x_0\in \cuco X$ and let $\theta_1$ be a (large) constant to be chosen later. Also, denote by $\factorseq_{\max}$ the set of all $[F]\in\factorseq-\{\cuco X\}$ so that
\begin{itemize}
 \item $\dist_{\fontact F}(\pi_F(x_0), b_F)> \theta_1$, and 
 \item $F$ is not parallel to a proper subcomplex of a representative of $[F']\in\factorseq-\{\cuco X\}$ for which $\dist_{\fontact F'}(\pi_{F'}(x_0), b_{F'})> \theta_1$.
\end{itemize}
Let $l=\dist_{\fontact\cuco X}(\pi_{\cuco X}(x_0),b_{\cuco X})$.  We
claim that $\factorseq_{\max}$ has finitely many elements.  Suppose
not, and let $[F_1],\dots,[F_{p(l+5\kappa)+1}]\in \factorseq_{\max}$,
for some $p$ to be determined below so that $\dist_{\fontact
F}(\pi_{F_i}(x_0), b_{F_i})> \theta_1$ for $1\leq i\leq
p(l+5\kappa)+1$.  Consider $x\in \cuco X$ satisfying  
$\dist_{\fontact{F_i}}(\pi_{F_i}(x), b_{F_i})\leq 5\kappa$ for each
$i$ and such that $\dist_{\fontact\cuco X}(\pi_{\cuco X}(x),b_{\cuco
X})\leq 5\kappa$.  The existence of such a point $x$ was established
above.

Thus, there exists $m\leq l+5\kappa$ such that
$[T_0],\ldots,[T_m]\in\factorsup-\{\cuco X\}$ is a sequence, with
$T_0$ a combinatorial hyperplane parallel to a hyperplane representing
a vertex of $\pi_{\cuco X}(x_0)$ and $T_m$ enjoying the same property
with $x$ replacing $x_0$, representing a geodesic in $\fontact\cuco X$
joining $\pi_{\cuco X}(x_0)$ to $\pi_{\cuco X}(x)$.  Suppose that 
we have chosen constants $\theta_{1}$ and $\kappa$ so that 
$\theta_1\geq 6\kappa-6\geq 5\kappa+4\xi+10$.  Hence, for each $i$, we
have $\dist_{\fontact
F_i}(\pi_{F_i}(x_0),\pi_{F_i}(x))\geq\theta_1-5\kappa\geq 4\xi+10$.
Proposition \ref{prop:bgiII} now implies that each $F_i$ is parallel
to a subcomplex of $T_j$ for some $j\in\{0,\ldots,m\}$.  Hence
$F_i\in\factorseq_{\max}\cap\factorseq_{T_j}$, so that it suffices to
show that the latter is finite.

Note that for each $j$, the set $\factorseq_{\max}\cap\factorseq_{T_j}$ is the set of maximal elements $H\in\factorseq_{T_j}$ with $\dist_{\fontact H}(\pi_{H}(\pi_{T_j}(x_0)), b_H)> \theta_1$.  This follows from the fact that $\gate_H(\gate_{T_j}(x_0))=\gate_H(x_0)$.  Either $\factorseq_{\max}\cap\factorseq_{T_j}=\{T_j\}$, or $\dist_{\fontact T_j}(\pi_{T_j}(x_0), b_{T_j})\leq \theta_1$ and, by induction on $\Delta$, there exists $p\geq 1$ with $|\factorseq_{\max}\cap\factorseq_{T_j}|\leq p$.  Hence $|\factorseq_{\max}|\leq p(l+5\kappa)$.

The finiteness of $\factorseq_{\max}$ holds for any choice of $x_0$, but it will be convenient for what follows to assume that $x_0$ has been chosen so that $\pi_{\cuco X}(x_0)$ contains the clique $b_{\cuco X}$. 

As before, choose $x\in \cuco X$ so that $\dist_{\fontact{F}}(\pi_{F}(x), b_{F})\leq 5\kappa$ for each $[F]\in\factorseq_{\max}$. Let $\factorseq'_{\max}$ be the set of all $[F]\in \factorseq_{\max}$ with the property that for each $[F']\in \factorseq_{\max}$ transverse to $[F]$, the closest parallel copy of $F$ to $x$ is closer in $\cuco X$ than the closest parallel copy of $F'$ to $x$.

\begin{claim}\label{claim:terminal}
Let $[F_1],[F_{-1}]\in\factorseq_{\max}$ be transverse.  Then there exists a unique $i\in\{\pm1\}$ such that $\dist_{\fontact F_{i}}(\pi_{F_{i}}(x_0),\pi_{F_{i}}(F_{-i}))\leq\kappa$.  Moreover, $\dist_{\cuco X}(F_i,x)<\dist_{\cuco X}(F_{-i},x)$, assuming $F_{\pm i}$ is the closest representative of $[F_{\pm i}]$ to $x$.
\end{claim}

\begin{proof}[Proof of Claim~\ref{claim:terminal}]
Suppose first $\dist_{\fontact F_{1}}(\pi_{F_{1}}(x_0),\pi_{F_{1}}(F_{-1}))>\kappa$. Then, by Proposition \ref{prop:realization_properties}, we have $\dist_{\fontact F_{-1}}(\pi_{F_{-1}}(x_0),\pi_{F_{-1}}(F_{1}))\leq\kappa$ and we are done. If $\dist_{\fontact F_{1}}(\pi_{F_{1}}(x_0),\pi_{F_{1}}(F_{-1}))\leq \kappa$, then $\dist_{\fontact F_{1}}(\pi_{F_{1}}(x),\pi_{F_{1}}(F_{-1}))>\kappa$. Proposition \ref{prop:realization_properties} implies that $\dist_{\fontact F_{-1}}(\pi_{F_{-1}}(x),\pi_{F_{-1}}(F_{1}))\leq\kappa$. But then $\dist_{\fontact F_{-1}}(\pi_{F_{-1}}(x_0),\pi_{F_{-1}}(F_{1}))\geq\theta_1-\kappa-\xi-2>\kappa$.

To prove the ``moreover'' clause, notice that $\dist_{\fontact F_{i}}(\pi_{F_{i}}(x),\pi_{F_{i}}(F_{-i}))>4\xi+10$. Hence, by Proposition \ref{prop:bgiII}, for any point $x'$ on any parallel copy of $F_{-i}$ there is a geodesic from $x'$ to $x$ passing through a parallel copy of $F_i$.
\end{proof}

If $\factorseq_{\max}=\emptyset$ then we can choose $y=x_0$.  Otherwise, $\factorseq_{\max}'\neq\emptyset$, and Claim~\ref{claim:terminal} implies that the elements of $\factorseq_{\max}'$ are pairwise-orthogonal; hence $\cuco X$ contains $P=\prod_{[F]\in\factorseq_{\max}'}F$.  By Lemma~\ref{lem:parallel_product}, there is a convex subcomplex $Q\subset\cuco X$ such that the convex hull of the union of all subcomplexes parallel to $P$ is isomorphic to $P\times Q$.  Since $|P|>1$ by our choice of $\theta_1$, there is at least one hyperplane crossing $P$, whence $Q$ is not unique in its parallelism class and hence lies in a combinatorial hyperplane $U$.  By induction on $\Delta$, there exists $y_U\in U$ such that $\dist_{\fontact W}(\pi_W(y_U),b_W)\leq\theta_2$ for some fixed $\theta_2$ and all $[W]\in\factorsup_U$.  Let $y_Q=\gate_Q(y_U)$.

By induction on $\Delta$, for each $[F]\in\factorseq_{\max}'$, we can choose $F\in[F]$ and a point $y_F\in F$ such that $\dist_{\fontact H}(\pi_H(y_F),b_H)\leq\theta_2$ for all $[H]\in\factorsup_F$ and some $\theta_2$.  Importantly, $\theta_2$ can be chosen to depend only on $\kappa,\xi$ and $\Delta-1$, i.e., $\theta_2$ is independent of $\factorseq_{\max}$, and we can thus assume that $\theta_1>\theta_2+2\kappa$.  By pairwise-orthogonality, there exists $y\in P\times Q$ such that $\gate_F(y)=y_F$ for all $[F]\in\factorseq_{\max}'$ and $\gate_Q(y)=y_Q$.  We now choose $\theta$ so that $\dist_{\fontact W}(\pi_W(y),b_W)\leq\theta$ for all $[W]\in\factorsup$.

First, if $W$ is parallel to a proper subcomplex of some $[F]\in\factorseq_{\max}'$,  then any $\theta\geq\theta_2$ suffices.

Second, suppose that some $[F]\in\factorseq_{\max}'$ is parallel to a proper subcomplex of $W$.  Suppose $\dist_{\fontact W}(b_W,\pi_W(F))\leq \kappa+2\theta_1$.  Notice that $\pi_W(y)=\pi_W(y_F)$ because $F$ is parallel to a subcomplex of $W$, and hence $\dist_{\fontact W}(b_W,\pi_W(y_F))=\dist_{\fontact W}(b_W,\pi_W(y))\leq 2\kappa+2\theta_1$. Hence suppose $\dist_{\fontact W}(b_W,\pi_W(F))>\kappa+2\theta_1$, so $\diam_{\fontact F}(b_F\cup\rho^W_F(b_W))\leq \kappa$. Since, up to parallelism, $W$ properly contains $F$ and $[F]\in\factorseq_{\max}$, we can conclude that if $W\neq \cuco X$ then we have $\dist_{\fontact W}(b_W,\pi_W(x_0))\leq\theta_1$, while the same inequality also holds in the case $W=\cuco X$ because of our choice of $x_0$.  Hence $\dist_{\fontact W}(\pi_W(x_0),\pi_W(F))>\kappa+\theta_1$.  By Proposition~\ref{prop:bgiII}, and the fact that a hierarchy path joining $\gate_W(x_0)$ to a point in $W$ projecting to $b_W$ cannot pass through an element of $[F]$ (because $\dist_{\fontact W}(\pi_W(x_0),\pi_W(F))> \kappa+\theta_1\geq \dist_{\fontact W}(\pi_W(x_0),b_W)$), we have $\dist_{\fontact F}(\pi_F(x_0),\rho_F^W(b_W))\leq 4\xi+10$. Hence $\dist_{\fontact F}(\pi_F(x_0),b_F)\leq 4\xi+10+\kappa$, which contradicts $F\in\factorseq_{\max}$, so that this case does not arise. 

Third, suppose that $[W]$ is transverse to some $[F]\in\factorseq_{\max}'$. If $\dist_{\fontact F}(\pi_F(y),\pi_F(W))>\kappa+\theta_2$, then by consistency of $\tup b$, Proposition \ref{prop:realization_properties}, and the fact $\pi_W(y)\in\pi_W(F)$ (since $y$ lies in a parallel copy of $F$), we have $\dist_{\fontact W} (\pi_W(y), b_W)\leq \kappa$. Hence, suppose $\dist_{\fontact F}(\pi_F(y),\pi_F(W))\leq \kappa+\theta_2$. In this case $\dist_{\fontact F}(\pi_F(x_0),\pi_F(W))>\theta_1 -\theta_2-\kappa $, whence $\dist_{\fontact W}(\pi_W(x_0),\pi_W(F))\leq \kappa$.

Now, since $y$ belongs to (a parallel copy of) $F$, we have $\dist_{\fontact W}(\pi_W(y),\pi_W(F))=0$. We now wish to argue that the inequality $\dist_{\fontact W}(\pi_W(y),b_W)\geq \theta_1+\xi+2+\kappa$ is impossible, thus concluding the proof in this case. In fact, if it holds, then $$\dist_{\fontact W}(\pi_W(x_0),b_W)\geq \dist_{\fontact W}(\pi_W(y),b_W) - \diam(\pi_W(F))-\dist_{\fontact W}(\pi_W(F),\pi_W(x_0))\geq \theta_1.$$
Hence $W$ is contained in some $W'$ with $[W']\in\factorseq_{\max}$. Notice that $[W']$ must be transverse to $[F]$, since $F$ cannot be parallel into $W$ nor vice versa, by maximality of $F$ and $W'$, and since, if $[W']$ was orthogonal to $[F]$, then $[W]$ would be as well.

Note $\dist_{\fontact F}(\pi_F(y),\pi_F(W'))\leq \kappa$, since this holds for $W$. Thus $\dist_{\fontact W'}(\pi_{W'}(x_0),\pi_{W'}(F))\leq \kappa$, by an argument above. But then, by Claim \ref{claim:terminal}, the closest parallel copy of $W'$ to $x$ is closer than the closest parallel copy of $F$, in contradiction with $[F]\in\factorseq'_{\max}$.

The only case left is when $W$ is orthogonal to all $[F]\in\factorseq_{\max}'$, which implies that $W$ is, up to parallelism, contained in $Q$. In this case, $\pi_W(y)=\pi_W(\gate_Q(y_Q))=\pi_W(y_Q)$, whence we are done by the choice of $y_Q$.
\end{proof}

\begin{rem}
Let $\kappa\geq\kappa_0$ and let $\tup b$ be a $\kappa$--consistent
tuple.  Theorem~\ref{thm:consistency_realization} yields a 
constant~$\theta$ and a $0$--cube $x\in\cuco X^{(0)}$ such that $\dist_{\fontact
F}(x,b_F)\leq\theta$ for all $F\in\factorsup$.  By
Theorem~\ref{thm:distance_formula}, any other $0$--cube $y$ with this
property is at a uniformly bounded distance --- depending on $\kappa$
and $\xi$ --- from $x$.  The definition of a factor system implies
that $\cuco X$ is uniformly locally finite, so the number of such
$0$--cubes $y$ is bounded by a constant depending only on $\kappa,\xi,$ and
the growth function of $\cuco X^{(0)}$. 
\end{rem}

\part{Hierarchically hyperbolic spaces}\label{part:applications}

\section{Quasi-boxes in hierarchically hyperbolic spaces}\label{sec:quasi_box}
In this section, we work in a level of generality which includes 
both mapping class groups and each CAT(0) cube complex $\cuco X$ with a factor system $\factorsup$.

\begin{defn}[Hierarchically hyperbolic space]\label{defn:space_with_distance_formula}
The metric space $(\cuco X,\dist_{\cuco X})$ is a \emph{hierarchically
hyperbolic space} if there exists $\delta\geq0$, an index set
$\mathfrak S$, and a set $\{\fontact W \mid W\in\mathfrak S\}$ of
$\delta$--hyperbolic spaces, such that the following conditions are
satisfied: \begin{enumerate}
\item\textbf{(Projections.)}\label{item:dfs_curve_complexes} There is
a set $\{\pi_W:\cuco X\rightarrow2^{\fontact W}\mid W\in\mathfrak S\}$
of \emph{projections} sending points in $\cuco X$ to sets of diameter
bounded by some $\xi\geq0$ in the various $\fontact W\in\mathfrak S$.
 \item \textbf{(Nesting.)} \label{item:dfs_nesting} $\mathfrak S$ is
 equipped with a partial order $\nest$, and either $\mathfrak
 S=\emptyset$ or $\mathfrak S$ contains a unique $\nest$--maximal
 element; when $V\nest W$, we say $V$ is \emph{nested} in $W$.  We
 require that $W\nest W$ for all $W\in\mathfrak S$.  For each
 $W\in\mathfrak S$, we denote by $\mathfrak S_W$ the set of
 $V\in\mathfrak S$ such that $V\nest W$.  Moreover, for all $V,W\in\mathfrak S$
 with $V$ properly nested into $W$ there is a specified subset
 $\rho^V_W\subset\fontact W$ with $\diam_{\fontact W}(\rho^V_W)\leq\xi$.
 There is also a \emph{projection} $\rho^W_V\colon \fontact
 W\rightarrow 2^{\fontact V}$.  (The similarity in notation is
 justified by viewing $\rho^V_W$ as a coarsely constant map $\fontact
 V\rightarrow 2^{\fontact W}$.)
 \item \textbf{(Orthogonality.)} \label{item:dfs_orthogonal}
 $\mathfrak S$ has a symmetric and anti-reflexive relation called
 \emph{orthogonality}: we write $V\orth W$ when $V,W$ are orthogonal.
 Also, whenever $V\nest W$ and $W\orth U$, we require that $V\orth U$.
 Finally, we require that for each $T\in\mathfrak S$ and each
 $U\in\mathfrak S_T$ for which $\{V\in\mathfrak S_T\mid V\orth
 U\}\neq\emptyset$, there exists $W\in\mathfrak S_T-\{T\}$, so that
 whenever $V\orth U$ and $V\nest T$, we have $V\nest W$.  Finally, if
 $V\orth W$, then $V,W$ are not $\nest$--comparable.
 \item \textbf{(Transversality and consistency.)}
 \label{item:dfs_transversal} If $V,W\in\mathfrak S$ are not
 orthogonal and neither is nested in the other, then we say $V,W$ are
 \emph{transverse}, denoted $V\transverse W$.  There exists
 $\kappa_0\geq 0$ such that if $V\transverse W$, then there are
  sets $\rho^V_W\subseteq\fontact W$ and
 $\rho^W_V\subseteq\fontact V$ each of diameter at most $\xi$ and 
 satisfying: $$\min\left\{\dist_{\fontact
 W}(\pi_W(x),\rho^V_W),\dist_{\fontact
 V}(\pi_V(x),\rho^W_V)\right\}\leq\kappa_0$$ for all $x\in\cuco X$; 
 alternatively, in the case $V\nest W$, then for all
 $x\in\cuco X$ we have: $$\min\left\{\dist_{\fontact
 W}(\pi_W(x),\rho^V_W),\diam_{\fontact
 V}(\pi_V(x)\cup\rho^W_V(\pi_W(x)))\right\}\leq\kappa_0.$$
 
 \noindent Suppose that: either $U\propnest V$ or $U\transverse V$, and either $U\propnest W$ or $U\transverse W$.  Then we have: if $V\transverse W$, then 
 $$\min\left\{\dist_{\fontact
 W}(\rho^U_W,\rho^V_W),\dist_{\fontact
 V}(\rho^U_V,\rho^W_V)\right\}\leq\kappa_0$$
 and if $V\propnest W$, then 
 $$\min\left\{\dist_{\fontact
 W}(\rho^U_W,\rho^V_W),\diam_{\fontact
 V}(\rho^U_V\cup\rho^W_V(\rho^U_W))\right\}\leq\kappa_0.$$
 
 Finally, if $V\nest U$ or $U\orth V$, then $\dist_{\fontact W}(\rho^U_W,\rho^V_W)\leq\kappa_0$ whenever $W\in\mathfrak S-\{U,V\}$ satisfies either $V\nest W$ or $V\transverse W$ and either $U\nest W$ or $U\transverse W$.
 \item \textbf{(Finite complexity.)} \label{item:dfs_complexity} There exists $n\geq0$, the \emph{complexity} of $\cuco X$ (with respect to $\mathfrak S$), so that any sequence $(U_i)$ with $U_i$ properly nested into $U_{i+1}$ has length at most $n$.
 \item \textbf{(Distance formula.)} \label{item:dfs_distance_formula}
 There exists $s_0\geq\xi$ such that for all $s\geq s_0$ there exist
 constants $K,C$ such that for all $x,x'\in\cuco X$, $$\dist_{\cuco
 X}(x,x')\asymp_{(K,C)}\sum_{W\in\mathfrak S}\ignore{\dist_{\fontact
 W}(\pi_W(x),\pi_W(x'))}{s}.$$
 We often write $\sigma_{\cuco X,s}(x,x')$ to denote the right-hand side of
 Item~\eqref{item:dfs_distance_formula}; more generally, given
 $W\in\mathfrak S$, we denote by $\sigma_{W,s}(x,x')$ the corresponding
 sum taken over $\mathfrak S_W$.
 
 \item \textbf{(Large links.)} \label{item:dfs_large_link_lemma} There
exists $\lambda\geq1$ such that the following holds.
Let $W\in\mathfrak S$ and let $x,x'\in\cuco X$.  Let
$N=\lambda\dist_{\fontact W}(\pi_W(x),\pi_W(x'))+\lambda$.  Then there exists $\{T_i\}_{i=1,\dots,\lfloor
N\rfloor}\subseteq\mathfrak S_W-\{W\}$ such that for all $T\in\mathfrak
S_W-\{W\}$, either $T\in\mathfrak S_{T_i}$ for some $i$, or $\dist_{\fontact T}(\pi_T(x),\pi_T(x'))<s_0$.  Also, $\dist_{\fontact W}(\pi_W(x),\rho^{T_i}_W)\leq N$ for each $i$.

 \item \textbf{(Bounded geodesic image.)} \label{item:dfs:bounded_geodesic_image} For all $W\in\mathfrak S$, all $V\in\mathfrak S_W-\{W\}$, and all geodesics $\gamma$ of $\fontact W$, either $\diam_{\fontact V}(\rho^W_V(\gamma))\leq B$ or $\gamma\cap\neb_E(\rho^V_W)\neq\emptyset$ for some uniform $B,E$.
 \item \textbf{(Realization.)} \label{item:consistency_and_realization} For each $\kappa$ there exists $\theta_e,\theta_u$ such that the following holds. Let $\tup
 b\in\prod_{W\in\mathfrak S}2^{\fontact W}$ have each coordinate 
 correspond to a subset of $\fontact W$ of diameter at most $\kappa$; for each $W$, let $b_W$ denote the $\fontact
 W$--coordinate of $\tup b$. Suppose that whenever $V\transverse W$ we have
 $$\min\left\{\dist_{\fontact W}(b_W,\rho^V_W),\dist_{\fontact V}(b_V,\rho^W_V)\right\}\leq\kappa$$
 and whenever $V\nest W$ we have
 $$\min\left\{\dist_{\fontact W}(b_W,\rho^V_W),\diam_{\fontact V}(b_V\cup\rho^W_V(b_W))\right\}\leq\kappa.$$
 Then there the set of all $x\in \cuco X$ so that $\dist_{\fontact W}(b_W,\pi_W(x))\leq\theta_e$ for all $\fontact W\in\mathfrak S$ is non-empty and has diameter at most $\theta_u$.
 (A tuple $\tup b$ satisfying the inequalities above is called $\kappa$--\emph{consistent}.)
 
 \item \textbf{(Hierarchy paths.)} \label{item:hierarchy_paths} There exists $D\geq 0$ so that any pair of points in $\cuco X$ can be joined by a $(D,D)$--quasi-geodesic $\gamma$ with the property that, for each $W\in\mathfrak S$, the projection $\pi_W(\gamma)$ is at Hausdorff distance at most $D$ from any geodesic connecting $\pi_W(x)$ to $\pi_W(y)$.  We call such quasi-geodesics \emph{hierarchy paths}.
\end{enumerate}
We say that the metric spaces $\{\cuco X_i\}$ are \emph{uniformly hierarchically hyperbolic} if each $\cuco X_i$ satisfies the axioms above and all constants can be chosen uniformly.
\end{defn}

Observe that a space $\cuco X$ is
hierarchically hyperbolic with respect to $\mathfrak S=\emptyset$,
i.e., hierarchically hyperbolic of complexity 0, if and only if $\cuco
X$ is bounded.  Similarly, $\cuco X$ is hierarchically hyperbolic of
complexity $1$, with respect to $\mathfrak S=\{\cuco X\}$, if and only
if $\cuco X$ is hyperbolic.

\begin{rem}[Cube complexes with factor systems are hierarchically hyperbolic spaces]\label{rem:cube_complex_case}
In the case where $\cuco X$ is a CAT(0) cube complex with a factor system, we take $\mathfrak S$ to be a subset of the factor system containing exactly one element from each parallelism class. We take $\{\fontact W \mid W\in\mathfrak S\}$ to
be the set of factored contact graphs of elements of $\mathfrak S$.
All of the properties required by
Definition~\ref{defn:space_with_distance_formula} are satisfied: given
$U,V\in\mathfrak S$, we have $U\nest V$ if and only if $U$ is parallel
into $V$; orthogonality is defined as in Section~\ref{sec:par_proj}.
The last condition on orthogonality is satisfied since each element of
a factor system is either unique in its parallelism class or is
contained in a hyperplane.  The first two inequalities in
Item~\eqref{item:dfs_transversal} follow from
Proposition~\ref{prop:behrstock_inequality}; the second two follow
from the first two since each $\rho^U_V$ is the image of the convex
subcomplex $U\in\mathfrak S$ under the projection $\pi_V$.  Finiteness
of complexity follows from the definition of a factor system
(specifically, uniform local finiteness of the family of factors).
Item~\eqref{item:dfs_distance_formula} is provided by
Theorem~\ref{thm:distance_formula},
Item~\eqref{item:dfs_large_link_lemma} by
Proposition~\ref{prop:bgiII},
Item~\eqref{item:dfs:bounded_geodesic_image} is
Proposition~\ref{prop:BGIIITF},
Item~\eqref{item:consistency_and_realization} is
Theorem~\ref{thm:consistency_realization}, and
Item~\eqref{item:hierarchy_paths} is
Proposition~\ref{prop:hier_revisited}.  In order to ensure that nesting and orthogonality are mutually exclusive, we require that elements of the factor system $\mathfrak S$ are not single points.
\end{rem}

\begin{rem}
The Bounded Geodesic Image constant $E$ from Definition~\ref{defn:space_with_distance_formula} can be taken to be $1$ for cube complexes with factor systems, as well as for the mapping class group.  However, we allow an arbitrary (fixed) constant in the general definition for greater flexibility.
\end{rem}

\begin{rem}
    In the case of a factor system, the constant in 
    Item~\eqref{item:dfs_curve_complexes} of 
    Definition~\ref{defn:space_with_distance_formula} is 1, since
    points of $\cuco X$ project to cliques in each factored contact graph.
    The constant in Item~\eqref{item:dfs_nesting} is $\xi+2$, 
    where $\xi$ is the constant from
    Definition~\ref{defn:factor_system}. For simplicity, 
    in Definition~\ref{defn:space_with_distance_formula}, we use a 
    single constant to fulfill both of these
    roles.
\end{rem}

\subsection{Product regions and standard boxes}\label{subsec:box}

Let $\cuco X$ be a hierarchically hyperbolic space and let
$U\in\mathfrak S$.  With $\kappa_0$ as in
Definition~\ref{defn:space_with_distance_formula}, we say that the
tuple $\tup a_{\nest}\in\prod_{W\nest
U}2^{\fontact W}$ or $\tup a_{\orth}\in\prod_{W\orth U}2^{\fontact W}$, is
$\kappa_0$--\emph{consistent} if the inequalities from
Item~\eqref{item:dfs_transversal} are satisfied for each pair of 
coordinates 
$a_{W}$ and $a_{V}$ in the tuple with  
the relevant $\pi_W(x)$ term 
replaced by $a_{W}$ (and similarly for $V$).

Let
$\tup a_{\nest}$ and $\tup a_{\orth}$ be 
$\kappa_0$--consistent.  Let 
$\tup a\in \prod_{\fontact W\in\mathfrak S}2^{\fontact W}$ be the tuple with $a_W=\rho^U_W$ for each $W\transverse U$ or $U\nest W$, and so that 
$a_W$ otherwise agrees with the relevant coordinate of $\tup a_{\nest}$ or $\tup 
a_{\orth}$, depending on whether $W\nest U$ or $W\orth U$. 
We claim that $\tup a$ is
$\kappa_0$--consistent.  Indeed, all inequalities involving some
$W\transverse U$ or $U\nest W$ are satisfied because of the last three inequalities in Definition~\ref{defn:space_with_distance_formula}.\eqref{item:dfs_transversal}.  Otherwise, if $W\nest
U$ or $W\orth U$, then, if $V\nest W$, we have that $V, W$ are
either both nested into or both orthogonal to $U$, and in both cases
the consistency inequality holds by assumption.

Item \ref{item:consistency_and_realization} of
Definition~\ref{defn:space_with_distance_formula} (realization) combines with the
above discussion to yield a natural coarsely defined map $\phi_U\colon
E_U\times F_U\to \cuco{X}$, where $E_U$ (resp.  $F_U$) is the set of
consistent elements of $\prod_{W\orth U}2^{\fontact W}$ (resp.
$\prod_{W\nest
U}2^{\fontact W}$).  

\begin{rem}\label{rem:product_pieces_cube_cx}
When $\cuco X$ is a cube complex with a factor system $\mathfrak S$, 
the space $F_U$ can be taken to be the subcomplex $U\in\mathfrak S$, we can take 
$E_U$ to be the complex provided by Lemma~\ref{lem:parallel_product}, and 
$\phi_U$ to be the inclusion, which is a cubical isometric embedding.
\end{rem}

We will denote $P_U=\phi_U(E_U\times F_U)$. In this context we define 
a \emph{gate}, $\gate_{P_U}\colon \cuco X\to P_U$, in the following
way.  Given $x\in\cuco X$, we let $\gate_{P_U}(x)$ be a point that coarsely
realizes the coordinates $a_W=\pi_W(x)$ for $W$ nested into or
orthogonal to $U$ and $\pi_W(x)=\rho^U_W$ for $W\transverse U$ or $U\nest W$.

By
Definition~\ref{defn:space_with_distance_formula}.\eqref{item:dfs_distance_formula} (distance formula), for each fixed $U$ 
the subspaces $\phi_U(E_U\times\{b_{\nest}\})$ of $\cuco X$ are
pairwise uniformly quasi-isometric, and similarly for $F_U$.  Abusing
notation slightly, we sometimes regard $E_U$ as a metric space by
identifying it with some $\phi_U(E_U\times\{b_{\nest}\})$, and
similarly for $F_U$.  Moreover, $E_U$ is a hierarchically hyperbolic
space, with respect to $\{V\in\mathfrak S:V\orth U\}\cup\{W\}$, where
$W$ is some non-maximal element of $\mathfrak S$ into which each
element orthogonal to $U$ is nested; and, similarly, $F_U$ is a hierarchically
hyperbolic space with respect to $\mathfrak S_U$.
It is straightforward to check that $\{E_U\}_{U\in\mathfrak S}$ are uniformly hierarchically hyperbolic with respect to the index set described above, with constants depending only on the constants for $\cuco X$.

\begin{rem}[How to induct on complexity]\label{rem:unif_hier}
Observe that if $U$ is not $\nest$--maximal, then any $\nest$--chain in 
$\mathfrak S_U$ is strictly contained in a $\nest$--chain of 
$\mathfrak S$, so that the complexity of $F_U$ is strictly less than 
that of $\cuco X$.  Moreover, by definition, there exists 
$W\in\mathfrak S$, not $\nest$--maximal, such that each $V$ with 
$V\orth U$ satisfies $V\nest W$.  Hence $E_U$ has complexity strictly 
less than that of $\cuco X$.  We will make use of these observations when 
inducting on complexity in the proof of Theorem~\ref{thm:density_point_ascone}.
\end{rem}

A \emph{standard $1$--box} is a hierarchy path $I\to F_{U}$ where  
$U\in\mathfrak S$. 
We then inductively define a \emph{standard $n$--box} to be a map of
the kind $\iota\colon (B\times I\subseteq \reals^{n-1}\times
\reals)\to \cuco X$, with $\iota(b,t)=\phi_U(\iota_1(b),\gamma(t))$
where $\iota_1\colon B\to E_U$ is a standard box and $\gamma\colon
I\to F_U$ is a hierarchy path.  It is straightforward to show
inductively that a standard box is a quasi-isometric embedding.

\subsection{Results}\label{subsec:qb_theorems}
The goal of this section is to establish the following three theorems:

\begin{thm}\label{thm:quasi_box}
 Let $\cuco X$ be a hierarchically hyperbolic space. Then for every $n\in\mathbb N$ and every $K,C,R_0,\epsilon_0$ the following holds.
There exists $R_1$ so that for any ball $B\subseteq \mathbb R^n$ of radius at least $R_1$ and
$f\colon B\to \cuco X$ a $(K,C)$--quasi-Lipschitz map, there is a ball
$B'\subseteq B$ of radius $R'\geq R_0$ such that $f(B')$ lies inside the
$\epsilon_0 R'$--neighborhood of a standard box.
\end{thm}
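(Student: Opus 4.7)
The plan is to argue by contradiction via the asymptotic cone, invoking Theorem~\ref{thm:density_point_ascone} together with Rademacher's theorem. Suppose the statement fails: then there are constants $n,K,C,R_0,\epsilon_0$, radii $R_k\to\infty$, balls $B_k\subseteq\reals^n$ of radius $R_k$ (translated to be centered at $0$), and $(K,C)$--quasi-Lipschitz maps $f_k\colon B_k\to\cuco X$, such that no sub-ball $B'\subseteq B_k$ of radius $R'\geq R_0$ has $f_k(B')$ contained in the $\epsilon_0 R'$--neighborhood of a standard box of $\cuco X$.

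Fix a non-principal ultrafilter $\omega$ and form the asymptotic cone $\cuco X_\omega$ with basepoints $f_k(0)$ and scaling sequence $(R_k)$, as in~\cite{Drutu:cones}. Because $R_k\to\infty$ and the $f_k$ are $(K,C)$--quasi-Lipschitz, they assemble into a well-defined $K$--Lipschitz map $f_\omega\colon\mathbb B^n\to\cuco X_\omega$, where $\mathbb B^n\subseteq\reals^n$ is the closed unit ball. Under the ultralimit, a sequence of standard boxes of $\cuco X$ of diameter $\asymp R_k$ produces a \emph{standard flat} in $\cuco X_\omega$, and the failure hypothesis translates into the following statement in the cone: for every ball $B(p,r)\subseteq\mathbb B^n$ and every standard flat $\Sigma_\omega\subseteq\cuco X_\omega$, the image $f_\omega(B(p,r))$ is \emph{not} contained in the $\epsilon_0 r$--neighborhood of $\Sigma_\omega$. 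Indeed, if it were, then for $\omega$--a.e.\ $k$ an unscaled sub-ball $B'_k\subseteq B_k$ of radius $R'_k\asymp rR_k\geq R_0$ would have $f_k(B'_k)$ in the $\epsilon_0 R'_k$--neighborhood of a standard box, contradicting our choice of $(f_k)$.

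Next, apply Theorem~\ref{thm:density_point_ascone}, whose announced role is to give a single asymptotic-cone statement from which Theorem~\ref{thm:quasi_box} and its siblings follow. Its content in the relevant direction is that, for any Lipschitz map $g\colon\mathbb B^n\to\cuco X_\omega$ into the cone of a hierarchically hyperbolic space, at Lebesgue-almost every $p\in\mathbb B^n$, and for every $\epsilon>0$, there exist arbitrarily small $r>0$ and a standard flat $\Sigma_\omega\subseteq\cuco X_\omega$ such that $g(B(p,r))$ lies in the $\epsilon r$--neighborhood of $\Sigma_\omega$. The mechanism is that the product-region picture $P_U=\phi_U(E_U\times F_U)$ from Section~\ref{subsec:box} passes to $\cuco X_\omega$ as a local bi-Lipschitz product structure whose factors are $\reals$--trees (the $\fontact W$ being uniformly hyperbolic, their ultralimits are $\reals$--trees), and Rademacher's theorem applied to $f_\omega$ forces the differential at a density point to be supported on a single Euclidean leaf of such a product, which is exactly the ultralimit of a sequence of standard boxes. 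Applied to our $f_\omega$ with $\epsilon=\epsilon_0$, this contradicts the assertion recorded at the end of the previous paragraph, completing the proof.

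The main technical obstacle is concealed inside Theorem~\ref{thm:density_point_ascone}: one must leverage the full strength of the hierarchically hyperbolic structure --- hierarchy paths (Definition~\ref{defn:space_with_distance_formula}.\eqref{item:hierarchy_paths}), consistency and realization (Definition~\ref{defn:space_with_distance_formula}.\eqref{item:consistency_and_realization}), and the orthogonality and nesting in $\mathfrak S$ --- to show that $\cuco X_\omega$ has, at almost every point, a neighborhood which is bi-Lipschitz to a product of $\reals$--trees indexed by pairwise-orthogonal elements of $\mathfrak S$, and that any Lipschitz differential into such a product is necessarily supported on a single Euclidean flat therein. Once that cone-level structural statement is in place, the present theorem reduces, as above, to the routine ultralimit-to-sequence transfer, which is the easy part of the argument.
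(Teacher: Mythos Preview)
Your high-level strategy --- contradiction, pass to ultralimits, invoke Theorem~\ref{thm:density_point_ascone} --- matches the paper's, but you misstate what Theorem~\ref{thm:density_point_ascone} says, and that creates a genuine gap. You describe it as a density-point statement in a \emph{fixed} asymptotic cone: at Lebesgue-almost every $p$, for every $\epsilon>0$ and arbitrarily small $r$, the image of $B(p,r)$ lies $\epsilon r$--close to a standard flat. That is not what part~\eqref{item:ab} asserts. It says instead: given $(r^0_m)\ll(r^1_m)$, there \emph{exist} sequences $(p_m)$ and $(r_m)$ with $(r^0_m)\ll(r_m)\leq(r^1_m)$ such that the ultralimit map, taken with \emph{that} scaling $(r_m)$ and observation point $(f(p_m))$, sends its whole domain into an ultralimit $\seq F$ of standard boxes. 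The theorem hands you the rescaling; it makes no claim about your pre-chosen cone with scaling $(R_k)$, and no ``almost every $p$'' or ``standard flat in the fixed cone'' appears anywhere. Consequently the paper's derivation is shorter than yours: apply Theorem~\ref{thm:density_point_ascone}.\eqref{item:ab} directly to the contradicting sequence, read off $f_m(B(p_m,r_m))\subseteq\neb_{\epsilon_0 r_m}(F_m)$ for $\omega$--a.e.\ $m$, check that $B(p_m,\min\{r_m,r^1_m/2\})\subseteq B_m$ has radius at least $R_0$ $\omega$--a.e., and contradict. No single cone $\cuco X_\omega$ is ever formed.

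Your final paragraph also misdescribes the mechanism inside Theorem~\ref{thm:density_point_ascone}. There is no local bi-Lipschitz product-of-$\reals$-trees chart on the cone, and Rademacher is not applied to $f_\omega$ itself (whose target is not Euclidean). The actual proof builds, via Lemma~\ref{lem:qgamma_cone}, a Lipschitz map $\seq q_{\subseq\gamma}$ from the cone to an \emph{interval}, applies Rademacher to the $\reals$-valued composite $\seq q_{\subseq\gamma}\circ\seq f^1$, and uses a single point $\seq p$ of nonzero differential to force the image, after rescaling (Claims~1 and~2, the latter via underspill), into a single product region $\seq P_{\subseq U}$; the standard box is then produced by induction on complexity applied to the $E_U$--factor. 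What you call the density-point phenomenon is essentially Claim~1 of that proof, but it concerns one product region $P_U$ at one Rademacher point --- not standard flats at almost every point --- and the passage from $P_U$ to a standard box is precisely what necessitates the further rescaling encoded in the actual statement of Theorem~\ref{thm:density_point_ascone}.\eqref{item:ab}.
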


\begin{remnon}In Theorem~\ref{thm:quasi_box}, we do not require $B,B'$ to be centered at the same point in $\reals^n$.\end{remnon}

\begin{thm}\label{thm:nilpemb}
 Let $\cuco X$ be a hierarchically hyperbolic space. Then for every
simply connected nilpotent Lie group $\mathcal N$, with a
left-invariant Riemannian metric, and every $K,C$ there exists $R$
with the following property.  For every $(K,C)$--quasi-Lipschitz map
$f\colon B\to\cuco X$ from a ball in $\mathcal N$ into $\cuco X$ and
for every $h\in\mathcal N$ we have $\diam(f(B\cap h[\mathcal
N,\mathcal N]))\leq R$.  In particular, any finitely generated
nilpotent group which quasi-isometrically embeds into $\cuco X$ 
is virtually abelian.
\end{thm}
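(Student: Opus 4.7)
The plan is to deduce Theorem~\ref{thm:nilpemb} from Theorem~\ref{thm:density_point_ascone} by the same asymptotic-cone strategy used in Theorem~\ref{thm:quasi_box}, with Pansu's generalization of Rademacher's theorem replacing Rademacher's theorem itself. Suppose, for contradiction, that the diameter bound fails for some $\mathcal N,K,C$: there are $(K,C)$-quasi-Lipschitz maps $f_n\colon B_n\to\cuco X$ from balls $B_n\subseteq\mathcal N$ and elements $h_n\in\mathcal N$ with $r_n:=\diam(f_n(B_n\cap h_n[\mathcal N,\mathcal N]))\to\infty$. Rescaling both $\mathcal N$ and $\cuco X$ by $r_n$ and passing to an ultralimit based at carefully chosen points of these commutator cosets produces a bi-Lipschitz map $\bar f\colon U\to\cuco X_\omega$, where $U$ is an open subset of the asymptotic cone of $\mathcal N$. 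By Pansu's theorem, this asymptotic cone is the Carnot group $G_\infty$ associated to the lower central series of $\mathcal N$, endowed with its Carnot--Carath\'eodory metric; moreover, the translates of $h_n[\mathcal N,\mathcal N]$ pass in the limit to cosets of $[G_\infty,G_\infty]$ on each of which $\bar f$ has image of diameter exactly~$1$.

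The core of the argument combines Pansu's Rademacher theorem with the local structure on $\cuco X_\omega$ provided by Theorem~\ref{thm:density_point_ascone}. Pansu's theorem gives, at almost every $x\in U$, a Pansu differential $D\bar f(x)$ that is a graded Lie group homomorphism from $G_\infty$ to the tangent cone at $\bar f(x)$. On the other hand, Theorem~\ref{thm:density_point_ascone} will imply that at $\omega$--almost every point of the cone the image of $\bar f$ is locally contained in a bi-Lipschitz copy of the asymptotic cone of a standard box, which is a finite product of $\mathbb R$--trees. Any graded homomorphism from a Carnot group into such a target must factor through the abelianization: each coordinate is a Lipschitz map from $G_\infty$ into an $\mathbb R$--tree, which is CAT($0$) and therefore, as in Pauls~\cite{Pauls:no_nilp_in_cat0}, has vanishing Pansu derivative in any commutator direction. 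The main obstacle is to arrange for Pansu-differentiability of $\bar f$ and the density-point conclusion of Theorem~\ref{thm:density_point_ascone} to hold at a common point, which we achieve by intersecting two full-measure subsets of $U$; once this is done, triviality of $D\bar f$ on $[\mathfrak g_\infty,\mathfrak g_\infty]$ contradicts the fact that commutator cosets limit to subsets of diameter~$1$.

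For the final statement, let $G$ be a finitely generated nilpotent group with a $(K,C)$--quasi-isometric embedding $\phi\colon G\to\cuco X$. The Malcev embedding realizes $G$ as a cocompact lattice in a simply connected nilpotent Lie group $\mathcal N$, and an equivariant extension over a fundamental domain upgrades $\phi$ to a quasi-Lipschitz map $\mathcal N\to\cuco X$. Applying the first part of the theorem to arbitrarily large balls yields a uniform bound $R$ on $\diam(\phi(h[G,G]))$ for every $h\in G$. Since $\phi$ is a quasi-isometric embedding, this forces $[G,G]$ to have finite diameter in the word metric of $G$, so $[G,G]$ is finite. Quotienting by the (finite, characteristic) torsion subgroup, the image of $[G,G]$ is a finite subgroup of a torsion-free nilpotent group, and hence trivial; thus $G$ is virtually abelian.
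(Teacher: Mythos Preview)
Your argument has a genuine gap in the core step.  Pansu's Rademacher theorem is a statement about Lipschitz maps between Carnot groups (or, in the simplest form, from a Carnot group to $\mathbb R^n$); the target $\cuco X_\omega$ is an asymptotic cone of a hierarchically hyperbolic space and carries no such structure, so there is no ``Pansu differential of $\bar f$ at $x$'' to speak of.  Your attempt to circumvent this by invoking Theorem~\ref{thm:density_point_ascone} to get a local product-of-$\mathbb R$-trees model for the image does not work either: part~\eqref{item:ab} of that theorem is stated and proved only for \emph{abelian} $\mathcal N$, which is exactly the case in which there is nothing to show, and part~\eqref{item:nilp} is what you are trying to establish.  (Also, the limit map is only Lipschitz, not bi-Lipschitz, since the $f_n$ are merely quasi-Lipschitz.)

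The paper avoids this entirely by never differentiating $\bar f$ itself.  Instead, given two distinct points $\seq x,\seq y$ in the image lying in the same ultralimit of a commutator coset, Lemma~\ref{lem:qgamma_cone} produces a Lipschitz map $\seq q_{\subseq\gamma}\colon\seqcuco\to\seq\gamma\subset\mathbb R$ with $\seq q_{\subseq\gamma}(\seq x)\neq\seq q_{\subseq\gamma}(\seq y)$.  Now the composition $\seq q_{\subseq\gamma}\circ\seq f$ is a Lipschitz map from an open set in the Carnot group $G_\infty$ to $\mathbb R$, and Pansu's theorem applies directly to force it to be constant along cosets of $[G_\infty,G_\infty]$ --- a contradiction.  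The missing idea in your approach is precisely this: use the hierarchically hyperbolic structure to post-compose with a real-valued Lipschitz map, rather than trying to differentiate into the cone.

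Your deduction of the ``in particular'' clause is essentially correct; the paper simply cites the Mal'cev completion and leaves the remaining (standard) steps implicit.
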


\begin{thm}\label{thm:rank}
 Let $\cuco X$ be a hierarchically hyperbolic space with respect to a
 set $\mathfrak S$.  If there exists a quasi-isometric embedding
 $f\co \mathbb R^n\to\cuco X$ then $n$ is bounded by the maximal
 cardinality of a set of pairwise-orthogonal elements of $\mathfrak
 S$, and in particular by the complexity of $\cuco X$.
\end{thm}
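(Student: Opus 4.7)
The plan is to combine Theorem~\ref{thm:quasi_box} with a dimension-theoretic argument in the asymptotic cone. Given a $(K,C)$--quasi-isometric embedding $f\colon\mathbb R^n\to\cuco X$, fix an asymptotic cone $\cuco X_\omega$, so that $f$ induces a bi-Lipschitz embedding $f_\omega\colon\mathbb R^n\to\cuco X_\omega$. Applying Theorem~\ref{thm:quasi_box} at finer and finer scales (equivalently, invoking its asymptotic-cone reformulation Theorem~\ref{thm:density_point_ascone}), one obtains that at $\omega$--almost every $x\in\mathbb R^n$ some neighborhood of $f_\omega(x)$ is contained in an ultralimit of standard boxes. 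Since each $\fontact W$ is $\delta$--hyperbolic, ultralimits of hierarchy paths are isometric to geodesic arcs in real trees, so the ultralimit of a sequence of standard $m$--boxes is an isometric product $\prod_{i=1}^m J_i$ of $m$ such arcs.

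Next I would invoke Rademacher's theorem: the bi-Lipschitz embedding $f_\omega$ is differentiable Lebesgue-almost everywhere with injective differential. At any point in $\mathbb R^n$ where this holds and whose image lies in such an asymptotic box, the $\mathbb R^n$--tangent injects into the tangent space of the product $\prod_{i=1}^m J_i$, which is a subspace of $\mathbb R^m$ (one dimension per tree factor). Injectivity of the differential then forces $n\leq m$. Alternatively, one could deduce $n\leq m$ from invariance of topological dimension, since $\prod_{i=1}^m J_i$ is $m$--dimensional.

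To bound $m$ by the maximal cardinality of a pairwise-orthogonal subset of $\mathfrak S$, I would exploit the inductive definition of a standard box. An $m$--box is built over some $U_m\in\mathfrak S$ by combining a hierarchy path in $F_{U_m}$ with an $(m-1)$--box in $E_{U_m}$; the hierarchical index set of $E_{U_m}$ consists of elements of $\mathfrak S$ orthogonal to $U_m$, together with at most one container element that can be avoided when selecting the $F$--factor used at the next stage. Iterating this selection yields $U_1,\ldots,U_m\in\mathfrak S$ pairwise orthogonal. The third clause of the orthogonality axiom then bounds the cardinality of any pairwise-orthogonal family by the complexity: given pairwise-orthogonal $U_1,\ldots,U_k\in\mathfrak S_T$, the axiom produces $W\in\mathfrak S_T-\{T\}$ containing $U_2,\ldots,U_k$ under $\nest$, and iterating yields a strictly descending $\nest$--chain of length $k$.

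The main obstacle will be making the differentiability step rigorous in the asymptotic cone: at points where the tree arcs $J_i$ have branch points the tangent space is not $\mathbb R^m$ but rather a tripod-like cone, and one must ensure that the density point supplied by Theorem~\ref{thm:density_point_ascone} lies in the interior of each factor $J_i$ so that the tangent genuinely sits inside $\mathbb R^m$. A secondary technical point is the clean handling of the container element when extracting pairwise-orthogonal elements from the inductive box construction; this will require unpacking the definition of the hierarchically hyperbolic structure on each $E_U$ and verifying that the standard-box recursion can be realigned to avoid it.
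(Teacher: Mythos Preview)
Your approach is essentially the paper's, which reads in full: apply Theorem~\ref{thm:density_point_ascone}.\eqref{item:ab} to the constant sequence $f_m=f$ (restricted to balls of growing radius) to obtain a rescaled ultralimit $\seq f$ mapping a copy of $\reals^n$ bi-Lipschitzly into an ultralimit $\seq F$ of standard boxes, whence $n$ is at most the dimension of those boxes. A few points where your write-up diverges and should be straightened out:

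First, do not fix an asymptotic cone in advance. Theorem~\ref{thm:density_point_ascone}.\eqref{item:ab} hands you the scaling factors $(r_m)$ and basepoints $(p_m)$; you must take the ultralimit it prescribes rather than an arbitrary one.

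Second, your description of $\seq F$ as a product of arcs in real trees, with the attendant branch-point worry, is a misreading. Hierarchy paths live in $\cuco X$, not in the hyperbolic spaces $\fontact W$, and a standard $m$--box is by definition a uniform quasi-isometric embedding of a box in $\reals^m$ into $\cuco X$. Hence $\seq F$ is simply bi-Lipschitz to a subset of $\reals^m$; there are no branches. The Rademacher step you propose is therefore redundant --- Rademacher is already consumed inside the proof of Theorem~\ref{thm:density_point_ascone}, and at this stage your ``alternatively'' (invariance of topological dimension) is all that is needed.

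Third, your container worry dissolves once you note that the container $W$ adjoined to form the index set of $E_U$ is its $\nest$--maximal element, hence not orthogonal to anything there. Any pairwise-orthogonal family in the index set of $E_U$ therefore lies in $\{V\in\mathfrak S:V\orth U\}$, and adjoining $U$ enlarges it to a pairwise-orthogonal family in $\mathfrak S$. Tracking this through the recursive construction in the proof of Theorem~\ref{thm:density_point_ascone} bounds the box dimension $m$ by the maximal cardinality of a pairwise-orthogonal set; the ``in particular'' then follows from the chain of containers you describe.
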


\begin{notation} Let $\omega$ be a non-principal ultrafilter, which 
    will be arbitrary, but fixed for the rest of the paper. For a 
    sequences of spaces $(X_{m})$, we denote their ultralimit $\ulim 
    X_{m}$ by 
$\seq X$, similarly for sequences of maps $\ulim f_{m}=\seq f$, etc. 
For scaling constants, we will have sequences of positive real 
numbers $(r_{m})$; when comparing two such sequences we write 
$(r_{m}) < (r'_{m})$ if $r_{m}<r'_{m}$ for $\omega$--a.e.~$m$. 
Also, we write $(r_{m}) \ll (r'_{m})$ if $\ulim r'_m/r_m=\infty$.
\end{notation}

Each of the above theorems follows from:

\begin{thm}\label{thm:density_point_ascone}
 Let $\{\cuco X_m\}$ be uniformly hierarchically hyperbolic spaces and let $\mathcal N$ be a simply connected nilpotent Lie group endowed with a left-invariant Riemannian metric.
Let $(f_m\colon B_m\to \cuco X_m)$ be a sequence of quasi-Lipschitz maps with uniformly bounded constants, where $B_m\subseteq\mathcal N$ is a ball of radius $r^1_m$. Then:
\begin{enumerate}
 \item \label{item:nilp} For every ultralimit $\seqcuco$ of 
 $(\cuco X_m)$ and corresponding ultralimit map $\seq f\colon \seq B\to \seqcuco$, the map $\seq f$ is constant along ultralimits of cosets of $[\mathcal N,\mathcal N]$.
 \item \label{item:ab} Suppose that $\mathcal N$ is abelian and let
 $(r^0_m)$ satisfy $(r_{m}^0) \ll  (r_{m}^1)$.  Then there
 exist sequences $(p_m)$ and $(r_{m})$ with $(r^0_{m})\ll (r_{m}) \leq (r^1_{m})$ and $B(p_m,r_m)\subseteq B_m$ so that the following holds. Let $\seqcuco$ be the ultralimit of $(\cuco X_m)$ with scaling factor
 $(r_{m})$ and observation point $(f(p_m))$ and let $\seq B$ be the ultralimit of the sequence $B(p_m,r_m)$ with observation point $(p_m)$ and scaling factors $(r_m)$.  Then there exists an ultralimit $\seq F$ of standard boxes so that the ultralimit map $\seq f\colon \seq B\to \seqcuco$ satisfies $\seq f(\seq B)\subseteq \seq F$.
\end{enumerate}
\end{thm}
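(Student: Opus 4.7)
The plan is to deduce both parts from a Pansu--Rademacher differentiation argument, carried out in the asymptotic cone, combined with induction on the complexity of $\cuco X_m$. Passing to the ultralimit with observation point $(f_m(p_m))$ and scaling $(r_m)$ gives a Lipschitz map $\seq f\colon\seq B\to\seqcuco$, where $\seq B\subseteq\seq{\mathcal N}=\ulim\mathcal N$ is a ball in the Carnot group $\seq{\mathcal N}$ endowed with its Carnot--Carath\'eodory metric. For each sequence $W=(W_m)$, the ultralimit $\seq T_W=\ulim\fontact{W}_m$ is a complete $\reals$--tree since the $\fontact{W}_m$ are uniformly $\delta$--hyperbolic, and one obtains $1$--Lipschitz projections $\seq\pi_W\colon\seqcuco\to\seq T_W$ from the $\pi_{W_m}$. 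Crucially, the distance formula (Item~\eqref{item:dfs_distance_formula}) survives the cone limit as a bi-Lipschitz estimate $\dist_{\seqcuco}(\seq x,\seq y)\asymp\sum_W\dist_{\seq T_W}(\seq\pi_W(\seq x),\seq\pi_W(\seq y))$ (the threshold $s$ becomes irrelevant after rescaling), so controlling $\seq f$ reduces to controlling each $\seq\pi_W\circ\seq f$.

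For part~\eqref{item:nilp}, fix $W$ and a geodesic segment $[a,b]\subseteq\seq T_W$. Post--composing $\seq\pi_W\circ\seq f$ with the nearest-point retraction $\seq T_W\to[a,b]\cong\reals$ yields a real-valued Lipschitz function on $\seq B$, so Pansu's theorem produces a Pansu derivative at almost every point that is a graded Lie group homomorphism $\seq{\mathcal N}\to\reals$ and therefore factors through the abelianization $\seq{\mathcal N}/[\seq{\mathcal N},\seq{\mathcal N}]$. Varying $[a,b]$ and integrating along horizontal curves shows that $\seq\pi_W\circ\seq f$ is constant on every ultralimit of cosets of $[\mathcal N,\mathcal N]$, and the cone distance formula then forces the same for $\seq f$.

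For part~\eqref{item:ab}, $\mathcal N$ is abelian so \eqref{item:nilp} is vacuous and the real content is to locate the sub-ball and scale. I would induct on the complexity of $\cuco X_m$; the complexity--$0$ case is trivial. At the inductive step, choose a point $q\in\seq B$ of density for the Pansu derivative of $\seq f$ at scale $(r^1_m)$, and let $\mathfrak R$ be the finite set of \emph{active} coordinates, i.e.~those $W$ for which the thresholded term $\ignore{\dist_{\fontact{W}_m}(\pi_W(f_m(\cdot)),\pi_W(f_m(q_m)))}{s}$ is comparable to $r^1_m$ on a definite sub-ball. If all elements of $\mathfrak R$ are pairwise orthogonal, then the realization theorem (Item~\eqref{item:consistency_and_realization}) identifies a neighborhood of $\seq f(q)$ with an ultralimit of product regions $P_{W_m}$, and the image of $\seq f$ on a suitable sub-ball lies in an ultralimit of standard boxes, as required. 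Otherwise there exist $V,W\in\mathfrak R$ that are either transverse or nested; the Behrstock-style inequality of Item~\eqref{item:dfs_transversal} together with bounded geodesic image (Item~\eqref{item:dfs:bounded_geodesic_image}) force one of the two projections to become coarsely constant on a definite sub-ball after a bounded amount of further rescaling. Gating that sub-ball into the factor $F_{W_m}$, which is uniformly hierarchically hyperbolic of strictly smaller complexity by Remark~\ref{rem:unif_hier}, lets the inductive hypothesis apply.

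The main obstacle is the last step: bookkeeping the rescaling so that the final sub-ball radius satisfies $(r^0_m)\ll(r_m)\leq(r^1_m)$ while iteratively reducing the active set to pairwise-orthogonal coordinates. This is where the axioms of Definition~\ref{defn:space_with_distance_formula} do real work: consistency and bounded geodesic image enforce coarse constancy of spurious projections on sub-balls, and the product--region structure supplied by the realization theorem is what ultimately cuts out a standard box. This is also the step that replaces the partition--of--subsurfaces and coarse differentiation machinery used in \cite{EskinMasurRafi:large_scale_rank}, and is what allows the argument to run in the cubical setting.
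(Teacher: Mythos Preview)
Your outline has a real gap in Part~\eqref{item:ab}, and it stems from missing the key construction the paper uses. The paper does \emph{not} work with all projections $\seq\pi_W$ simultaneously or with an ``active set'' $\mathfrak R$. Instead, given any two distinct points $\seq x\neq\seq y$ in the image, Lemma~\ref{lem:qgamma_cone} (built on Lemma~\ref{lem:loc_const_q}) produces a single Lipschitz map $\seq q_{\subseq\gamma}\colon\seqcuco\to\seq\gamma\cong\reals$ with three properties: it separates $\seq x$ from $\seq y$; it is \emph{locally constant outside} a product region $\seq P_{\subseq U}\cong\seq E_{\subseq U}\times\seq\gamma$; and on $\seq P_{\subseq U}$ it is projection to the line factor. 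The locally-constant property is exactly what makes Rademacher bite: at a point $\seq p$ where $\seq q_{\subseq\gamma}\circ\seq f$ has nonzero derivative, the image $\seq f(B(\seq p,l))$ must sit in the $\epsilon l$--neighborhood of $\seq P_{\subseq U}$ for all small $l$ (Claim~1), and underspill then converts this into the required sequence of scales (Claim~2). Induction on complexity is applied once, to $\gate_{E_{U_m}}\circ f_m$, and the resulting box in $\seq E_{\subseq U}$ is crossed with $\seq\gamma$ to finish. Your ``active coordinate'' scheme lacks this mechanism: the Behrstock inequality tells you that for a fixed point one of two projections is near $\rho$, but it does not force a projection to become coarsely constant on a definite sub-ball, and your reduction of $\mathfrak R$ to a pairwise-orthogonal set has no termination argument and no control on the accumulated rescaling relative to $(r^0_m)$. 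You also gate into $F_{W_m}$ rather than $E_{U_m}$; the paper's product decomposition is $E_U\times\gamma$ with $\gamma$ a single hierarchy path, not a general $E_U\times F_U$.

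Two smaller points. First, the ``cone distance formula'' you state as a bi-Lipschitz sum does not make sense as written: the index set of sequences $W=(W_m)$ is uncountable, so there is no finite sum. What survives is the separation statement (if $\seq x\neq\seq y$ then some $\seq\pi_W$ separates them), which is all the paper needs---and the paper avoids even this by building $\seq q_{\subseq\gamma}$ directly. Second, in Part~\eqref{item:nilp} your phrase ``integrating along horizontal curves'' is misleading: cosets of $[\seq{\mathcal N},\seq{\mathcal N}]$ are precisely the non-horizontal directions in the Carnot group, so one cannot integrate the vanishing Pansu derivative along them. The constancy along commutator cosets is a genuine consequence of Pansu's theorem (a graded homomorphism to $\reals$ kills higher layers), but it is cited, not derived, in the paper; your sketch makes it sound more elementary than it is. The paper's Part~\eqref{item:nilp} argument is again cleaner: pick $\seq x\neq\seq y$ on the same commutator coset, build the single map $\seq q_{\subseq\gamma}$ separating them, and obtain an immediate contradiction with Pansu.
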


We first deduce the other theorems from Theorem~\ref{thm:density_point_ascone}.

\begin{proof}[Proof of Theorem \ref{thm:quasi_box}.] Fix $n,K,C,\epsilon_0,R_0>0$. If the statement was false then there would exist a sequence of maps $f_m\colon B_m\to\cuco X$ where
\begin{itemize}
 \item each $B_m$ is a ball of radius $r^1_m$ about $0\in\reals^n$, each $f_m$ is $(K,C)$--quasi-Lipschitz and $r^1_m\to\infty$,
 \item no ball $B'\subseteq B_m$ of radius $R'\geq R_0$ is so that $f_m(B')$ is contained in the $\epsilon_0R'$--neighborhood of a standard box.
\end{itemize}

In the notation of Theorem~\ref{thm:density_point_ascone}.\eqref{item:ab}, if
$(p_m)$ is the observation point of $\seq B$, then it is readily
checked that, $\omega$--a.e., we have $f_m(B(p_m,r_m))\subseteq
\neb_{\epsilon_0 r_m}(F_m)$, where $\seq r=(r_m)$, $\seq F=(F_m)$.  By the proof of Theorem~\ref{thm:density_point_ascone}.\eqref{item:ab}, we have that $\dist_{\reals^n}(p_m,0)\leq r^1_m/2$.  Letting $s_m=\min\{r_m,r^1_m/2\}$, and recalling that $\ulim r^1_m=\ulim r^m=+\infty$, we have $s_m\geq R_0$ $\omega$--a.e.; thus for sufficiently large $m$, taking 
$B'=B(p_m,s_m)$ contradicts the second item above.
\end{proof}

\begin{proof}[Proof of Theorem \ref{thm:nilpemb}.]
This follows directly from 
Theorem~\ref{thm:density_point_ascone}.\eqref{item:nilp}. The 
statement about nilpotent groups follows from the well-known fact that 
every finitely generated nilpotent group is virtually a lattice in a 
simply-connected nilpotent Lie group. \cite{Malcev-completion}
\end{proof}

\begin{proof}[Proof of Theorem \ref{thm:rank}.]  Using
Theorem~\ref{thm:density_point_ascone}.\eqref{item:ab}, we see that
some rescaled ultralimit $\seq f$ of $f$ maps an asymptotic cone of $\mathbb
R^n$, which is itself a copy of $\mathbb R^n$, into a rescaled ultralimit of
standard boxes, whence the claim follows.
\end{proof}

\subsection{Proof of Theorem~\ref{thm:density_point_ascone}}\label{subsec:proof_of_last_prop}
We now complete the proofs of Theorems~\ref{thm:quasi_box}, 
\ref{thm:nilpemb}, and~\ref{thm:rank} by proving
Theorem~\ref{thm:density_point_ascone}.  Throughout, $\cuco X$ is a
hierarchically hyperbolic space; below we use the notation from
Definition~\ref{defn:space_with_distance_formula}.

Fix $W\in\mathfrak S$ and let $\gamma\colon I\rightarrow F_W$ be a 
hierarchy path:   
we  now define the ``projection'' from $\cuco X$ to $\gamma$. Roughly speaking, we map $x\in \cuco X$ to any point in $\gamma$ that minimizes the $\fontact
W$-distance from $x$. 
Let
$\beta=\pi_W(\gamma(I))$ and let $c_\beta\colon \fontact
W\rightarrow\beta$ be closest-point projection.  Define a map
$q_\gamma\colon \cuco X\rightarrow \gamma$ as follows: for each
$z\in\beta$, let $t(z)$ be an arbitrary point of $\gamma$ so that
$\pi_W(t(z))=z$, and then, for each $x\in\cuco X$, let
$q_\gamma(x)=t(c_\beta(\pi_W(x)))$.

Let $P_W$ denote the image in $\cuco X$ of the restriction of $\phi_W$ to 
$E_W \times (\image\gamma)$.

Lemmas~\ref{lem:loc_const_q} and~\ref{lem:qgamma_cone} accomplish in 
the hierarchically hyperbolic setting an 
analog of a result in the mapping class group proven in  
\cite[Theorems~3.4 and~3.5]{BehrstockMinsky:dimension_rank}; note that in 
\cite{BehrstockMinsky:dimension_rank} the results are formulated in 
the asymptotic cone, rather than in the original space.

\begin{lem}\label{lem:loc_const_q}
Let $(\cuco X,\dist)$ be a hierarchically hyperbolic space.  There
exists a constant $\ell\geq 1$ depending only on the constants from
Definition~\ref{defn:space_with_distance_formula} so that the
following holds.  Suppose that $\gamma\colon I\rightarrow F_W$ is a 
hierarchy path connecting $x$ to $y$  
and let $M=\sup_{U\in\mathfrak S_{W}-\{W\}}\sigma_{U,s_0}(x,y)$.  Let $z\in\cuco X$ and let $R=\dist_{\cuco X}(z,P_W)$.  Then
 $$\diam_{\cuco X}(q_\gamma(B(z,R/\ell)))\leq \ell M+\ell.$$  
\end{lem}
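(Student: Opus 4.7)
The plan is to factor the projection $q_\gamma$ through $\pi_W$ and then use contracting-projection estimates in the $\delta$--hyperbolic space $\fontact W$, converting the result back to $\cuco X$ via the distance formula. The key structural observation is that by construction $q_\gamma(z)=t(c_\beta(\pi_W(z)))$, where $c_\beta\colon \fontact W\to\beta$ is closest-point projection onto the quasi-geodesic $\beta=\pi_W(\gamma)$, so that the variation of $q_\gamma$ on $B(z,R/\ell)$ is entirely governed by the $\fontact W$--variation of $c_\beta\circ\pi_W$.

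The proof has two main ingredients. First is an in-path distance estimate: for all $p,q\in\gamma$,
\[
\dist_{\cuco X}(p,q)\;\lesssim\;(d_W(p,q)+1)(M+1).
\]
This follows from the distance formula applied to the pair $(p,q)$. Since both points lie in (the image of) $\gamma\subset F_W$, the projections agree up to uniformly bounded error at every index $V$ with $V\orth W$, $V\transverse W$, or $W\propnest V$. For each $V\propnest W$ with $V\neq W$, the projection $\pi_V(\gamma)$ is a quasi-geodesic joining $\pi_V(x)$ to $\pi_V(y)$, so $d_V(p,q)\le M+O(1)$ by the definition of $M$. The large-links axiom bounds the number of indices contributing above the distance-formula threshold by $\lambda d_W(p,q)+\lambda$, producing the stated estimate. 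The second ingredient is a standard hyperbolic-geometry argument: since $\pi_W$ is coarsely $L$--Lipschitz (immediate from the distance formula) and since closest-point projection onto a quasi-geodesic in a $\delta$--hyperbolic space is coarsely constant on any ball whose radius is less than half the distance from its center to the quasi-geodesic, one has $\diam_{\fontact W}\bigl(c_\beta\circ\pi_W(B(z,R/\ell))\bigr)=O(\delta)$ whenever $d_W(z,\beta)\gtrsim R/\ell$. Combining the two ingredients gives the claimed bound in this regime.

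The main obstacle is handling the complementary regime where $d_W(z,\beta)\ll R/\ell$, i.e.\ where $R$ is realized primarily by projections at indices other than $W$. The plan is to apply the distance formula to $\dist_{\cuco X}(z,\gate_{P_W}(z))\asymp R$ and analyze the three types of contribution. Contributions from $V\orth W$ do not affect $q_\gamma$ at all, since $q_\gamma$ factors through $\pi_W$ and the gate to $P_W$ agrees with $z$ on these coordinates. Contributions from $V\transverse W$ or $W\propnest V$ force, via the consistency axiom, that $\pi_W(z)$ lies within $\kappa_0$ of $\rho^V_W$; combined with the bounded geodesic image axiom, this either gives a large lower bound on $d_W(z,\beta)$ (contradicting the regime) or shows that $\rho^V_W$ lies near $\beta$, in which case $\pi_V(\gamma)$ itself has diameter at most $M$ and contributes only $O(M)$ to the variation of $q_\gamma$. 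Contributions from $V\propnest W$ with $V\neq W$ record a divergence of $\pi_V(z)$ from $\pi_V(\gamma)$; here coarse-Lipschitz behavior of the gate $\gate_\gamma$ inside $F_W$ implies that $\pi_V(\gate_\gamma(z'))$ agrees with $\pi_V(\gate_\gamma(z))$ up to error $O(M+1)$ for every $z'\in B(z,R/\ell)$, and one checks that $q_\gamma(z')$ tracks $\gate_\gamma(z')$ up to comparable error when $d_W(z,\beta)$ is small. Assembling the case analysis and choosing $\ell$ uniformly large in terms of $L$, $\delta$, $\kappa_0$, and the distance-formula, large-links, and bounded geodesic image constants then yields $\diam_{\cuco X}\bigl(q_\gamma(B(z,R/\ell))\bigr)\le\ell M+\ell$.
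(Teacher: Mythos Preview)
Your first ingredient (the in--$\gamma$ distance estimate) and your second ingredient (contracting projection when $d_{\fontact W}(z,\beta)$ is large relative to $R/\ell$) are both essentially correct, and together they do dispose of the ``far'' regime.  The genuine gap is in the complementary regime.

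Your case analysis there does not actually control $\diam_{\fontact W}\bigl(c_\beta\circ\pi_W(B(z,R/\ell))\bigr)$, which is what your first ingredient needs as input.  For instance, the transverse contributions you discuss can make $R$ arbitrarily large while $d_{\fontact W}(z,\beta)$ stays bounded; but $q_\gamma$ depends \emph{only} on $\pi_W$, so saying those $V\transverse W$ contributions ``contribute only $O(M)$ to the variation of $q_\gamma$'' conflates two different things.  Concretely: if $R$ is realized by transverse terms and $d_{\fontact W}(z,\beta)$ is small, then for $z'\in B(z,R/\ell)$ one only knows $d_{\fontact W}(\pi_W(z),\pi_W(z'))\lesssim R/\ell$, and closest-point projection to $\beta$ is merely Lipschitz in this situation.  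Feeding this into your first ingredient gives a bound of order $(R/\ell)(M+1)$, not $\ell M+\ell$.  The gate argument you sketch for $V\propnest W$ has the same defect: it bounds individual $\fontact V$--coordinates of $q_\gamma(z')$ but does not bound the number of contributing $V$, which via Large Links is again proportional to the $\fontact W$--variation.

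The paper avoids this by changing the target statement.  Rather than bounding $c_\beta\circ\pi_W$ on the whole ball, it proves: whenever $d_{\cuco X}(q_\gamma(v),q_\gamma(w))>\ell M+\ell$, any hierarchy path $\alpha$ from $v$ to $w$ contains a point $t$ with $d_{\cuco X}(t,P_W)\lesssim M+1$.  The point $t$ is located using exactly your hyperbolic-projection observation (if $c_\beta\pi_W(v)$ and $c_\beta\pi_W(w)$ are far apart, the $\fontact W$--image of $\alpha$ must pass close to $\beta$), but then one bounds $d_{\cuco X}(t,P_W)$ directly: the transverse terms are handled by choosing buffer points $t_1,t_2$ on $\alpha$ flanking $t$ and applying consistency at $t_1,t_2$, while the nested terms are handled by BGI plus Large Links applied to the \emph{bounded} $\fontact W$--segment near $\pi_W(t)$.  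Since $d_{\fontact W}(t,t')$ is now $O(1)$ by construction, Large Links produces only $O(1)$ containers $T_i$, and each contributes at most $M$.  The lemma then follows by a short contradiction argument: if $v,w\in B(z,R/\ell)$ realize large $q_\gamma$--diameter, the existence of $t$ forces $R\lesssim M+1$, and one concludes.  The missing idea in your proposal is precisely this passage through the hierarchy path $\alpha$ and the point $t$; your complementary-regime case analysis cannot substitute for it.
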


\begin{proof}
For a suitable constant $\ell$, we will prove the following: given $v,w\in\cuco
X$, if $\dist (q_\gamma(v),q_\gamma(w))>\ell M+\ell$, then any
hierarchy path $\alpha$ from $v$ to $w$ intersects the $\ell
M$--neighborhood of $P_W$; then the the lemma will follow by increasing
$\ell$.
 
 First, fix a hierarchy path $\alpha$ from $v$ to $w$.  For $\ell$
 large enough, the $\delta$--hyperbolicity of $\fontact W$ and the
 fact that $\alpha$ is a hierarchy path guarantee the existence of
 points $t_1,t,t_2$, appearing in this order along $\alpha$, so that: 
 \begin{enumerate}
  \item $\dist_{\fontact W}(t,\gamma)\leq 2\delta+2D$,
  \item $\dist_{\fontact W}(t, c_{\beta}(\pi_{W}(v))), d_{\fontact 
  W}(t, c_{\beta}(\pi_{W}(w)))\geq 100\delta+100D$,
  \item $\dist_{\fontact W}(t,t_i)>10\kappa_0$ for $i\in\{1,2\}$.
\end{enumerate}

Using 
Definition~\ref{defn:space_with_distance_formula}.\eqref{item:consistency_and_realization} (Realization) 
we pick any $t'\in P_W$ whose $\fontact U$--coordinate is
$\kappa$--close to that of $c_{\beta}(t)$ whenever $U\nest W$ and
whose $\fontact U$--coordinate is $\kappa$--close to that of $t$
whenever $U\orth W$.

\begin{figure}[h]
 \includegraphics{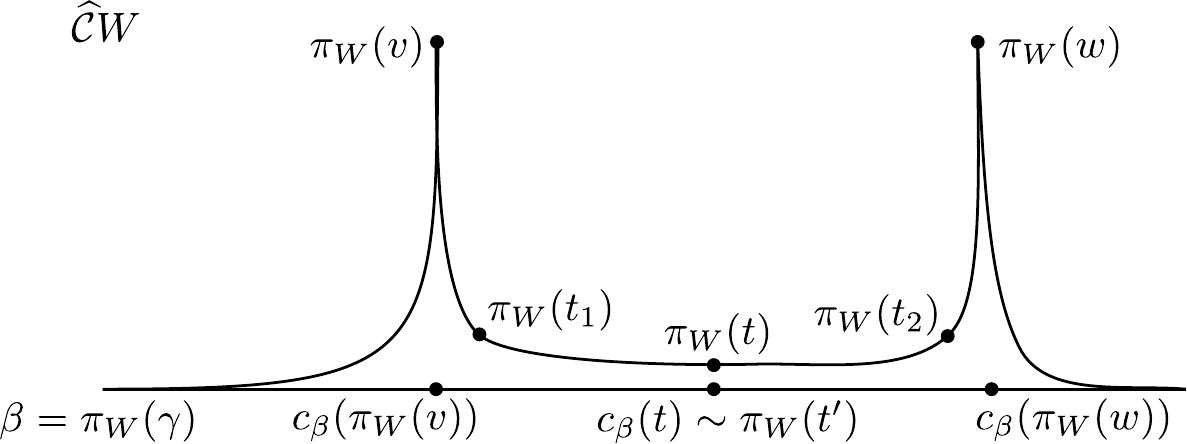}
\end{figure}

By choice of $t'$ and taking the threshold in the distance formula 
larger than $\kappa$ we have that  
any term in the distance formula contributing to $\dist_{\cuco X}(t,t')$ comes from some $U\in\mathfrak S$ which is either nested in
$W$ or transverse to it.  We will first argue that the latter terms
are uniformly bounded.  This is because if
$U\transverse W$ contributes to the distance formula with any
threshold $\geq\kappa_0+\kappa$, we have $d_{\fontact
U}(\rho^W_U,\pi_{U}(t))> \kappa_0$ and hence $d_{\fontact
W}(\rho^U_W,\pi_{ W}(t))\leq \kappa_0$ by the first consistency
inequality (see Definition~\ref{defn:space_with_distance_formula}.\eqref{item:dfs_transversal}).  But then we also have $d_{\fontact W}(\rho^U_W,\pi_{
W}(t_i))> \kappa_0$ and hence again by the first consistency 
condition we have $d_{\fontact U}(\rho^W_U,\pi_{U}(t_i))\leq
\kappa_0$.  However, since $\pi_W\circ\alpha$ is an unparameterized
$(D,D)$--quasi-geodesic and $t$ lies between $t_1$ and $t_2$, there is
a bound on $d_{\fontact U}(\rho^W_U, \pi_{U}(t))$ in terms of $\kappa_0$, $D$,
$\delta$ only; this is the uniform bound we wanted.  From now on, we assume that the thresholds used in the
distance formula exceed this bound.  We now know that
$\sigma_{W,s}(t,t')$ coarsely bounds $\dist_{\cuco X}(t,t')$ for any
sufficiently large threshold $s$.

By 
Definition~\ref{defn:space_with_distance_formula}.\eqref{item:dfs_large_link_lemma} 
(Large Link Lemma), any term that contributes to $\sigma_{W,s_0}(t,t')$ is either $W$ or an element of $\mathfrak S_{T_i}$, for at most $\lambda(2\delta+2D+1)$ elements $T_i\in\mathfrak S_W-\{W\}$, each within distance $\lambda(2\delta+2D+1)$ from $\pi_{W}(t)$, where $\lambda\geq1$ is a uniform constant.
 
 By
 Definition~\ref{defn:space_with_distance_formula}.\eqref{item:dfs:bounded_geodesic_image}
 (Bounded Geodesic Image), applied to geodesics joining $\pi_W(v)$ to
 $\pi_W(x)$ and from $\pi_W(w)$ to $\pi_W(y)$ (or vice versa), we have
 $d_{\fontact F}(v,x),d_{\fontact F}(w,y)\leq B$ for any $F\in
 \mathfrak S_{T_i}$.  Since $\pi_F(t)$ is $D$--close to
 a geodesic from $\pi_F(v)$ to $\pi_F(w)$ for $F\in\mathfrak S_{T_i}$ and
 similarly for $\pi_F(t')$ with respect to $\pi_F(v)$ and $\pi_F(w)$, we 
 thus have
 $$\sum_{F\in\mathfrak S_{T_i}} \ignore{d_{\fontact F}(t,t')}{s_0+2D+2B}\leq R \sum_{F\in\mathfrak S_{T_i}} \ignore{d_{\fontact F}(x,y)}{s_0}$$
 where $R=(s_0+2D+2B)/s_0$. Hence
 $$\sigma_{W,s_0+2D+2B}(t,t')\leq \lambda(2\delta+2D+1)+\lambda(2\delta+2D+1) 
 R\max_i\left\{\sum_{F\in\mathfrak S_{T_i}}\ignore{d_{\fontact F}(x,y)}{s_0}\right\}\leq \ell'M+\ell'.$$
 We hence get $\dist_{\cuco X}(t,t')\leq \ell M$, as required.
\end{proof}

\begin{lem}\label{lem:qgamma_cone}
 Let $\{\cuco X_m\}$ be uniformly hierarchically hyperbolic spaces,
 with respect to $\{\mathfrak S_m\}$, and let $\seqcuco$ be an
 ultralimit of $(\cuco X_m)$.  Also, let $\seq x$, $\seq y$ be
 distinct points of $\seqcuco$.  Then there exists an ultralimit
 $F_{\subseq U}$ of the seqeunce  $(F_{U_m})$, an ultralimit $\seq
 \gamma$ of hierarchy paths $\gamma_m$, contained in $F_{\subseq U}$
 and connecting distinct points $\seq x', \seq y'$, and a Lipschitz
 map $\seq q_{\subseq \gamma}\colon \seqcuco\to \seq\gamma$ such that
\begin{enumerate}
 \item \label{item:qgc_1}$\seq q_{\subseq\gamma}(\seq x)=\seq x'$, $\seq q_{\subseq\gamma}(\seq y)=\seq y'$;
 \item \label{item:qgc_2}$\seq q_{\subseq\gamma}$ restricted to $\seq 
 P_{\subseq U}$ is the projection on the first factor, where $\seq 
 P_{\subseq U}\cong \seq E_{\subseq U}\times \seq\gamma$ is an ultralimit of $(P_{U_m})$;
 \item \label{item:qgc_3}$\seq q_{\subseq\gamma}$ is locally constant outside $\seq P_{\subseq U}$.
\end{enumerate}
\end{lem}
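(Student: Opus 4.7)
The plan is: for each $m$, select $U_m \in \mathfrak S_m$ and a hierarchy path $\gamma_m \subseteq F_{U_m}$ whose length is comparable to the scaling factor $r_m$, then take $\seq q_{\subseq \gamma}$ to be the ultralimit of the coarse projections $q_{\gamma_m}\colon \cuco X_m \to \gamma_m$ supplied by Lemma~\ref{lem:loc_const_q}. With a careful choice of $U_m$, that lemma will yield both the Lipschitz control making $\seq q_{\subseq \gamma}$ well-defined and the local constancy outside $\seq P_{\subseq U}$ asserted in~\eqref{item:qgc_3}.

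The crucial first step is selecting $U_m$ so that, $\omega$-almost surely, $\dist_{\fontact U_m}(\pi_{U_m}(x_m), \pi_{U_m}(y_m)) \asymp r_m$ and moreover $\sigma_{V, s_0}(x_m, y_m) = o(r_m)$ for every $V \propnest U_m$, where $x_m, y_m$ represent $\seq x, \seq y$. I would obtain $U_m$ by an iterative descent starting from the $\nest$-maximal element $S_m$: if $\dist_{\fontact S_m}(\pi_{S_m}(x_m), \pi_{S_m}(y_m))$ is already of order $r_m$ and dominates the remaining sum in the distance formula, set $U_m = S_m$; otherwise apply the Large Link Lemma (Definition~\ref{defn:space_with_distance_formula}.\eqref{item:dfs_large_link_lemma}) to locate finitely many proper sub-domains collectively carrying the bulk of $\sigma_{\cuco X_m, s_0}(x_m, y_m)$, pick one of maximal contribution, and recurse inside it. Finite complexity (Definition~\ref{defn:space_with_distance_formula}.\eqref{item:dfs_complexity}) bounds the depth of recursion, and a pigeonhole argument at each level produces the required $U_m$.

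Having fixed $U_m$, let $x_m' = \gate_{P_{U_m}}(x_m)$ and $y_m' = \gate_{P_{U_m}}(y_m)$, and let $\gamma_m \subseteq F_{U_m}$ be a hierarchy path joining the $F_{U_m}$-coordinates of these gates under the product decomposition $P_{U_m} \cong E_{U_m}\times F_{U_m}$; existence of such a hierarchy path is guaranteed by Definition~\ref{defn:space_with_distance_formula}.\eqref{item:hierarchy_paths}. Lemma~\ref{lem:loc_const_q} then bounds $\diam(q_{\gamma_m}(B(z, R/\ell)))$ by $\ell M + \ell$ whenever $R = \dist(z, P_{U_m})$, where $M = \sup_{V \propnest U_m}\sigma_{V, s_0}(x_m, y_m) = o(r_m)$ by the choice above. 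After rescaling by $r_m$, this estimate ensures that the sequence $q_{\gamma_m}$ has a well-defined ultralimit $\seq q_{\subseq \gamma}\colon \seqcuco \to \seq \gamma$ which is Lipschitz (a chaining argument handles points close to $\seq P_{\subseq U}$, where $R/r_m$ is small) and locally constant on $\seqcuco \setminus \seq P_{\subseq U}$, proving~\eqref{item:qgc_3}.

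Finally, I would verify the remaining properties. Assertion~\eqref{item:qgc_1} follows because $\gamma_m$ joins $F_{U_m}$-coordinates of $x_m'$ and $y_m'$, and $q_{\gamma_m}(x_m)$ coarsely equals this starting coordinate since $\gate_{P_{U_m}}$ leaves the $\fontact U_m$-projection coarsely unchanged. Assertion~\eqref{item:qgc_2} follows because $q_{\gamma_m}$ factors through $\pi_{U_m}$, and on $P_{U_m}$ the projection $\pi_{U_m}$ depends only on the $F_{U_m}$-coordinate (by the orthogonality axiom, Definition~\ref{defn:space_with_distance_formula}.\eqref{item:dfs_orthogonal}), so $q_{\gamma_m}$ restricted to $P_{U_m}$ is coarsely the closest-point projection from $F_{U_m}$ onto $\gamma_m$; in the ultralimit this becomes the projection onto the $\seq \gamma$ factor of $\seq E_{\subseq U}\times \seq \gamma$. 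The main obstacle throughout is the initial choice of $U_m$: arranging sub-domain contributions to be genuinely $o(r_m)$ rather than merely $O(r_m)$ requires the combined descent using Large Link and finite complexity, and is the technical heart of the argument.
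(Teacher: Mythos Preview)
Your overall plan matches the paper's: choose $U_m\in\mathfrak S_m$ well, take $\gamma_m\subseteq F_{U_m}$ a hierarchy path between the gates of $x_m,y_m$, and let $\seq q_{\subseq\gamma}$ be the ultralimit of the maps $q_{\gamma_m}$ from Lemma~\ref{lem:loc_const_q}. Properties~\eqref{item:qgc_1}--\eqref{item:qgc_3} then follow as you indicate.

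The one place where you diverge from the paper, and where your argument becomes shakier, is the selection of $U_m$. You aim for $\dist_{\fontact U_m}(\pi_{U_m}(x_m),\pi_{U_m}(y_m))\asymp r_m$ together with $\sigma_{V,s_0}(x_m,y_m)=o(r_m)$ for all $V\propnest U_m$, and you propose to achieve this by a Large Link/pigeonhole descent. But your descent step can stall: if at some stage $\dist_{\fontact W}\asymp r_m$ yet condition~(2) fails, Large Link hands you $N\asymp\lambda\dist_{\fontact W}\asymp r_m$ sub-domains $T_i$, and pigeonhole only yields $\sigma_{T_i}\gtrsim 1$, which is useless. You could patch this by instead passing directly to an offending $V$ with $\sigma_V\asymp r_m$, but then your stopping criterion $\dist_{\fontact V}\asymp r_m$ need not hold for $V$, and the recursion does not close.

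The paper sidesteps all of this by tracking the \emph{right} quantity: it asks only for
\[
\ulim\tfrac{1}{r_m}\,\sigma_{U_m,s_0}(x_m,y_m)>0
\quad\text{and}\quad
\ulim\tfrac{1}{r_m}\,\sigma_{V_m,s_0}(x_m,y_m)=0\ \text{for every}\ V_m\propnest U_m.
\]
The first condition holds for $U_m=S_m$ by the distance formula. If the second fails, there is a sequence $(V_m)$ with $V_m\propnest U_m$ still satisfying the first condition, so replace $U_m$ by $V_m$; by finite complexity this terminates in at most $n$ steps. No Large Link, no pigeonhole. The point is that $\sigma_{U_m}\asymp r_m$ (not $\dist_{\fontact U_m}$) is exactly what makes $|\gamma_m|\asymp r_m$, since $\dist_{F_{U_m}}(x'_m,y'_m)\asymp\sigma_{U_m,s_0}(x_m,y_m)$ by the distance formula for $F_{U_m}$; and the second condition is exactly $M=o(r_m)$ in Lemma~\ref{lem:loc_const_q}. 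So what you called ``the technical heart of the argument'' is in fact a two-line descent once you target $\sigma_{U_m}$ rather than $\dist_{\fontact U_m}$.
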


\begin{proof}
 We first claim that there exist $U_m\in \mathfrak S_m$ so that
\begin{enumerate}
 \item $\ulim\frac{1}{s_m} \sigma_{U_m,s_0}(x_m,y_m)>0$,
 \item for any $U'_m\in\mathfrak S_{U_m}-\{U_m\}$, we have
 $\ulim\frac{1}{s_m} \sigma_{U'_m,s_0}(x_m,y_m)=0$.
\end{enumerate}

In fact, since $\seq x\neq \seq y$, there exists some $(\fontact
U_m)$ satisfying the first property, by the distance formula and the
existence of a $\nest$--maximal element.  Also, if $(\fontact U_m)$
satisfies the first property but not the second one, then by
definition we have a sequence $(\fontact V_m)$ satisfying the first
property and so that each $V_m$ is $\omega$--a.e.\ properly nested
into $U_m$.  By
Definition~\ref{defn:space_with_distance_formula}.\eqref{item:dfs_complexity},
in finitely many steps we find $(U_m)$ with the desired property.

Now let $\gamma_m$ be a hierarchy path in $F_{U_m}$ connecting
$x'_m=\gate_{P_{U_m}}(x_m)$ to $y'_m=\gate_{P_{U_m}}(y_m)$.  We can
define $\seq q_{\subseq\gamma}$ to be the ultralimit of the maps
$q_{\gamma_m}$ as in Lemma~\ref{lem:loc_const_q}, and all properties
are easily verified.
\end{proof}

\begin{proof}[Proof of Theorem~\ref{thm:density_point_ascone}.]
The main task is to prove Theorem~\ref{thm:density_point_ascone}.\eqref{item:ab}. Along the way we will point out how to adapt the first part of the argument to obtain Theorem~\ref{thm:density_point_ascone}.\eqref{item:nilp}.

We will prove the proposition by induction on complexity, the base
case being that of complexity $0$, where the distance formula implies
that every ultralimit of $(\cuco X_m)$ is a point.

Consider the ultralimit $\seqcuco^1$ of $(\cuco
X_m)$ with scaling factor $(r^1_{m})$ and an ultralimit map $\seq
f^1\colon \seq B^1\to \seqcuco^{1}$.  (For 
\ref{thm:density_point_ascone}.\eqref{item:nilp}, we
don't restrict the choice of scaling factors used in forming 
$\seqcuco^{1}$.)

If $\seq f^1(\seq B^1)$ is a single point, then the
conclusion is immediate.  Hence, consider $\seq x\neq \seq y$ in
$\seq f^1(\seq B^1)$.  Let $\seq \gamma$, $\seq U$, $\seq x', \seq y'$
and $\seq q_{\subseq \gamma}\colon \seqcuco^1 \to \seq\gamma$ be as in
Lemma \ref{lem:qgamma_cone}.  In the situation of
\ref{thm:density_point_ascone}.\eqref{item:nilp}, towards a 
contradiction, we pick $\seq x\neq \seq y$ in the same ultralimit of cosets
of $[\mathcal N,\mathcal N]$, which we can do if $\mathcal N$ is not 
abelian.  It follows by Pansu's Differentiability
Theorem~\cite[Theorem~2]{Pansu:C_C}, 
which applies because the asymptotic cones of
$\mathcal N$ are Carnot groups \cite{PansuConesNilp}, that every Lipschitz map from
$\seq B^1$ to $\mathbb R$ is constant along cosets of $[\mathcal
N,\mathcal N]$; this contradicts the properties of $\seq q_{\subseq
\gamma}$ established in Lemma~\ref{lem:qgamma_cone} when 
$\mathcal N$ is not abelian.  
Theorem~\ref{thm:density_point_ascone}.\eqref{item:nilp} is hence
proven, and from now on we focus exclusively on
Theorem~\ref{thm:density_point_ascone}.\eqref{item:ab}.  By
Rademacher's Theorem there exists $(p_{m})\in (B^1_{m})$ so that 
$\seq g_{\subseq
\gamma}=\seq q_{\subseq \gamma}\circ \seq f^1$ is differentiable at $\seq
p$ and the differential is nonzero and $\dist_{\reals^n}(p_m,0)\leq r^1_m/2$.  In particular $B(p_m,r^1_m/2)\subseteq B_m$.  Abusing notation slightly, we
are regarding $\seq \gamma$ as an interval in $\mathbb
R$.\\

\renewcommand{\qedsymbol}{$\blacksquare$}

\noindent{\bf Claim~1.} For every $\epsilon>0$ there exists
$l_\epsilon>0$ so that for any $l\leq l_\epsilon$ we have that $\seq
f^1(B(\seq p,l))$ is contained in the $\epsilon l$--neighborhood of
$\seq P_{\subseq U}$.

\begin{proof}[Proof of Claim~1.]  We identify a neighborhood of $\seq p$
with a neighborhood of $0$ in $\mathbb R^n$.
    
We know that there exists a linear function $A$ so that 
for any $v_{1}$ and 
$v_{2}$ in this neighborhood we have $|\seq g_{\subseq
\gamma}(v_1)-\seq g_{\subseq
\gamma}(v_2)|=|A(v_2-v_1)|+o(\max\{\|v_1\|,\|v_2\|\})$. Moreover, 
consider $v$ of norm~$1$ and set $Av=\theta> 0$.  Given   
any $v_1$ with 
$d(\seq f^1(v_1), \seq E_{\subseq U}\times\seq
\gamma)\geq \epsilon \|v_1\|$, 
then, since $\seq f^1$ is $K$--Lipschitz and
$\seq q_{\subseq \gamma}$ is constant outside $\seq E_{\subseq U}\times
\seq\gamma$, we have $\seq g_{\subseq \gamma}(v_1)=\seq g_{\subseq
\gamma}(v_2)$, where $v_2=v_1+\frac{\epsilon \|v_1\|}{K} v$.  Hence
$$0=|\seq g_{\subseq \gamma}(v_1)-\seq g_{\subseq \gamma}(v_2)|=
\frac{\epsilon\theta \|v_1\|}{K} + o((1+\epsilon/K)\|v_1\|)$$
which cannot happen for $\|v_1\|$ small enough depending on $\epsilon$.
\end{proof}

\noindent {\bf Claim~2.} There exists $(r^2_m)$ with $(r^0_m)\leq
(r^2_m)\ll (r^1_m)$ so that for all $(r_m)$ with
$(r^2_m)\ll (r_m)\ll (r^1_m)$ we have $\seq f(\seq
B)\subseteq \seq E_{\subseq U'}\times \seq\gamma'$, where $\seq B$ is the
ultralimit of $(B_m)$ with observation point $(p_m)$ and scaling
factor $(r_m)$.\\

Claim~2 follows from Claim~1 by an application of the 
principle from nonstandard analysis called 
\emph{underspill}, nonetheless, we provide a proof in the interest of 
self-containment. 

\begin{proof}[Proof of Claim~2.]  In view of Claim 1, there exists a
function $\epsilon\colon \mathbb N\to \mathbb R^+$ so that
$\epsilon(k)\to 0$ as $k\to\infty$ and so that for every $k$ the set
$A_k\subseteq \mathbb N$ defined below satisfies $\omega(A_k)=1$, 
$$A_{k}=\{m| f_m(B(p_m,r^1_m/2^k))\subseteq \neb_{\epsilon(k)
r^1_m/2^k}(E_{U_m}\times\gamma_m)\}.$$
Let $k(m)=\max\{k\leq m:m\in \bigcap_{i\leq k} A_i\}$, which by the 
above is
well-defined $\omega$--a.e.\ and satisfies $\ulim 
k(m)=\infty$. By definition of $k(m)$, for $\omega$--a.e.\ $m$ we have
$f_m(B(p_m,r^1_m/2^k))\subseteq \neb_{\epsilon(k)
r^1_m/2^k}(E_{U_m}\times \gamma_m)$ for every $k\leq k(m)$.

Let $(r^2_m)=\max\{(r^0_{m}),(r^1_m/2^{k(m)})\}$. 
We claim that if $(r^2_m) \ll (r_m)\ll (r^1_m)$, then the
result holds for the scaling factor $(r_m)$. 
To do this, we show for each $j\in\mathbb N$ that $\omega(A'_j)=1$, 
where: 
$$A'_j=\{m|f_m(B(p_m, jr_m))\subseteq \neb_{r_m/j}(E_{U_m}\times \gamma_m)\}.$$
Fix $j$. For a given $m$, let $k'(m)$ satisfy  
$r^1_m/2^{k'(m)+1}<jr_m\leq r^1_m/2^{k'(m)}$.  For $\omega$--a.e.\ $m$
we have $k'(m)\leq k(m)$ since $(r_m)\gg (r^1_m/2^{k(m)})$.  In
particular for $\omega$--a.e.\ $m$ we have:
$$f_m(B(p_m, jr_m))\subseteq f_m(B(p_m, r^1_m/2^{k'(m)}))\subseteq
\neb_{\epsilon(k'(m)) r^1_m/2^{k'(m)}}(E_{U_m}\times \gamma_m)
\subseteq \neb_{2\epsilon(k'(m))jr_m}(E_{U_m}\times\gamma_m).$$
Since $(r_m)\ll (r^1_m)$, we have $\ulim k'(m)=\infty$, and
hence $2\epsilon(k'(m))\leq 1/j^2$ for $\omega$--a.e.\ $m$; the claim
follows.
\end{proof}

\renewcommand{\qedsymbol}{$\square$}

By induction on complexity, enabled by Remark~\ref{rem:unif_hier}, we
know that for some $(r_m)\gg (r^2_m)$ the ultralimit $\seq g$ of
$g_m=\gate_{E_{U_m}}\circ f_m$ maps $\seq B$ into some ultralimit
$\seq F'$ of standard boxes, whence it is readily deduced that $\seq
f(\seq B)$ is contained in the ultralimit $\seq F'\times \seq \gamma'$
of standard boxes.  Note that provided $r_m\leq r^1_m/2$, the ultralimit $\mathbf B$ contains the required ultralimit of $(B(p_m,r_m))$.
\end{proof}

\section{Acylindricity}\label{sec:acyl}
\begin{defn}[Automorphism of a hierarchically hyperbolic space]\label{defn:hieraut}
Let $\cuco X$ be a hierarchically hyperbolic space, with respect to
$\mathfrak S$.  An \emph{automorphism} $g$ of $(\cuco X,\mathfrak S)$
is a bijection $g\colon \mathfrak S\rightarrow\mathfrak S$ and a
collection $\{g_U\colon \fontact U\rightarrow \fontact {(gU)}:U\in\mathfrak S\}$ of
isometries such that for all $U,V\in\mathfrak S$ we have
$g_V(\rho^U_V)=\rho^{gU}_{gV}$ and such that $g$ preserves
$\orth,\nest,$ and $\transverse$.
\end{defn}

By
Definition~\ref{defn:space_with_distance_formula}.\eqref{item:dfs_distance_formula},\eqref{item:consistency_and_realization},
every automorphism $g$ of $\mathfrak S$ induces a quasi-isometry
$\phi_g\colon\cuco X\rightarrow\cuco X$ with uniformly bounded
constants.  Such a quasi-isometry $\phi_g$ can be described as
follows.  For $x\in \cuco X$ and $U\in\mathfrak S$, $\pi_{gU}(\phi_g(x))$
coarsely coincides with $ g_U(\pi_U(x))$.  When it will not introduce
confusion, we will use the notation ``$g$'' for the quasi-isometry
$\phi_g$ as well as for the element $g\in\Aut(\mathfrak S)$.  In the
remainder of this section, we are interested in the situation where
the action of $G$ on $\cuco X$ by uniform quasi-isometries is proper
and cocompact.

Let $S=S(\cuco X,\mathfrak S)$ be the unique $\nest$--maximal element
and consider $G\leq\Aut(\mathfrak S)$.  Since each $g\in G$ preserves
the nesting relation, $G$ fixes $S$, and hence $g_S$ is an isometry of
$\fontact S$ for each $g\in G$.  Note that this action coarsely preserves the
union of the sets in $\pi_S(\cuco X)$.

\begin{rem}\label{rem:s_cubical}
When $\cuco X$ is a cube complex with a factor system, $S=\cuco X$ and $\fontact S$ is the factored contact graph $\fontact\cuco X$.  The union of sets in the image of $\pi_\cuco X\colon \cuco X\rightarrow 2^{\fontact\cuco X^{(0)}}$ is coarsely equal to $\fontact\cuco X$, by the definition of $\pi_\cuco X$.
\end{rem}

\begin{thm}\label{thm:acyl_no_box}
Let the group $G\leq\Aut(\mathfrak S)$ act properly and cocompactly on the hierarchically hyperbolic space $\cuco X$ and let $S$ be the unique $\nest$--maximal element of $\mathfrak S$. Then for all $\epsilon>0$, there exist constants $R,N$ such that there are at most $N$ elements $g\in G$ such that $\dist_{\fontact S}(\pi_S(x),\pi_S(gx))<\epsilon,\dist_{\fontact S}(\pi_S(y),\pi_S(gy))<\epsilon$ whenever $x,y\in\cuco X$ satisfy $\dist_{\fontact S}(\pi_S(x),\pi_S(y))\geq R$.
\end{thm}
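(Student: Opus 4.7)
The plan is a ``bounded displacement'' argument: for each $g\in G$ satisfying the hypothesis we produce a point $p=p(g)\in\cuco X$ with $\dist_{\cuco X}(p,gp)\le C$, where $C$ depends only on $\epsilon$ and the structural constants of $(\cuco X,\mathfrak S)$. Then properness and cocompactness of the $G$-action on $\cuco X$ bound the number of such $g$ by some $N=N(C)$, which is precisely acylindricity on $\fontact S$. To choose $p$, take the midpoint of a hierarchy path from $x$ to $y$ (Definition~\ref{defn:space_with_distance_formula}.\eqref{item:hierarchy_paths}), so that $\pi_S(p)$ lies within bounded $\fontact S$-distance of the midpoint $m$ of a $\fontact S$-geodesic $[\pi_S(x),\pi_S(y)]$. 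Since the induced isometry $g_S$ of $\fontact S$ nearly fixes both $\pi_S(x)$ and $\pi_S(y)$, $\delta$-hyperbolicity of $\fontact S$ forces $g_S$ to move $m$ (and hence $\pi_S(p)$) by at most a uniform constant $C_0=C_0(\epsilon,\delta)$.

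To bound $\dist_{\cuco X}(p,gp)$ I apply the distance formula (Definition~\ref{defn:space_with_distance_formula}.\eqref{item:dfs_distance_formula}). By the Large Link Lemma applied to $p,gp$ at the maximal domain $S$, only boundedly many $V\in\mathfrak S$ (a count depending on $C_0$) contribute a non-zero term $\dist_{\fontact V}(\pi_V(p),\pi_V(gp))$, so it suffices to bound each term individually. Fix a threshold $D$ depending on the Consistency and Bounded Geodesic Image constants. When $\dist_{\fontact S}(\rho^V_S,\pi_S(p))>D$, the Consistency axiom yields $\pi_V(p)\approx\rho^S_V(\pi_S(p))$ and $\pi_V(gp)\approx\rho^S_V(\pi_S(gp))$, while BGI applied to a short $\fontact S$-geodesic from $\pi_S(p)$ to $\pi_S(gp)$, which stays $E$-far from $\rho^V_S$, bounds $\dist_{\fontact V}(\rho^S_V(\pi_S(p)),\rho^S_V(\pi_S(gp)))$ by $B$; this gives the required uniform bound on this term.

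The hard case is $\dist_{\fontact S}(\rho^V_S,\pi_S(p))\le D$. Here I choose $R\gg D$ so that $\rho^V_S$ is automatically far from both $\pi_S(x)$ and $\pi_S(y)$, and apply the identical Consistency$+$BGI argument to short $\fontact S$-geodesics from $\pi_S(x)$ to $\pi_S(gx)$ and from $\pi_S(y)$ to $\pi_S(gy)$, obtaining $\pi_V(gx)\approx\pi_V(x)$ and $\pi_V(gy)\approx\pi_V(y)$ in $\fontact V$. In the subcase $gV=V$, the isometry $g_V$ of $\fontact V$ nearly fixes the two endpoints of a long $\fontact V$-geodesic, and hence, by hyperbolicity of $\fontact V$, nearly fixes every point on it; applied to $\pi_V(p)$ (which lies near this geodesic because $\gamma$ is a hierarchy path) this gives $\pi_V(gp)\approx g_V(\pi_V(p))\approx\pi_V(p)$. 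The main obstacle is the subcase $gV\ne V$: one must compare $\pi_V(p)$ with $g_{g^{-1}V}(\pi_{g^{-1}V}(p))$ across the isometry $g_{g^{-1}V}\colon\fontact{g^{-1}V}\to\fontact V$, which requires tracking the $\langle g\rangle$-orbit of $V$ among the $W\in\mathfrak S$ with $\rho^W_S$ close to $\pi_S(p)$ (a set that $g_S$ coarsely permutes, since $g_S$ nearly fixes $\pi_S(p)$) and propagating the ``nearly fixes endpoints'' property around this orbit through the chain of isometries $g_{g^{-k}V}$. Once every term is bounded, the distance formula yields $\dist_{\cuco X}(p,gp)\le C$, and properness plus cocompactness of the $G$-action on $\cuco X$ complete the proof.
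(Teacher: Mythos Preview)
Your argument essentially reproduces the paper's proof in one of its two cases, but the subcase you flag as the ``main obstacle'' ($gV\neq V$ with $\rho^V_S$ near $\pi_S(p)$) is a genuine gap that cannot be closed by propagating along the $\langle g\rangle$--orbit. The problem is this: when $\dist_{\fontact V}(\pi_V(x),\pi_V(y))$ is large, the point $\pi_V(p)$ lies \emph{somewhere} on a long geodesic in $\fontact V$, but its position along that geodesic depends on how the $\cuco X$--arclength of the hierarchy path $\gamma$ is distributed among \emph{all} domains, not on anything intrinsic to $V$. Since $\pi_V(gp)\approx g_{g^{-1}V}(\pi_{g^{-1}V}(p))$, you would need the position of $\pi_{g^{-1}V}(p)$ along the corresponding geodesic in $\fontact{g^{-1}V}$ to match that of $\pi_V(p)$ in $\fontact V$, and nothing in the axioms forces this --- the discrepancy can be as large as $\dist_{\fontact V}(\pi_V(x),\pi_V(y))$. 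Your ``nearly fixes endpoints $\Rightarrow$ nearly fixes interior points'' argument works when $gV=V$ precisely because then a single isometry of a single hyperbolic space carries $\pi_V(p)$ to $\pi_V(gp)$; when $gV\neq V$ there is no such isometry to invoke.

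The paper resolves this by a case split on whether the set $\mathfrak L_1$ of $\nest$--maximal $V$ with $\rho^V_S$ near the midpoint and $\dist_{\fontact V}(\pi_V(x),\pi_V(y))>\epsilon$ is empty. If $\mathfrak L_1=\emptyset$, then every relevant $\dist_{\fontact V}(\pi_V(x),\pi_V(y))$ is uniformly small, so both $\pi_V(p)$ and $\pi_V(gp)$ lie within $O(\epsilon)$ of $\pi_V(x)$ and your bounded-displacement argument goes through with no need to compare $V$ and $gV$ at all. If $\mathfrak L_1\neq\emptyset$, the paper abandons the midpoint entirely: it produces a uniformly bounded set $\mathfrak L_2$ with $gU\in\mathfrak L_2$ for every $g\in\mathfrak H$ and $U\in\mathfrak L_1$, and shows that $g\cdot\gate_{P_U}(x)$ lies uniformly close to $\gate_{P_{gU}}(x)$; properness plus $|\mathfrak L_2|<\infty$ then bounds $|\mathfrak H|$. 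This product-region/gate argument is the missing ingredient. (A side remark: the Large Link axiom does not bound the \emph{number} of contributing $V$, only the number of maximal containers $T_i$; the way to use it is to bound every individual term uniformly and then choose a distance-formula threshold above that bound.)
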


Before proving Theorem~\ref{thm:acyl_no_box}, we note two corollaries.  Recall that the action of the group $G$ on the metric space $(M,\dist)$ is \emph{acylindrical} if for all $\epsilon>0$, there exist constants $R,N$ such that there are at most $N$ elements $g\in G$ such that $\dist(x,gx)<\epsilon,\dist(y,gy)<\epsilon$ whenever $x,y\in M$ satisfy $\dist(x,y)\geq R$.  (Note that if $M$ is bounded, then any action on $M$ is automatically acylindrical.  In particular, if $\cuco X$ is a cube complex decomposing as the product of unbounded subcomplexes and $G$ acts geometrically on $\cuco X$, then the action of $G$ on $\contact\cuco X$ is trivially acylindrical.)

\begin{cor}[Acylindrical hyperbolicity]\label{cor:acyl_hyp}
Let $G\leq\Aut(\mathfrak S)$ act properly and cocompactly on the hierarchically hyperbolic space $\cuco X$.  Then $G$ acts acylindrically on a hyperbolic space quasi-isometric to $\mathcal U=\bigcup_{x\in\cuco X}\pi_S(x)$. In particular, if $\mathcal U$ is unbounded and $G$ is not virtually cyclic, then $G$ is acylindrically hyperbolic.
\end{cor}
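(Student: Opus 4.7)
The plan is to derive the corollary from Theorem \ref{thm:acyl_no_box} together with Osin's structure theorem for acylindrical actions on hyperbolic spaces \cite{Osin:acyl}. First I would fix a hyperbolic space quasi-isometric to $\mathcal U$ on which $G$ acts by isometries: the natural candidate is an orbit $Y = G \cdot u_0 \subseteq \fontact S$ for some $u_0 \in \mathcal U$, equipped with the metric induced from $\fontact S$. Since $G$ acts cocompactly on $\cuco X$ and $\pi_S$ is coarsely Lipschitz (via the distance formula applied to the $\nest$--maximal term), the orbit $Y$ is coarsely dense in $\mathcal U$ and hence quasi-isometric to it. Hyperbolicity of $Y$ would follow from quasi-convexity of $\mathcal U$ in $\fontact S$, which in turn uses hierarchy paths (Definition \ref{defn:space_with_distance_formula}.\eqref{item:hierarchy_paths}): given $u \in \pi_S(x)$ and $v \in \pi_S(y)$, the $\pi_S$--projection of any hierarchy path in $\cuco X$ from $x$ to $y$ is an unparameterized quasigeodesic of $\fontact S$ contained in $\mathcal U$.

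Next, I would translate the acylindricity statement of Theorem \ref{thm:acyl_no_box} into one for the $G$--action on $Y$. Let $C$ be the uniform bound, coming from Definition \ref{defn:hieraut}, such that $\dist_{\fontact S}(g_S(\pi_S(x)), \pi_S(g \cdot x)) \leq C$ for all $g \in G$ and $x \in \cuco X$. Given $\epsilon > 0$, let $R, N$ be the constants provided by Theorem \ref{thm:acyl_no_box} for the parameter $\epsilon' = \epsilon + \xi + C$, and set $R' = R + 2\xi$. For $u, v \in Y$ with $\dist_{\fontact S}(u, v) \geq R'$, choose $x, y \in \cuco X$ with $u \in \pi_S(x)$ and $v \in \pi_S(y)$; since each $\pi_S(\cdot)$ has diameter at most $\xi$, this forces $\dist_{\fontact S}(\pi_S(x), \pi_S(y)) \geq R$. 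If $g \in G$ satisfies $\dist_{\fontact S}(u, g_S(u)) < \epsilon$ and $\dist_{\fontact S}(v, g_S(v)) < \epsilon$, then the triangle inequality together with the definition of $C$ yields $\dist_{\fontact S}(\pi_S(x), \pi_S(g \cdot x)) < \epsilon'$ and $\dist_{\fontact S}(\pi_S(y), \pi_S(g \cdot y)) < \epsilon'$, so Theorem \ref{thm:acyl_no_box} gives at most $N$ such $g$.

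Finally, under the hypotheses of the second assertion, I would apply Osin's classification: any acylindrical action on a hyperbolic space is elliptic, lineal, or non-elementary. The unboundedness of $\mathcal U$ together with cocompactness of $G$ on $\cuco X$ ensures that $G$--orbits on $Y$ are unbounded, ruling out the elliptic case; lineal acylindrical actions force the group to be virtually cyclic, which is excluded by hypothesis. Hence the action is non-elementary, and $G$ is acylindrically hyperbolic. The main technical step requiring care is establishing hyperbolicity of $Y$, i.e., verifying quasi-convexity of $\mathcal U$ in $\fontact S$ via hierarchy paths; the remainder is a direct reformulation of Theorem \ref{thm:acyl_no_box} followed by a standard appeal to Osin's theorem.
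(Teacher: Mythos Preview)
Your proposal is correct and follows essentially the same strategy as the paper: establish quasiconvexity of $\mathcal U$ in $\fontact S$ via hierarchy paths, then deduce acylindricity from Theorem~\ref{thm:acyl_no_box}. The only cosmetic differences are that the paper thickens $\mathcal U$ (closed under the $G$--action) by adjoining $\fontact S$--geodesics to obtain a genuine geodesic hyperbolic space rather than working with a single orbit, and the paper leaves the appeal to Osin's classification for the final clause implicit.
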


\begin{proof}
Let $S$ be as in Theorem~\ref{thm:acyl_no_box}.  Let $\mathcal T_0\subseteq\fontact S$ be the union of all sets of the form $\pi_S(x)$ with $x\in\cuco X$, together with all of their $G$--orbits, so that $\mathcal T_0$ is $G$--invariant and coarsely equal to the union of the elements of $\pi_S(\cuco X)$.  Definition~\ref{defn:space_with_distance_formula}.\eqref{item:hierarchy_paths} thus ensures that $\mathcal T_0$ is quasiconvex in the hyperbolic space $\fontact S$, so that we can add geodesics of $\fontact S$ to $\mathcal T_0$ to form a $G$--hyperbolic space $\mathcal T$ that is $G$--equivariantly quasi-isometric to the union of the elements of $\pi_S(\cuco X)$.  The action of $G$ on $\mathcal T$ is thus acylindrical by Theorem~\ref{thm:acyl_no_box}.   
\end{proof}

In the cubical case, acylindricity can be witnessed by the contact graph instead of the factored contact graph.

\begin{cor}\label{cor:cubical_acyl}
Let $G$ act properly and cocompactly on the CAT(0) cube complex $\cuco X$, and suppose that $\cuco X$ contains a $G$--invariant factor system.  Then $G$ acts acylindrically on $\fontact\cuco X$, and hence on $\contact\cuco X$.
\end{cor}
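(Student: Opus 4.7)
The plan is to reduce Corollary~\ref{cor:cubical_acyl} to Corollary~\ref{cor:acyl_hyp}. By Remark~\ref{rem:cube_complex_case}, the pair $(\cuco X, \mathfrak S)$, with $\mathfrak S$ a set of parallelism-class representatives in the given $G$--invariant factor system $\factorsup$, is a hierarchically hyperbolic space whose $\nest$--maximal element $S$ is $\cuco X$ itself, and for which $\fontact S = \fontact \cuco X$. As observed in Remark~\ref{rem:s_cubical}, the set $\mathcal U := \bigcup_{x\in\cuco X}\pi_{\cuco X}(x)$ coarsely equals $\fontact \cuco X$: every hyperplane-vertex appears in some $\pi_{\cuco X}(x)$ (take $x$ on its carrier), and every cone-vertex is by construction adjacent to some hyperplane-vertex. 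Applying Corollary~\ref{cor:acyl_hyp} therefore yields a $G$--equivariant quasi-isometry from $\fontact \cuco X$ to a hyperbolic space on which $G$ acts acylindrically, from which acylindricity of the $G$--action on $\fontact \cuco X$ follows by transferring the witness.

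The second assertion, about $\contact \cuco X$, will be obtained by invoking the first with respect to a different $G$--invariant factor system. As observed in the discussion following Definition~\ref{defn:factor_system}, the minimum factor system $\factorsup_{\min}$ --- consisting of $\cuco X$, every combinatorial hyperplane, and the closure of this family under large projections --- is $\Aut(\cuco X)$--invariant, hence $G$--invariant; the original $\factorsup$ is needed only to ensure that $\factorsup_{\min}$ itself is a factor system (in particular, uniformly locally finite). By Remark~\ref{rem:factored_in_standard_example}, when $\factorsup_{\min}$ is used, the resulting factored contact graph is quasi-isometric to $\contact \cuco X$, because the additional cone-vertices are already contained in vertex-links of $\contact \cuco X$. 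This quasi-isometry is manifestly $G$--equivariant, since $G$ acts on $\cuco X$ by cubical automorphisms, inducing a common action on both graphs.

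It remains to transport acylindricity across the $G$--equivariant quasi-isometry $\fontact \cuco X \to \contact \cuco X$ associated to $\factorsup_{\min}$: given an acylindricity witness $(R,N)$ at scale $\epsilon$ for the action on $\fontact \cuco X$, dilating both $R$ and $\epsilon$ by the quasi-isometry constants yields a witness at a comparable scale for the action on $\contact \cuco X$, using that both graphs are hyperbolic quasi-trees. The main obstacle I anticipate is purely the bookkeeping for this last transport and the verification that the quasi-isometry in Remark~\ref{rem:factored_in_standard_example} is fully $G$--equivariant at the level of constants; no substantial new technique beyond Corollary~\ref{cor:acyl_hyp} is required.
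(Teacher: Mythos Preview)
Your proposal is correct and follows essentially the same route as the paper's proof: apply the general acylindricity result to the HHS structure on $\cuco X$ (using Remarks~\ref{rem:cube_complex_case} and~\ref{rem:s_cubical}), then for the $\contact\cuco X$ statement replace $\factorsup$ by the minimal $G$--invariant factor system and invoke Remark~\ref{rem:factored_in_standard_example} to identify the resulting factored contact graph with $\contact\cuco X$ up to $G$--equivariant quasi-isometry. The only cosmetic difference is that the paper cites Theorem~\ref{thm:acyl_no_box} directly rather than passing through Corollary~\ref{cor:acyl_hyp}; your detour through Corollary~\ref{cor:acyl_hyp} is harmless but adds an unnecessary transport step, since Theorem~\ref{thm:acyl_no_box} together with Remark~\ref{rem:s_cubical} already gives acylindricity on $\fontact\cuco X$ itself.
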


\begin{proof}
For any $G$--invariant factor system $\factorsup$, the action of $G$ on $\fontact\cuco X$ is acylindrical by Theorem~\ref{thm:acyl_no_box} and Remarks~\ref{rem:cube_complex_case} and~\ref{rem:s_cubical}.  Let $\factorsup$ be a factor-system in $\cuco X$ and let $\xi$ be the constant for $\factorsup$ from Definition~\ref{defn:factor_system}.  Let $\factorsup_0$ be the smallest set of convex subcomplexes of $\cuco X$ that contains $\cuco X$ and each subcomplex parallel to a combinatorial hyperplane, and has the property that $\gate_F(F')\in\factorsup_0$ whenever $F,F'\in\factorsup_0$ and $\diam(\gate_F(F'))\geq\xi$.  By definition, $\factorsup_0\subseteq\factorsup$, so $\factorsup_0$ has bounded multiplicity and thus $\factorsup_0$ is a factor system.  The  associated factored contact graph of $\cuco X$, on which $G$ acts acylindrically, is $G$--equivariantly quasi-isometric to $\contact\cuco X$, and the result follows.
\end{proof}

\begin{proof}[Proof of Theorem~\ref{thm:acyl_no_box}]
Fix $\epsilon>0$, and for convenience assume that it is $100$ times larger
than all the constants in Definition~\ref{defn:space_with_distance_formula}.  Let $R_0\geq
1000\epsilon$ and consider $x,y\in\cuco X$ such that $R=\dist_{\fontact S}(\pi_S(x),\pi_S(y))\geq R_0$, and let $\mathfrak H$ be the set of all $g\in G$
such that $\dist_{\fontact S}(\pi_S(gx),\pi_S(x))<\epsilon$ and
$\dist_{\fontact S}(\pi_S(gy),\pi_S(y))<\epsilon$.

We will consider, roughly speaking, the set of all $U\in\mathfrak S$ so that $x,y$ project far away in $U$, the corresponding $\rho^U_S$ is near the middle of a geodesic from $\pi_S(x)$ to $\pi_S(y)$, and $U$ is $\nest$--maximal with these properties. We do so because these $U$ correspond to product regions ``in the middle'' between $x$ and $y$. Formally, let $\mathfrak L_1$ be the set of all $U\in\mathfrak S-\{S\}$ with the following properties:
\begin{enumerate}
 \item $\dist_{\fontact U}(\pi_U(x),\pi_U(y))>\epsilon$;
 \item $|\dist_{\fontact S}(\pi_S(x),\rho^{U}_S)-\frac{R}{2}|\leq 10\epsilon$;
 \item $U$ is not properly nested into any $U'\in\mathfrak S-\{S\}$ with $\dist_{\fontact U'}(\pi_{U'}(x),\pi_{U'}(y))>\epsilon$.
\end{enumerate}

When applying an element $g$ that moves $x,y$ a bounded amount, any $U\in\mathfrak L_1$ gets moved to some $gU$ with similar properties but slightly worse constants. To capture this, we let $\mathfrak L_2$ be the set of all $U\in\mathfrak S-\{S\}$ such that:
\begin{enumerate}
 \item $\dist_{\fontact U}(\pi_U(x),\pi_U(y))>\epsilon/2$;
 \item $|\dist_{\fontact S}(\pi_S(x),\rho^U_S)-\frac{R}{2}|\leq 11\epsilon$;
 \item $U$ is not properly nested into any $U'\in\mathfrak S-\{S\}$ with $\dist_{\fontact U'}(\pi_{U'}(x),\pi_{U'}(y))>2\epsilon$.\\
\end{enumerate}

\noindent\textbf{Bounding $|\mathfrak L_2|$:}  Consider a hierarchy path $\gamma$ from $x$ to $y$. Then there are $x',y'$ on $\gamma$ so that
\begin{itemize}
 \item $\dist_{\fontact S}(\pi_S(x'),\pi_S(y'))\leq 23\epsilon$, and
 \item whenever $U\in\mathfrak S-\{S\}$ is so that $\dist_{\fontact U}(\pi_U(x),\pi_U(y))>\epsilon/2$ and $|\dist_{\fontact S}(\pi_S(x),\rho^U_S)-\frac{R}{2}|\leq 11\epsilon$, then $\dist_{\fontact U}(\pi_U(x'),\pi_U(y'))>s_0$ (recall that $s_0$ is the minimal threshold of the Distance Formula).
\end{itemize}

The existence of $x',y'$ follows since we can choose $x'$ and $y'$ projecting close to points on a $\fontact S$--geodesic from $\pi_S(x)$ to $\pi_S(y)$ that lie on opposite sides of the midpoint, at distance slightly larger than $11\epsilon$ from the midpoint. Bounded Geodesic Image (Definition~\ref{defn:space_with_distance_formula}.\eqref{item:dfs:bounded_geodesic_image}) guarantees that the second condition holds because it ensures that $\pi_U(x),\pi_U(y)$ coarsely coincide with $\pi_U(x'),\pi_U(y')$ (recall that $\epsilon$ is much larger than $s_0$ and all other constants in Definition~\ref{defn:space_with_distance_formula}).

By Definition~\ref{defn:space_with_distance_formula}.\eqref{item:dfs_large_link_lemma} 
(Large Link Lemma), with $W=S$ and $x,x'$ replaced by $x',y'$, each $U\in\mathfrak L_2$ is nested into one of 
at most $23\epsilon\lambda+\lambda$ elements $T$ of $\mathfrak S$. For $p$ as in the Claim below, the number of $U\in\mathfrak L_2$ nested into the same such $T$ is bounded by $p$, for otherwise some $U$ (the $U_i$ in the conclusion of the Claim) would fail to satisfy the third property in the definition of $\mathfrak L_2$. Hence $|\mathfrak L_2|\leq p\lambda(23\epsilon+1)$.\\

\noindent{\bf Claim.} There exists $p$ with the following property. Let $T\in\mathfrak S$, let $x,y\in\cuco X$, and let $\{U_i\}_{i=1}^{p}\subseteq \mathfrak S_T$ be distinct and satisfy $\dist_{\fontact U_i}(\pi_{U_i}(x),\pi_{U_i}(y))\geq \epsilon$. Then there exists $U'\in\mathfrak S_T$ and $i$ so that $U_i\propnest U'$ and $\dist_{\fontact U'}(\pi_{U'}(x),\pi_{U'}(y))>2\epsilon$.

\begin{proof}[Proof of Claim]
Define the level of $Y\in\mathfrak S$ to be the maximal $k$ so that there exists a $\nest$--chain of length $k$ in $\mathfrak S_Y$.
The proof is by induction on the level $k$ of a $\nest$--minimal $T'\in\mathfrak S_T$ into which each $S_i$ is nested. For the base case $k=1$ it suffices to take $p=2$ since in this case there is no pair of distinct $U_1,U_2\in\mathfrak S_{T'}$.
 
Suppose that the statement holds for a given $p(k)$ when the level
of $T'$ is at most $k$.  Suppose further that $|\{U_i\}|\geq p(k+1)$
(where $p(k+1)$ is a constant much larger than $p(k)$ that will be
determined shortly) and there exists a $\nest$--minimal $T'\in\mathfrak
S_T$ of level $k+1$ into which each $U_i$ is nested.  There are two
cases.

If $\dist_{\fontact T'}(\pi_{T'}(x),\pi_{T'}(y))> 2\epsilon$, then we are done (for $p\geq 2$).  If not, then
 Definition~\ref{defn:space_with_distance_formula}.\eqref{item:dfs_large_link_lemma} (Large Link Lemma)
yields $K$ and $T_1,\dots,T_K$, each properly
nested into $T'$ (and hence of level $\leq k$), so that any
given $U_i$ is nested into some $T_j$.  In particular, if $p(k+1)\geq
Kp(k)$, there exists $j$ so that at least $p(k)$ elements of
$\{U_i\}$ are nested into $T_j$.  By the induction hypothesis and Definition~\ref{defn:space_with_distance_formula}.\eqref{item:dfs_complexity} (Finite Complexity), 
we are done. 
\end{proof}

\noindent\textbf{$\mathfrak L_1\neq \emptyset$ case:}  Suppose that there 
exists $U\in\mathfrak L_1$. In this case the idea is that there actually are product regions in the middle between $x,y$, and the action of an element moving $x,y$ not too much permutes the gates into such product regions, so that there cannot be too many such elements.

First of all, our choice of $R_0$ and Definition~\ref{defn:space_with_distance_formula}.\eqref{item:dfs:bounded_geodesic_image} (Bounded Geodesic Image) ensure that for all 
$g\in\mathfrak H$, we have $gU\in\mathfrak L_2$. Indeed, if $U\in\mathfrak L_1$ then $\dist_{\fontact gU}(\pi_{gU}(x),\pi_{gU}(y))\geq \dist_{\fontact gU}(\pi_{gU}(gx),\pi_{gU}(gy))-2B\geq \epsilon-2B> \epsilon/2$, ensuring the first property in the definition of $\mathfrak L_2$. The second property follows from the fact that $\pi_S(x)$ gets moved distance $\leq\epsilon$ by $g$, and the third property holds for $gU$ because otherwise, using Bounded Geodesic Image, we would find a contradiction with the third property of $U$ from the definition of $\mathfrak L_1$.

Fix any 
$g\in\mathfrak H$ and let $P_1=E_U\times F_U, P_2=E_{gU}\times F_{gU}$ be the spaces provided by Section~\ref{subsec:box}. We claim that $\dist_{\cuco X}(g\gate_{P_1}(x), \gate_{P_2}(x))$ is uniformly bounded. From this, properness of the action and the bound on $|\mathfrak L_2|$ yield a bound on $|\mathfrak H|$.

Clearly, $g\gate_{P_1}(x)$ coarsely coincides with
$\gate_{P_2}(gx)$, since coordinates of gates are defined
equivariantly.  Hence we must show that $w=\gate_{P_2}(gx)$
coarsely coincides with $z=\gate_{P_2}(x)$. 

By Definition~\ref{defn:space_with_distance_formula}.\eqref{item:dfs_distance_formula} (Distance Formula), it suffices to show that the projections of $w$ and $z$ coarsely coincide in every $\fontact Y$ for $Y\in\mathfrak S$. By definition of $\gate_{P_2}$, it suffices to consider $Y\in\mathfrak S$ which is either nested into or orthogonal to $gU$. For such $Y$, $\rho^Y_S$ coarsely coincides with $\rho^{gU}_S$ by the final part of Definition~\ref{defn:space_with_distance_formula}.\eqref{item:dfs_transversal} (Transversality and consistency).  Moreover, any geodesic from $x$ to $gx$ stays far from $\rho^Y_S$, so that Definition~\ref{defn:space_with_distance_formula}.\eqref{item:dfs:bounded_geodesic_image} (Bounded Geodesic Image) gives a uniform bound on $\dist_{\fontact Y}(\pi_Y(x),\pi_Y(gx))$, as required.\\

\noindent\textbf{$\mathfrak L_1=\emptyset$ case:}  Suppose now that 
$\mathfrak L_1=\emptyset$. In this case, the idea is that a hierarchy path from $x$ to $y$ does not spend much time in any product region near the middle, and hence it behaves like a geodesic in a hyperbolic space. Fix a hierarchy path $\gamma$ in $\cuco X$ joining $x,y$ and let $p\in\gamma$ satisfy $|\dist_{\fontact S}(\pi_S(p),\pi_S(x))-\frac{R}{2}|\leq\epsilon$.  We will produce a constant $M_3$, depending on the constants from Definition~\ref{defn:space_with_distance_formula} and on $\epsilon$ such that $\dist_{\cuco X}(p,gp)\leq M_3$.  It will then follow that $|\mathfrak H|$ is bounded in view of properness of the action of $G$ on $\cuco X$.

We now bound $\dist_{\cuco X}(p,gp)$ using the distance formula. First, note that $\dist_{\fontact S}(\pi_S(p),\pi_S(gp))\leq 10\delta+\epsilon$ (where $\delta$ is the hyperbolicity constant of $\fontact S$).

If $U\in\mathfrak S$ contributes to the sum $\sigma_{\cuco X,s_0}(p,gp)$ with threshold $s_0$, then, given the bound on $\dist_{\fontact S}(p,gp)$ and Bounded Geodesic Image, $\dist_{\fontact S}(\pi_S(p),\rho^U_S)\leq 2\epsilon$.
Fix now any $U$ satisfying $\dist_{\fontact S}(\pi_S(p),\rho^U_S)\leq 2\epsilon$. Our goal is now to bound $\dist_{\fontact U}(\pi_U(p),\pi_U(gp))$ uniformly. It follows from the assumption that $\mathfrak L_1=\emptyset$ that $d_{\fontact U}(\pi_U(x),\pi_U(y))\leq 3\epsilon$, and also $d_{\fontact U}(\pi_U(gx),\pi_U(gy))=d_{\fontact g^{-1}U}(\pi_{g^{-1}U}(x),\pi_{g^{-1}U}(y))\leq 3\epsilon$.  Indeed, if, say, $U$ satisfied $d_{\fontact U}(\pi_U(x),\pi_U(y))>3\epsilon$, then either $U$ or some other $U'$ with $d_{\fontact U'}(\pi_{U'}(x),\pi_{U'}(y))>\epsilon$ and $U\nest U'$ would belong to $\mathfrak L_1$, since for $U\nest U'$ and $U'\neq S$, the sets $\rho^U_S$ and $\rho^{U'}_S$ coarsely coincide.

Since $\gamma$ is a hierarchy path, $\dist_{\fontact U}(\pi_U(x),\pi_U(p))$ and $d_{\fontact U}(\pi_U(gx),\pi_U(gp))$ are bounded by $\dist_{\fontact U}(\pi_U(x),\pi_U(y))+2D$ and $d_{\fontact U}(\pi_U(gx),\pi_U(gy))+2D$, respectively, which are both bounded by, say, $4\epsilon$. Hence, $\dist_{\fontact U}(\pi_U(p),\pi_U(gp))\leq 9\epsilon+\dist_{\fontact U}(\pi_U(x),\pi_U(gx))$ (where the diameters of the projection sets are taken care of by the extra $\epsilon$), and the last term is uniformly bounded by Bounded Geodesic Image. We then get the desired bound, concluding the proof.
\end{proof}

\begin{cor}\label{cor:free_subgroups}
Let $G\leq\Aut(\mathfrak S)$ act properly and cocompactly on the hierarchically hyperbolic space $\cuco X$.  Let $g,h\in G$ be hyperbolic on the maximal $S\in\mathfrak S$ and satisfy $g^nh\neq hg^n,h^ng\neq gh^n$ for all $n\neq 0$.  Then there exists $N>0$, depending on $g$ and $h$, such that $\langle g^N,h^N\rangle$ is freely generated by $g^N,h^N$.
\end{cor}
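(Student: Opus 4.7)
The plan is to invoke Corollary~\ref{cor:acyl_hyp} to reduce to a standard ping-pong argument for two independent loxodromic isometries of a hyperbolic space on which $G$ acts acylindrically. By Corollary~\ref{cor:acyl_hyp}, the action of $G$ on $\fontact S$ (or rather on the $G$--invariant, quasi-convex subspace $\mathcal U\subseteq\fontact S$, which is $G$--equivariantly quasi-isometric to a hyperbolic space $\mathcal T$) is acylindrical. By hypothesis $g,h$ act as loxodromic isometries on $\fontact S$; since $G$ coarsely preserves $\mathcal U$ and $\mathcal U$ is quasi-convex, any $G$--orbit in $\mathcal U$ is quasi-isometrically embedded, so $g$ and $h$ remain loxodromic on $\mathcal T$, with well-defined pairs of fixed points $\{g^{\pm\infty}\},\{h^{\pm\infty}\}\subseteq\partial\mathcal T$.

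The key step is to show that the non-commutation hypothesis forces $\{g^{\pm\infty}\}\cap\{h^{\pm\infty}\}=\emptyset$, i.e., that $g$ and $h$ are \emph{independent} loxodromics. Suppose, toward a contradiction, that the two fixed-point sets share a point. Then, by a standard acylindricity argument (e.g., Lemma~6.5 of~\cite{Osin:acyl}), some nontrivial powers $g^m$ and $h^k$ share \emph{both} fixed points on $\partial\mathcal T$; in particular they generate a group that coarsely stabilizes a quasi-axis. Acylindricity of the $G$--action on $\mathcal T$ then implies that the pointwise stabilizer of two points far apart on this common quasi-axis is finite, and combining this with the fact that $\langle g^m,h^k\rangle$ has bounded orbits on the axis (after passing to further powers if necessary) yields that $g^m$ and $h^k$ have a common power. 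This forces some $h^{k'}$ to commute with some $g^{m'}$, contradicting the assumption that $g^nh\neq hg^n$ and $h^ng\neq gh^n$ for all nonzero $n$.

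Once independence is established, the proof is completed by a classical ping-pong/Bestvina--Fujiwara argument on $\partial\mathcal T$: choose disjoint neighborhoods $U_g^\pm,U_h^\pm\subseteq\partial\mathcal T$ of the four fixed points. For $N$ sufficiently large, $g^{\pm N}$ maps the complement of $U_g^\mp$ into $U_g^\pm$, and similarly for $h^{\pm N}$. Any nontrivial reduced word in $g^N,h^N$ then maps a point outside all four neighborhoods into one of them, and in particular is nontrivial; hence $\langle g^N,h^N\rangle$ is freely generated by $g^N,h^N$. The main obstacle is the first step, namely deducing from the algebraic non-commutation hypothesis (not a hypothesis on fixed points) that $g$ and $h$ are independent as loxodromics of $\mathcal T$; this is where acylindricity of the action on $\mathcal T$ provided by Corollary~\ref{cor:acyl_hyp} is essential, rather than merely having loxodromic elements.
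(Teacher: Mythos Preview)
Your approach is essentially the same as the paper's: invoke Corollary~\ref{cor:acyl_hyp} to get an acylindrical action on a hyperbolic space, then appeal to the standard fact that two loxodromics for such an action either have a common power or powers that freely generate a free group. The paper simply cites this fact as \cite[Proposition~2.4]{Fujiwara:acyl} rather than unpacking it.

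One small imprecision worth tightening: in your independence step you conclude that ``some $h^{k'}$ commutes with some $g^{m'}$,'' but this is weaker than what the hypothesis forbids (namely that some nonzero power of $g$ commutes with $h$ itself). The fix is immediate: once you know $g^m$ and $h^k$ have a common power, say $g^{ma}=h^{kb}$, then $g^{ma}h = h^{kb}h = h\,h^{kb} = h\,g^{ma}$, so $g^{ma}$ commutes with $h$, directly contradicting $g^nh\neq hg^n$ for $n=ma$. With that adjustment your argument is complete and matches the paper's.
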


\begin{proof}
By Corollary~\ref{cor:acyl_hyp}, $G$ acts acylindrically on a hyperbolic space, whence the claim follows from~\cite[Proposition~2.4]{Fujiwara:acyl}.
\end{proof}

In particular, when $G$ acts properly and cocompactly on a CAT(0) cube
complex with a factor system, e.g., when $G$ is compact
special, the conclusion of Corollary~\ref{cor:free_subgroups} is
satisfied.  Thus acylindricity can be used to find free subgroups of
groups acting on cube complexes by different means than are used in
the discussion of the Tits
alternative~\cite{CapraceSageev:rank_rigidity,SageevWiseTits}.  The
above corollary recovers Theorem~47 of~\cite{KimKoberda:curve_graph}
about subgroups of right-angled Artin groups generated by powers of
elements acting loxodromically on the extension graph once we observe,
as in~\cite{KimKoberda:curve_graph}, that there is a
quasi-isometry from the extension graph to the contact graph 
(whenever the right-angled Artin group does not have a free $\integers$ factor).

In the case $\cuco X$ is a uniformly locally finite cube complex, not 
necessarily equiped with a factor system, one can
obtain the same conclusion, provided $g,h\in\Aut(\cuco X)$ act loxodromically on
$\contact\cuco X$, by a ping-pong argument.

\bibliographystyle{alpha}
\bibliography{contractible_contact}
\end{document}